\newtheorem{thm}{Theorem}[section]
\newtheorem{lem}[thm]{Lemma}
\newtheorem{prop}[thm]{Proposition}
\newtheorem{cor}[thm]{Corollary}
\theoremstyle{definition}
\newtheorem{ex}[thm]{Example}
\newtheorem{defn}[thm]{Definition}
\newtheorem{rem}[thm]{Remark}
\numberwithin{equation}{section}
\numberwithin{figure}{section}
\newcommand{\Z}{\mathbb{Z}}
\newcommand{\K}{\mathcal{K}}
\newcommand{\C}{\mathcal{C}}
\newcommand{\cP}{\mathcal{P}}
\newcommand{\cN}{\mathcal{N}}
\newcommand{\T}{\mathcal{T}}
\newcommand{\cB}{\mathcal{B}}
\newcommand{\spin}{{\mathrm{spin}}}
\newcommand{\spinc}{{\mathrm{spin^c}}}
\newcommand{\s}{{\mathfrak{s}}}
\renewcommand{\t}{{\mathfrak{t}}}
\renewcommand{\a}{\alpha}
\newcommand{\lk}{{\mathrm{lk}}}
\newcommand{\im}{{\mathrm{im\ }}}
\newcommand{\del}{\partial}
\newcommand{\inv}{^{-1}}
\newcommand{\onto}{\twoheadrightarrow}
\newcommand{\G}{\Gamma}
\newcommand{\Q}{\mathbb{Q}}
\newcommand{\R}{\mathcal{R}}
\newcommand{\Hom}{\operatorname{Hom}}
\renewcommand{\P}{{\mathcal{P}}}
\newcommand{\sss}{\mathfrak{s}}
\newcommand{\cfkinf}{CFK^\infty}
\newcommand{\x}{\mathbf{x}}
\title[Filtering Topologically Slice Knots]{Filtering Smooth Concordance classes of Topologically Slice Knots}
\author{Tim D. Cochran$^{\dag}$}
\address{Department of Mathematics MS-136, P.O. Box 1892, Rice University, Houston, TX 77251-1892}
\email{cochran@rice.edu}
\author{Shelly Harvey$^{\dag\dag}$}
\address{Department of Mathematics MS-136, P.O. Box 1892, Rice University, Houston, TX 77251-1892}
\email{shelly@rice.edu}
\author{Peter Horn$^{\dag\dag\dag}$}
\address{Department of Mathematics, Syracuse University, 215 Carnegie Building, Syracuse, NY, 13244-1150}
\email{pdhorn@syr.edu}
\thanks{$^{\dag}$Partially supported by the National Science Foundation  DMS-1006908}
\thanks{ $^{\dag\dag}$Partially supported by NSF CAREER DMS-0748458}
\thanks{$^{\dag\dag\dag}$Partially supported by NSF Postdoctoral Fellowship DMS-0902786}
\subjclass[2000]{Primary Secondary}
\begin{document}
\date{\today}
\begin{abstract} We propose and analyze a structure with which to organize the difference between a knot in $S^3$ bounding a topologically embedded $2$-disk in $B^4$ and it bounding a smoothly embedded disk. The $n$-solvable filtration of the \textit{topological} knot concordance group, due to Cochran-Orr-Teichner, may be complete in the sense that any knot in the intersection of its terms may well be topologically slice. However, the natural extension of this filtration to what is called the $n$-solvable filtration of the \textit{smooth} knot concordance group,  is unsatisfactory because \textit{any} topologically slice knot lies in \textit{every} term of the filtration.  To ameliorate this we investigate a new filtration, $\{\cB_n\}$,  that is simultaneously a refinement of the $n$-solvable filtration and a generalization of notions of positivity studied by Gompf and Cochran. We show that each $\cB_n/\cB_{n+1}$  has infinite rank. But our primary interest is in the induced filtration, $\{\T_n\}$, on the subgroup, $\T$, of knots that are topologically slice. We prove that $\T/\T_0$ is large, detected by gauge-theoretic invariants and the $\tau$, $s$, $\epsilon$-invariants; while the non-triviliality of  $\T_0/\T_1$ can be detected by certain $d$-invariants. All of these concordance obstructions vanish for knots in $\T_1$. Nonetheless, going beyond this, our main result is that $\T_1/\T_2$ has positive rank. Moreover under a ``weak homotopy-ribbon'' condition, we show that each  $\T_n/\T_{n+1}$ has positive rank. These results suggest that, even among topologically slice knots, the fundamental group is responsible for a wide range of complexity.
\end{abstract}

\maketitle

\section{Introduction}\label{sec:intro}

\vspace{.2in}

One of the most surprising mathematical developments of the last $30$ years  was the discovery that $\mathbb{R}^4$, in stark contrast to all other dimensions,  has an infinite number of inequivalent differentiable structures.  This was a consequence of the work of Fields medallists Michael Freedman and Simon Donaldson ~\cite{Don1,FQ}. In the intervening years many topological $4$-manifolds have been shown to admit an infinite number of smooth structures, distinct up to diffeomorphism. Indeed, as of this writing there is not a single topological $4$-manifold that is known to admit a \textit{finite} (non-zero) number of smooth structures. This striking difference between the topological and smooth categories can, in a sense, be traced to the failure of the Whitney Trick in dimension $4$ ~\cite[Thm. 9.27]{GompfStip}. This may be thought of as the inability to replace a topologically embedded $2$-dimensional disk by a smoothly embedded disk. Locally, given a $2$-disk, $\Delta$, topologically  embedded in the $4$-ball so that $\partial \Delta$ is a \textit{knot} $K$ in $S^3\equiv \partial B^4$, we cannot necessarily find a smoothly embedded disk with $K$ as boundary (as first investigated by Fox, Milnor and Kervaire in the 1950's).  Thus this local failure may be viewed as a paradigm  for the chasm between the categories on the global scale of $4$-manifolds.

Despite this proliferation of smooth structures on $4$-manifolds the authors know of no attempt to organize the set of all such structures. Here we propose a scheme for organizing this difference in categories for the local problem. A knotted circle $K$ in $S^3$ is said to be \textbf{topologically slice} if it is the boundary of a \textit{topologically} embedded $2$-disk (with a product regular neighborhood) in $B^4$. A knot is said to be a \textbf{slice knot} is the boundary of a \textit{smooth} embedding of a $2$-disk in $B^4$. We propose a method for organizing the difference between these notions (generalizing ~\cite{Go1}). We also give examples exhibiting  new behavior among knots that are topologically slice but not smoothly slice. Our proposed organizational scheme uses a known group structure on certain equivalence classes of knots, which we now review.

A \textbf{knot} $K$ is the image of a tame embedding of an oriented circle into $S^3$. Two knots, $K_0\hookrightarrow S^3\times \{0\}$ and $K_1\hookrightarrow S^3\times \{1\}$, are (smoothly) \textbf{ concordant} if there exists a proper smooth embedding of an annulus into $S^3\times [0,1]$ that restricts to the knots on $S^1\times \{0,1\}$. Let $\mathcal{C}$ denote the set of  concordance classes of knots. It is known that the connected sum operation endows $\mathcal{C}$ with the structure of an abelian group, called the \textbf{smooth knot concordance group}. The identity element is the class of the trivial knot. It is elementary to see that this equivalence class is precisely the set of slice knots.  The inverse of $K$ is the class of the mirror-image of $K$ with the circle orientation reversed, denoted $-K$.

In ~\cite{COT} a  filtration  by subgroups of the \textit{topological knot concordance group} was defined. This filtration has provided a convenient framework for many recent advances in the study of knot concordance. For example, the classical invariants of Milnor, Levine, Tristram, and Casson-Gordon are encapsulated in the low-order terms. The  filtration is also significant because of its strong natural connection to the techniques of A. Casson and M. Freedman on the \emph{topological} classification problem for $4$-manifolds.  This filtration my be complete, in that its intersection may be precisely the set of topologically slice knots.  More recent papers  (e.g. ~\cite[p.1423]{CHL3}\cite{CHL6}) and the present paper are concerned only with a filtration of the \textit{smooth} knot concordance group \textit{suggested} by that in ~\cite{COT}),
$$
\cdots \subset \mathcal{F}_{n+1} \subset \mathcal{F}_{n.5}\subset\mathcal{F}_{n}\subset\cdots \subset
\mathcal{F}_1\subset \mathcal{F}_{0.5} \subset \mathcal{F}_{0} \subset \mathcal{C},
$$
called the \textbf{($n$)-solvable filtration} of $\mathcal{C}$.  The $n$-solvable filtration (just like the filtration of the topological concordance group) is highly non-trivial; each of the associated graded abelian groups $\{\mathcal{F}_{n}/\mathcal{F}_{n.5}~|~n\in\mathbb{N}\}$ contains $\Z^\infty\oplus \Z^\infty_2$ ~\cite{CHL3,CHL6}. 

However the $n$-solvable filtration of the smooth knot concordance group is not useful in distinguishing among knots that are slice in the \emph{topological category}, but not slice in the smooth category, which is the main focus of this paper. Indeed, if $\mathcal{T}$ denotes the subgroup of smooth concordance classes of knots that are topologically slice, then it was observed in ~\cite[p.1423]{CHL3} (using ~\cite[Section 8.6]{FQ}) that
$$
\mathcal{T}\subset\cap_{n=1}^\infty\mathcal{F}_n.
$$
Yet $\mathcal{T}$ itself is known to be highly non-trivial. It was first shown in ~\cite{Endo} using gauge-theoretic techniques of Furuta and Fintushel-Stern that $\mathcal{T}$ has infinite rank. Recent work of Hedden-Livingston-Ruberman, Hom and Hedden-Kirk shows that much finer structure exists in $\T$ ~\cite{HLR,Hom,HedKirk2}. Yet no proposal has been made to organize this structure.

It is the purpose of the present work to propose and investigate new  filtrations of $\C$ that, like the $n$-solvable filtration, are highly non-trivial and yet are superior to that filtration in that they induce non-trivial filtrations of $\mathcal{T}$. Our filtration can also been seen as a generalization of Gompf's notion of \textit{kinkiness} ~\cite{Go1}. Our filtration thus retains the strong connection to the tower techniques of A. Casson and M. Freedman.

In Section~\ref{sec:filtrations}, we define nested submonoids, $\{\mathcal{P}_n\}$ and  $\{\mathcal{N}_n\}$,  of $\C$, which we call the  $n$-positive and $n$ negative knots, respectively. A knot is \textbf{$0$-positive (respectively $0$-negative)} if it bounds a smoothly embedded $2$-disk, $\Delta$, in a smooth, compact, oriented, simply-connected $4$-manifold, $V$,  with $\partial V=S^3$, where the intersection form on $H_2(V)$ is positive definite (respectively negative definite), and where $[\Delta]=0$ in $H_2(V,\partial V)$. It follows that $V$ is homeomorphic to a punctured $\#_1^m\mathbb{C}P(2)$. To motivate the definition of $\cP_n$ for $n>0$, note that if $V$ were diffeomorphic to a punctured $\#_1^m\mathbb{C}P(2)$ and if the corresponding $\mathbb{C}P(1)$'s were embedded in the complement of $\Delta$, they could be (smoothly)  blown-down, resulting in  an actual slice disk in $B^4$.  Therefore we say a knot $K$ is \textbf{$n-$positive}  if $H_2(V)$ admits a basis of disjointly embedded surfaces \textit{in the exterior of $\Delta$} , where, loosely-speaking,  these surfaces are more and more like $2$-spheres as $n$ increases (the precise definitions are in Section~\ref{sec:filtrations}). The sets of $n$-positive and $n$-negative knots induce monoid  filtrations of $\C$, called \textbf{the $n$-positive filtration}, $\{\mathcal{P}_n\}$, and
the \textbf{$n$-negative filtration}, $\{\mathcal{N}_n\}$ respectively.  Then we observe that the intersection $\cB_n\equiv \mathcal{N}_n\cap\mathcal{P}_n$, which we call \textbf{n-bipolar knots}, yields a filtration of $\C$ by \textit{subgroups}:
$$
\{0\}\subset\dots\subset\mathcal{B}_{n+1}\subset\mathcal{B}_n\subset\dots \subset\mathcal{B}_0\subset \C.
$$

We show that membership in $\mathcal{P}_0$ (and $\mathcal{N}_0$) is obstructed by (the sign of) many well-known knot concordance invariants.  For example,

\begin{prop}\label{prop:introresults2} Suppose $K$ is a knot.
\begin{enumerate}
\item [1.] If $K\in \cB_0$ then the Levine-Tristram signature function of $K$ vanishes, so $K$ has finite order in the algebraic concordance group (Corollary~\ref{cor:Zofiniteorder}) ;
\item [2.] If $K\in \cB_1$ then $K$ is algebraically slice (Corollary~\ref{cor:ponealgslice}) ;
\item [3.] If $K\in \cB_2$ then its Casson-Gordon slicing obstructions vanish, as do all metabelian signature obstructions (Theorems~\ref{thm:CGvanish}and ~\ref{thm:generalsignaturesobstruct}) .
\end{enumerate}
\end{prop}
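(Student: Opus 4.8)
The plan is to run a single ``definiteness sandwich'' for all three parts. A knot in $\cB_n=\P_n\cap\N_n$ bounds a smooth disk $\Delta_+$ in a simply-connected positive-definite $V_+$ with $\partial V_+=S^3$, $[\Delta_+]=0\in H_2(V_+,\partial V_+)$, and $H_2(V_+)$ carried by the defining $n$-positive surfaces, and likewise a disk $\Delta_-$ in a negative-definite $V_-$. Deleting a tubular neighborhood of $\Delta_\pm$ gives $W_\pm:=V_\pm\setminus\nu(\Delta_\pm)$ with $\partial W_\pm=M_K$ (the zero surgery manifold), $H_1(W_\pm;\Z)\cong\Z$ generated by the meridian of $\Delta_\pm$ (here $[\Delta_\pm]=0$ is needed), $H_2(W_\pm)\cong H_2(V_\pm)$ so that $\sigma(W_\pm)=\pm\,\rank H_2(V_\pm)$, and with the $n$-positive surfaces now lying in $\pi_1(W_\pm)^{(n)}$, disjoint from $M_K$, and generating $H_2(W_\pm)$. (Since $\{\cB_n\}$ refines $\{\mathcal F_n\}$, parts (2) and (3) can alternatively be deduced from~\cite{COT}; I argue all three directly.) Since the Alexander module of $K$ is $\mathbb{Q}[t^{\pm1}]$-torsion, $M_K$ is acyclic with $\mathbb{Q}(t)$-coefficients, for a prime-power root of unity $\omega$ with $\Delta_K(\omega)\neq 0$ it is $\mathbb{C}_\omega$-acyclic, and for a prime-power metabelian character it is acyclic with the corresponding twisted coefficients; consequently the relevant twisted intersection forms of $W_\pm$ are nonsingular.

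For (1), fix such an $\omega$. By the Casson--Gordon--Gilmer signature-defect formula, $\sigma_\omega(K)=\sigma_\omega(W)-\sigma(W)$ for any compact $W$ with $\partial W=M_K$ carrying a $\Z$-character restricting correctly on $H_1(M_K)$, where $\sigma_\omega(W)$ denotes the signature of the $\omega$-twisted form on $H_2(W;\mathbb{C}_\omega)$. Applied to the positive-definite $W_+$ this yields $\sigma_\omega(W_+)\le\sigma(W_+)$, hence $\sigma_\omega(K)\le 0$; applied to the negative-definite $W_-$ it yields $\sigma_\omega(K)\ge 0$. So $\sigma_\omega(K)=0$ for every such $\omega$, and since these are dense in $S^1$ while the Levine--Tristram signature function is locally constant away from the finitely many zeros of $\Delta_K$, that function vanishes identically; a knot whose Levine--Tristram signature function vanishes identically has finite order in the algebraic concordance group, which is (1).

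For (2), $K\in\P_1$ makes $H_2(W_+)$ generated by surfaces in $\pi_1(W_+)^{(1)}$, hence by surfaces lifting to the infinite cyclic cover $\widetilde W_+$; using this one shows $\widetilde W_+$ is $\mathbb{Q}(t)$-acyclic in degree two, and then the half-lives-half-dies principle applied to $M_K=\partial W_+$ presents $\ker(H_1(\widetilde{M_K};\mathbb{Q})\to H_1(\widetilde W_+;\mathbb{Q}))$ as a metabolizer for the rational Blanchfield pairing, so $K$ is algebraically slice (Kervaire--Levine). For (3), $K\in\cB_2$: one uses the depth-two defining surfaces to control the homology of the cover of $W_\pm$ attached to a prime-power metabelian character $\phi$ of $M_K$ --- which extends over $W_\pm$ exactly because $V_\pm$ is simply connected with $[\Delta_\pm]=0$ --- and realizes the Casson--Gordon invariant and the first-order metabelian signature invariants of $(K,\phi)$ as signature defects of $W_\pm$; the same upper/lower sandwich then forces each to vanish. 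This is Theorems~\ref{thm:CGvanish} and~\ref{thm:generalsignaturesobstruct}.

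The main obstacle, common to all three parts, is the inequality $\sigma_\omega(W_+)\le\sigma(W_+)$ and its higher-order analogues: one must rule out the possibility that passing to the relevant cover of $W_+$ inflates the twisted intersection form with an extra positive part. The worry is that the first homology of that cover is large, so that the twisted form has rank exceeding $\rank H_2(W_+)=\sigma(W_+)$ and potentially larger signature. What I would need to prove is that positive-definiteness of $W_+$ together with acyclicity of $M_K$ forces this first homology to be torsion over the relevant group ring (so the twisted form has rank exactly $\rank H_2(W_+)$ for all but finitely many twists), or else that the summand of the twisted form it supports is metabolic and so of signature zero; either way the definite form of $W_+$ then bounds the twisted signature from above, and the negative-definite $W_-$ bounds it from below. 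Establishing this homology control --- which is precisely where the strength of the $n$-surface hypothesis is consumed --- is the technical heart of the proof.
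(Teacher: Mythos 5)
Your overall plan---bound the relevant twisted signature from above by the positive-definite manifold and from below by the negative-definite one---matches the paper's strategy in spirit, but the route you take differs technically, and the gap you flag at the end is real but is also slightly misdiagnosed.

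The paper does \emph{not} run the sandwich on the zero-surgery complement $W_\pm = V_\pm\setminus\nu(\Delta_\pm)$ with $\mathbb{C}_\omega$-twisted intersection forms. For part (1) it instead takes $d=p^r$-fold \emph{branched} cyclic covers $\widetilde{V}_\pm$ of the entire $4$-manifold $V_\pm$ along $\Delta_\pm$, closes up by gluing on $(-B^4,-F_K)$, and applies the $G$-signature theorem (Rochlin's lemma). The eigenspace signatures $\sigma(\widetilde{V},j)$ then automatically satisfy $\sigma(\widetilde{V},j)\le\beta_2(\widetilde V,j)$, and the rank identity $\beta_2(\widetilde V,j)=\beta_2(V)$ is obtained from Gilmer's Euler-characteristic formula $\chi(V-\Delta)=\chi(\widetilde V-\widetilde\Delta,j)$ together with the vanishing $\beta_1(\widetilde V)=0$, which in turn follows from the Casson--Gordon lemma that $H_1(\widetilde V;\mathbb{Z}_p)=0$ \emph{because} $H_1(V;\mathbb{Z}_p)=0$. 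Note that this rank control comes from simple-connectivity of $V$, not from the surface condition: for $K\in\cB_0$ the hypothesis $\pi_1(S_i)\subset\pi_1(W)^{(0)}$ is vacuous, so the ``$n$-surface hypothesis'' cannot be what saves the day in part (1), contrary to the speculation in your last paragraph. In your formulation (working with the infinite cyclic cover of $W_\pm$ and specializing at $\omega$), you would still need to prove that $H_1(W_\pm;\mathbb{C}_\omega)=0$ for all $p^r$-th roots of unity $\omega\ne1$; this is true, but the cleanest proof is exactly the branched-cover argument: the $p^r$-fold cyclic cover of $W_\pm$ is $\widetilde V_\pm\setminus\widetilde\Delta_\pm$, and $\beta_1(\widetilde V_\pm)=0$ forces its first Betti number to be $1$, carried by the meridian, so the $j\ne 0$ eigenspaces of $H_1$ vanish. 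So the gap is fillable, but the filling \emph{is} the branched-cover argument, at which point one might as well do the whole computation there.

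For parts (2) and (3) your sketches are reasonable but do not give a self-contained proof. The paper proves these by first showing $\cB_n\subset\mathcal{F}_n^{\mathrm{odd}}\subset\mathcal{F}_n^{\Q}$ (Proposition~\ref{prop:positiveinratnsolv} and Corollary~\ref{cor:positiveinratnsolv}, by blowing up with $-\mathbb{C}P(2)$'s to turn the diagonal definite form into an odd form $\bigl(\begin{smallmatrix}0&1\\1&*\end{smallmatrix}\bigr)$), and then quoting the COT machinery: ~\cite[Theorem~4.4]{COT} for the self-annihilating Blanchfield submodule (your ``half-lives-half-dies'' step, Corollary~\ref{cor:ponealgslice}), ~\cite[Theorem~4.2]{COT} for the von Neumann signature vanishing (Theorem~\ref{thm:generalsignaturesobstruct}), and a careful re-run of ~\cite[Theorem~9.11]{COT} with surfaces $L_i,D_i$ in place of spheres for the Casson--Gordon vanishing (Theorem~\ref{thm:CGvanish}). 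The place where the $n$-surface hypothesis genuinely \emph{is} consumed is here: the surfaces with $\pi_1(L_i)\subset\pi_1(W)^{(n)}$ lift to the abelian and metabelian covers and span isotropic half-rank subspaces of the twisted intersection forms. You gesture at this, but without making the transition to the $\mathcal{F}_n^{\mathrm{odd}}$ picture (splitting each $\langle +1\rangle$ into an $L_i,D_i$ pair) the isotropic-subspace count does not come out to half the rank, and the Witt-triviality you want does not follow directly from definiteness alone. In short: your approach is workable but your last paragraph correctly identifies the missing step, and the missing step is substantial for parts (2)--(3), while for part (1) it needs the $\pi_1(V)=0$ branched-cover argument rather than anything about the surfaces.
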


In fact, in Section~\ref{sec:relationsn-solv} we show that these new filtrations are (essentially) refinements of $\{\mathcal{F}_n\}$ and so the $n^{th}$-order signature obstructions that were used to study $\mathcal{F}_n$ also obstruct membership in $\cB_n$. Thus, even though the terms of $\{\cB_n\}$ are much smaller than those of $\{\mathcal{F}_n\}$, we are still able to show that the new filtration is highly non-trivial using the same techniques as were used to show that $\{\mathcal{F}_n\}$ was non-trivial.

\newtheorem*{thm:nontrivialityoffiltration}{Theorem~\ref{thm:nontrivialityoffiltration}}
\begin{thm:nontrivialityoffiltration} For each $n\geq 1$, there exists
$$
\Z^\infty\subset \frac{\cB_n}{\cB_{n+1}}.
$$
while for each $n\geq 0, n\neq 1$ there exists
$$
\Z_2^\infty\subset \frac{\cB_n}{\cB_{n+1}}.
$$
\end{thm:nontrivialityoffiltration}

But the examples of Theorem~\ref{thm:nontrivialityoffiltration} are not topologically slice. So we turn our attention to the intersection $\mathcal{T}_n\equiv \cB_n\cap\mathcal{T}$ which yields a filtration of $\mathcal{T}$ by subgroups:
$$
\{0\}\subset\dots\subset\mathcal{T}_{n+1}\subset\mathcal{T}_n\subset\dots \subset\mathcal{T}_0\subset \mathcal{T}.
$$

The advantage of  $\{\cB_n\}$ over $\{\mathcal{F}_n\}$ is that $\{\T_n\}$ is an interesting non-trivial filtration of $\T$ whereas $\{\mathcal{F}_n\cap \T\}$ is a trivial filtration of $\T$ (each term is $\T$ itself).  Evidence that $\{\T_n\}$ is natural is provided by showing that known invariants fit well into this structure. We are able to analyze most of the known invariants that obstruct a knots being smoothly slice and prove (more generally) that they  obstruct membership in certain terms of $\{\mathcal{T}_n\}$. Evidence for non-triviality is provided by showing non-triviality for certain successive quotients $\T_{n}/\T_{n+1}$.

Specifically, we show that, even among topologically slice knots, membership in $\mathcal{P}_0$ is obstructed by (the sign of) many well-known knot concordance invariants. 

\begin{prop}\label{prop:introresults1} If $K\in \cP_0$ then
\begin{itemize}
\item  [1.] the Levine-Tristram signature function of $K$ is non-positive (Proposition~\ref{prop:easy2fold}) ;
\item  [2.] $\tau(K)\geq 0$ (Ozsv\'{a}th-Szab\'{o}  see Proposition~\ref{prop:tau});
\item  [3.] $s(K)\geq 0$ (Kronheimer-Mrowka see Proposition~\ref{prop:s}  ) ;
\item  [4.] If, additionally, the $p^r$-signatures of $K$ vanish and the $p^r$-fold  cover of $S^3$ branched over $K$ is a homology sphere, then $\delta_p(K)\leq 0$ (Corollary~\ref{cor:homologyspheredinvariants}) ;
\item [5.] if $\Sigma$ is $\pm 1$-surgery on $K$ then $d(\Sigma)\leq 0$ (Corollary~\ref{cor:dinvariantssurgeries}) ;
\item [6.]  If, additionally, the $2^r$-signatures of $K$ vanish, then the corresponding Hedden-Kirk slice obstructions (extending the Fintushel-Stern invariants) obstruct membership in  $\cP_0$ (see Theorem~\ref{thm:Endo});
\item [7.] If $K\in \cB_0$, then  $\epsilon(K)=0$ (see Proposition~\ref{prop:epsilon}).
\end{itemize}
Here $\tau$ is the concordance invariant of Ozsv\'{a}th-Szab\'{o} and Rasmussen defined from Heegard Floer homology ~\cite{OzSz2}, $\epsilon$ is the concordance invariant of Hom ~\cite{Hom}, $s$ is Rasmussen's concordance invariant defined from Khovanov homology ~\cite{Ras1}, and $\delta_p$ refers to the invariants of Manolescu-Owens ~\cite{ManOw} and Jabuka ~\cite{Jab} (Ozsv\'{a}th-Szab\'{o} d-invariants associated to prime-power branched covers and certain specific $\spinc$-structures ~\cite{OzSz1}). The term Fintushel-Stern obstructions (as generalized in ~\cite[Theorem 1]{HedKirk2}) refers to the  invariant of those authors that obstructs a rational homology $3$-sphere from being the boundary of a $4$-manifold with positive definite intersection form ~\cite{FiSt2}, as applied to a $2^r$-fold cyclic cover of $S^3$ branched over $K$.
\end{prop}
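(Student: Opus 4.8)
The unifying point is that membership in $\cP_0$ is a four-dimensional hypothesis of exactly the shape that each invariant in the statement is already known to obstruct. By definition $K=\del\Delta$ for a smoothly embedded disk $\Delta$ in a smooth compact simply-connected $V$ with $\del V=S^3$, positive-definite intersection form, and $[\Delta]=0\in H_2(V,\del V)$. From this I would record three standard consequences, once and for all: (i) $-K$ bounds $\Delta$ in $-V$, which is simply-connected with $\del(-V)=S^3$ and negative-definite, so $b_2^+(-V)=0$; (ii) the exterior $X:=V\smallsetminus\nu\Delta$ has $H_1(X)\cong\Z$ (a meridian of $\Delta$) while the inclusion $H_2(X)\xrightarrow{\ \sim\ }H_2(V)$ preserves the intersection form, which stays positive-definite, so $\sigma(X)=b_2(X)$; and (iii) since $[\Delta]=0$, the framing $\Delta$ induces on $K$ is the $0$-framing, so attaching a $2$-handle to $V$ along $K$ with framing $+1$ and capping the core with $\Delta$ produces a simply-connected $X_{+1}$ with $\del X_{+1}=S^3_{+1}(K)$ and intersection form the orthogonal sum of that of $V$ with $\langle+1\rangle$ --- still positive-definite. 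After this, each item is an appeal to a known obstruction of one of two flavours: either to a knot bounding a null-homologous disk in a $4$-manifold with $b_2^+=0$ and $b_1=0$, or to a homology sphere (a cyclic branched cover of $S^3$ over $K$, or $\pm 1$-surgery on $K$) bounding a definite $4$-manifold.

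For (1), feeding the double cover $W_2$ of $V$ branched over $\Delta$ into the $G$-signature theorem (using $[\Delta]^2=0$) and comparing its signature with $b_2(W_2)=2b_2(V)$ against positive-definiteness of $V$ yields $\sigma_{-1}(K)\le 0$; to get the whole Levine--Tristram function I would instead run the same comparison on the $\omega$-twisted intersection form of $X$, using $\sigma_\omega(K)=\sigma_\omega(X)-\sigma(X)$ together with $\sigma_\omega(X)\le\dim_{\mathbb{C}}H_2(X;\mathbb{C}_\omega)=b_2(X)=\sigma(X)$ for $\omega$ not a zero of $\Delta_K$, and then invoke piecewise-constancy of the signature function to pass from a dense set of $\omega$ to all of them. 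For (2) and (3), I would apply the one-sided genus-type bounds of Ozsv\'ath--Szab\'o for $\tau$, respectively of Kronheimer--Mrowka for $s$, for null-homologous surfaces in negative-semidefinite $4$-manifolds, to $-K\subset -V$: this gives $\tau(-K)\le 0$ and $s(-K)\le 0$, hence $\tau(K),s(K)\ge 0$. For (7), $K\in\cB_0$ means $K$ bounds a null-homologous disk in a negative-definite manifold (as $K\in\cN_0$) and, via $-V$, so does $-K$; applying the analogous one-sided inequality for Hom's $\epsilon$ to both bounds $\epsilon(K)\le 0$ and $\epsilon(K)\ge 0$ simultaneously.

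For (5), $X_{+1}$ from (iii) is a positive-definite, simply-connected filling of the integral homology sphere $S^3_{+1}(K)$, so the Ozsv\'ath--Szab\'o $d$-invariant inequality for homology spheres bounding definite $4$-manifolds gives $d(S^3_{+1}(K))\le 0$. For (4), let $W$ be the $p^r$-fold cyclic cover of $V$ branched over $\Delta$; then $\del W=\Sigma_{p^r}(K)$, and by standard transfer/van Kampen arguments $H_1(W)=0$ and $b_2(W)=p^r b_2(V)$, while the $G$-signature theorem expresses $\sigma(W)$ in terms of $\sigma(V)$ and the $p^r$-signatures of $K$. Under the hypotheses that those signatures vanish and $\Sigma_{p^r}(K)$ is a $\Z$-homology sphere, $W$ is therefore positive-definite, so $d(\Sigma_{p^r}(K))\le 0$ and hence $\delta_p(K)\le 0$. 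For (6), the same $W$ (only a rational homology sphere filling being needed now, which is automatic) is positive-definite, so the Fintushel--Stern obstruction --- in the generality of Hedden--Kirk --- to a rational homology sphere bounding a positive-definite $4$-manifold applies to $\Sigma_{2^r}(K)$, which recovers and refines Endo's argument.

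The content here is uniform rather than deep, so the main obstacle is bookkeeping, on three fronts. First, the sign conventions must be pinned down consistently --- for the $G$-signature theorem, for orientation reversal of $\tau$, $s$, $\epsilon$, and for the normalizations of $\delta_p$ and of $S^3_{\pm 1}(K)$ --- so that ``positive-definite'' produces the asserted direction of each inequality; a single slip turns ``non-positive'' into ``non-negative.'' Second, and more substantively, (4) and (6) hinge on the homological facts $b_2(W)=p^r b_2(V)$ and $H_1(W)=0$ for the branched covers, which I would deduce from $[\Delta]=0$ and simple-connectivity of $V$; it is precisely these identities that allow \emph{vanishing of the branched-cover signatures} to be upgraded to \emph{definiteness} of $W$, so the signature hypotheses in (4) and (6) are not decorative. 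Third, and I expect this to be the point most dependent on imported machinery, is (7): $\epsilon$ is not as manifestly a $4$-manifold invariant as $\tau$ or $d$, so the one-sided behaviour of $\epsilon$ under bounding a null-homologous disk in a $b_2^+=0$ manifold has to be extracted from Hom's structural description of $\epsilon$ via $\cfkinf$.
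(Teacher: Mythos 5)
Most of your reductions are aligned in spirit with the paper's, but there are two substantive issues and a couple of genuine divergences worth flagging.

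\emph{Item 5 has a real gap.} Your construction in step (iii)---attach a $2$-handle to $V$ along $K$ and note that the cocore together with $\Delta$ gives a new sphere of square equal to the framing---only yields a positive-definite filling of $S^3_{+1}(K)$. That half is vacuous: a $+1$-handle on $B^4$ already gives a positive-definite filling of $S^3_{+1}(K)$ for \emph{every} knot, as the paper itself remarks. For $S^3_{-1}(K)$ your handle attachment introduces a $\langle -1\rangle$-summand, and the resulting form is indefinite. The paper's Proposition \ref{prop:1surgery} uses a different construction: viewing the slice disk minus a small subdisk as a concordance annulus $A$ from $K$ to $U$ in the punctured $0$-positon, it performs ``Dehn surgery cross $[0,1]$'' along $A$. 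Since $A$ is nullhomologous, this does not change $H_2$ or the intersection form, so the cobordism from $S^3_{p/q}(K)$ to $S^3_{p/q}(U)$ remains positive definite for \emph{any} $p/q$, including $-1$. You need this (or an equivalent device) to get the $-1$ case.

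\emph{Item 7 is not proved the way you hope.} There is no one-sided inequality for $\epsilon$ under null-homologous disks in definite $4$-manifolds analogous to the $\tau$- or $s$-bounds, and the paper does not attempt one. Instead, Proposition \ref{prop:epsilon} exploits the closure of $\cB_0$ under satellites (Corollary \ref{cor:satellites}): the cables $K_{2,\pm 1}$ have unknotted patterns, hence lie in $\cB_0$, hence have $\tau=0$ by Corollary \ref{cor:taunogood}; Hom's Theorem 5.2 then forces $\epsilon(K)=0$ from $\tau(K)=\tau(K_{2,1})=\tau(K_{2,-1})=0$. This is a qualitatively different argument that sidesteps the $4$-dimensional behaviour of $\epsilon$ entirely. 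Your instinct that this is the weak point of your outline was correct.

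Two smaller remarks. For item 3, the Kronheimer--Mrowka result requires more than $[\Delta]=0$ in $H_2$; the paper's proof of Proposition \ref{prop:s} shows, via the relative Hurewicz theorem (using $\pi_1(V)=0$), that $[\Delta]$ lies in the image of $\pi_2(S^3,K)\to\pi_2(V,K)$, and it is this that feeds into \cite[Cor.~1.1]{KrMr1}. Your ``null-homologous'' formulation glosses over that step. For item 1, your route via twisted intersection forms of the disk exterior $X$ is a genuinely different (and more direct) path than the paper's, which runs the $G$-signature theorem on the $d$-fold branched cover $\widetilde{V}$ and compares signatures against the branched cover of a closed-up pair; both are valid, but the paper's branched-cover version is also what feeds items 4 and 6, so it does double duty.
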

If $K\in \mathcal{N}_0$ then a similar result holds, so that if $K\in \cB_0$ then the invariants in Proposition~\ref{prop:introresults1} $1.-5.$ and $7.$ are zero.

In fact we show that:

\newtheorem*{thm:Endo}{Theorem~\ref{thm:Endo}}
\begin{thm:Endo}
The family of topologically slice pretzel knots considered by Endo generates a
$$
\Z^\infty\subset \T/\T_0.
$$
The proof uses Endo's original argument. Similarly, using the extension of the Fintushel-Stern-Furuta strategy (and the calculations) due to Hedden-Kirk ~\cite[Theorem 1]{HedKirk2}, the set of Whitehead doubles of the $(2,2^n-1)$ torus knots ($n>1$) has the same property.
\end{thm:Endo}

Hence  $\T/\T_0$ is quite rich and the invariants of Proposition~\ref{prop:introresults1} are very useful for proving that a knot is not in $\mathcal{T}_0$, but none is directly useful beyond that.

In Section~\ref{sec:firstlevel} we  prove that the (signs of) Ozsv\'ath-Szabo $d$-invariants associated to prime-power branched covers can do slightly better: they obstruct membership in $\cP_1$ hence obstruct membership in $\mathcal{T}_1$.

\newtheorem*{thm:onenegativedinvariants}{Theorem~\ref{thm:onenegativedinvariants}}
\begin{thm:onenegativedinvariants} If $K\in\cP_1$  and $Y$ is the $p^r$-fold cyclic  cover of $S^3$ branched over $K$, then there is a metabolizer $G<H_1(Y)$ for the $\Q/\Z$-linking form on $H_1(Y)$; and there is a $\spinc$ structure $\mathfrak{s}_0$ on $Y$  such that $d(Y,\mathfrak{s}_0+\hat{z})\leq 0$ for all $z\in G$, where $\hat{z}$ is the Poincare dual of $z$.  Furthermore we may take $\mathfrak{s}_0$ to be a $\spinc$ structure corresponding to a spin structure on $Y$.  
\end{thm:onenegativedinvariants}

A similar but sharper result (Corollary~\ref{cor:zerodeltainvariants}) holds for the $\delta_p$-invariants.  Taken together (varying $p$) these invariants yield a homomorphism 
$$
\frac{\mathcal{T}_0}{\mathcal{T}_{1}}\to \times_{i=1}^{\infty} \Z,
$$
but as of now too few calculations have been done for topologically slice knots to prove that the image is infinitely generated.

None of the invariants above is capable of detecting non-triviality in $\mathcal{T}_1/\mathcal{T}_{2}$. Specifically, the Casson-Gordon slicing obstructions and the $d$-invariant slicing obstructions vanish for knots in $\mathcal{T}_1$. Given this, it is surprising that we can show:

\newtheorem*{thm:maintheoremtop}{Theorem~\ref{thm:maintheoremtop}}
\begin{thm:maintheoremtop}  The group
$$
\frac{\mathcal{T}_1}{\mathcal{T}_{2}}
$$
has positive rank.
\end{thm:maintheoremtop}

This is shown using a \textit{combination} of $d$-invariants and Casson-Gordon invariants.

We also sketch, in Theorem~\ref{thm:generalnontriviality}, the proof of a result only slightly weaker than the desired end result that  $\mathcal{T}_n/\mathcal{T}_{n+1}$ is non-zero for every $n$. Namely we  exhibit topologically slice knots in $\mathcal{T}_n$ that do not lie $\mathcal{T}_{n+1}$ in a ``weakly homotopy ribbon'' fashion (there exists no $(n+1)$-positon $V$ as in Definition~\ref{def:positive} wherein the inclusion $S^3-K\to V-\Delta$ induces a surjection on Alexander modules). This is shown using a combination of $d$-invariants and von Neumann signature invariants.

There is still a lot of room for improvement.  Neither of $Wh(RHT)$ and $Wh(Wh((RHT))$ (iterated Whitehead doubles of the right-handed trefoil) lie in $\mathcal{T}_0$ (as detected, say, by their $\tau$-invariants), yet we offer no new invariants with which to distinguish them. Moreover, the positive and bipolar filtrations are still not as discriminating as we could hope for certain knots with Alexander polynomial $1$. For example, in Corollary~\ref{prop:Alexone} we show that the (untwisted) Whitehead double of any knot in  $\mathcal{B}_0$ in fact lies in  $\mathcal{T}_n$ for \textit{every} $n$. Specifically, the Whitehead double of the figure eight knot lies in the intersection of all $\mathcal{T}_n$. To detect such knots, a different filtration and truly new invariants are needed.

Finally we remark that, although the bipolar filtration is far superior to the solvable filtration when studying $\mathcal{T}$, when considering the entire concordance group $\mathcal{C}$, the solvable filtration is still useful (perhaps more useful). Indeed $\cB_m\cap\mathcal{F}_n$ is a bifiltration that is finer that either individual filtration.

\section{Definitions of the Filtrations}\label{sec:filtrations}

In this section we define various new relations on $\mathcal{C}$ and use these relations to define the new filtrations of $\C$ (and $\T$) that will be our objects of study. To accomplish this one should consider relaxing the condition that a knot bounds an embedded $2$-disk in $B^4$. There are two obvious paths (although they are not unrelated). One possibility is to relax the condition on the $2$-disk and ask only that the knot bound a singular disk or a surface or a grope, for example.  Alternatively, one  can relax the condition on $B^4$, and consider when a knot  bounds an embedded disk in some other $4$-manifold. Here we take the latter approach.

 We say that  two knots $K$ and $K'$ are \textbf{concordant in  $V$} if $V$ is a smooth, compact, oriented, $4$-manifold with $\partial V\cong S^3 \coprod -S^3$ and there exists an annulus, $A$, smoothly and properly embedded in $V$ whose boundary gives the knots $K$ and $-K'$ and where the annulus is trivial in $H_2(V,\partial V)$.  We say that $K$ \textbf{is slice in $V$} if $V$ is smooth, compact and oriented with $\partial V\cong S^3$ and there is a $2$-disk smoothly embedded in $V$ whose boundary is $K$, and where the slice disk is trivial in $H_2(V,S^3)$. This last condition is important because any knot bounds a smoothly embedded disk in a punctured connected sum of $\mathbb{C}P(2)$'s ~\cite[Lemma 3.4]{CLick}.

 In ~\cite[Def. 2.1]{CGompf}  Cochran and Gompf defined a relation, $K\geq K'$, on knots that generalized the relation that $K$ can be transformed to $K'$ by changing only positive crossings. We generalize and filter this notion by adding an integer parameter.

\begin{defn}\label{def:geq} We say $K\geq_n K'$ if $K$ is concordant to $K'$ in a smooth $4$-manifold $V$ such that 
\begin{itemize}
\item [1.] $\pi_1(V)=0$;
\item [2.] the intersection form on $H_2(V)$ is positive definite;
\item [3.] $H_2(V)$ has a basis represented by a collection of surfaces $\{S_i\}$ disjointly and smoothly embedded in the exterior of the annulus $A$ such that, for each $i$, $\pi_1(S_i)\subset \pi_1(V-A)^{(n)}$.
\end{itemize}
\end{defn}

Here $G^{(n)}$ denotes the $n^{th}$ term of the derived series of the group $G$, where $G^{(1)}\equiv [G,G]$ and $G^{(n+1)}\equiv [G^{(n)},G^{(n)}]$. Note that \textit{any} group series satisfying a certain functorialty could be used here and would give, \textit{a priori},  a different relation.

It is elementary to verify  that these relations descend to relations on $\mathcal{C}$. In particular if $K$ is any slice knot, and $U$ is the unknot then $K\geq_n U$ and $U\geq_n K$ for every $n$.  Moreover it is easy to check that each relation is compatible with the monoidal structure on $\C$, that is, if $K\geq_n K'$ then $K\# K''\geq_n K' \# K''$ for any $K''$. These verifications are left to the reader. It follows from $2.$ and $3.$ that the matrix of the intersection form on $H_2(V)$ with respect to this basis is an identity matrix. Condition $3.$ ensures that $A$ is homologically trivial. Each of the relations  $\geq_n$ is clearly reflexive and transitive, but fails to be symmetric and fails to be antisymmetric.  

\begin{defn}\label{def:positive} Let $\mathcal{P}_{n}$, the set of  \textbf{n-positive knots}, be the set of (concordance classes of) knots $K$ such that  $K\geq_n U$ where $U$ is the trivial knot.  Let $\mathcal{N}_{n}$, the set of  \textbf{n-negative knots}, be the set of (concordance classes of) knots $K$ such that  $U\geq_n K$. Equivalently $K$ is \textbf{n-positive} (respectively,  \textbf{n-negative}), if 
$K$ is slice in a smooth $4$-manifold $V$ such that
\begin{itemize}
\item [1.] $\pi_1(V)=0$;
\item [2.] the intersection form on $H_2(V)$ is positive definite (respectively, negative definite);
\item [3.] $H_2(V)$ has a basis represented by a collection of surfaces $\{S_i\}$ disjointly embedded in the exterior  of the slice disk $\Delta$ such that $\pi_1(S_i)\subset \pi_1(V-\Delta)^{(n)}$ for each $i$.
\end{itemize}
Such a $V$ is called an \textbf{$n$-positon for $K$} (respectively an \textbf{$n$-negaton for $K$}).
\end{defn}

Since the surfaces $S_i$ are disjoint, the intersection matrix with respect to this basis is diagonal. Since the intersection form is positive definite and unimodular, this matrix is the identity matrix. Thus these conditions imply (by work of  Freedman on the classification of closed, smooth, simply-connected $4$-manifolds up to homeomorphism) that any  $n$-positon $V$ is a smooth manifold that is homeomorphic to a (punctured) connected sum of $\mathbb{C}P(2)$'s. Hence $V$ is a punctured connected sum of $\mathbb{C}P(2)$'s, but with a possibly exotic differentiable structure. 
 
The parameter $n$ in this definition can be motivated by the following observation:  if the surfaces $S_i$ were actually \emph{spheres} then, since $S_i\cdot S_i=1$ (respectively $-1$), their regular neighborhoods would be diffeomorphic to that of $\mathbb{C}P(1)\subset \pm\mathbb{C}P(2)$ so they could be (differentiably) excised (blown-down), proving that $K$ is smoothly slice in $B^4$ endowed with a possibly exotic smooth structure. Condition $3$.  is intended to progessively approximate, as $n$ increases, this scenario.

\begin{rem} This definition extends to links and string links for which the components have zero pairwise linking numbers.  
\end{rem}

The sets $\mathcal{P}_n$ and $\mathcal{N}_n$ are clearly closed under connected sum. However, if $K\in \mathcal{P}_{n}$ then $-K$ need not  lie in $\mathcal{P}_{n}$ but will certainly lie in $\mathcal{N}_n$. Thus neither $\mathcal{P}_{n}$ nor $\mathcal{N}_{n}$ is a subgroup of $\C$.  However each of the sets in the following definition is a subgroup.

\begin{defn}\label{def:filtr} The set of \textbf{n-bipolar knots} is $\cB_n\equiv \mathcal{N}_n\cap \mathcal{P}_n$.
\end{defn}

Then, in summary, we have:

\begin{prop}\label{prop:filtrationsofC} $\{\mathcal{P}_n\}$ and $\{\mathcal{N}_n\}$ induce filtrations of $\C$ by submonoids
\begin{align*}
\{0\}\subset\dots\subset\mathcal{P}_n\subset\dots \subset\mathcal{P}_0\subset \C,\\
\{0\}\subset\dots\subset\mathcal{N}_n\subset\dots \subset\mathcal{N}_0\subset \C,
\end{align*}
We call these the ($n$)-\textbf{positive filtration} and  ($n$)-\textbf{negative filtration} of $\mathcal{C}$. Moreover both $\{\cB_n\}$ and $\{\langle \mathcal{P}_n\rangle\}$ induce filtrations of $\C$  by \textbf{subgroups}
\begin{align*}
\{0\}\subset\dots\subset\mathcal{B}_n\subset\dots \subset\mathcal{B}_0\subset \C,\\
\{0\}\subset\dots\subset\langle\mathcal{P}_n\rangle\subset\dots \subset\langle\mathcal{P}_0\rangle\subset \C.
\end{align*}
where $\langle \mathcal{P}_n\rangle$ denotes the \textbf{subgroup} of $\C$ generated by the set $\mathcal{P}_n$. The former we call the ($n$)-\textbf{bipolar filtration}.
\end{prop}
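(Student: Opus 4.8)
The plan is to check, in order: that each $\mathcal{P}_n$ and $\mathcal{N}_n$ is a submonoid of $\C$; that the four families $\{\mathcal{P}_n\}$, $\{\mathcal{N}_n\}$, $\{\cB_n\}$, $\{\langle\mathcal{P}_n\rangle\}$ are decreasing; and that $\cB_n$ and $\langle\mathcal{P}_n\rangle$ are subgroups. Since it has already been observed that the relations $\geq_n$ descend to $\C$, these are the only remaining points, and the nesting is immediate: for any group $G$ one has $G^{(n+1)}\subseteq G^{(n)}$, so every $(n+1)$-positon is an $n$-positon and $\mathcal{P}_{n+1}\subseteq\mathcal{P}_n$; likewise $\mathcal{N}_{n+1}\subseteq\mathcal{N}_n$, hence $\cB_{n+1}\subseteq\cB_n$, and $\langle\mathcal{P}_{n+1}\rangle\subseteq\langle\mathcal{P}_n\rangle$ because the subgroup generated by a smaller set is smaller. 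The class of $U$ lies in every $\mathcal{P}_n\cap\mathcal{N}_n$ since $B^4$ is a vacuous $n$-positon and $n$-negaton for $U$ ($H_2(B^4)=0$).

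The submonoid statement reduces to closure under connected sum. Given $K\in\mathcal{P}_n$ with $n$-positon $(V,\Delta,\{S_i\})$ and $K'\in\mathcal{P}_n$ with $n$-positon $(V',\Delta',\{S'_j\})$, form the boundary connected sum $W=V\natural V'$ along balls $B^3\subset\partial V=S^3$ and $B^3\subset\partial V'=S^3$ chosen to meet $K$ and $K'$ in trivial arcs and to miss the surfaces $S_i,S'_j$, so that $\Delta$ and $\Delta'$ are joined by a band into a disk $\Delta''$ with $\partial\Delta''=K\#K'$. Then $\partial W\cong S^3$; $\pi_1(W)\cong\pi_1(V)*\pi_1(V')=0$; by Mayer--Vietoris $H_2(W)\cong H_2(V)\oplus H_2(V')$ carries the orthogonal direct sum of the two intersection forms, which is positive definite, with basis $\{S_i\}\cup\{S'_j\}$, still disjointly and smoothly embedded in $W\setminus\Delta''$; and $[\Delta'']=0$ in $H_2(W,S^3)$, being assembled from two nullhomologous disks and a band. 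For condition $3.$ of Definition~\ref{def:positive}: the inclusion $V\setminus\Delta\hookrightarrow W\setminus\Delta''$ induces a homomorphism $\phi$ on fundamental groups, and for any homomorphism one has $\phi(G^{(n)})\subseteq\phi(G)^{(n)}\subseteq\pi_1(W\setminus\Delta'')^{(n)}$ with $G=\pi_1(V\setminus\Delta)$; since $\pi_1(S_i)$ maps into $G^{(n)}$, it maps into $\pi_1(W\setminus\Delta'')^{(n)}$, and symmetrically for the $S'_j$. Thus $W$ is an $n$-positon for $K\#K'$; replacing ``positive'' with ``negative'' throughout gives the same for $\mathcal{N}_n$.

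Since $\cB_n=\mathcal{P}_n\cap\mathcal{N}_n$ is an intersection of submonoids containing $0$, it is a submonoid, and $\langle\mathcal{P}_n\rangle=\{\,a-b:a,b\in\mathcal{P}_n\,\}$ is genuinely a subgroup (closure of $\mathcal{P}_n$ under $\#$ collapses every word in its elements to a single difference). It remains only to see that $\cB_n$ is closed under inversion. If $V$ is an $n$-positon for $K$, reversing its orientation gives a smooth $4$-manifold $\overline V$ with $\partial\overline V\cong S^3$, negative definite intersection form, the same nullhomologous slice disk, and the same basis surfaces $S_i$ (conditions $1.$ and $3.$ involve only $\pi_1$, hence are orientation-blind); its boundary knot, read through the standard orientation-reversing identification of $-S^3$ with $S^3$, represents $-[K]$ in $\C$. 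Hence $[K]\in\mathcal{P}_n\Rightarrow-[K]\in\mathcal{N}_n$, and symmetrically $[K]\in\mathcal{N}_n\Rightarrow-[K]\in\mathcal{P}_n$; combining, $[K]\in\cB_n\Rightarrow-[K]\in\cB_n$, so $\cB_n$ is a subgroup.

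I expect the only point needing real care to be the connected-sum construction: one must position the gluing balls so that the slice disks band together correctly while remaining disjoint from the basis surfaces, and then check (via van Kampen and Mayer--Vietoris) that the enlarged disk exterior receives the $S_i$ and $S'_j$ compatibly with the inclusions, after which monotonicity and functoriality of the derived series take care of condition $3.$. Everything else is formal bookkeeping.
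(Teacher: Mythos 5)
Your proof is correct and takes essentially the same route the paper intends: the paper itself leaves the verifications to the reader, noting only that $\geq_n$ descends to $\C$ and is compatible with connected sum, that $\mathcal{P}_n$ and $\mathcal{N}_n$ are "clearly closed under connected sum," and that $K\in\mathcal{P}_n$ implies $-K\in\mathcal{N}_n$. You have simply written out these observations in full detail (the boundary connected sum of positons, Mayer--Vietoris and van Kampen, functoriality of the derived series, orientation reversal), and the argument is sound.
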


Finally, 

\begin{defn}\label{def:topfiltr} Let $\mathcal{T}\subset \C$ denote the subgroup represented by knots that are topologically slice,  and let $\mathcal{T}_n$ denote $\cB_n\cap \mathcal{T}$
\end{defn}

\begin{prop}\label{prop:filtrationsofT} $\{\cP_n\cap\mathcal{T}\}$ and $\{\cN_n\cap\mathcal{T}\}$ are filtrations of $\mathcal{T}$ by submonoids while $\{\mathcal{T}_n\}$  is a filtration of $\mathcal{T}$ by subgroups
\begin{align*}
\{0\}\subset\dots\subset\mathcal{T}_n\subset\dots \subset\mathcal{T}_0\subset \mathcal{T}.
\end{align*}
\end{prop}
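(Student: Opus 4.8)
\textbf{Proof proposal for Proposition~\ref{prop:filtrationsofT}.}
The plan is to deduce everything formally from Proposition~\ref{prop:filtrationsofC} together with the fact that $\mathcal{T}$ is a subgroup of $\C$, using the elementary observations that the intersection of a submonoid of $\C$ with a subgroup of $\C$ is again a submonoid, that the intersection of two subgroups of $\C$ is a subgroup, and that all the relevant containments are preserved by intersecting with the fixed set $\mathcal{T}$. No new geometry is required beyond what is already established; the content is entirely bookkeeping.

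First I would record that $\mathcal{T}$ is indeed a subgroup of $\C$. If $K$ and $K'$ are topologically slice, bounding locally flat disks $\Delta,\Delta'\subset B^4$ with product neighborhoods, then $K\#K'$ bounds the boundary-connected sum $\Delta\natural\Delta'$ in $B^4\natural B^4\cong B^4$, which is again a locally flat disk with product neighborhood; and the reverse mirror $-K$ is topologically slice whenever $K$ is. (Equivalently, $\mathcal{T}$ is the kernel of the natural surjection from $\C$ to the topological concordance group.) Hence $\mathcal{T}\le\C$ is a subgroup, and in particular a submonoid containing the class of the unknot.

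Next I would handle the monoid filtrations. By Proposition~\ref{prop:filtrationsofC}, each $\cP_n$ is a submonoid of $\C$ with $\cP_{n+1}\subset\cP_n\subset\cP_0\subset\C$, and likewise for $\{\cN_n\}$. Intersecting with the submonoid $\mathcal{T}$: the class of the unknot lies in $\cP_n\cap\mathcal{T}$ for every $n$ (the unknot is both $n$-positive and topologically slice), and $\cP_n\cap\mathcal{T}$ is closed under $\#$ since both $\cP_n$ and $\mathcal{T}$ are; thus $\cP_n\cap\mathcal{T}$ is a submonoid of $\mathcal{T}$. The containment $\cP_{n+1}\cap\mathcal{T}\subset\cP_n\cap\mathcal{T}$ is immediate from $\cP_{n+1}\subset\cP_n$, and similarly $\cP_0\cap\mathcal{T}\subset\mathcal{T}$. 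The identical argument applies to $\{\cN_n\cap\mathcal{T}\}$.

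Finally I would treat $\{\mathcal{T}_n\}$. By Proposition~\ref{prop:filtrationsofC} each $\cB_n=\cN_n\cap\cP_n$ is a subgroup of $\C$ with $\cB_{n+1}\subset\cB_n\subset\cB_0\subset\C$. Since $\mathcal{T}_n=\cB_n\cap\mathcal{T}$ is the intersection of two subgroups of $\C$, it is a subgroup of $\C$ contained in $\mathcal{T}$, hence a subgroup of $\mathcal{T}$. The chain $\{0\}\subset\dots\subset\mathcal{T}_{n+1}\subset\mathcal{T}_n\subset\dots\subset\mathcal{T}_0\subset\mathcal{T}$ follows from $\cB_{n+1}\subset\cB_n$ and $\cB_0\cap\mathcal{T}\subset\mathcal{T}$. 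There is no real obstacle here; the only point deserving a sentence of care is the verification that $\mathcal{T}$ is a subgroup (addressed above), after which the proposition is purely formal.
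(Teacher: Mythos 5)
Your proof is correct and is precisely the routine argument the paper leaves implicit: the authors state Proposition~\ref{prop:filtrationsofT} without proof, treating it as an immediate consequence of Proposition~\ref{prop:filtrationsofC} and the fact that $\mathcal{T}$ is a subgroup of $\C$. Your careful bookkeeping (intersection of a submonoid with a subgroup is a submonoid, intersection of two subgroups is a subgroup, containments pass to intersections) fills in exactly what the authors intended the reader to supply.
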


We close with several curiosities, the first of which is quite useful later in the paper. 

\begin{cor}\label{cor:torsionfreeC} If $K\in \cP_n$ and $K\notin \cN_n$ then no non-zero multiple of $K$ lies in $\cB_n$. 
\end{cor}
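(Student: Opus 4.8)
The plan is to argue by contradiction, using only the monoid structure of $\mathcal{N}_n$ together with the orientation‑reversal symmetry already recorded above (namely that $K\in\mathcal{P}_n$ forces $-K\in\mathcal{N}_n$). Suppose, for contradiction, that some nonzero multiple $mK$ lies in $\cB_n=\mathcal{N}_n\cap\mathcal{P}_n$. Since $\cB_n$ is a subgroup of $\C$ (Proposition~\ref{prop:filtrationsofC}), we have $-(mK)=(-m)K\in\cB_n$ as well, so after replacing $m$ by $|m|$ we may assume $m\geq 1$. In particular $mK\in\mathcal{N}_n$.

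Now invoke the hypotheses. Since $K\in\mathcal{P}_n$, the remark preceding Definition~\ref{def:filtr} gives $-K\in\mathcal{N}_n$. Because $\mathcal{N}_n$ is a submonoid of $\C$ (again Proposition~\ref{prop:filtrationsofC}), it is closed under connected sum, so the $(m-1)$‑fold connected sum $(m-1)(-K)$ lies in $\mathcal{N}_n$ (this is the trivial knot, hence in $\mathcal{N}_n$, when $m=1$). Adding this to $mK\in\mathcal{N}_n$ inside the monoid $\mathcal{N}_n$ yields
$$
K \;=\; mK \,\#\, (m-1)(-K)\;\in\;\mathcal{N}_n,
$$
contradicting the assumption that $K\notin\mathcal{N}_n$. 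Hence no nonzero multiple of $K$ can lie in $\cB_n$.

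I do not expect any genuine obstacle here; the statement is formal, and the entire content is keeping track of signs and of which sets are monoids versus groups. The only two points requiring a word of care are (i) reducing to $m\ge 1$, which uses that $\cB_n$ — unlike $\mathcal{P}_n$ or $\mathcal{N}_n$ — is closed under taking inverses, and (ii) that closure of $\mathcal{N}_n$ under $\#$ lets one cancel $mK$ against $(m-1)(-K)$ to land back in $\mathcal{N}_n$ rather than merely in the group it generates. Both are immediate from Proposition~\ref{prop:filtrationsofC} and the displayed remark, so the write‑up should be only a few lines.
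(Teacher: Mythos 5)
Your proof is correct and is essentially the paper's own argument: reduce to $m\geq 1$ using that $\cB_n$ is a group, then write $K = mK \# (m-1)(-K)$ and use closure of $\mathcal{N}_n$ under connected sum together with $-K\in\mathcal{N}_n$ to derive the contradiction. The only cosmetic difference is that the paper first observes $m>1$ is forced (since $K\notin\mathcal{N}_n$) before applying the cancellation identity, whereas you fold the $m=1$ case in directly; both are fine.
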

\begin{proof}  For sake of contradiction, suppose $mK\in \cB_n$ for a non-zero integer $m$. Since $\cB_n$ is a group we can assume that $m>0$. Then in particular $mK \in \cN_n$. Since $K\notin \cN_n$,  it follows that $m>1$.   Since $K\in \cP_n$, $-K\in \cN_n$ and so $(m-1)(-K)\in \cN_n$. Since the latter is closed under connected-sum,
$$
K= mK + (m-1)(-K) \in \cN_n,
$$
which is a contradiction.  
\end{proof}

Secondly, if we restrict to Tor($\mathcal{C}$), by which we mean the torsion subgroup of $\mathcal{C}$, the $n$-positive filtration is a filtration by subgroups.

\begin{cor}\label{prop:filtrationsoftorsionC} $\{\mathcal{P}_n\cap \mathrm{Tor}(\mathcal{C})\}$ and $\{\mathcal{N}_n\cap \mathrm{Tor}(\mathcal{C})\}$ are filtrations of $\mathrm{Tor}(\mathcal{C})$  by subgroups, and in fact each equals $\{\cB_n\cap \mathrm{Tor}(\mathcal{C})\}$.
\end{cor}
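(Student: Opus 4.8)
The plan is to show that for knots of finite order, membership in $\cP_n$ or $\cN_n$ already forces membership in the other, so that all three families of subsets coincide on $\mathrm{Tor}(\C)$. Since the previous proposition (Proposition~\ref{prop:filtrationsofC}) established that $\{\cP_n\}$ and $\{\cN_n\}$ are nested submonoids of $\C$, intersecting each term with the subgroup $\mathrm{Tor}(\C)$ gives nested submonoids of $\mathrm{Tor}(\C)$; the only thing to check is that these submonoids are in fact subgroups, and that they equal $\cB_n\cap\mathrm{Tor}(\C)$.

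The key step is the following observation, which is essentially Corollary~\ref{cor:torsionfreeC} read contrapositively. Suppose $K\in\mathrm{Tor}(\C)$ and $K\in\cP_n$. Then some nonzero multiple $mK$ is trivial in $\C$, hence (taking $m>0$) $mK$ is slice and so $mK\in\cB_n\subset\cN_n$. Now $-K\in\cN_n$ because $K\in\cP_n$ (this is the elementary remark preceding Definition~\ref{def:filtr}: the $n$-negative manifold for $-K$ is obtained by reversing orientation on an $n$-positon for $K$), and $\cN_n$ is closed under connected sum, so $(m-1)(-K)\in\cN_n$. Therefore
$$
K = mK \,\#\, (m-1)(-K) \in \cN_n .
$$
Hence $K\in\cP_n\cap\cN_n=\cB_n$. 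Symmetrically, if $K\in\mathrm{Tor}(\C)$ and $K\in\cN_n$ then $K\in\cB_n$. This proves $\cP_n\cap\mathrm{Tor}(\C)=\cN_n\cap\mathrm{Tor}(\C)=\cB_n\cap\mathrm{Tor}(\C)$.

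It then remains only to note that $\cB_n\cap\mathrm{Tor}(\C)$ is a subgroup: it is the intersection of the subgroup $\cB_n$ of $\C$ with the subgroup $\mathrm{Tor}(\C)$. Combined with the nesting $\cB_{n+1}\subset\cB_n$ already recorded in Proposition~\ref{prop:filtrationsofC}, this yields the asserted filtration of $\mathrm{Tor}(\C)$ by subgroups. I do not anticipate a genuine obstacle here; the entire content is the connected-sum trick above, and the corollary is really just a repackaging of Corollary~\ref{cor:torsionfreeC} together with the elementary closure properties of $\cP_n$ and $\cN_n$ under connected sum and orientation reversal.
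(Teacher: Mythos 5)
Your proof is correct and essentially identical to the paper's: both reduce to observing that if $mK=0$ in $\C$ and $K\in\cP_n$, then $-K\in\cN_n$ and hence $(m-1)(-K)\in\cN_n$, which equals $K$ in $\C$. Your version carries an extra (redundant) $mK$ summand in the decomposition $K=mK\#(m-1)(-K)$ — the paper goes directly from $m(-K)=0$ to $K=(m-1)(-K)$ — but this does not change the argument.
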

\begin{proof}[Proof of Corollary~\ref{prop:filtrationsoftorsionC}] Suppose $rK=0$ in $\mathcal{C}$ for some $r>1$.  If $K\in \mathcal{P}_n$ then $-K\in\mathcal{N}_n$ so $(r-1)(-K)\in\mathcal{N}_n$. But since $r(-K)=0$,  $K=(r-1)(-K)$, so $K\in \mathcal{N}_n$. Thus $K\in \cB_n$. This shows that
$$
\mathcal{P}_n\cap \mathrm{Tor}(\mathcal{C})= \cB_n\cap \mathrm{Tor}(\mathcal{C})=\mathcal{N}_n\cap \mathrm{Tor}(\mathcal{C}),
$$
and so each is a subgroup since $\cB_n\cap \mathrm{Tor}(\mathcal{C})$ is the intersection of two subgroups.
\end{proof}

\section{Examples of Knots in $\mathcal{P}_n$, $\mathcal{N}_n$ and $\cB_n$}\label{sec:examples}

In this section we give examples of knots lying deep in the various filtrations.

If $K$ can be transformed to $K'$ by changing some set of positive crossings to negative crossings then $K\geq_0 K'$, as can be seen by blowing-up at the singular points in the trace of the homotopies that accomplish the crossing changes  ~\cite[Lemma 3.4]{CLick}\cite[Prop. 2.2]{CGompf}. Thus it follows that:

\begin{prop}[Cochran-Lickorish]\label{prop:easyexs}  Any knot that can be changed to a slice knot by changing positive crossings (between the same component) to negative crossings lies in $\mathcal{P}_0$. 
\end{prop}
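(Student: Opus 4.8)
The plan is to reduce the statement to the description of crossing changes in a simply-connected $4$-manifold built from the trace of a homotopy, exactly in the spirit of \cite[Lemma 3.4]{CLick} and \cite[Prop. 2.2]{CGompf}, and then observe that the resulting $4$-manifold is an $n$-positon for \emph{every} $n$ because it has no second homology classes ``left over'' that are not already spheres. Concretely: suppose $K$ can be changed to a slice knot $K'$ by changing positive crossings to negative ones, where both arcs involved in each crossing change lie on the same component of $K$. Build the standard $4$-manifold $W$ that realizes this sequence of crossing changes --- a product cobordism $S^3\times[0,1]$ with one blow-up $\overline{\mathbb{C}P(2)}$... no, here one wants the \emph{positive} definite case, so one blows up using copies of $\mathbb{C}P(2)$ at each singular point of the immersed annulus traced out by the crossing-change homotopies. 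Since we change positive crossings to negative crossings, each singularity contributes a $+1$ to the intersection form, so the resulting $V$ has positive-definite intersection form, is simply-connected, and contains a smoothly embedded annulus $A$ from $K$ to $K'$ (first go from $K$ to $K'$ via the blown-up homotopy trace, then cap with the slice disk of $K'$, all living inside $V$ --- or keep it as a concordance from $K$ to $K'$ if one prefers the $\geq_n$ formulation). This is precisely the content already invoked in the paragraph preceding Proposition~\ref{prop:easyexs}: $K\geq_0 K'$.

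The key point, and the only thing beyond what gives $\mathcal{P}_0$, is the observation that the $\mathbb{C}P(1)$'s one blows up \emph{at} can be chosen to miss the annulus $A$ entirely: they are small linking spheres of the double-point arcs, which have been pushed off $A$. Hence $H_2(V)$ has a basis of disjointly embedded $2$-\emph{spheres} in the exterior of $A$ (equivalently of $\Delta$), and a $2$-sphere $S$ trivially satisfies $\pi_1(S)=1\subset \pi_1(V-A)^{(n)}$ for every $n$, since the trivial group is contained in every term of every derived series. Therefore the very same $V$ is simultaneously an $n$-positon for $K$ for all $n\ge 0$. This shows $K\in\bigcap_n \mathcal{P}_n$, which is a stronger statement than merely $K\in\mathcal{P}_0$; so in particular $K\in\mathcal{P}_0$, which is what is claimed.

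Assembling the steps in order: (i) recall from the cited references the construction of the trace cobordism and its blow-up, and verify $\pi_1(V)=0$ and positive-definiteness of the intersection form; (ii) verify that the exceptional spheres can be isotoped off $A$ (this uses only that the double points of the immersed annulus are isolated and one can take linking $2$-spheres in their normal disks, which are disjoint from the annulus); (iii) note that these spheres form a basis of $H_2(V)$ and that $\pi_1(\text{sphere})$ is trivial, hence lies in $\pi_1(V-A)^{(n)}$ for all $n$; (iv) conclude $K\geq_n U$ for all $n$ by composing with the slice disk of $K'$, hence $K\in\mathcal{P}_n$ for all $n$, and in particular $K\in\mathcal{P}_0$.

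I do not anticipate a genuine obstacle here --- the proposition is essentially a repackaging of the already-cited crossing-change constructions --- but the one point requiring a little care is step (ii): making sure the construction can be arranged so that the generating spheres are literally disjoint from $A$ and from each other, rather than merely homologically independent. This is standard (the double-point locus of a generic homotopy is a finite set of arcs, and one blows up in a neighborhood of each, taking the proper transform of $A$), but it is the place where the ``for all $n$'' strengthening is actually bought, so it is worth stating explicitly. The hypothesis that the crossing changes occur between arcs of the \emph{same} component is what guarantees that the trace is an annulus (a single surface) rather than a more complicated cobordism, and that its relevant homology class is trivial in $H_2(V,\partial V)$; without it one would be in the setting of links rather than knots.
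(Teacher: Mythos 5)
Your argument for the proposition \emph{as stated} --- that such a knot lies in $\mathcal{P}_0$ --- is essentially the paper's: cite the Cochran--Lickorish construction (\cite[Lemma 3.4]{CLick}) to produce a slice disk for $K$ in a punctured $\#_m\mathbb{C}P(2)$. For membership in $\mathcal{P}_0$, condition 3 of Definition~\ref{def:positive} only asks for a basis of $H_2(V)$ by surfaces disjointly embedded in $V\setminus\Delta$ with $\pi_1(S_i)\subset\pi_1(V\setminus\Delta)^{(0)}=\pi_1(V\setminus\Delta)$, which is vacuous; one only needs embedded surfaces of \emph{some} genus disjoint from $\Delta$, and the construction supplies these.

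However, your strengthening --- that the same $V$ is an $n$-positon for \emph{every} $n$, so that $K\in\bigcap_n\mathcal{P}_n$ --- is false, and the error is exactly at your step (ii). The exceptional spheres of the blow-up cannot be made disjoint from the annulus $A$ while remaining spheres. Concretely, realize the crossing change by attaching a $+1$-framed $2$-handle along a small unknotted circle $C$ encircling the two strands at the crossing, with $\mathrm{lk}(C,K)=0$. The generating class of $H_2$ of the resulting $4$-manifold is represented by the union of the core of the $2$-handle with a disk $D\subset S^3$ bounding $C$; but every such disk meets $K$ geometrically in two points (the two strands pass through it with opposite signs), so the resulting sphere meets $A=K\times I$ in two points. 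Equivalently, in the proper-transform picture, the exceptional $\mathbb{C}P(1)$ meets the proper transform of the immersed annulus once for each sheet at the resolved double point, hence twice. The algebraic intersection is zero, but removing the two geometric intersections while staying a smoothly embedded sphere is precisely the failure of the Whitney trick in dimension $4$ --- the very phenomenon the whole paper is organized around. The standard remedy is to tube the two intersection points along $A$, producing a genus-one surface disjoint from $A$ with self-intersection $+1$. These tori satisfy the $n=0$ condition, but for $n\geq 1$ one would need $\pi_1(S_i)\subset\pi_1(V\setminus\Delta)^{(n)}$, which there is no reason to expect since $\pi_1(S_i)$ is now nontrivial and the generators of the torus map nontrivially into the abelianization $\pi_1(V\setminus\Delta)\cong\mathbb{Z}$.

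A quick sanity check confirms the strengthening cannot hold: the right-handed trefoil can be unknotted by changing a single positive crossing, so your argument would place it in $\mathcal{P}_1$, hence (by Corollary~\ref{cor:ponealgslice}) it would be algebraically slice --- but its signature is $-2$. So the construction yields only $\mathcal{P}_0$, as the paper claims, and no more.
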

\begin{proof} In ~\cite[Lemma 3.4]{CLick} it is shown that if $K$ satisfies the hypothesis then $K$ is slice in a punctured connected-sum of copies of $\mathbb{C}P(2)$. Thus $K\in \mathcal{P}_0$.
\end{proof}

\begin{ex}\label{ex:easy} Any knot that admits a positive projection lies in $\mathcal{P}_0$. The (twisted or untwisted) Whitehead double of \textit{any knot} using a positive clasp lies in $\mathcal{P}_0$, since it can be unknotted by changing a single positive crossing. The figure $8$ knot lies in $\cB_0$ since it can be unknotted via a positive or a negative crossing.
\end{ex}

\textbf{Question}: Does every strongly quasipositive knot lie in $\mathcal{P}_0$?

\vspace{.1in}

It is easy to create knots and links lying in $\mathcal{P}_n$ or $\cB_n$ using the satellite construction and generalizations of this. In particular suppose that $ST$ is a solid torus embedded in $S^3$ in an unknotted fashion, and  $\eta$ is the oriented meridian circle of $ST$. Suppose $\overline{R}$ is a knot in $ST$ which when  viewed as a knot in $S^3$ will be denoted $R$. Suppose that $\eta\in \pi_1(S^3\setminus R)^{(k)}$. If $k\geq1$ then $\overline{R}$ is said to have \textbf{winding number zero} in $ST$.   Suppose that $J$ is any knot. Then let $R(\eta,J)\equiv \overline{R}(J)$ denote the \textbf{satellite knot} with $\overline{R}$  as pattern and $J$ as companion. This is also called the \textbf{result of infection on $R$ by $J$ along $\eta$}.  It is known that $R(\eta,-)$ induces a well-defined operator on $\C$.

\begin{prop}\label{prop:operatorsact}  With notation in the preceding paragraph, suppose that $R\in \mathcal{P}_n$ (respectively $\mathcal{N}_n$, $\cB_n$),  and $\eta\in \pi_1(S^3\setminus R)^{(k)}$. Then
\begin{align}\label{eq:operators}
R(\eta,\mathcal{P}_{n-k})\subset \mathcal{P}_n;
\end{align}
respectively
\begin{align}\label{eq:operators2}
R(\eta,\mathcal{N}_{n-k})\subset \mathcal{N}_n;
\end{align}
\begin{align}\label{eq:operators3}
R(\eta,\cB_{n-k})\subset \cB_n.
\end{align}
\end{prop}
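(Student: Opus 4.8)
The plan is to construct the required $n$-positon (or $n$-negaton, or both simultaneously in the bipolar case) for $R(\eta,J)$ by grafting together a known $(n-k)$-positon $W$ for $J$ and a known $n$-positon $V$ for $R$ along the infection torus. First I would set up the standard satellite picture: $R(\eta,J)$ is obtained from $R$ by removing a tubular neighborhood of $\eta$ and regluing the exterior of $J$; equivalently, $S^3 \setminus R(\eta,J)$ is the union of $S^3 \setminus (R \cup \eta)$ and $S^3 \setminus J$ glued along a torus, sending the meridian of $\eta$ to the longitude of $J$ and the longitude of $\eta$ to the meridian of $J$. I want to perform the analogous surgery one dimension up: start with the $n$-positon $V$ for $R$ (with slice disk $\Delta_R$), note that $\eta$, lying in $S^3 = \partial V$, bounds a disk times interval neighborhood, and excise $\eta \times D^2 \times I$ and reglue $W \setminus \Delta_J$ (where $\Delta_J \subset W$ is the slice disk for $J$). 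The upshot should be a $4$-manifold $V'$ with $\partial V' = S^3$ in which $R(\eta,J)$ bounds a disk $\Delta'$, namely $\Delta_R$ pushed off $\eta$ (it can be taken disjoint from $\eta$ since $\eta$ is a curve in $S^3 \setminus R$, hence misses $\Delta_R$).

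The key steps, in order: (1) Verify $\pi_1(V') = 0$. Since $W$ has $\pi_1 = 0$ and we are gluing along a solid-torus-like piece, van Kampen gives $\pi_1(V') = 0$ provided the meridian of $J$ (which normally generates $\pi_1(S^3\setminus J)$) maps to something killed in $\pi_1(V \setminus (\text{nbhd }\eta))$; but that meridian becomes the longitude of $\eta$, which bounds in $S^3 \setminus \eta$, hence in $V$. (2) Compute $H_2(V')$: a Mayer--Vietoris argument should give $H_2(V') \cong H_2(V) \oplus H_2(W)$, with the intersection form the orthogonal direct sum, hence still positive definite (resp.\ negative definite); the gluing region is a homology $S^1 \times D^2$ and contributes nothing. (3) Produce the basis of embedded surfaces: take the surfaces $\{S_i\}$ for $V$ — these live in $V \setminus \Delta_R$, and I must arrange them to also miss $\eta \times D^2 \times I$, which is possible after a small isotopy since $\eta$ and the $S_i$ are $2$-complexes in general position in a $4$-manifold — together with the surfaces $\{S'_j\}$ for $W$. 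The surfaces from $W$ satisfy $\pi_1(S'_j) \subset \pi_1(W \setminus \Delta_J)^{(n-k)}$. (4) The crux: check the derived-series condition $\pi_1(S_i), \pi_1(S'_j) \subset \pi_1(V' \setminus \Delta')^{(n)}$. For the $S_i$ this is immediate since $\pi_1(S_i) \subset \pi_1(V \setminus \Delta_R)^{(n)}$ and the inclusion $V \setminus \Delta_R \hookrightarrow V' \setminus \Delta'$ (after removing the tube) is compatible with derived series by functoriality. For the $S'_j$, I need that the inclusion $W \setminus \Delta_J \hookrightarrow V' \setminus \Delta'$ sends $\pi_1(W \setminus \Delta_J)$ — in particular $\pi_1(S'_j) \subset \pi_1(W\setminus\Delta_J)^{(n-k)}$ — into $\pi_1(V' \setminus \Delta')^{(n)}$, which requires a "$k$-fold drop" of the derived series. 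This should follow because $\pi_1(W \setminus \Delta_J)$ is generated (as a normal subgroup) by the meridian of $J$, and that meridian is $\eta$, which by hypothesis lies in $\pi_1(S^3 \setminus R)^{(k)} \subset \pi_1(V \setminus \Delta_R)^{(k)} \subset \pi_1(V' \setminus \Delta')^{(k)}$; hence the image of $\pi_1(W \setminus \Delta_J)$ lies in the subgroup normally generated by an element of the $k$-th derived subgroup, and one checks that $\pi_1(S'_j) \subset \pi_1(W\setminus\Delta_J)^{(n-k)}$ therefore maps into $\pi_1(V'\setminus\Delta')^{(n)}$.

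The main obstacle I expect is precisely step (4), the behavior of the derived series under the infection gluing — specifically making rigorous the claim that applying the $(n-k)$-th derived subgroup inside a group whose whole image already lies in the $k$-th derived subgroup of the ambient group lands inside the $n$-th derived subgroup of the ambient group. This is a purely group-theoretic lemma of the following shape: if $f\colon A \to B$ is a homomorphism whose image lies in $N$, the normal closure in $B$ of a subset of $B^{(k)}$, then $f(A^{(j)}) \subset B^{(j+k)}$. One must be careful that the normal closure of a subset of $B^{(k)}$ need not itself lie in $B^{(k)}$ — but it does lie in $B^{(k)}$ when $B^{(k)}$ is normal in $B$, which it is (each term of the derived series is characteristic). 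Granting that, $N \subset B^{(k)}$, so $\im f \subset B^{(k)}$, whence $f(A^{(j)}) = (\im f)^{(j)} \subset (B^{(k)})^{(j)} = B^{(j+k)}$, using that $(G^{(k)})^{(j)} = G^{(j+k)}$ for the derived series. With $j = n-k$ this gives exactly $\pi_1(S'_j) \subset \pi_1(V'\setminus\Delta')^{(n)}$. The remaining routine verifications — that $\Delta'$ is null-homologous in $H_2(V', S^3)$ (it is, being isotopic into a collar of the old boundary away from the surgery), that the $4$-manifold is smooth and oriented, and that the bipolar case follows by running the positive and negative constructions in parallel and intersecting — I would leave to the reader, as the paper does for the analogous monoid-compatibility checks after Definition~\ref{def:geq}.
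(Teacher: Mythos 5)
Your proof matches the paper's in all essential respects: both graft the $(n-k)$-positon for $J$ onto the $n$-positon for $R$ along the infection solid torus, verify $\pi_1 = 0$ and $H_2 \cong H_2(V_R)\oplus H_2(V_J)$ via van Kampen and Mayer--Vietoris, and deduce the derived-series condition from the fact that the meridian of $J$, which normally generates $\pi_1$ of the exterior of $\Delta_J$, is identified with $\eta \in \pi_1(V_R\setminus\Delta_R)^{(k)}$. The only differences are cosmetic: the paper glues $V_J\setminus\mathrm{nbhd}(\Delta_J)$ directly onto $V_R$ along the solid torus $ST'\subset \partial V_R = S^3$ rather than excising a collar piece (so your worry about isotoping the $S_i$ off the gluing region is moot), and it leaves implicit the group-theoretic step $(B^{(k)})^{(j)} = B^{(k+j)}$ that you correctly spell out.
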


This generalizes ~\cite[Proposition 2.7]{CGompf}. 

\begin{proof} In the next section we will give an equivalent definition of $\cP_n$ and $\cN_n$. Using the equivalent definition, the proof of Proposition~\ref{prop:operatorsact} is almost identical to that of ~\cite[Lemma 6.4]{CHL4}  (which was done for the $n$-solvable filtration). We include a different proof. 

By symmetry it suffices to prove equation~\ref{eq:operators}. Suppose $J\in \cP_{n-k}$ and let $K\equiv R(\eta,J)$. We will show that $K\in \cP_n$.  Suppose $R$ has slice disk $\Delta_R$ in the $n$-positon $V_R$.  Suppose $J$ has slice disk $\Delta_J$ in the $(n-k)$-positon $V_J$.  We will describe a slice disk for $K$ in an $n$-positon $V_K$.  Recall that $\overline{R}$ lies in an unknotted solid torus $ST$ whose exterior in $S^3$ we denote $ST'$.  The circle $\eta$ may be viewed as a meridian of $ST$ or as a longitude of $ST'$. Form a new $4$-manifold, $V_K$, as the union of $V_R$ and $V_J-(\Delta_J\times \text{int}D^2)$, identifying $ST'$ in the boundary of the former with $\Delta_J\times S^1$ in the boundary of the latter, in such a way that the meridian of $J$ is identified with  $\eta$. One first observes that $\partial V_K$ is the union of $ST$ with $S^3\setminus J$ where the meridian of $J$ is identified with the meridian of $ST$.  Thus $\partial V_K$ is homeomorphic to $S^3$ and the image, under this identification, of the knot $\overline{R}$ is the satellite knot $K=R(\eta, J)$.  Thus  $K$ is slice in $V_K$ (merely by letting $\Delta_K$ be the image of the slice disk $\Delta_R$). 

Since $V_J$ is simply-connected,  $\pi_1(V_J\setminus \Delta_J)$ is normally generated by a meridian of $J$, which has a representative in $\Delta_J\times S^1$. Then, since $V_R$ is simply-connected, it follows from the Seifert-Van Kampen theorem that $V_K$ is simply-connected.

A Mayer-Vietoris sequence shows that
$$
H_2(V_K)\cong H_2(V_R)\oplus H_2(V_J\setminus \Delta_J)\cong H_2(V_R))\oplus H_2(V_J).
$$
By Definition~\ref{def:positive} the latter two groups have bases, $\{\Sigma_i\}$ and $\{S_j\}$, disjoint from $\Delta_R$ and $\Delta_J$ respectively. Thus the union of these bases is a basis for $H_2(V_K)$ consisting of embedded surfaces disjoint from $\Delta_K$. The intersection form with respect to these bases is an identity matrix.

 It remains only to show that these surfaces satisfy the $\pi_1$-condition of Definition~\ref{def:positive}. This is clear for the  $\Sigma_j$. For the surfaces $S_i$ it suffices to show  that:
$$
i_*(\pi_1(V_J\setminus \Delta_J)^{(n-k)})\subset \pi_1(V_K\setminus \Delta_K)^{(n)}.
$$
This follows from two facts. First recall that  $\pi_1(V_J\setminus \Delta_J)$ is normally generated by a meridian, $\mu_J$, of $J$, and that this meridian is identified with $\eta$. Secondly, by hypothesis $\eta\in \pi_1(S^3\setminus R)^{(k)}$,  so $\eta\in \pi_1(V_R\setminus \Delta_R)^{(k)}$.  Hence $i_*(\mu_J)=i_*(\eta)\in \pi_1(V_K\setminus \Delta_K)^{(k)}$. 
\end{proof}

\begin{cor}\label{cor:satellites} Each of the submonoids discussed is closed under forming satellites in the sense that any satellite knot $K=R(\eta,J)$ whose pattern knot $R$ and companion knot $J$ both lie in $\mathcal{P}_n$ (respectively $\mathcal{N}_n$, $\cB_n$) itself lies in $\mathcal{P}_n$ (respectively $\mathcal{N}_n$, $\cB_n$). If the winding number is zero then $K\in \mathcal{P}_{n+1}$ (respectively $\mathcal{N}_{n+1}$, $\cB_{n+1}$).
\end{cor}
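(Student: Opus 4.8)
The plan is to deduce Corollary~\ref{cor:satellites} directly from Proposition~\ref{prop:operatorsact}, which does all the real work. The statement has two parts: the "arbitrary winding number" case, and the "winding number zero" case, and each is just the right specialization of the containments \eqref{eq:operators}, \eqref{eq:operators2}, \eqref{eq:operators3}.

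First I would handle the general case. Given a satellite $K = R(\eta, J)$ with pattern $R \in \mathcal{P}_n$ and companion $J \in \mathcal{P}_n$, I note that $\eta$ is an embedded circle in the knot exterior $S^3 \setminus R$, hence certainly $\eta \in \pi_1(S^3 \setminus R) = \pi_1(S^3\setminus R)^{(0)}$. Thus Proposition~\ref{prop:operatorsact} applies with $k = 0$, giving $R(\eta, \mathcal{P}_{n}) = R(\eta, \mathcal{P}_{n-0}) \subset \mathcal{P}_n$. Since $J \in \mathcal{P}_n$ by hypothesis, we conclude $K \in \mathcal{P}_n$. The same argument with \eqref{eq:operators2} and \eqref{eq:operators3} handles the $\mathcal{N}_n$ and $\cB_n$ cases verbatim.

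Next I would handle the winding number zero case. By the definition recalled just before Proposition~\ref{prop:operatorsact}, saying $\overline{R}$ has winding number zero in $ST$ means precisely that $\eta \in \pi_1(S^3 \setminus R)^{(k)}$ for some $k \geq 1$; in particular $\eta \in \pi_1(S^3\setminus R)^{(1)}$. So I would apply Proposition~\ref{prop:operatorsact} with $k = 1$: since $R \in \mathcal{P}_{n+1}$ (note: here the pattern must be taken in $\mathcal{P}_{n+1}$, not merely $\mathcal{P}_n$, for the conclusion to read as stated—alternatively one starts with $R \in \mathcal{P}_m$ and gets $R(\eta, \mathcal{P}_{m-1}) \subset \mathcal{P}_m$) and $\eta \in \pi_1(S^3\setminus R)^{(1)}$, we get $R(\eta, \mathcal{P}_{n}) \subset \mathcal{P}_{n+1}$; with $J \in \mathcal{P}_n$ this yields $K \in \mathcal{P}_{n+1}$, and likewise for $\mathcal{N}$ and $\cB$.

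I do not anticipate a genuine obstacle here: the corollary is a formal consequence of the proposition, and the only point requiring minor care is matching indices—specifically, being precise about whether "winding number zero, pattern in $\mathcal{P}_n$, companion in $\mathcal{P}_n$" should land in $\mathcal{P}_n$ (using $k=1$ with $n-k = n-1 \leq n$, so the companion's hypothesis $J \in \mathcal{P}_n \subset \mathcal{P}_{n-1}$... no, the filtration decreases, so $\mathcal{P}_n \subset \mathcal{P}_{n-1}$ is false) or whether one needs the companion in a deeper term. The cleanest reading, and the one I would write, is: if $R \in \mathcal{P}_n$, $J \in \mathcal{P}_{n}$, winding number zero, then since $\eta \in \pi_1(S^3\setminus R)^{(1)}$ and $J \in \mathcal{P}_{n} \subset \mathcal{P}_{(n+1)-1}$ we would need $R \in \mathcal{P}_{n+1}$; the honest statement is simply to invoke Proposition~\ref{prop:operatorsact} with the indices as given there and record the resulting inclusion. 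So the "hard part" is purely bookkeeping of the derived-series index $k$ against the filtration index $n$, and I would present it as a one-paragraph deduction citing \eqref{eq:operators}–\eqref{eq:operators3}.
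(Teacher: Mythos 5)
Your reduction of the general (no winding condition) case to Proposition~\ref{prop:operatorsact} with $k=0$ is exactly right, and that is indeed the whole content of the first sentence.

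Two points on the winding-number-zero analysis. First, a small error: you wrote ``the filtration decreases, so $\mathcal{P}_n\subset\mathcal{P}_{n-1}$ is false,'' but this is backwards. The displayed filtration $\{0\}\subset\cdots\subset\mathcal{P}_n\subset\cdots\subset\mathcal{P}_0\subset\mathcal{C}$ means precisely that $\mathcal{P}_n\subset\mathcal{P}_{n-1}$; larger index gives a smaller set. So $J\in\mathcal{P}_n$ does give $J\in\mathcal{P}_{n-1}$, and Proposition~\ref{prop:operatorsact} with $R\in\mathcal{P}_n$ and $k=1$ then yields $K\in\mathcal{P}_n$ --- which you already knew from the first sentence, not the improvement to $\mathcal{P}_{n+1}$.

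Second, the substantive issue you circled around is real and worth stating plainly: the corollary's second sentence, read with the first sentence's hypothesis $R,J\in\mathcal{P}_n$, does not follow from Proposition~\ref{prop:operatorsact}, and in fact is false without a stronger hypothesis on the pattern. To apply the proposition with $k=1$ and conclude $K\in\mathcal{P}_{n+1}$ one needs $R\in\mathcal{P}_{n+1}$ and $J\in\mathcal{P}_n$. If one only assumes $R\in\mathcal{P}_n$ the statement fails: take $\eta$ nullhomotopic in $S^3\setminus R$ (so certainly $\eta\in\pi_1(S^3\setminus R)^{(1)}$, i.e.\ winding number zero), in which case $R(\eta,J)=R$ for every $J$; with $R$ the figure-eight knot (which lies in $\mathcal{B}_0$ by Example~\ref{ex:easy} but not in $\mathcal{P}_1$, since knots in $\mathcal{P}_1$ are algebraically slice by Corollary~\ref{cor:ponealgslice}) and $J$ slice, one gets $K=R\in\mathcal{P}_0$ but $K\notin\mathcal{P}_1$. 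The correct reading of the winding-number-zero clause --- and the one consistent with every use made of it in the paper (Whitehead doubles with unknotted pattern in Example~\ref{ex:exswhitehead}, infections with the ribbon pattern $9_{46}$ in Example~\ref{ex:nine46} and later) --- is that $R$ should be assumed to lie in $\mathcal{P}_{n+1}$ (respectively $\mathcal{N}_{n+1}$, $\mathcal{B}_{n+1}$); in all the paper's applications $R$ is slice, so this holds for free. With that amendment, the second sentence is again an immediate instance of equations~\eqref{eq:operators}--\eqref{eq:operators3}, as you say.
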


Since taking the untwisted Whitehead double of a knot is a satellite operator with winding number zero and with unknotted pattern, we have the following.

\begin{ex}\label{ex:exswhitehead} Let $Wh(-)$ denote the untwisted positive Whitehead double operator (whose clasp has positive crossings). Let $Wh^-(-)$ denote the untwisted negative Whitehead double operator. Now if $J\in \cP_n$, (respectively $\mathcal{N}_n$, $\cB_n$)  then both $Wh(J)$ and $Wh^-(J)$ lie in $\mathcal{P}_{n+1}$ (respectively $\mathcal{N}_{n+1}$, $\cB_{n+1}$). Since the figure eight knot, $E$, lies in $\cB_0$, both $Wh(E)$ and $Wh^-(E)$ lie in $\cB_{1}$.  Since the right-handed trefoil knot, $RHT$, is a positive knot, $Wh^+(RHT)\in \cP_1$ (but not in $\cN_0$ as we shall see later). On the other hand $Wh^{-}(RHT)$ lies in $\cP_1$ and also lies in $\cB_0$ since it can be unknotted by changing the negative crossing undoing the clasp.
\end{ex}

\begin{ex}\label{ex:nine46} Suppose $R$ is the ribbon knot $9_{46}$. 
Let $RHT$ be the right-handed trefoil knot, $LHT$ be the left-handed trefoil knot and  $U$ be the unknot. Then $RHT\in \mathcal{P}_0$ by Proposition~\ref{prop:easyexs}. Note that, $R(LHT,U)$, the knot on the left-hand side of Figure~\ref{fig:ribbonfamily} is also a ribbon knot, as is the knot $R(U,RHT)$ (not pictured). Thus the knot $K=R(LHT,RHT)$ on the right-hand side of Figure~\ref{fig:ribbonfamily} may be viewed as a winding number zero satellite knot with ribbon pattern knot in two different ways and hence we can apply Proposition~\ref{prop:operatorsact} in two different ways to conclude that  
$$
K\equiv R(LHT,RHT)=\left(R(LHT,-)\right)(RHT)\in \mathcal{P}_1,
$$
and
$$
K\equiv R(LHT,RHT)=\left(R(-,RHT)\right)(LHT)\in \mathcal{N}_1.
$$
Thus $K\in \cB_1$.
\begin{figure}[htbp]
\setlength{\unitlength}{1pt}
\begin{picture}(327,151)
\put(0,0){\includegraphics{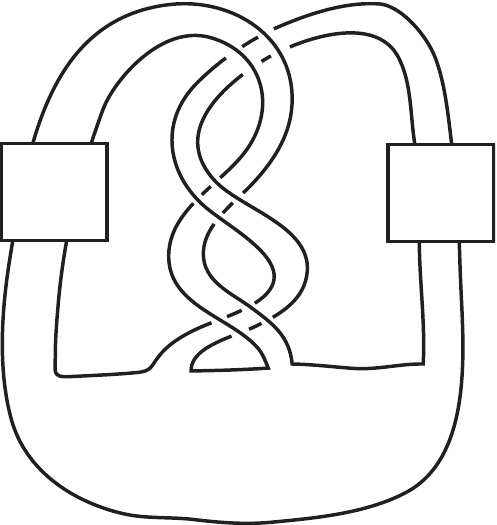}}
\put(185,0){\includegraphics{family_scaled}}
\put(3,93){$LHT$}
\put(123,93){$U$}
\put(300,93){$RHT$}
\put(188,93){$LHT$}
\end{picture}
\caption{$K=R(LHT,RHT)$}\label{fig:ribbonfamily}
\end{figure}

\end{ex}

Unfortunately the positive filtration is still not as discriminating as we would hope for certain kinds of topologically slice knots, namely those with Alexander polynomial $1$.

\begin{cor}\label{prop:Alexone} Suppose  $K=R(\eta,J)$ is a winding number zero satellite knot  whose pattern knot $R$ is a slice knot with Alexander polynomial one, and whose companion knot $J$ lies in $\cP_0$. Then $K\in\cP_n$ for all $n$. Thus the Whitehead double (with either clasp) of a $0$-positive knot is $n$-positive for all $n$.
\end{cor}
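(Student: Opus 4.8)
The plan is to exploit the fact that $R$ has Alexander polynomial one, so that its infection curve $\eta$, which already lies in $\pi_1(S^3\setminus R)^{(1)}$, actually lies arbitrarily deep in the derived series of $\pi_1(S^3\setminus R)$ after we slice $R$ in an appropriate $4$-manifold. Concretely, since $R$ is slice with $\Delta_R=1$, the slice disk exterior $W_R \equiv B^4 \setminus \Delta_R$ has $\pi_1(W_R)$ with trivial Alexander module (i.e.\ $H_1(W_R;\Z[t,t^{-1}])=0$), so $\pi_1(W_R)^{(1)} = \pi_1(W_R)^{(2)} = \cdots$; said differently, the commutator subgroup of $\pi_1(W_R)$ is perfect. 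Because $\eta$ is nullhomologous in $W_R$ (winding number zero, so $\eta \in \pi_1(S^3\setminus R)^{(1)} \subset \pi_1(W_R)^{(1)}$) and $\pi_1(W_R)^{(1)}$ equals all its higher derived subgroups, we get $\eta \in \pi_1(W_R)^{(n)}$ for \emph{every} $n$.

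The key steps, in order:
\begin{itemize}
\item [(1)] Note $R$ is slice in $V_R = B^4$, which is trivially an $m$-positon for $R$ for every $m$ (it has $H_2=0$, so conditions 2 and 3 of Definition~\ref{def:positive} are vacuous). Its slice disk exterior is $W_R = B^4\setminus\Delta_R$.
\item [(2)] Since $\Delta_R(t)=1$, the Alexander module of $W_R$ vanishes, hence $\pi_1(W_R)^{(1)}$ is perfect; combined with $\eta\in\pi_1(W_R)^{(1)}$ this gives $\eta\in\pi_1(W_R)^{(n)}$ for all $n$. (This is the standard argument, as in Freedman's work on Alexander polynomial one knots; cf. the Stallings-type fact that a perfect group equals its own derived subgroup.)
\item [(3)] Now run the construction in the proof of Proposition~\ref{prop:operatorsact}: given $J\in\cP_0$ with $0$-positon $V_J$ and slice disk $\Delta_J$, form $V_K = W_R \cup_{ST'} \bigl(V_J \setminus (\Delta_J\times \mathrm{int}\,D^2)\bigr)$, gluing $\eta$ to the meridian of $J$. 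As in that proof, $\partial V_K \cong S^3$ containing $K=R(\eta,J)$ as a slice knot, $V_K$ is simply connected by Seifert--van Kampen, and Mayer--Vietoris gives $H_2(V_K)\cong H_2(V_R)\oplus H_2(V_J) \cong H_2(V_J)$ with the basis of disjointly embedded surfaces $\{S_j\}$ coming from $V_J$, intersection form the identity, positive definite.
\item [(4)] Verify the $\pi_1$-condition for level $n$: we need $i_*(\pi_1(V_J\setminus\Delta_J)^{(0)}) = i_*(\pi_1(V_J\setminus\Delta_J)) \subset \pi_1(V_K\setminus\Delta_K)^{(n)}$. Since $\pi_1(V_J\setminus\Delta_J)$ is normally generated by the meridian $\mu_J$, and $\mu_J$ is identified with $\eta$, and $\eta\in\pi_1(W_R)^{(n)}\subset\pi_1(V_K\setminus\Delta_K)^{(n)}$ by step (2), the normal subgroup generated by $\mu_J$ maps into the normal closure of $\pi_1(V_K\setminus\Delta_K)^{(n)}$; but a derived subgroup of a \emph{normal} subgroup... here we want $i_*$ of all of $\pi_1(V_J\setminus\Delta_J)$ to land in the \emph{subgroup} $\pi_1(V_K\setminus\Delta_K)^{(n)}$, which requires care. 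Actually $\pi_1(V_K\setminus\Delta_K)^{(n)}$ is normal in $\pi_1(V_K\setminus\Delta_K)$, and $i_*(\mu_J)=\eta$ lies in it; since $\pi_1(V_J\setminus\Delta_J)$ is generated by conjugates of $\mu_J$ (normally generated) \emph{and} $\pi_1(V_K\setminus\Delta_K)$ surjects appropriately, the image $i_*(\pi_1(V_J\setminus\Delta_J))$ lies in the normal closure of $\langle\eta\rangle$, which is contained in the normal (hence self-normalizing-closed) subgroup $\pi_1(V_K\setminus\Delta_K)^{(n)}$. Conclude $V_K$ is an $n$-positon for $K$.
\end{itemize}

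I expect step (4), the bookkeeping on the $\pi_1$-condition, to be the main subtlety — one must be careful that ``normally generated by $\mu_J$'' plus ``$\eta\in(\cdot)^{(n)}$'' genuinely forces the whole fundamental group of the infecting piece into $(\cdot)^{(n)}$, using the normality of the derived subgroup and the fact that the surfaces $S_j$ already satisfied $\pi_1(S_j)\subset\pi_1(V_J\setminus\Delta_J)^{(0)}$ (trivial condition). The honest input making everything work is step (2), the perfectness of the commutator subgroup of the Alexander-polynomial-one slice disk complement; this is where the Alexander polynomial one hypothesis is used and is the conceptual heart. Finally, applying this with $R$ the unknotted Whitehead pattern (Alexander polynomial one, slice) and $J\in\cP_0$ gives that $Wh(J)$, with either clasp, lies in $\cP_n$ for all $n$; the negative-clasp case follows identically, or by applying the already-proven positive case to mirror images via the $\cN_n$ version.
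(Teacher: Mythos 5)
Your overall strategy — push $\eta$ arbitrarily deep into the derived series using the Alexander polynomial one hypothesis, then run the satellite construction — is the paper's approach, and your step (4) is essentially the $\pi_1$-bookkeeping from the proof of Proposition~\ref{prop:operatorsact} reproduced inline. However, step (2) has a genuine gap. You assert that $\Delta_R=1$ forces $H_1(W_R;\Z[t,t^{-1}])=0$, where $W_R=B^4\setminus\Delta_R$ is the slice disk exterior, i.e.\ that the \emph{disk exterior's} Alexander module is trivial. That is not automatic: the Alexander polynomial of $R$ controls the Alexander module of the knot exterior $S^3\setminus R$, not of $W_R$. For an arbitrary slice disk there is no reason for $\pi_1(S^3\setminus R)\to\pi_1(W_R)$ to be onto, so triviality of the knot's Alexander module does not transfer to $W_R$. (The transfer does work when $\Delta_R$ is a ribbon disk, since then the knot group surjects, but you have no such hypothesis.)

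The fix is small and is precisely what the paper does: work in $\pi_1(S^3\setminus R)$ rather than $\pi_1(W_R)$. Since $\Delta_R=1$, the integral Alexander module of the knot $R$ is zero, so $\pi_1(S^3\setminus R)^{(1)}=\pi_1(S^3\setminus R)^{(2)}$, and hence $\pi_1(S^3\setminus R)^{(1)}=\pi_1(S^3\setminus R)^{(n)}$ for all $n$. The winding number zero hypothesis already puts $\eta\in\pi_1(S^3\setminus R)^{(1)}$, so $\eta\in\pi_1(S^3\setminus R)^{(n)}$ for every $n$. Since $R$ is slice, $B^4$ is an $n$-positon for $R$ for every $n$, so $R\in\cP_n$; now Proposition~\ref{prop:operatorsact} with $k=n$ gives $R(\eta,\cP_0)\subset\cP_n$ immediately, with no need to re-derive the glued $4$-manifold and its $\pi_1$-conditions. (The only place the derived series depth of $\eta$ enters the proof of that proposition is via the inclusion $S^3\setminus R\hookrightarrow V_K\setminus\Delta_K$, which carries $\pi_1(S^3\setminus R)^{(n)}$ into $\pi_1(V_K\setminus\Delta_K)^{(n)}$; your detour through $\pi_1(W_R)$ was never needed.)
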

\begin{proof}  The winding number zero hypothesis means that $\eta\in \pi_1(S^3\setminus R)^{(1)}$.  Since $R$ has Alexander polynomial one, its Alexander module is trivial, that is, $\pi_1(S^3\setminus R)^{(1)}=\pi_1(S^3\setminus R)^{(2)}$. It follows that $\pi_1(S^3\setminus R)^{(1)}=\pi_1(S^3\setminus R)^{(n)}$ for all $n$. Hence $\eta\in  \pi_1(S^3\setminus R)^{(n)}$ for each $n$. Since $R$ is slice $R\in \cP_n$. Now  Proposition~\ref{prop:operatorsact} with $k=n$ implies the desired result.

\end{proof}

\section{Obstructions to lying in $\mathcal{P}_0, \mathcal{N}_0$ and $\mathcal{T}_0$}\label{sec:zerolevel}

It is well-known that the signature of a positive knot is non-positive ~\cite[Corollary 3.4]{CGompf}. More generally we shall see that  (the signs of) many concordance invariants obstruct being slice in a positive definite manifold.  In this section we show that membership in $\mathcal{P}_0$ is obstructed by the signs of classical signatures as well as the sign of the $\tau$-invariant and $s$-invariant. We also see that the slicing obstructions of Donaldson, Fintushel-Stern  and Hedden-Kirk also obstruct membership in $\mathcal{P}_0$.

If $K$ is a knot in $S^3$, $V$ is a Seifert matrix for $K$ and $\omega$ is a complex number of norm $1$, then recall the Levine-Tristram $\omega$-signature of $K$, $\sigma_K(\omega),$  is the signature of
$$
(1-\omega)V+(1-\overline{\omega})V^T.
$$
However, for $\omega$ equal to a root of the Alexander polynomial of $K$, we redefine $\sigma_K(\omega)$ to be the average of the two limits $\displaystyle\lim_{\alpha\rightarrow \omega^{\pm}} \sigma_K(\alpha)$. The resulting function, $\sigma_K:S^1\to \Z$, we shall call the \textbf{Levine-Tristram signature function} of $K$. This function is a concordance invariant. If $p$ is a prime the signatures corresponding to $\omega^j$ where $\omega=\exp(\frac{2\pi i}{p^r})$ are called the (Tristram) $p^r$-signatures of $K$.

The following generalizes Theorem 3.16 and Lemma 4.3 of ~\cite{CGompf}

\begin{prop}\label{prop:easy2fold} If $K\in \mathcal{P}_0$  then the Levine-Tristram signature function of $K$ is non-positive.
Moreover, for a prime power $p^r$,  if,  in addition, the $p^r$-signatures of $K$ are zero, then the $p^r$-fold cyclic cover of $S^3$ branched over $K$ bounds a compact $4$-manifold $\widetilde{V}$ whose intersection form is positive definite and for which $H_1(\widetilde{V};\mathbb{Z}_p)=0$.
\end{prop}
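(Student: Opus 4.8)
The plan is to produce, from an $n$-positon $V$ for $K$ (in fact we only need a $0$-positon), a positive definite $4$-manifold $\widetilde V$ bounding the branched cover $\Sigma_{p^r}(K)$, and to control the signature of its intersection form via the $G$-signature theorem. First I would take the slice disk $\Delta$ for $K$ in the $0$-positon $V$; since $[\Delta]=0$ in $H_2(V,\partial V)$ and $\pi_1(V)=0$, the exterior $V\setminus \Delta$ has $H_1\cong\Z$ generated by the meridian, so the $p^r$-fold cyclic cover $\widetilde V$ of $V$ branched over $\Delta$ is defined and has $\partial\widetilde V=\Sigma_{p^r}(K)$. The known behavior of signatures under branched covers (the Casson–Gordon / Viro formula, as in Litherland's work) gives
$$
\sigma(\widetilde V)=(p^r)\,\sigma(V)-\sum_{j=1}^{p^r-1}\sigma_K\!\big(\zeta^j\big),
$$
where $\zeta=\exp(2\pi i/p^r)$ and $\sigma(V)=b_2(V)=\dim H_2(V)$ since $V$ is positive definite. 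The hypothesis that the $p^r$-signatures vanish kills the correction term, so $\sigma(\widetilde V)=(p^r)\,b_2(V)$. Since also $b_2(\widetilde V)=(p^r)\,b_2(V)$ (the transfer argument: $H_2(\widetilde V;\Q)$ splits into $\Z_{p^r}$-eigenspaces, the trivial one being $H_2(V;\Q)$ and each nontrivial one contributing, in the absence of $L^2$-type corrections, the same rank — more carefully this is the standard computation of $b_2$ of a cyclic branched cover of a $4$-manifold over a properly embedded surface of square zero), the intersection form on $\widetilde V$ is positive definite.

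Next I would arrange the homological condition $H_1(\widetilde V;\Z_p)=0$. This is where the precise choice of branched cover and a little care is needed: one has $H_1(\widetilde V;\Z)$ fitting into an exact sequence built from $H_1$ of the infinite cyclic cover of $V\setminus\Delta$ (the ``Alexander module'' of the disk exterior) modulo $t^{p^r}-1$. The key point is that $H_1(V\setminus\Delta;\Z)\cong\Z$ and $H_2(V\setminus\Delta)\to H_2(V)$ is onto (the surfaces $S_i$ lie in the exterior), together with a Milnor-sequence / half-lives-half-dies argument mod $p$, which forces the $\Z_p$-homology of the branched cover in degree $1$ to vanish exactly as in the classical case of slice disks in $B^4$ (where $\Sigma_{p^r}(K)$ bounds a $\Z_p$-homology ball). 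I expect to invoke or adapt the standard lemma (Casson–Gordon, Kauffman–Taylor) that a slice disk exterior has the $\Z_p$-homology of a circle, the only new feature being the presence of the $2$-handles represented by the $S_i$, which contribute only to $H_2$.

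The main obstacle, I expect, is the bookkeeping in the second paragraph: verifying that $b_2(\widetilde V)=p^r\,b_2(V)$ and that $H_1(\widetilde V;\Z_p)=0$ simultaneously, i.e.\ that the branched cover of the exotic $\#^m\C P(2)$-with-a-disk-removed behaves homologically just like the cover of $B^4$ minus a slice disk but with $m$ extra positive-definite $2$-handles. Concretely this means running the Mayer–Vietoris / transfer computation for $\widetilde V=\widetilde{(B^4\setminus\Delta_0)}\cup(\text{covers of the }2\text{-handle regions})$ and checking the handles attached along the $S_i$ lift to $p^r$ disjoint copies each contributing a $+1$ to the form. I would set this up by first treating the case where $V$ is literally obtained from $B^4$ by attaching $2$-handles to $\partial$ along a framed link disjoint from $\Delta$ (which, up to homeomorphism and up to the homological data we care about, is the general case), and then the computation is an explicit handle-by-handle count. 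Once the signature and $b_2$ match and the $\Z_p$-homology is checked, positive definiteness of $\widetilde V$ is immediate, completing the proof.
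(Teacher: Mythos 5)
Your strategy is essentially the paper's: take the $p^r$-fold cyclic cover $\widetilde V$ of the $0$-positon $V$ branched over the slice disk, control its signature via the $G$-signature theorem (Rochlin's lemma, applied after closing up with a pushed-in Seifert surface), and verify $H_1(\widetilde V;\Z_p)=0$ by the Casson--Gordon Lemma 4.2. That part of your outline matches the paper's proof closely, and the handle-by-handle/Euler-characteristic verification of $b_2(\widetilde V)=p^r\,b_2(V)$ is exactly the kind of bookkeeping the paper carries out (via Gilmer's eigenspace Euler characteristic identity), using $\beta_1(\widetilde V)=\beta_3(\widetilde V)=0$, which itself follows from the $H_1(\widetilde V;\Z_p)=0$ step.

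The genuine gap is that you never address the first conclusion --- that the \emph{entire} Levine--Tristram signature function of $K$ is non-positive --- and the tool you write down cannot reach it. You record only the aggregate formula $\sigma(\widetilde V)=p^r\sigma(V)\pm\sum_{j}\sigma_K(\zeta^j)$, whereas the argument needs the eigenspace-by-eigenspace version: letting $\sigma(\,\cdot\,,j)$ and $\beta_2(\,\cdot\,,j)$ denote the signature and rank of the $\exp(2\pi i j/p^r)$-eigenspace of the covering transformation on $H_2(\,\cdot\,;\mathbb C)$, one has $\sigma_K(\omega^j)=\sigma(\widetilde V,j)-\beta_2(\widetilde V,j)$, which is manifestly $\leq 0$ for \emph{each} $j$, and density of prime-power roots of unity then gives non-positivity of the whole signature function. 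Summing over $j$ collapses this into the aggregate formula you wrote, but from the sum alone one cannot conclude that each $\sigma_K(\zeta^j)\leq 0$. (Also be careful with signs: with the paper's conventions the refined identity $\sigma(\widetilde W,j)=\sigma(\widetilde V,j)-\beta_2(\widetilde V,j)$ is what yields non-positivity, and the aggregate sum appears with a $+$ rather than your $-$; a bare total-signature formula is also of the wrong sign to give non-positivity.) To be complete you should replace your aggregate formula with Rochlin's per-eigenvalue statement $\sigma(\widetilde Y,j)=\sigma(Y)$ for the closed double $(Y,F)=(V,\Delta)\cup(-B^4,-F_K)$, decompose $\widetilde Y$ along the rational homology sphere $\Sigma_{p^r}(K)$, and use Gilmer's Euler characteristic count to get $\beta_2(\widetilde V,j)=\beta_2(V)$; then both conclusions (non-positivity of the signature function, and positive definiteness of $\widetilde V$ when the $p^r$-signatures vanish) fall out of the single eigenspace identity.
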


Thus if  $K\in \cB_0$ then its signature function is identically zero. Since it is known that knot signature function detects elements that are of infinite order in Levine's algebraic concordance group we have:

\begin{cor}\label{cor:Zofiniteorder} If $K\in \cB_0$ then $K$ has finite order in the algebraic concordance group. Moreover, for any prime power $p^r$, the $p^r$-fold cyclic cover of $S^3$ branched over $K$ bounds two compact $4$-manifolds $\widetilde{V}_{\pm}$ whose intersection forms are (respectively) $\pm$-definite and for which $H_1(\widetilde{V}_{\pm};\mathbb{Z}_p)=0$.
\end{cor}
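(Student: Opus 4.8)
The plan is to deduce the statement immediately from Proposition~\ref{prop:easy2fold} together with the standard fact that the Levine--Tristram signature function is an algebraic concordance invariant that detects elements of infinite order in Levine's algebraic concordance group $\mathcal{G}$. First I would observe that if $K\in\cB_0=\mathcal{N}_0\cap\mathcal{P}_0$, then in particular $K\in\mathcal{P}_0$, so by Proposition~\ref{prop:easy2fold} the function $\sigma_K$ is non-positive. On the other hand $-K\in\mathcal{N}_0$ means $U\geq_0 K$, equivalently (by the symmetry between positons and negatons obtained by reversing orientation of the ambient $4$-manifold, i.e.\ the mirror) $-K\in\mathcal{P}_0$; applying Proposition~\ref{prop:easy2fold} to $-K$ gives that $\sigma_{-K}$ is non-positive. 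Since $\sigma_{-K}(\omega)=-\sigma_K(\omega)$ for all $\omega\in S^1$ (mirroring negates the Seifert form up to congruence, hence negates the signature at every $\omega$, including at roots of the Alexander polynomial where $\sigma_K$ is defined as the average of one-sided limits), non-positivity of both forces $\sigma_K\equiv 0$.

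Next I would invoke the classical result of Levine: the collection of Levine--Tristram signatures at prime-power roots of unity (together with the discriminant/determinant data already controlled here) obstructs infinite order in $\mathcal{G}$; more precisely, an algebraically slice knot has order dividing a small integer, and any knot whose signature function vanishes identically has, after tensoring with $\mathbb{Q}$, trivial image in $\mathcal{G}\otimes\mathbb{Q}\cong\mathbb{Z}^\infty$. Hence $\sigma_K\equiv 0$ implies $K$ has finite order in $\mathcal{G}$, which is the first assertion.

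For the ``moreover'' clause I would argue as follows. Apply the second sentence of Proposition~\ref{prop:easy2fold} to $K\in\mathcal{P}_0$: since the $p^r$-signatures of $K$ vanish (they are part of $\sigma_K\equiv 0$), the $p^r$-fold branched cover $\Sigma_{p^r}(K)$ bounds a compact $4$-manifold $\widetilde V_+$ with positive definite intersection form and $H_1(\widetilde V_+;\mathbb{Z}_p)=0$. Then apply the same proposition to $-K\in\mathcal{P}_0$: its $p^r$-signatures also vanish, so $\Sigma_{p^r}(-K)$ bounds $\widetilde V'$ positive definite with $H_1(\widetilde V';\mathbb{Z}_p)=0$. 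Now use the standard identification $\Sigma_{p^r}(-K)\cong -\Sigma_{p^r}(K)$ (the branched cover of the mirror is the orientation-reversal of the branched cover); setting $\widetilde V_-\equiv -\widetilde V'$, this is a $4$-manifold bounded by $\Sigma_{p^r}(K)$ whose intersection form is negative definite and with $H_1(\widetilde V_-;\mathbb{Z}_p)=0$ (homology with $\mathbb{Z}_p$-coefficients is insensitive to orientation reversal). This yields the two required manifolds $\widetilde V_\pm$.

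The only genuinely non-routine point is the reduction ``$-K\in\mathcal{N}_0\Rightarrow -K\in\mathcal{P}_0$''; I would make this precise by noting that if $V$ is a $0$-negaton for $K$ (so $K$ is slice in $V$, $\pi_1(V)=0$, intersection form negative definite), then $\overline V$ (the same smooth manifold with reversed orientation) has positive definite intersection form and exhibits the mirror $-K=\overline K$ as slice in it, i.e.\ $\overline V$ is a $0$-positon for $-K$; the $\pi_1$ and homology-basis conditions of Definition~\ref{def:positive} are orientation-independent. Everything else is bookkeeping with signatures and branched covers, which I would not belabor.
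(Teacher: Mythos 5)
Your proposal is correct and takes essentially the same approach as the paper: the paper derives the corollary directly from Proposition~\ref{prop:easy2fold} with exactly the reasoning you spell out (signature function of a $0$-bipolar knot vanishes because orientation reversal turns a $0$-negaton into a $0$-positon for $-K$, Levine's theorem on signature detecting infinite order, and the second clause of the proposition applied to both $K$ and $-K$ together with $\Sigma_{p^r}(-K)\cong-\Sigma_{p^r}(K)$). You have merely filled in details the paper leaves implicit in the sentence preceding the corollary.
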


\begin{cor}\label{cor:CmodZzero}  The knot signatures corresponding to the different prime roots of unity yield an epimorphism
$$
\frac{\C}{\cB_0}\onto \Z^{\infty}.
$$
\end{cor}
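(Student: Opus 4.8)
The plan is to realize $\Z^{\infty}$ inside $\C/\cB_0$ by producing an explicit infinite family of knots whose pairwise-independent signature data witnesses an epimorphism to $\Z^{\infty}$, and then invoking Proposition~\ref{prop:easy2fold} (in the form of Corollary~\ref{cor:Zofiniteorder}) to see that the map factors through $\C/\cB_0$. First I would fix, for each prime $p_i$, a primitive $p_i$-th root of unity $\omega_i = \exp(2\pi i/p_i)$ and consider the homomorphism
$$
\Phi\colon \C \lra \bigoplus_{i=1}^{\infty}\Z, \qquad \Phi(K) = \bigl(\sigma_K(\omega_1),\sigma_K(\omega_2),\dots\bigr),
$$
where one uses the averaged value at roots of the Alexander polynomial so that each $\sigma_{(-)}(\omega_i)$ is a genuine concordance homomorphism (additivity under connected sum is the standard additivity of Levine--Tristram signatures, and the image of any single knot has only finitely many nonzero coordinates since $\sigma_K$ is a step function with finitely many jumps). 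By Corollary~\ref{cor:Zofiniteorder}, any $K\in\cB_0$ has identically vanishing signature function, so $\cB_0\subset\ker\Phi$ and $\Phi$ descends to $\overline{\Phi}\colon \C/\cB_0 \to \bigoplus_i\Z$.

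Next I would check surjectivity of $\overline{\Phi}$, which amounts to surjectivity of $\Phi$. For this it suffices to exhibit, for each $i$, a knot $K_i$ with $\sigma_{K_i}(\omega_i)\neq 0$ but $\sigma_{K_i}(\omega_j)=0$ for all $j\neq i$; then suitable integer combinations of the $K_i$ hit every element of the direct sum. The standard choice is a torus knot or an appropriate twist/connected-sum whose signature function jumps only at $p_i$-th roots of unity — for instance one can take $K_i$ to be a connected sum of copies of the $(2,p_i)$-torus knot (whose Levine--Tristram signature is supported on arcs determined by $p_i$), or more cleanly a knot built from a Seifert form $\begin{pmatrix}-1 & 1\\ 0 & -1\end{pmatrix}$ twisted so that its one jump occurs precisely at a primitive $p_i$-th root of unity; scaling by connected sum adjusts the height of the jump. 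A short computation of the signature of $(1-\omega)V + (1-\bar\omega)V^T$ confirms the support is as claimed. Taking $\{p_i\}$ to be distinct primes guarantees that a $p_i$-th root of unity is never a $p_j$-th root for $j\neq i$, so the supports are disjoint and the images $\Phi(K_i)$ are ``triangular,'' hence generate $\bigoplus_i\Z$.

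Finally, composing with any isomorphism $\bigoplus_{i=1}^{\infty}\Z \cong \Z^{\infty}$ (as the statement writes $\Z^\infty$ for the countable direct sum), we conclude that $\overline{\Phi}$ is the desired epimorphism $\C/\cB_0 \onto \Z^{\infty}$. The main obstacle — really the only non-formal point — is producing the family $\{K_i\}$ with cleanly disjoint signature supports; everything else (well-definedness, additivity, the factorization through $\cB_0$) is immediate from the already-established Corollary~\ref{cor:Zofiniteorder} and standard properties of the signature function. One should also note in passing that $\mathcal{T}_0\subset\cB_0$, so this same map shows $\C/\cB_0$ already surjects onto $\Z^\infty$ without needing any topologically slice examples.
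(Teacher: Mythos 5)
Your overall structure is the natural one and matches what the paper implicitly intends: define $\Phi(K) = (\sigma_K(\omega_i))_i$, note by Corollary~\ref{cor:Zofiniteorder} that $\cB_0\subset\ker\Phi$, and show the image has infinite rank (hence, being a subgroup of the free abelian group $\bigoplus_i\Z$, is itself $\cong\Z^\infty$). The well-definedness, additivity, and factorization-through-$\cB_0$ steps are all fine.

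However, the surjectivity step as written has a genuine gap: the claimed ``disjoint signature supports'' of the knots $K_i$ do not hold for the examples you propose. The Levine--Tristram signature function of $T(2,p_i)$ is $0$ only on the two short arcs near $\omega=1$ adjacent to $e^{\pm i\pi/p_i}$; it is nonzero on essentially the whole remaining circle through $-1$. In particular $\sigma_{T(2,p_i)}(\omega_j)\neq 0$ whenever $2\pi/p_j > \pi/p_i$, which happens for \emph{every} $p_j < 2p_i$, so the supports overlap massively. The same defect afflicts a single genus-one twist knot with Seifert form $\left(\begin{smallmatrix}-1&1\\ 0&n\end{smallmatrix}\right)$: its signature is a two-valued step function equal to $\pm2$ on the entire arc from $e^{i\theta_n}$ through $-1$ to $e^{-i\theta_n}$, not just ``at'' $\omega_i$. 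The fact that a $p_i$-th root of unity is never a $p_j$-th root is a statement about the \emph{jump points}, not about the \emph{support}, and conflating these is where the argument breaks. One correct fix is a triangularity argument: order the primes so that $\arg\omega_1 > \arg\omega_2 > \cdots$, and for each $i$ take $K_i$ to be a genus-one twist knot whose signature jump $\theta_i$ satisfies $\arg\omega_{i+1} < \theta_i < \arg\omega_i$; then $\sigma_{K_i}(\omega_j)=0$ for $j>i$ and $\sigma_{K_i}(\omega_i)=\pm2\neq 0$, so the matrix $(\sigma_{K_i}(\omega_j))$ is lower triangular with nonzero diagonal, giving infinite rank. Alternatively, form ``windowed'' knots as differences $K_i=J_{a_i}\#(-J_{b_i})$ of two twist knots with nearby jump angles so that the signature support is a thin pair of arcs containing $\omega_i$ and no other $\omega_j$; that achieves the literally disjoint supports you wanted, but it requires connected sums, not a single twist or torus knot.
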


\begin{proof}[Proof of Proposition~\ref{prop:easy2fold}] Since $K\in \mathcal{P}_0$, $K$ bounds a slice disk $\Delta$ in a manifold $V$ as in Definition~\ref{def:positive}. Now we mimic the proof of ~\cite[Theorem 3.7]{CLick}. Let $d=p^r$ be a prime power and let $\Sigma$ denote the $d$-fold cyclic cover of $S^3$ branched over $K$, which is well known to be a $\Z_p$-homology sphere ~\cite[Lemma 4.2]{CG1}. Since $\Delta$ is disjoint from a basis for $H_2(V)$, it represents zero in $H_2(V,\partial V)$. It follows that $H_1(V-\Delta)\cong\Z$, generated by the meridian. Thus the $d$-fold cyclic cover of $V$ branched over $\Delta$, denoted  $\widetilde{V}$, is defined and has boundary $\Sigma$. Since $H_1(V;\Z_p)=0$, it follows from the proof of ~\cite[Lemma 4.2]{CG1} that $H_1(\widetilde{V};\Z_p)=0$ . Thus $\beta_1(\widetilde{V})=0=\beta_3(\widetilde{V})$.

To compute the signature of $\widetilde{V}$ we make $V$ into a closed $4$-manifold and use the $G$-signature theorem. Let $(B^4,F_K)$ be the $4$-ball together with a Seifert surface for $K$ pushed into its interior. Let
$$
(Y,F)=(V,\Delta)\cup (-B^4,-F_K)
$$
be the closed pair, let $\widetilde{W}$ denote the $d$-fold cyclic branched cover of $(B^4,F_K)$, and let $\widetilde{Y}$ be the $d$-fold cyclic branched cover of $(Y,F)$. Note that $\Z_d$ acts on $\widetilde{V}$, $\widetilde{Y}$, and $\widetilde{W}$ with $V$, $Y$ and $B^4$ respectively as quotient. Choose a generator $\tau$ for this action. Let $H_i(\widetilde{Y},j;\mathbb{C})$, $0\leq j<d$, denote the $\exp(\frac{2\pi i j}{d})$-eigenspace for the action of  $\tau_*$ on $H_i(\widetilde{Y};\mathbb{C})$; let $\beta_i(\widetilde{Y},j)$ denote the rank of this eigenspace, and let $\chi(\widetilde{Y},j)$ denote the alternating sum of these ranks (similarly for $\widetilde{V}$ and $\widetilde{W}$). Let $\sigma(\widetilde{Y},j)$ denote the signature of the $\exp(\frac{2\pi i j}{d})$-eigenspace of the isometry $\tau_*$ acting on $H_2(\widetilde{Y};\mathbb{C})$ (similarly for $\widetilde{V}$ and $\widetilde{W}$). By a lemma of Rochlin, using the $G$-signature theorem ~\cite{Rok1}\cite[Lemma 2.1]{CG1}, since $\widetilde{Y}$ is closed and $[F]\cdot[F]=0$,
$$
\sigma(\widetilde{Y},j)=\sigma(Y).
$$
Since $\widetilde{Y}=\widetilde{V}\cup -\widetilde{W}$ glued along the rational homology sphere $\Sigma$, this translates to
$$
\sigma(\widetilde{V},j)-\sigma(\widetilde{W},j)=\sigma(V)-\sigma(B^4).
$$
Since the intersection form of $V$ is, by assumption, positive definite, $\sigma(V)=\beta_2(V)$, so
$$
\sigma(\widetilde{V},j)-\sigma(\widetilde{W},j)=\beta_2(V).
$$
Hence
\begin{equation}\label{eq:sigs1}
\sigma(\widetilde{W},j)=\sigma(\widetilde{V},j)-\beta_2(V).
\end{equation}

Consider the covering space $\widetilde{V}-\widetilde{\Delta}\to V-\Delta$. By ~\cite[Proposition 1.1]{Gi5}, for any $0\leq j<d$,
\begin{equation}\label{eq:eulerbar}
\chi(V-\Delta)=\chi(\widetilde{V}-\widetilde{\Delta},j).
\end{equation}
Since $\tau$ acts by the identity on $H_0(\widetilde{V}-\widetilde{\Delta})$,  $\beta_0(\widetilde{V}-\widetilde{\Delta},0)=1$ and, if $j\neq 0$, $\beta_0(\widetilde{V}-\widetilde{\Delta},j)=0$. Since $\beta_1(V-\Delta)=1$, $\beta_1(\widetilde{V}-\widetilde{\Delta})\geq 1$. On the other hand, since $\beta_1(\widetilde{V})=0$, we must have $\beta_1(\widetilde{V}-\widetilde{\Delta})=1$ generated by a meridian. Since $\tau$ acts by the identity on the first homology of this meridian, $\beta_1(\widetilde{V}-\widetilde{\Delta},0)=1$ and, if $j\neq 0$, $\beta_1(\widetilde{V}-\widetilde{\Delta},j)=0$. Since $\widetilde{V}$ is obtained from $\widetilde{V}-\widetilde{\Delta}$ by adding a $2$-handle along a circle of infinite homological order,
$$
\beta_2(\widetilde{V}-\widetilde{\Delta},j)=\beta_2(\widetilde{V},j).
$$
For the same reason, since
$\beta_3(\widetilde{V})=0$, $\beta_3(\widetilde{V}-\widetilde{\Delta},j)=0$. Similarly, $H_2(V-\Delta)\cong H_2(V)$ and $H_3(V-\Delta)\cong H_3(V)=0$. Thus equation ~(\ref{eq:eulerbar}) becomes
\begin{equation}\label{eq:rankH2}
\beta_2(V)=\beta_2(\widetilde{V},j), ~~\text{and so} ~~ d\beta_2(V)=\beta_2(\widetilde{V}).
\end{equation}
Combining this with equation ~(\ref{eq:sigs1}) we have
\begin{equation}\label{eq:sigs2}
\sigma(\widetilde{W},j)=\sigma(\widetilde{V},j)-\beta_2(\widetilde{V},j).
\end{equation}
Thus $\sigma(\widetilde{W},j)$ is non-positive.

But it is known that, if $j\neq 0$, then $\sigma(\widetilde{W},j)$ is a \textbf{ $p^r$-signature of $K$} ~\cite{Vi1}\cite[Chapter 12]{Gor1}, that is
$$
\sigma(\widetilde{W},j)=\sigma_{\omega^j}(K),
$$
where $\omega=\exp(\frac{2\pi i}{d})$. Since the roots of unity, as $p^r$ varies, are dense in the circle, this implies that the entire signature function of $K$ is non-positive.

Additionally, from ~(\ref{eq:sigs2}), we see also that $\widetilde{V}$ is positive definite if and only if $\sigma(\widetilde{W},j)=0$ for each $j$. Thus if all of the $p^r$signatures of  $K$ are zero, then $\widetilde{V}$ is positive definite.
\end{proof}

Even for topologically slice knots, membership in $\mathcal{P}_0$ is often obstructed by the theorems of Donaldson ~\cite{Don1}, Fintushel-Stern ~\cite{FiSt2} and Ozsv\'ath-Szabo ~\cite{OzSz2}. In this regard the following elementary observation is useful (this result is almost the same as ~\cite[Lemma 2.10]{CGompf}). Recall that if $K$ is an oriented knot and $p/q\in \Q$ is non-zero then $Y=S^3_{p/q}(K)$, the $p/q$-framed Dehn surgery on $K$, is a rational homology $3$-sphere with $H_1(Y)\cong \Z_p$ via a canonical map sending the meridian to $1$. 

\begin{prop}\label{prop:1surgery} If $K\geq J$ then for any non-zero $p/q\in \Q$,  $Y=S^3_{p/q}(K)\coprod -S^3_{p/q}(J)$  bounds a compact $4$-manifold $W$ with intersection form isomorphic to $\oplus\langle 1\rangle$, for which there exist canonical isomorphisms $H_1(\partial_{\pm}W)\cong H_1(W)\cong \Z_p$. In particular if  $K\in \mathcal{P}_0$ (respectively $\cN_0$) then both the $+1$-framed surgery on $K$ and the  $-1$ surgery on $K$ bound compact $4$-manifolds with  positive definite (respectively negative definite) diagonalizable intersection form and $H_1=0$.
\end{prop}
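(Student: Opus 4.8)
The plan is to produce $W$ by modifying the ambient $4$-manifold $V$ furnished by the relation $K\ge J$ along a tubular neighborhood of the concordance annulus. Recall the consequences of $K\ge J$ that I will use: $K$ is concordant to $J$ via an annulus $A$ in a smooth $V$ with $\pi_1(V)=0$, the intersection form of $V$ is the identity matrix in the basis $\{S_i\}$ of $H_2(V)$ given by disjoint embedded surfaces \emph{disjoint from $A$}, and $[A]=0\in H_2(V,\partial V)$ (these are exactly the remarks following Definition~\ref{def:geq}). Fix a product neighborhood $\nu A\cong A\times D^2$ and put $W_0=V\setminus\mathrm{int}\,\nu A$, so that $T:=A\times S^1\subset\partial W_0$ while the rest of $\partial W_0$ is $(S^3\setminus\nu K)\sqcup -(S^3\setminus\nu J)$, meeting $T$ along $\partial\nu K\sqcup\partial\nu J$. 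Because $[A]=0$, the framing $A$ induces on $K$ and on $J$ is the $0$-framing. Now form $W$ by gluing $Z:=D^2\times S^1\times[0,1]$ to $W_0$ along $T$, identifying $T\cong(\partial D^2\times S^1)\times[0,1]$ so that on each torus level the curve of slope $p/q$ (with respect to meridian and $0$-framed longitude) is sent to $\partial D^2\times\{\mathrm{pt}\}$. The two ends $D^2\times S^1\times\{0,1\}$ of $Z$ then become slope-$p/q$ Dehn fillings of $S^3\setminus\nu K$ and of $S^3\setminus\nu J$, so $\partial W=S^3_{p/q}(K)\sqcup -S^3_{p/q}(J)$, with $\partial_+W=S^3_{p/q}(K)$ and $\partial_-W=S^3_{p/q}(J)$.

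The homological content will come from two Mayer--Vietoris sequences: one for $V=W_0\cup_T\nu A$, used to identify $H_*(W_0)$, and one for $W=W_0\cup_T Z$. Since $\pi_1(V)=0$ and $A$ is null-homologous, removing $\nu A$ leaves $H_1(W_0)\cong\Z$ generated by a meridian $\mu$ of $A$ (which is the meridian $\mu_K$ of $K$ along $\partial_+$), while the core $\ell$ of $A$, being a $0$-framed pushoff of $K$, is null-homologous in $W_0$; moreover $H_2(W_0)$ is the sum of the rank-$\beta_2(V)$ subgroup spanned by the $S_i$ (on which the form is $\oplus\langle1\rangle$, since the $S_i$ miss $\nu A$) and a single class represented by a meridional torus of $A$. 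In $Z\simeq S^1$ the meridian $\partial D^2$ bounds, so in the Mayer--Vietoris sequence for $W$ the meridional torus of $A$ dies, no new second homology is created (the relevant $2\times2$ gluing matrix has determinant $\pm p\neq0$, using precisely that $\ell$ is null-homologous in $W_0$), and one obtains $H_2(W)\cong\Z^{\beta_2(V)}$ carried by the $S_i$ with intersection form $\oplus\langle1\rangle$, together with $H_1(W)\cong\Z^2/\langle(0,-p),(1,q)\rangle\cong\Z_p$. Tracing the generators, $\mu_K$ and $\mu_J$ map to generators of $H_1(W)$; since each of $H_1(S^3_{p/q}(K))$, $H_1(S^3_{p/q}(J))$ is canonically $\Z_p$ generated by that meridian, the inclusions induce the claimed canonical isomorphisms $H_1(\partial_\pm W)\cong H_1(W)\cong\Z_p$.

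For the ``in particular'' statement, apply the above with $J=U$ the unknot and $p/q=\pm1$: then $S^3_{\pm1}(U)=S^3$, so $\partial W=S^3_{\pm1}(K)\sqcup -S^3$, and capping the $-S^3$ with $B^4$ yields a compact $W'$ with $\partial W'=S^3_{\pm1}(K)$, with $H_1(W')=H_1(W)=\Z_{\pm1}=0$ and intersection form still $\oplus\langle1\rangle$, i.e.\ positive definite and diagonalizable. If instead $K\in\cN_0$, then $U\ge K$, so the same construction applied to that relation gives a positive definite $W$ with $\partial W=S^3\sqcup -S^3_{\pm1}(K)$; reversing orientation and capping the resulting $S^3$ boundary component with $B^4$ produces a negative definite, diagonalizable $4$-manifold with boundary $S^3_{\pm1}(K)$ and $H_1=0$.

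The step I expect to demand the most care is making the cut-and-paste precise enough that the intersection form is \emph{exactly} $\oplus\langle1\rangle$ (no stray $\langle\pm1\rangle$ or hyperbolic summand) and $H_1$ is \emph{exactly} $\Z_p$. Both facts hinge on $[A]=0$: it is what forces the $A$-framing on $K$ and $J$ to be the Seifert framing, and it is what forces $\ell$ to be null-homologous in $W_0$ --- and the latter is exactly what makes the determinant of the gluing matrix in the Mayer--Vietoris computation equal to $\pm p$ rather than $\pm(p+cq)$ for some nonzero correction term $c$. This is the same mechanism underlying the closely related \cite[Lemma 2.10]{CGompf} and the branched-cover computation in the proof of Proposition~\ref{prop:easy2fold}.
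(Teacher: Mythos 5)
Your proof is correct and takes essentially the same route as the paper's. The paper's entire argument is the sentence ``doing `Dehn surgery cross $[0,1]$' on the annulus in $V$ gives the desired manifold $W$,'' with a pointer to \cite[Lemma 2.10]{CGompf} for details and the observation that simple-connectivity of $V$ supplies the extra $H_1$-isomorphism claim; you have carried out exactly this cut-and-paste construction (excising $\nu A$ and gluing in $D^2\times S^1\times[0,1]$), made the two Mayer--Vietoris computations explicit, and handled the ``in particular'' specialization $J=U$, $p/q=\pm 1$ as the paper does, so there is nothing to add.
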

\begin{proof} Suppose $K\geq J$  via $V$ (as in Definition~\ref{def:geq} for $n=0$). Then doing ``Dehn surgery cross $[0,1]$'' on the annulus in $V$ gives the desired manifold $W$ (more details are given in ~\cite[Lemma 2.10]{CGompf}). Here, since we have assumed $\pi_1(V)=0$, we get the extra $H_1$-isomorphism property that was not present in ~\cite{CGompf}.

For the second statement, take $J=U$ and let $p=\pm 1$. We remark that the $+1$-framed surgery on \textit{any} knot bounds such a positive definite manifold, so really it is only the  statement for $-1$ that has content.  
\end{proof}

As a consequence, we will show  in Corollary~\ref{cor:dinvariantssurgeries} that the signs of the Ozsv\'ath-Szabo d-invariants associated to the $\pm 1$-surgeries on a knot obstruct membership in $\cP_0$ and $\cN_0$.

\begin{ex}\label{ex:Gompf} Let $K=WH^-(LHT)$ where $LHT$ denotes the left-handed trefoil knot. Since $K$ has Alexander polynomial one it is topologically slice ~\cite{FQ}. Thus $K\in \mathcal{T}\cap\mathcal{N}_0$ by Proposition~\ref{prop:easyexs}. But $K\notin \cP_0$  because, by ~\cite[Corollary 2.5]{Go1}, $-1$-surgery on $K$ does not bound a $4$-manifold with positive definite intersection form as required by Proposition~\ref{prop:1surgery}. Hence $K\notin\mathcal{T}_0$
\end{ex}

\begin{ex}\label{ex:pqr} Suppose $p<0,q>0,r>0$ are odd and $pq+qr+rp=-1$. Then the pretzel knot $K(p,q,r)$ has Alexander polynomial $1$ and hence is topologically slice. As long as no product of two of $p,q,r$ is $-1$, it is of infinite order in the smooth concordance group ~\cite[Corollary 4.3]{CLick}. Moreover the $2$-fold branched cover, $\Sigma_K$, is the Brieskorn sphere $\Sigma(|p|,q,r)$ (with its orientation as the boundary of the canonical negative definite resolution)~\cite[140-141]{CLick}. Then, by ~\cite[Thms. 10.1, 10.4]{FiSt2}, $\Sigma_K$ cannot bound a positive definite $4$-manifold as in the conclusion of Proposition~\ref{prop:easy2fold}. Thus $K(p,q,r)\notin \mathcal{T}_0$. Alternatively, $\Sigma(p,q,|r|)$ bounds its canonical $1$-connected negative definite plumbing ~\cite[137-141]{CLick}, $Y$, whose intersection form is \emph{not diagonalizable} (this is proved in ~\cite[proof of Prop. 3.1,page 11-12]{GJ}). Hence $K(p,q,r)\notin \mathcal{P}_0$ since $W\cup -Y$ would violate Donaldson's theorem.
\end{ex}

In ~\cite{Endo} Endo showed that a certain infinite subset of the family of knots in Example~\ref{ex:pqr} is linearly independent in $\T$. In fact  Endo's argument (using the full strength of techniques of Furuta and Fintushel-Stern), together with our Proposition~\ref{prop:easy2fold}, shows the following:

\begin{thm}\label{thm:Endo} The family of topologically slice pretzel knots considered by Endo generates a
$$
\Z^\infty\subset \T/\T_0.
$$
\end{thm}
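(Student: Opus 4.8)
The plan is to import wholesale the strategy that Endo used in~\cite{Endo} to prove that his pretzel knots are linearly independent in $\T$, and to observe that this very argument actually proves the stronger statement that their images in $\T/\T_0$ are linearly independent. Recall from Example~\ref{ex:pqr} that a pretzel knot $K=K(p,q,r)$ with $p<0<q,r$ odd and $pq+qr+rp=-1$ has Alexander polynomial $1$ (so it is topologically slice, hence lies in $\T$) and its double branched cover $\Sigma_K$ is the Brieskorn sphere $\Sigma(|p|,q,r)$, oriented as the boundary of its canonical negative definite resolution. Endo considers an infinite family $\{K_i\}$ of such knots (for instance $K_i=K(-(2i+1),\,2i+3,\,2i+5)$ or a similar arithmetic family) chosen so that the associated Brieskorn spheres have the right Fintushel--Stern/Furuta $R$-invariants (or $w$-invariants), and shows that for any nontrivial finite integer linear combination $J=\#_i a_i K_i$, the double branched cover $\Sigma_J$ cannot bound a smooth positive-definite (equivalently negative-definite, after reversing orientation) $4$-manifold with appropriate $H_1$-constraints. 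The technical heart is the instanton-theoretic gauge-theory estimate of Furuta and Fintushel--Stern showing that a suitable connected sum of these Brieskorn homology spheres does not bound a definite $4$-manifold with $H_1=0$.

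The key steps, in order, are as follows. First, I would recall Endo's family $\{K_i\}$ and the precise gauge-theoretic statement he establishes: for every nonzero $(a_1,\dots,a_k)\in\Z^k$, the $\Z_2$-homology sphere $\Sigma_J$ (where $J=\#_i a_i K_i$), which is a connected sum of $\pm$ Brieskorn spheres, does not bound a smooth compact oriented $4$-manifold $X$ with positive-definite intersection form and $H_1(X;\Z_2)=0$. (One may have to attend to orientations: since $-K$ has double branched cover $-\Sigma_K$, the connected sum $\Sigma_J$ is built from both orientations of the $\Sigma(|p_i|,q_i,r_i)$, and Endo's argument is designed to handle exactly this.) Second, I would invoke Proposition~\ref{prop:easy2fold}: if a knot $J$ lies in $\cP_0$ and its $2^r$-signatures vanish, then $\Sigma_J$, the $2$-fold branched cover of $S^3$ over $J$, bounds a compact $4$-manifold $\widetilde V$ with positive-definite intersection form and $H_1(\widetilde V;\Z_2)=0$. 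Third, I must check the signature hypothesis: each $K_i$ has Alexander polynomial $1$, hence all its Tristram--Levine signatures vanish, hence so do those of any connected sum $J=\#_i a_i K_i$; in particular the $2^r$-signatures of $J$ vanish. Therefore, if $J\in\cB_0$ (or even just $J\in\cP_0$), Proposition~\ref{prop:easy2fold} furnishes exactly the positive-definite $4$-manifold with $H_1(\,\cdot\,;\Z_2)=0$ bounded by $\Sigma_J$ that Endo's gauge theory forbids. This contradiction shows $J\notin\cP_0$ whenever $(a_i)\ne 0$, hence a fortiori $J\notin\cB_0=\cN_0\cap\cP_0$, so the images of the $K_i$ in $\T/\cB_0=\T/\T_0$ are linearly independent, giving the desired $\Z^\infty\subset\T/\T_0$.

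I should double check one subtlety. The statement $J\notin\cP_0$ needs the full-strength (not merely ``signatures vanish'') conclusion of Proposition~\ref{prop:easy2fold}, namely that $\Sigma_J$ bounds a \emph{positive-definite} $4$-manifold; this is available precisely because all the relevant branched-cover signatures of $J$ vanish. Conversely, if one only wanted to rule out $J\in\cB_0$ one could run the symmetric argument for $\cN_0$ with the orientation-reversed Brieskorn spheres; but since Endo's estimate is already set up to handle arbitrary sign combinations, ruling out membership in $\cP_0$ alone suffices. One further point to verify is that Endo indeed exhibits an \emph{infinite} independent family, not just finitely many knots; this is the content of~\cite{Endo} and requires choosing an infinite subfamily of the pretzel knots whose Brieskorn spheres have the requisite distinct $R$-invariants --- I would cite this rather than reprove it.

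The main obstacle --- really the only nontrivial input --- is the gauge-theoretic non-bounding result of Furuta and Fintushel--Stern as packaged by Endo. Everything else (the Alexander-polynomial-$1$ computation, the identification of the double branched cover with a Brieskorn sphere, and the deduction from Proposition~\ref{prop:easy2fold}) is routine. So the proof is genuinely short: it is the observation that Endo's obstruction is not merely an obstruction to smooth sliceness but to membership in the much larger class $\cP_0$, because Proposition~\ref{prop:easy2fold} extracts from $\cP_0$-membership precisely the structure (a definite filling of the branched cover with $\Z_2$-homology constraints) that the gauge theory contradicts. I would write the proof essentially as: ``Let $\{K_i\}$ be Endo's family. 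Each $K_i$ is topologically slice with vanishing signatures. If some nonzero combination $J=\#_i a_i K_i$ lay in $\cP_0$, then by Proposition~\ref{prop:easy2fold} the Brieskorn-connected-sum $\Sigma_J$ would bound a smooth positive-definite $4$-manifold with $H_1(\,\cdot\,;\Z_2)=0$, contradicting~\cite{Endo}. Hence $J\notin\cP_0$, so $\{[K_i]\}$ spans a $\Z^\infty$ in $\T/\T_0$.''
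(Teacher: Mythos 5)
Your proposal is essentially the paper's own argument: invoke Proposition~\ref{prop:easy2fold} (with $p=2$) to extract from a hypothetical $\cP_0$-membership of a nonzero combination $J$ a smooth positive-definite $4$-manifold with $H_1(\,\cdot\,;\Z_2)=0$ bounded by the double branched cover $\Sigma_J$, and then run Endo's Furuta/Fintushel--Stern obstruction against it. The one place where you diverge, and which deserves a warning, is the claim that this rules out $J\in\cP_0$ for \emph{every} nonzero tuple $(a_i)$; the Furuta-type $R$-invariant estimate is sensitive to the orientation of the connected sum of Brieskorn spheres, and is set up to obstruct a \emph{positive}-definite filling only when the signs are normalized appropriately. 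The paper finesses this by a WLOG: since $\cB_0$ is a group, one may replace $J$ by $-J$ so that the highest-index coefficient is positive, and \emph{then} apply Endo's argument to the $\cP_0$-membership forced by $J\in\T_0\subset\cB_0$. Your intermediate claim ``$J\notin\cP_0$ for all $(a_i)\neq 0$'' is therefore likely an overstatement; what you actually get is ``$J\notin\cB_0$ for all $(a_i)\neq 0$'' after the sign normalization, which is exactly what is needed for linear independence in $\T/\T_0$. With that adjustment, your proof matches the paper's.
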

\begin{proof} The claim is that Endo's original proof (together with our Proposition~\ref{prop:easy2fold} for $p=2$) proves this stronger result. Endo's family is a specific sequence of pretzel knots, $K_k$, $k=1,...$,  as in Example~\ref{ex:pqr}. Following Endo, we proceed by contradiction. Suppose that some non-trivial linear combination, $K$, of such knots were to lie in $\T_0$. By taking the concordance inverse we may assume that if $m$ is the largest value for which $K_m$ occurs, then it occurs with a positive coefficient. Since $K\in \T_0$,  $K\in\cP_0$. The $2$-fold branched cover, $\Sigma_K$, is a connected sum of the corresponding Brieskorn homology spheres.  By Proposition~\ref{prop:easy2fold} for $p=2$, $\Sigma_K$ is the boundary of a compact $4$-manifold $\widetilde{V}$ whose intersection form is positive definite and for which $H_1(\widetilde{V};\mathbb{Z}_2)=0$. Endo's argument shows (using work of Furuta) that this is a contradiction in the case that $\beta_2(\widetilde{V})=0$, but it was known that the results of Furuta employed in the proof hold in our more general context (see ~\cite[Page 340]{Fur}, ~\cite[Thm. 1.1 and remarks below Thm. 1.2]{FiSt2}~\cite[proof of Thm. 5.1]{FiSt3}.
\end{proof}

More generally, for any knot $K\in \cP_0$ with vanishing $2^r$-signatures, by  Proposition~\ref{prop:easy2fold}, the $2^r$-fold cyclic cover of $S^3$ branched over $K$ bounds a compact $4$-manifold $\widetilde{V}$ whose intersection form is positive definite and for which $H_1(\widetilde{V};\mathbb{Z}_2)=0$. Therefore the obstruction of Hedden-Kirk (Fintushel-Stern, Furuta) vanishes.

Additionally, it follows immediately from  work of Ozsv\'ath-Szabo ~\cite[Thm. 1.1]{OzSz2} that

\begin{prop}\label{prop:tau}(Ozsv\'ath-Szabo) If $K\in \mathcal{P}_0$ then $\tau(K)\geq 0$.
\end{prop}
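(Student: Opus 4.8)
The statement to prove is Proposition~\ref{prop:tau}: if $K \in \mathcal{P}_0$ then $\tau(K) \geq 0$.

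The plan is to reduce this to the known behavior of $\tau$ under the existence of a slice disk in a positive definite $4$-manifold, as established by Ozsv\'ath--Szab\'o in ~\cite[Thm.~1.1]{OzSz2}. That theorem bounds $\tau$ in terms of the genus and the self-intersection data of a surface representing a knot in a negative (or positive) definite $4$-manifold bounding $S^3$; in the form most convenient here it says that if a knot $K$ bounds a smoothly, properly embedded surface $F$ of genus $g$ in a $4$-manifold $X$ with $\partial X = S^3$, $b_1(X) = 0$, and negative definite intersection form, with $[F] = 0 \in H_2(X, \partial X)$, then $2\tau(K) \leq 2g + [F]\cdot[F] + \cdots$, and with the appropriate sign conventions the positive definite case gives a lower bound on $\tau(K)$.

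Here are the steps I would carry out. First, unwind the hypothesis $K \in \mathcal{P}_0$ using Definition~\ref{def:positive}: $K$ is slice in a smooth, compact, oriented, simply-connected $4$-manifold $V$ with $\partial V = S^3$, with positive definite intersection form on $H_2(V)$, via a slice disk $\Delta$ with $[\Delta] = 0 \in H_2(V, S^3)$. Second, reverse orientation: $-V$ has $\partial(-V) = -S^3$... more precisely, consider the mirror. Actually the cleanest move is to observe that $\overline{V}$ (orientation reversal) is a negative definite $4$-manifold with boundary $-S^3$ in which the mirror knot $\overline{K}$ (equivalently $-K$ up to the relevant conventions) bounds a disk, so that Ozsv\'ath--Szab\'o's theorem applied to this negative definite filling yields $\tau(-K) \leq 0$; then invoke $\tau(-K) = -\tau(K)$ to conclude $\tau(K) \geq 0$. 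Third, check that the homological triviality of $\Delta$ (it is disjoint from a basis of $H_2(V)$ by condition~$3$) is exactly what is needed so that the genus-zero, zero self-intersection surface hypothesis in ~\cite[Thm.~1.1]{OzSz2} is met, making the bound collapse to $\tau \leq 0$ with no correction terms.

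The main obstacle, such as it is, is bookkeeping: matching the orientation and sign conventions of ~\cite[Thm.~1.1]{OzSz2} (which is typically stated for negative definite fillings and for the genus bound with a specific sign) against our positive definite convention, and verifying that $[\Delta] = 0$ in $H_2(V, \partial V)$ together with $\pi_1(V) = 0$ really does force the relevant term to vanish rather than contribute. There is no deep difficulty here; the proposition is genuinely a direct corollary of the Ozsv\'ath--Szab\'o adjunction-type inequality for $\tau$, and the write-up will amount to quoting that theorem in the negative definite case and applying $\tau(-K) = -\tau(K)$.
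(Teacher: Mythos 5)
Your proposal is correct and matches the paper's intent exactly: the paper states this result as an immediate consequence of Ozsv\'ath--Szab\'o's genus bound ~\cite[Thm.~1.1]{OzSz2} and gives no further argument, so the orientation-reversal bookkeeping you spell out (pass to $-V$, which is negative definite, where the mirror $-K$ bounds a null-homologous disk, so $\tau(-K)\le 0$, hence $\tau(K)\ge 0$) is precisely the ``immediate'' step being alluded to. The one thing worth noting is that the homological triviality condition is genuinely used to kill both the $[\Sigma]\cdot[\Sigma]$ and $|[\Sigma]|$ terms in the Ozsv\'ath--Szab\'o inequality, as you observe.
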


\begin{cor}\label{cor:taunogood} If $K\in \cB_0$ then $\tau(K)=0$.
\end{cor}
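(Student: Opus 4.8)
The plan is short: apply Proposition~\ref{prop:tau} to both $K$ and its concordance inverse $-K$, using the mirror symmetry between $\mathcal{P}_0$ and $\mathcal{N}_0$. First, since $\cB_0 = \mathcal{N}_0 \cap \mathcal{P}_0$, the hypothesis $K \in \cB_0$ gives in particular $K \in \mathcal{P}_0$, so Proposition~\ref{prop:tau} yields $\tau(K) \geq 0$.

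Next I would exploit the symmetry between $\mathcal{P}_0$ and $\mathcal{N}_0$ under mirroring. If $V$ is a $0$-negaton for $K$ as in Definition~\ref{def:positive}, then reversing the orientation of $V$ produces a smooth, compact, oriented, simply-connected $4$-manifold $\overline{V}$ with $\partial\overline{V} = S^3$ whose intersection form is now positive definite, in which the mirror $-K$ bounds the (same) slice disk, and with the derived-series condition on the basing surfaces $\{S_i\}$ unchanged (derived series is a property of the fundamental group of the exterior, which is unaffected by orientation reversal). Thus $\overline{V}$ is a $0$-positon for $-K$; that is, $K \in \mathcal{N}_0 \Rightarrow -K \in \mathcal{P}_0$ (this is essentially the observation preceding Definition~\ref{def:filtr}). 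Applying Proposition~\ref{prop:tau} to $-K$ gives $\tau(-K) \geq 0$.

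Finally, since $\tau\colon \mathcal{C} \to \Z$ is a homomorphism (Ozsv\'ath--Szab\'o) and $-K$ represents the inverse of $K$ in $\mathcal{C}$, we have $\tau(-K) = -\tau(K)$, so $\tau(K) \leq 0$. Combined with $\tau(K) \geq 0$ this forces $\tau(K) = 0$. There is no substantive obstacle; the only point requiring a word of care is the orientation-reversal identification of $0$-negatons with $0$-positons of the mirror, and this is immediate from Definition~\ref{def:positive}. (The identical argument, run with $s$ in place of $\tau$ and Proposition~\ref{prop:s} in place of Proposition~\ref{prop:tau}, also recovers $s(K)=0$ for $K\in\cB_0$, and likewise $\epsilon(K)=0$.)
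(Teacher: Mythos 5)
Your proof is correct and is exactly the argument the paper intends; the paper states the corollary without explicit proof precisely because it follows immediately from Proposition~\ref{prop:tau} applied to $K$ and to $-K$, using the orientation-reversal symmetry between $\mathcal{P}_0$ and $\mathcal{N}_0$ and the fact that $\tau$ is a homomorphism on $\mathcal{C}$.
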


Then, regards Jennifer Hom's $\epsilon$-invariant we have:

\begin{prop}\label{prop:epsilon} If $K\in \mathcal{B}_0$ then $\epsilon(K)=0$.
\end{prop}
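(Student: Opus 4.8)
The plan is to show that if $K \in \cB_0$ then $\epsilon(K) = 0$ by combining the sign constraint on $\tau$ already established in Corollary~\ref{cor:taunogood} with Hom's structural results on how $\epsilon$ interacts with $\tau$ and with connected sums. First I would recall the key facts about $\epsilon$ from~\cite{Hom}: $\epsilon$ is a smooth concordance invariant taking values in $\{-1,0,1\}$; $\epsilon(-K) = -\epsilon(K)$; $\epsilon$ is additive-like enough that if $\epsilon(K_1) = \epsilon(K_2)$ then $\epsilon(K_1 \# K_2) = \epsilon(K_1)$; and, crucially, if $\epsilon(K) = 0$ then $\tau(K) = 0$, while more relevantly there is a converse-type constraint: if $\tau(K) = 0$ but $\epsilon(K) \neq 0$, then the full Upsilon/$\nu^+$-type package (or rather the $\{V_s\}$ or $CFK^\infty$ data) of $K$ is constrained in a way incompatible with $K$ admitting a $d$-invariant-positive filling on \emph{both} sides.

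The cleanest route, which I expect is the one the authors take, is this: suppose for contradiction $\epsilon(K) = \pm 1$; WLOG $\epsilon(K) = 1$ (else replace $K$ by $-K$, which lies in $\cB_0$ since $\cB_0$ is a group, and has $\epsilon(-K) = -1$). Since $K \in \cB_0$ we have $\tau(K) = 0$ by Corollary~\ref{cor:taunogood}. Now I would invoke Hom's theorem that $\epsilon(K) = 1$ together with $\tau(K) = 0$ forces $\tau$ of certain cables or, more directly, forces $\tau(K \# K) = 0$ but $\epsilon(K \# K) = 1$, and iterating, $\tau(\#^m K) = 0$ while $\#^m K \in \cB_0$ for all $m$. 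The point is then to derive a contradiction from a knot in $\cP_0 \cap \cN_0$ with $\epsilon = 1$ by feeding it into one of the $d$-invariant obstructions of Proposition~\ref{prop:introresults1}(5) or Proposition~\ref{prop:1surgery}: Hom's work shows $\epsilon(K) = 1$ implies $\tau(K_{2,1}) = 2\tau(K) + 1 = 1 > 0$ for the $(2,1)$-cable (when $\tau(K)=0$), and more to the point $d(S^3_{+1}(K))$ or $d(S^3_{-1}(K))$ is pinned down by $\epsilon$ and $V_0$ in a way that violates the two-sided definiteness from Proposition~\ref{prop:1surgery}. Concretely, for a knot with $\tau = 0$ and $\epsilon = 1$ one has $V_0(K) = 0$ but $V_0(-K) = 0$ as well yet the reduced complex is not locally equivalent to the unknot, and the correction-term inequality $d(S^3_{-1}(K)) \leq 0$ from $K \in \cN_0$ combined with $d(S^3_{-1}(K)) \geq 0$ — no, rather $d(S^3_{+1}(-K)) = -d(S^3_{-1}(K))$ — the two sign constraints from $\cP_0$ and $\cN_0$ together force $d(S^3_{+1}(K)) = d(S^3_{-1}(K)) = 0$, and Hom's formula for these surgery $d$-invariants in terms of $\epsilon$ and $V_0$ then forces $\epsilon(K) = 0$.

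The main obstacle will be pinning down exactly which surgery- or cable-formula of Hom to use so that the two-sided definiteness hypothesis (positive definite filling \emph{and} negative definite filling) of $\cB_0$ bites. The issue is that $\tau = 0$ alone does not see $\epsilon$, so I must route through an auxiliary construction — either the $(2,1)$-cable $K_{2,1}$, whose $\tau$ \emph{does} detect $\epsilon$ via $\tau(K_{2,1}) = 2\tau(K) + 1$ when $\epsilon(K) = 1$, or the $\pm 1$-surgeries whose $d$-invariants Hom computes from $\epsilon$ and $V_0$. For the cable approach one needs that $\cB_0$ is closed under the cabling operation in the relevant sense, or at least that $\epsilon(K) = 1, \tau(K) = 0$ propagates to $\tau(K_{2,1}) > 0$ which then contradicts $\tau(K_{2,1}) \le 0$ — but that requires $K_{2,1} \in \cN_0$, which is not immediate since cabling is not covered by Proposition~\ref{prop:operatorsact} without care about winding number (here winding number is $2 \neq 0$, but $K_{2,1} = P(\eta, K)$ for an appropriate pattern $P$ in $\cB_0$ with $\eta$ null-homologous, i.e. the cabling pattern, so Proposition~\ref{prop:operatorsact} with $k = 1$ does apply provided the $(2,1)$-cabling pattern itself lies in $\cB_0$ — which it does, being a knot in a solid torus that is isotopic to the core, hence slice in the obvious product $4$-manifold). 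So the honest version is: apply Proposition~\ref{prop:operatorsact} to conclude $K_{2,1} \in \cB_0$, hence $\tau(K_{2,1}) = 0$ by Corollary~\ref{cor:taunogood}, hence by Hom's cabling formula $2\tau(K) + 1 \le 0$ if $\epsilon(K) = 1$ and $2\tau(K) - 1 \ge 0$ if $\epsilon(K) = -1$; combined with $\tau(K) = 0$ this is a contradiction either way, so $\epsilon(K) = 0$. I would double-check Hom's exact cabling statement (\cite[Theorem 2]{Hom} or similar) to make sure the formula $\tau(K_{2,1})$ takes the values $2\tau(K)+\epsilon$-dependent-correction claimed, and that the $(2,1)$-cabling pattern genuinely lies in $\cP_0 \cap \cN_0$.
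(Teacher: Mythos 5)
Your route is the paper's route: show the relevant cables of $K$ lie in $\cB_0$, deduce their $\tau$-invariants vanish by Corollary~\ref{cor:taunogood}, and feed this into Hom's cabling machinery. The paper cites Corollary~\ref{cor:satellites} (the cable patterns $U_{2,\pm 1}$ are unknots, hence in $\cB_0$, so the cables are too) and then invokes Hom's Theorem~5.2 directly. So the structure is right, and your long detour through $d$-invariants of $\pm1$-surgeries is unnecessary.

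But your final ``honest version'' has a real gap: you use only the $(2,1)$-cable, and the formula you write down is wrong. Hom's cabling theorem for $p=2,\,q=1$ gives $\tau(K_{2,1})=2\tau(K)+\tfrac{(p-1)(q-1)}{2}=2\tau(K)$ for one sign of $\epsilon$ and $\tau(K_{2,1})=2\tau(K)+\tfrac{(p-1)(q+1)}{2}=2\tau(K)+1$ for the other; there is no ``$2\tau(K)-1$'' case. With $\tau(K)=0$ this yields $\tau(K_{2,1})\in\{0,1\}$, so the $(2,1)$-cable only contradicts \emph{one} of $\epsilon=\pm1$ — for the other sign you get $\tau(K_{2,1})=0$, which is perfectly consistent. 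That is exactly why the paper also puts $K_{2,-1}$ into $\cB_0$ and concludes $\tau(K_{2,-1})=0$: the $(2,-1)$-cable rules out the sign of $\epsilon$ that the $(2,1)$-cable misses. Your earlier ``WLOG $\epsilon(K)=1$ by passing to $-K$'' trick would also repair this (applying the $(2,1)$-cable argument to $-K\in\cB_0$ is equivalent, up to mirrors, to applying the $(2,-1)$-cable argument to $K$), but you abandoned it, and as written it contains a sign slip ($\epsilon(-K)=+1$, not $-1$, when $\epsilon(K)=-1$). So: keep the framework, use both $K_{2,1}$ and $K_{2,-1}$ as the paper does (or do the mirror WLOG carefully), and state Hom's formula correctly.
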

\begin{proof}  Note that both $K_{2,1}$, the $(2,1)$-cable of $K$,  and $K_{2,-1}$, the $(2,-1)$-cable of $K$, may be viewed as satellites of $K$ with patterns $U_{2,1}$ and $U_{2,-1}$. Since the latter are  trivial knots, by Corollary~\ref{cor:satellites}, both $K_{2,1}$ and $K_{2,-1}$ lie in $\cB_0$. By Corollary~\ref{cor:taunogood},  $\tau(K)=\tau(K_{2,1})=\tau(K_{2,-1})=0$. It then follows from ~\cite[Theorem 5.2]{Hom}
that $\epsilon(K)=0$.
\end{proof}

As regards the Rasmussen $s$-invariant of knot concordance, it  follows quickly  from  recent work of Kronheimer-Mrowka ~\cite[Cor. 1.1]{KrMr1} that

\begin{prop}\label{prop:s} If $K\in \mathcal{P}_0$ then $s(K)\geq 0$.
\end{prop}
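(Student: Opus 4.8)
The plan is to derive this directly from the Kronheimer--Mrowka adjunction bound for the Rasmussen invariant, after a short bookkeeping reduction. Recall that $s$ is a concordance invariant with $s(mK)=-s(K)$, where $mK$ is the mirror image of $K$; so it is enough to prove $s(mK)\le 0$. First I would unwind the hypothesis: since $K\in\mathcal{P}_0$, Definition~\ref{def:positive} (with $n=0$) supplies a smooth, compact, oriented, simply-connected $4$-manifold $V$ with $\partial V=S^3$, positive definite intersection form, containing a smoothly embedded slice disk $\Delta$ for $K$ with $[\Delta]=0$ in $H_2(V,\partial V)$. Reversing the orientation of $V$ produces $X=-V$ with $\partial X=-S^3\cong S^3$, where the orientation-reversing identification of the boundary carries $K$ to $mK$; now $X$ is simply-connected with \emph{negative} definite form (so $b_2^+(X)=0$ and $b_1(X)=0$), and $\Delta\subset X$ is a null-homologous, smoothly and properly embedded genus-zero surface bounding $mK$.

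Next I would invoke Kronheimer--Mrowka's Corollary~1.1 of \cite{KrMr1}: their theorem provides an adjunction-type upper bound for $s$ in terms of the genus and self-intersection of a connected oriented surface properly embedded in a $4$-manifold with $S^3$ boundary and $b_2^+=0$. Applied to $\Delta\subset X$, which has genus $0$ and (being null-homologous) vanishing self-intersection, this gives $s(mK)\le 0$, and therefore $s(K)=-s(mK)\ge 0$, as desired. This is the precise analogue of the proofs of Propositions~\ref{prop:tau} and \ref{prop:easy2fold}, with the Ozsv\'ath--Szab\'o, respectively $G$-signature, input replaced by the Kronheimer--Mrowka input.

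The only point that requires any care is matching the hypotheses of \cite[Cor.~1.1]{KrMr1} to the present setting: one must use that their bound holds for an \emph{arbitrary} negative-definite (not just standard $B^4$) filling, which is exactly what is available here; that simple-connectivity of $V$ gives the required $b_1$ vanishing; and that the homological triviality of $\Delta$ built into the definition of $\mathcal{P}_0$ is what makes the self-intersection correction term in their inequality vanish. No new gauge theory is needed beyond quoting their result. (As usual, replacing $\mathcal{P}_0$ by $\mathcal{N}_0$ and mirroring gives the companion statement $s(K)\le 0$ for $K\in\mathcal{N}_0$, hence $s(K)=0$ for $K\in\cB_0$.)
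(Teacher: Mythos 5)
Your plan reaches the right conclusion by citing the right reference, but it mischaracterizes the hypotheses of Kronheimer--Mrowka's Corollary~1.1 and as a result omits the one genuinely substantive step of the argument. Their corollary is not a purely homological adjunction bound in which ``genus plus self-intersection'' are the inputs; it requires the stronger, homotopy-theoretic hypothesis that the disk $\Delta$, regarded as a class in $\pi_2(V,K)$, is in the image of $j_*:\pi_2(S^3,K)\to\pi_2(V,K)$ (equivalently, that $i_*[\Delta]=0$ in $\pi_2(V,S^3)$). The homological triviality $[\Delta]=0$ in $H_2(V,\partial V)$ that comes from Definition~\ref{def:positive} is \emph{not} directly one of the hypotheses of that corollary, and it is not being used to ``make a self-intersection correction term vanish.'' The missing step is exactly the content of the paper's proof: because $\pi_1(V)=\pi_1(S^3)=0$, the relative Hurewicz theorem gives $\pi_2(V,S^3)\cong H_2(V,S^3)$, so the homological triviality of $\Delta$ upgrades to $\pi_2$-triviality, and only then does Kronheimer--Mrowka apply. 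Note also that your claimed use of simple-connectivity (``to give the required $b_1$ vanishing'') is not where it does its work --- it is the hypothesis of the relative Hurewicz theorem. Without the Hurewicz step your argument does not get off the ground: every knot bounds a null-homologous disk in some punctured $\#^m\mathbb{CP}^2$ after enough non-simply-connected blowups, and the distinction you need is precisely the $\pi_2$ condition.

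Two smaller remarks. First, the orientation-reversal bookkeeping you carry out is harmless but unnecessary: the paper applies Kronheimer--Mrowka's positive-definite formulation to the $0$-positon $V$ directly and gets $s(K)\geq 0$ with no mirror. Second, once you insert the Hurewicz step, your proof becomes correct and differs from the paper's only in that stylistic mirroring.
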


\begin{proof} Suppose that $K$ bounds the slice disk $\Delta$ in the $0$-positon $V$.  We think of $\Delta$ as representing a class in $\pi_2(V,K)$. Consider the exact sequence
$$
\pi_2(S^3,K)\overset{j_*}\rightarrow\pi_2(V,K)\overset{i_*}\rightarrow\pi_2(V,S^3)\cong H_2(V,S^3).
$$
Since $\pi_1(V)=\pi_1(S^3)=0$, by the relative Hurewicz theorem, $H_2(V,S^3)\cong \pi_2(V,S^3)$. Since, by definition, $i_*([\Delta])=0$ in $H_2(V,S^3)$, $i_*([\Delta])=0$ in $\pi_2(V,S^3)$. Hence $[\Delta]$ is in the image of $j_*$. Now,  by ~\cite[Cor. 1.1]{KrMr1}, $s(K)\geq 0$.
\end{proof}

\begin{cor}\label{cor:svanish} If $K\in \cB_0$ then $s(K)=0$.
\end{cor}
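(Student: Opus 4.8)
The plan is to reduce everything to Proposition~\ref{prop:s}, which already establishes that $s(K)\geq 0$ whenever $K\in\mathcal{P}_0$. Since $\cB_0=\mathcal{P}_0\cap\mathcal{N}_0$, membership of $K$ in $\cB_0$ gives the lower bound $s(K)\geq 0$ for free, so the only work is to extract the matching upper bound $s(K)\leq 0$ from the hypothesis $K\in\mathcal{N}_0$.

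First I would invoke the elementary symmetry observed just after Definition~\ref{def:positive}: the definitions of $\mathcal{P}_n$ and $\mathcal{N}_n$ differ only by reversing the orientation of the ambient $4$-manifold (equivalently, replacing ``positive definite'' by ``negative definite''), so taking mirror images carries one class to the other. Concretely, $K\in\mathcal{N}_0$ forces $-K\in\mathcal{P}_0$ (in fact $-K\in\cB_0$, but only $-K\in\mathcal{P}_0$ is needed). Applying Proposition~\ref{prop:s} to $-K$ then yields $s(-K)\geq 0$.

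Next I would use the two standard properties of Rasmussen's invariant: $s$ is additive under connected sum and vanishes on smoothly slice knots. Since $K\#(-K)$ is slice, $0=s(K\#(-K))=s(K)+s(-K)$, so $s(K)=-s(-K)\leq 0$. Combining with the lower bound gives $s(K)=0$, as desired.

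There is no genuine obstacle here; the statement is a formal consequence of Proposition~\ref{prop:s} together with the mirror symmetry of the bipolar filtration. The only point that deserves a moment's care is the behavior of $s$ under $K\mapsto -K$: rather than quoting the mirror formula directly, it is cleanest to derive $s(-K)=-s(K)$ from additivity and vanishing on slice knots as above, which sidesteps any convention issue about whether $-K$ denotes the mirror, the orientation reverse, or both. (The identical argument, applied to Proposition~\ref{prop:tau}, also gives Corollary~\ref{cor:taunogood}, and indeed the same pattern underlies the ``if $K\in\cB_0$'' clauses throughout this section.)
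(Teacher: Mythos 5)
Your proof is correct and is essentially the argument the paper intends (the paper states the corollary without proof, but the implicit reasoning is exactly this: $K\in\cP_0$ gives $s(K)\geq 0$ by Proposition~\ref{prop:s}, and $K\in\cN_0$ gives $s(K)\leq 0$ by the mirror symmetry you spell out). Your care in deriving $s(-K)=-s(K)$ from additivity and sliceness, rather than quoting a mirror formula, is a reasonable touch but not a departure from the paper's approach.
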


\section{Relations with the $n$-solvable filtration and von Neumann signature defect obstructions}\label{sec:relationsn-solv}

We show that the $n$-positive filtration and the $n$-negative filtration are, essentially, refinements of the $n$-solvable filtration of Cochran-Orr-Teichner ~\cite{COT}. This is a philosophically important point but also  allows us to use higher-order signatures to obstruct membership in $\cB_n$.

Before discussing the (simple) connection between these filtrations, the reader might benefit from being aware of a slight paradigm shift leading to an equivalent definition of $n$-positivity (and $n$-negativity), which looks more like the original definition of the $n$-solvable filtration. Note that if $K$ is slice in a manifold $V$ then the boundary of $V\setminus \Delta$, which we denote by $W$, may be identified with the zero-framed surgery on $K$, $M_K$. Hence conditions on $V\setminus \Delta$  and $K$ may be re-interpreted as conditions on $W$ and its boundary $M_K$. 

\begin{defn}\label{def:seconddef} (Alternative to Definition~\ref{def:positive}) We say that a knot is an element of $\mathcal{P}_{n}$, and said to be \textbf{n-positive}, if the zero-framed surgery $M_K$ bounds a compact, oriented, connected, smooth $4$-manifold $W$ such that
\begin{itemize}
\item [1.] $H_1(M_K;\Z)\to H_1(W;\Z)$ is an isomorphism and $\pi_1(W)$ is normally generated by a meridian of $K$;
\item [2.] $H_2(W;\Z)$ has a basis consisting of connected compact oriented surfaces, $\{S_i~|~1\leq i\leq r\}$, disjointly embedded in $W$ with normal bundles of Euler class $1$.
\item [3.] for each $i$, $\pi_1(S_i)\subset \pi_1(W)^{(n)}$.
\end{itemize}
Thus in particular the intersection form on $H_2(W)$ is isomorphic to the standard diagonal form $\oplus \langle +1\rangle$ (this includes the case that $H_2(W)=0$). Then we say that $M_K$ is \textbf{n-positive} \textbf{via} $W$ (compare ~\cite[Def.8.7]{COT}), and  that $W$ is an \textbf{n-positon for $M_K$}. Similarly by changing the Euler class to $-1$, we define $\mathcal{N}_n$, the \textbf{n-negative} knots. In this case $W$ is called an \textbf{n-negaton for $M_K$}. 
\end{defn}

\begin{prop}\label{prop:defsequiv} Definition~\ref{def:positive} is equivalent to Definition~\ref{def:seconddef}
\end{prop}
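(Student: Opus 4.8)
The goal is to show that Definition~\ref{def:positive} (a knot $K$ bounding a slice disk $\Delta$ in a simply-connected positive-definite $4$-manifold $V$ with the surface-basis condition, interpreted on $V-\Delta$) is equivalent to Definition~\ref{def:seconddef} (the zero-surgery $M_K$ bounding $W$ with the corresponding $\pi_1$, homology, and surface-basis conditions). The plan is to prove this by passing back and forth between $V$ and $W$ via the correspondence $W = V - N(\Delta)$, where $N(\Delta)\cong \Delta\times D^2$ is a tubular neighborhood, and conversely $V = W \cup_{M_K} (D^2\times D^2)$, attaching a $2$-handle along a meridian of $K$ (equivalently, re-gluing $\Delta\times D^2$).

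First I would prove the implication Definition~\ref{def:positive} $\Rightarrow$ Definition~\ref{def:seconddef}. Given $V$ and $\Delta$ as in Definition~\ref{def:positive}, set $W = V - \mathrm{int}\,N(\Delta)$. Since $\partial V = S^3$ and $N(\Delta)\cap \partial V$ is a tubular neighborhood of $K$, we have $\partial W = M_K$, the zero-framed surgery (here the framing is zero because $\Delta$ is homologically trivial in $H_2(V,\partial V)$, so the meridian bounds in $W$ on one hand and the pushed-off longitude is the one inherited from $S^3-K$; I would spell out this framing computation carefully since it is the crux of why it is the \emph{zero}-framed surgery and not some other surgery). For condition 1: since $[\Delta]=0$ in $H_2(V,\partial V)$, a Mayer--Vietoris / excision argument gives $H_1(W)\cong \Z$ generated by the meridian, matching $H_1(M_K)$, and $\pi_1(W)$ is normally generated by the meridian because $\pi_1(V)=0$ and $V$ is recovered from $W$ by adding a $2$-handle along the meridian and a $4$-handle. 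For condition 2: a Mayer--Vietoris argument (using that $N(\Delta)$ is a $2$-handle-like piece and $\Delta$ is disjoint from the given surface basis $\{S_i\}$ of $H_2(V)$) shows $H_2(W)\cong H_2(V)$ with the \emph{same} basis $\{S_i\}$, which already lies in the exterior of $\Delta$ hence in $W$; the intersection form and Euler classes are unchanged, giving $\oplus\langle+1\rangle$. For condition 3: $\pi_1(W) = \pi_1(V-\Delta)$ since $W$ is a deformation retract of $V - \Delta$ (removing the open tubular neighborhood versus removing $\Delta$ itself gives homotopy equivalent spaces), so $\pi_1(S_i)\subset\pi_1(V-\Delta)^{(n)} = \pi_1(W)^{(n)}$.

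Next I would prove the converse, Definition~\ref{def:seconddef} $\Rightarrow$ Definition~\ref{def:positive}. Given $W$ with $\partial W = M_K$ satisfying the three conditions, form $V = W \cup_{\phi} (D^2\times D^2)$, where $\phi$ attaches the $2$-handle along a meridian $\mu$ of $K$ in $M_K$ with the framing induced by the surgery; this re-inserts a copy of $\Delta\times D^2$ with core disk $\Delta$ whose boundary is $K$, and $\partial V = S^3$ (since surgery on $M_K$ along the meridian with the appropriate framing undoes the zero-surgery and returns $S^3$, with $K$ reappearing as the boundary of the cocore/core). Then $\pi_1(V) = \pi_1(W)/\langle\langle\mu\rangle\rangle = 1$ by condition 1; $H_2(V)\cong H_2(W)$ with the same surface basis $\{S_i\}$ (the $2$-handle is attached along a primitive/infinite-order class killing the $\Z$ in $H_1$, contributing nothing to $H_2$), so the intersection form is $\oplus\langle+1\rangle$, positive definite; and $[\Delta]=0$ in $H_2(V,\partial V)$ because $\Delta$ is disjoint from the $S_i$ which form a basis, or equivalently because $H_1(V-\Delta) = H_1(W)\cong\Z$. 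Finally $\pi_1(V-\Delta) \cong \pi_1(W)$ again by the deformation retraction, so $\pi_1(S_i)\subset\pi_1(W)^{(n)} = \pi_1(V-\Delta)^{(n)}$. The negative-definite ($\mathcal{N}_n$) case is verbatim with Euler class $-1$ and the intersection form $\oplus\langle-1\rangle$.

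The main obstacle I anticipate is the \textbf{framing bookkeeping}: verifying precisely that $\partial(V-N(\Delta))$ is the \emph{zero}-framed surgery on $K$ (not some other framing), and conversely that attaching the $2$-handle to $M_K$ along a meridian with the correct framing produces $S^3$ with $K$ back in place. This rests on the hypothesis that $[\Delta]=0$ in $H_2(V,\partial V)$ — which forces $H_1(V-\Delta)\cong\Z$ and pins down that the boundary surgery coefficient is zero — and on the standard fact that the meridian–longitude basis of $\partial N(K)$ seen from $S^3-K$ versus from $\Delta\times D^2$ differ exactly by the framing of $\Delta$, which is zero by the homological triviality. I would present this as a short lemma (perhaps citing the analogous computation in~\cite[Def.~8.7]{COT} or~\cite{CGompf}) and then treat the remaining Mayer--Vietoris and van Kampen verifications as routine, leaving the easy diagram chases to the reader as the paper does elsewhere.
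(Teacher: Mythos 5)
Your proof takes essentially the same approach as the paper's: in both directions you pass between $V$ and $W$ via $W = V \setminus N(\Delta)$ and, conversely, re-attaching a $2$-handle along the meridian, verifying the homological and fundamental-group conditions through the same Mayer--Vietoris/van Kampen reasoning. You are somewhat more explicit about the framing bookkeeping (that the boundary is the \emph{zero}-framed surgery, via $[\Delta]=0$ in $H_2(V,\partial V)$), which the paper treats as implicit, but the overall argument is the same.
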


\begin{proof} Suppose that $K$ satisfies Definition~\ref{def:positive} via $V$. Let $W$ be the exterior of the slice disk in $V$. Then $\partial W=M_K$, $H_1(M_K)\cong H_1(W)\cong \Z$ and $\pi_1(W)$ is normally generated by the meridian of $K$. Moreover $H_2(W)\cong H_2(V)$ which by hypothesis has a basis of disjoint embedded surfaces $S_i$ such that $\pi_1(S_i)\subset \pi_1(W)^{(n)}$. As previously observed, the intersection matrix with respect to this basis is the identity matrix. Equivalently, for each $i$, the Euler class of the normal bundle of $S_i$ is $+1$. Hence $K$ is $n$-positive via $W$ as required by Definition~\ref{def:seconddef}.

Conversely suppose $K$ satisfies Definition~\ref{def:seconddef} and $M_K$ is $n$-positive via $W$. Add a $2$-handle along the meridian of $K$ to yield at  a smooth $4$-manifold $V$ such that, $\partial V=S^3$, $\pi_1(V)=0$, and the inclusion of $W$ into $V$ induces an isometry of the intersection form. Moreover the cobordism from $M_K$ to $S^3$ given by adding the $2$-handle above, if turned up-side down, is merely the cobordism from $S^3$ to $M_K$ obtained by adding  a $0$-framed $2$-handle along of $K$. Thus the cocore of the original handle is a smoothly embedded slice disk, $\Delta$, for $K\subset \partial V$. From this point of view it is clear that a copy of  $K$ may be viewed as living in $\partial V$ and is slice $V$. Moreover, the collection of surfaces $\{S_i\}$ lying in $W$ is a basis for $H_2(V)$ and is disjoint from $\Delta$. Indeed $V\setminus \Delta$ is diffeomorphic to $W$. Thus this collection also satisfies Definition~\ref{def:positive}.

\end{proof}

Definition~\ref{def:seconddef} is very similar to the definition (given first in ~\cite[Def. 2.3]{CHL5}) of a knot's being $n$-solvable in the sense of Cochran-Orr-Teichner ~\cite{COT}. To relate the $n$-positive filtration to the $n$-solvable filtration, first we define a filtration,  $\{\mathcal{F}_{n}^{odd}\}$, which is slightly larger than the $n$-solvable filtration, $\{\mathcal{F}_{n}\}$, by dropping the ``spin condition'' in the latter. Indeed $3$ slight variations of the $n$-solvable filtration have appeared:
$$
\mathcal{F}_{n}\subset \mathcal{F}_{n}^{odd}\subset \mathcal{F}_{n}^\Q.
$$
The first is the $n$-solvable filtration in ~\cite{COT}. The third was called the \textit{rational} $n$-solvable filtration in ~\cite[Definition 4.1]{COT}.  $\mathcal{F}_{n}^{odd}$, which we define below, is essentially the same as what was called the \textit{integral} $n$-solvable filtration in ~\cite[Definition 3.1]{Cha6}.

\begin{defn}\label{def:nsolvableodd} A knot $K$ is an element of $\mathcal{F}_{n}^{odd}$ if
the zero-framed surgery $M_K$ bounds a compact, connected, oriented, smooth $4$-manifold $W$ such that
\begin{itemize}
\item [1.] $H_1(M_K;\Z)\to H_1(W;\Z)$ is an isomorphism;
\item [2.] $H_2(W;\Z)$ has a basis consisting of connected, compact, oriented surfaces, $\{L_i,D_i~|~1\leq i\leq r\}$,  embedded in $W$, wherein the surfaces are pairwise disjoint except that, for each $i$, $L_i$ intersects $D_i$ transversely once with positive sign; moreover, the $L_i$ have trivial normal bundles;
\item [3.] for each $i$, $\pi_1(L_i)\subset \pi_1(W)^{(n)}$
and $\pi_1(D_i)\subset \pi_1(W)^{(n)}$.
\end{itemize}
A knot $K\in \mathcal{F}_{n.5}^{odd}$ if, in addition,
\begin{itemize}
\item [4.] for each $i$, $\pi_1(L_i)\subset \pi_1(W)^{(n+1)}$.
\end{itemize}
\end{defn}

Recall that a knot is \textbf{$n$-solvable} or lies in $\mathcal{F}_{n}$, if we additionally require that the $D_i$ have trivial normal bundles \cite[Definition 2.3]{CHL5}. This forces the intersection form to be hyperbolic and hence forces $W$ to be spin. All of the results of ~\cite{COT} that obstruct membership in $\mathcal{F}_{n}$, obstruct membership in $\mathcal{F}_{n}^{odd}$ as we shall see. In fact, it was already observed in ~\cite[Section 4]{COT} that these results held for an even larger filtration, called the \emph{rational $n$-solvable filtration}, denoted $\boldsymbol{\mathcal{F}_{n}^\Q}$. The definition of the latter is similar to that above, replacing all occurrences of $\Z$ are replaced by $\Q$ (or see ~\cite[Definition 4.1]{COT}).

Note that the definitions of $K\in \cP_n$ ( Definition~\ref{def:seconddef}), $K\in \mathcal{F}_{n}$, and $K\in\mathcal{F}_{n}^{odd}$ (Definition~\ref{def:nsolvableodd})  are identical except that the intersection form on $H_2(W)$ is required to be, in the first case, a direct sum of $+1$'s, in the second case a direct sum of hyperbolic pairs, and in the third case, a direct sum of matrices of the form
$$
\left(\begin{array}{cc}
0 & 1\\
1 & *
\end{array}\right),
$$
where $*$ is arbitrary. 

We remark, even though it is not necessary for this paper, that there are equivalent alternative  definitions  of $\mathcal{F}_{n}^{odd}$ (and its variants). Specifically:

\begin{defn}\label{def:altdefoddsolv} (Alternative Defintion) $K\in\mathcal{F}_{n}^{odd}$ if 
$K$ bounds a smooth $2$-disk $\Delta$ in a smooth $4$-manifold $V$ such that
\begin{itemize}
\item [1.] $H_1(V)=0$;
\item [2.] $H_2(V;\Z)$ has a basis consisting of connected, compact, oriented surfaces, $\{L_i,D_i~|~1\leq i\leq r\}$,  embedded in $V\setminus \Delta$, wherein the surfaces are pairwise disjoint except that, for each $i$, $L_i$ intersects $D_i$ transversely once with positive sign; moreover, the $L_i$ have trivial normal bundles; 
\item [3.]  $\pi_1(L_i)\subset \pi_1(V-\Delta)^{(n)}$ and $\pi_1(D_i)\subset \pi_1(V-\Delta)^{(n)}$
\end{itemize}
A knot $K\in \mathcal{F}_{n.5}^{odd}$ if, in addition,
\begin{itemize}
\item [4.] for each $i$, $\pi_1(L_i)\subset \pi_1(V-\Delta)^{(n+1)}$
\end{itemize}
\end{defn}

Finally:

\begin{prop}\label{prop:positiveinratnsolv} For any integer $n$,
$$
\mathcal{P}_n\subset \mathcal{F}_n^{odd}.
$$
\end{prop}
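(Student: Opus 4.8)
The plan is to use the alternative characterization of $n$-positivity given in Definition~\ref{def:seconddef} and compare it directly with the alternative description of $\mathcal{F}_n^{odd}$. The essential content is that a direct sum of $+1$'s is a special case of a direct sum of matrices of the form $\left(\begin{smallmatrix} 0 & 1 \\ 1 & * \end{smallmatrix}\right)$ after a suitable change of basis, so the obstruction is not really geometric but linear-algebraic: I must exhibit, given an $n$-positon $W$ for $M_K$, a new basis of $H_2(W)$ realizing the ``$L_i$--$D_i$'' structure required by Definition~\ref{def:nsolvableodd}, with both basis elements still represented by embedded surfaces carrying the correct $\pi_1$-condition.

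First I would start with $K \in \mathcal{P}_n$ and take an $n$-positon $W$ for $M_K$ as in Definition~\ref{def:seconddef}: so $H_2(W;\Z)$ has a basis $\{S_i\}$ of disjoint embedded surfaces each with normal Euler class $+1$ and with $\pi_1(S_i) \subset \pi_1(W)^{(n)}$. The intersection form is then $\oplus \langle +1 \rangle$. Note first that conditions 1. of Definitions~\ref{def:seconddef} and~\ref{def:nsolvableodd} coincide (the ``normally generated by a meridian'' clause in Definition~\ref{def:seconddef} is an extra hypothesis we are free to discard). So the work is entirely in producing the required $H_2$-basis. If the rank $r$ is even, I can pair up the surfaces: for each pair, replace $\{S_{2j-1}, S_{2j}\}$ by $\{L_j, D_j\}$ where $L_j$ and $D_j$ are obtained by taking connected sums / tubing of $S_{2j-1}$ and $S_{2j}$ (and a parallel copy) along arcs in $W$ so that, with respect to the new basis, the intersection form on that $2$-dimensional summand becomes $\left(\begin{smallmatrix} 0 & 1 \\ 1 & * \end{smallmatrix}\right)$. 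Concretely, $\langle S_1 + S_2, S_2\rangle$ has matrix $\left(\begin{smallmatrix} 2 & 1 \\ 1 & 1 \end{smallmatrix}\right)$ — not quite the desired form — so instead one uses $\langle S_1 - S_2, S_2 \rangle$, giving $\left(\begin{smallmatrix} 0 & 1 \\ -1 & 1 \end{smallmatrix}\right)$, and after reorienting one of them gets $\left(\begin{smallmatrix} 0 & 1 \\ 1 & * \end{smallmatrix}\right)$ with $* = 1$ (and one checks the sign on $L_i \cdot D_i$ can be arranged to be $+1$). The class $S_1 - S_2$ is represented by tubing $S_1$ to a reversed-orientation parallel copy of $S_2$ along an embedded arc; since the $S_i$ are disjoint and $W$ is connected such an arc exists, and tubing preserves disjointness from the other $S_k$ and does not increase $\pi_1$ (the arc and the meridional tube can be absorbed, and $\pi_1$ of the tubed surface is generated by $\pi_1(S_1)$, $\pi_1(S_2)$, and the loop running over the tube, all of which lie in $\pi_1(W)^{(n)}$ after noting the tube-loop is null-homotopic or at worst conjugate into $\pi_1(W)^{(n)}$). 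The $L_i$ here need trivial normal bundle: $S_1 - S_2$ has self-intersection $1 + 1 = $ wait — one must be careful, $\langle S_1 - S_2, S_1 - S_2 \rangle = 1 + 1 = 2 \neq 0$; so in fact the correct combination giving a trivial normal bundle is $\langle S_1 - S_2 \rangle$ only if the form were $\langle 1 \rangle \oplus \langle -1 \rangle$. Since here it is $\langle 1 \rangle \oplus \langle 1 \rangle$, I instead take $L_j$ to represent $S_{2j-1}$ itself (Euler class $+1$, nontrivial bundle — but Definition~\ref{def:nsolvableodd} only requires the $L_i$, not the $D_i$, to have trivial normal bundle, and in fact checking the definition: it does require $L_i$ trivial).

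Re-examining: the cleanest route, and the one I would actually carry out, is to observe that $\mathcal{F}_n^{odd}$ as described via the matrix $\left(\begin{smallmatrix} 0 & 1 \\ 1 & * \end{smallmatrix}\right)$ should be matched not by pairing but by a direct stabilization argument — or better, simply to verify that the definition of $\mathcal{F}_n^{odd}$ in Definition~\ref{def:nsolvableodd} can be relaxed: a single surface $S_i$ of square $+1$ with $\pi_1(S_i) \subset \pi_1(W)^{(n)}$ can be viewed as contributing an $L$--$D$ pair by taking $L_i = D_i = S_i$? No — they must be distinct basis elements meeting once. So: the honest approach is the ``blow-up'' trick. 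Form $W' = W \# \ov{\mathbb{C}P}^2$ for each $S_i$; in $W'$, the class $[S_i] - [\mathbb{C}P^1]$ has square $1 - 1 = 0$ and is represented by an embedded surface (tube $S_i$ to the exceptional sphere) with $\pi_1$ still in $\pi_1(W')^{(n)}$ (the exceptional sphere is simply connected, and $\pi_1(W') = \pi_1(W)$), while $[\mathbb{C}P^1]$ has square $-1$; the pair $\{[S_i]-[\mathbb{C}P^1], [\mathbb{C}P^1]\}$ has intersection matrix $\left(\begin{smallmatrix} 0 & 1 \\ 1 & -1 \end{smallmatrix}\right)$, which is exactly of the allowed form with $* = -1$, and the first element $L_i := [S_i] - e$ has trivial normal bundle. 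But wait — blowing up changes $M_K$'s bounding manifold; that is fine since $\partial W' = \partial W = M_K$ and condition 1. is preserved. Then $\{L_i, D_i\}$ over all $i$ is the required basis of $H_2(W')$, so $K \in \mathcal{F}_n^{odd}$. I expect the main obstacle to be the $\pi_1$-bookkeeping during the tubing operation: one must confirm that tubing $S_i$ to the exceptional sphere along an embedded arc in $W'$ produces a surface whose $\pi_1$ still maps into $\pi_1(W')^{(n)} = \pi_1(W)^{(n)}$ — this works because $\pi_1$ of the tubed surface is generated by the images of $\pi_1(S_i)$, $\pi_1(\mathbb{C}P^1) = 1$, and the meridional circle of the connecting tube, and the latter bounds a disk (the tube is a $1$-handle attached to a connected surface) so contributes nothing; hence the image in $\pi_1(W')$ equals the image of $\pi_1(S_i)$, which lies in $\pi_1(W)^{(n)}$ by hypothesis. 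The orientation/sign conventions ensuring $L_i \cdot D_i = +1$ rather than $-1$ are routine and handled by reversing an orientation if needed.
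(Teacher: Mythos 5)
Your final argument is correct and is essentially the paper's proof: blow up once per basis surface by forming $W' = W \#_m (-\mathbb{C}P(2))$, tube each $S_i$ to an exceptional sphere to produce a square-zero surface $L_i$ with the same $\pi_1$-condition, and pair it with a dual surface $D_i$ meeting $L_i$ once. The only cosmetic difference is the choice of $D_i$: the paper takes $D_i$ to be a push-off of $S_i$ (giving $* = +1$), whereas you take $D_i$ to be (a push-off of) the exceptional sphere (giving $* = -1$); either is an admissible value of $*$ in Definition~\ref{def:nsolvableodd}, and in both cases one must use a parallel copy rather than the original sphere, since the tubing consumes a disk from it.
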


The key to the  proof is the simple fact that  $<+1>\oplus <-1>$ is congruent to the matrix 
$$
\left(\begin{array}{cr}
0 & 1\\
1 & -1
\end{array}\right).
$$
More geometrically, the idea is that if a knot is slice in a smooth manifold $V$ that is homeomorphic to a punctured $\#_m (\mathbb{C}P(2))$ then, by connect-summing with copies of $-\mathbb{C}P(2)$ away from the slice disk, we see that $K$ is also slice in $V\#_m (-\mathbb{C}P(2))$. The latter is known to be homeomorphic to a punctured $\#_m S^2\tilde{\times}S^2$.

\begin{proof}   Suppose $K$ satisfies Definition~\ref{def:seconddef} via $W$ whose intersection form is $\oplus_m \langle +1\rangle$ with basis $\{e_i\}$ represented by $\{S_i~|~ 1\leq i\leq m\}$. Then let $W'=W\#_m (-\mathbb{C}P(2))$, performing the connected sums far away from the $S_i$. Note that the fundamental group is unchanged. Let $P_i$ be the embedded $\mathbb{C}P(1)$ contained in the $i^{th}$ copy of $-\mathbb{C}P(2))$. Tube $P_i$ to $S_i$ (avoiding the other $S_j$ and $P_j$) and call the resulting embedded surface $L_i$.  It will have self-intersection zero and hence trivial normal bundle. Let $D_i$ be a push-off of $S_i$. Then $D_i$ intersects $L_i$ precisely once. It follows that $W'$ together with the collection $\{L_i,D_i\}$ satisfies Definition~\ref{def:nsolvableodd}. Hence $K\in \mathcal{F}_n^{odd}$.

\end{proof}

\begin{cor}\label{cor:positiveinratnsolv} For any integer $n$,
$$
\cB_n\subset\langle\mathcal{P}_n\rangle=\langle\mathcal{N}_n\rangle=\langle\mathcal{P}_n\cup\mathcal{N}_n\rangle\subset \mathcal{F}_n^{odd}\subset \mathcal{F}_n^{\Q}.
$$
\end{cor}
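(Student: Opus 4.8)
The plan is to prove the chain in Corollary~\ref{cor:positiveinratnsolv} one containment at a time, most of which are formal once Proposition~\ref{prop:positiveinratnsolv} is in hand. First I would establish the equalities $\langle\mathcal{P}_n\rangle=\langle\mathcal{N}_n\rangle=\langle\mathcal{P}_n\cup\mathcal{N}_n\rangle$. The key observation is that $K\in\mathcal{P}_n$ if and only if $-K\in\mathcal{N}_n$ (from the mirror-image behavior of the positive/negative conditions noted right after Definition~\ref{def:positive}). Since taking a generating set to the set of its inverses does not change the subgroup generated, $\langle\mathcal{P}_n\rangle=\langle\{-K:K\in\mathcal{P}_n\}\rangle=\langle\mathcal{N}_n\rangle$. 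For the middle equality, $\langle\mathcal{P}_n\cup\mathcal{N}_n\rangle$ obviously contains both $\langle\mathcal{P}_n\rangle$ and $\langle\mathcal{N}_n\rangle$; conversely, since $\mathcal{N}_n=\{-K:K\in\mathcal{P}_n\}\subset\langle\mathcal{P}_n\rangle$, we get $\mathcal{P}_n\cup\mathcal{N}_n\subset\langle\mathcal{P}_n\rangle$, hence $\langle\mathcal{P}_n\cup\mathcal{N}_n\rangle\subset\langle\mathcal{P}_n\rangle$.

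Next I would dispatch the two end containments. The leftmost, $\cB_n\subset\langle\mathcal{P}_n\rangle$, is immediate since $\cB_n=\mathcal{N}_n\cap\mathcal{P}_n\subset\mathcal{P}_n\subset\langle\mathcal{P}_n\rangle$. For $\langle\mathcal{P}_n\rangle\subset\mathcal{F}_n^{odd}$, I would first note that $\mathcal{F}_n^{odd}$ is a subgroup of $\mathcal{C}$: it is closed under connected sum (boundary-connect-sum the two $4$-manifolds $W$, which preserves all three conditions of Definition~\ref{def:nsolvableodd}) and under inverses (reversing orientation of $W$ sends the intersection form to its negative, but one can reglue/adjust so that the required form — a sum of matrices $\left(\begin{smallmatrix}0&1\\1&*\end{smallmatrix}\right)$ — is still realized, as is standard for these filtrations; this is exactly the kind of argument made in \cite{COT} and \cite{Cha6}). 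Granting that $\mathcal{F}_n^{odd}$ is a subgroup, Proposition~\ref{prop:positiveinratnsolv} gives $\mathcal{P}_n\subset\mathcal{F}_n^{odd}$, and since the right-hand side is a subgroup, $\langle\mathcal{P}_n\rangle\subset\mathcal{F}_n^{odd}$. Finally, $\mathcal{F}_n^{odd}\subset\mathcal{F}_n^{\Q}$ follows directly from comparing Definitions: replacing all the integral homology conditions by rational ones only weakens the requirements, so every $n$-positon witnessing $K\in\mathcal{F}_n^{odd}$ also witnesses $K\in\mathcal{F}_n^{\Q}$ (this is the remark made just before Proposition~\ref{prop:positiveinratnsolv}).

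The main obstacle is verifying that $\mathcal{F}_n^{odd}$ is genuinely a subgroup — specifically closure under inverses — since the intersection-form condition in Definition~\ref{def:nsolvableodd} is not manifestly symmetric under orientation reversal. I expect this to be handled exactly as in \cite{COT,Cha6}: one uses that if $M_K$ bounds $W$ with the required properties, then $M_{-K}=-M_K$ bounds $-W$, and the blocks $\left(\begin{smallmatrix}0&1\\1&*\end{smallmatrix}\right)$ negate to $\left(\begin{smallmatrix}0&-1\\-1&-*\end{smallmatrix}\right)$, which is congruent (by negating one basis vector of each pair) back to $\left(\begin{smallmatrix}0&1\\1&-*\end{smallmatrix}\right)$, again of the allowed shape; the $\pi_1$-conditions are unaffected by these changes of basis since they only involve the surfaces $L_i$, whose homotopy class can be preserved. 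Alternatively, and perhaps more cleanly, one can cite that the corresponding facts for $\mathcal{F}_n$ and $\mathcal{F}_n^{\Q}$ are established in \cite{COT}, and the identical argument applies to $\mathcal{F}_n^{odd}$. Once this is granted the corollary is a short formal chain assembled from the pieces above.
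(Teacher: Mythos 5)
Your proof is correct and follows essentially the same route as the paper: both reduce everything to Proposition~\ref{prop:positiveinratnsolv} together with the fact that $\mathcal{F}_n^{odd}$ is a subgroup of $\mathcal{C}$, using $K\in\mathcal{P}_n\Leftrightarrow -K\in\mathcal{N}_n$ to handle the equalities. The only difference is one of thoroughness rather than strategy: the paper treats the subgroup property of $\mathcal{F}_n^{odd}$ and the equalities as immediate (deferring to \cite{COT,Cha6}), whereas you spell out the closure-under-inverses argument via orientation reversal and a basis change on the $\left(\begin{smallmatrix}0&1\\1&\ast\end{smallmatrix}\right)$ blocks; your sketch of that check is accurate, with the minor remark that the $\pi_1$-conditions in Definition~\ref{def:nsolvableodd} involve both $L_i$ and $D_i$, though both are indeed unaffected by reversing the orientation of $D_i$.
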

\begin{proof} Here the brackets mean ``the subgroup generated by''. The last inclusion follows immediately from the definitions, as do the equalities.  If $K\in \mathcal{N}_n$ then $-K\in \mathcal{P}_n$ so  $-K\in \mathcal{F}_n^{odd}$ by Proposition~\ref{prop:positiveinratnsolv}. Thus $K\in \mathcal{F}_n^{odd}$ since the latter is a subgroup.
\end{proof}

\begin{cor}\label{cor:ponealgslice} If $K\in \mathcal{P}_1$ (or $K\in \mathcal{N}_1$) then $K$ is algebraically slice.
\end{cor}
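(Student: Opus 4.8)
The plan is to deduce Corollary~\ref{cor:ponealgslice} directly from Corollary~\ref{cor:positiveinratnsolv} together with the known fact that $1$-solvability (in any of the variant senses, in particular $\mathcal{F}_1^{\Q}$) implies algebraic sliceness. More precisely, by Corollary~\ref{cor:positiveinratnsolv} we have $\mathcal{P}_1 \subset \mathcal{F}_1^{odd} \subset \mathcal{F}_1^{\Q}$, and symmetrically $\mathcal{N}_1 \subset \mathcal{F}_1^{odd}$ since $K \in \mathcal{N}_1$ forces $-K \in \mathcal{P}_1 \subset \mathcal{F}_1^{odd}$ and $\mathcal{F}_1^{odd}$ is a subgroup; so it suffices to show any $K \in \mathcal{F}_1^{\Q}$ is algebraically slice. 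This last implication is essentially \cite[Remark 4.4 / Theorem 4.2]{COT} (the rational $n$-solvable filtration sits inside the corresponding stage of the relevant Witt/algebraic filtration), but I would give the self-contained argument below so the reader can see it does not require the spin hypothesis.

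First I would take $K \in \mathcal{P}_1$ (the case $\mathcal{N}_1$ being handled by the mirror as above) and apply Definition~\ref{def:seconddef} to get a $1$-positon $W$ for $M_K$, with $\partial W = M_K$, $H_1(M_K) \xrightarrow{\cong} H_1(W) \cong \Z$, intersection form $\oplus\langle +1\rangle$ on $H_2(W)$ with basis represented by disjointly embedded surfaces $S_i$ satisfying $\pi_1(S_i) \subset \pi_1(W)^{(1)}$. The point of the $\pi_1(S_i)\subset \pi_1(W)^{(1)}$ condition is exactly that the generators of $H_2(W)$ lift to the universal abelian cover of $W$; equivalently, working with $\Z$-coefficients twisted by the abelianization $\pi_1(W)\to\Z$, the surfaces $S_i$ represent classes in $H_2(W;\Q[\Z])$ (more precisely, the image of $H_2(W;\Q[t^{\pm1}]) \to H_2(W;\Q)$ contains the $[S_i]$, since each $S_i$ bounds in the infinite cyclic cover's relevant chain level because $\pi_1(S_i)$ dies in $H_1$ of that cover). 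I would then push a Seifert surface for $K$ into $B^4$ and glue $-B^4$ to $W$ along $M_K$ to form a closed-up picture, or, more cleanly, follow the standard computation: the Blanchfield form of $K$ is determined by the $\Q[t^{\pm1}]$-valued intersection form on $H_2$ of the infinite cyclic cover of $W$, and the hypothesis on the $S_i$ produces a half-rank summand (a metabolizer) on which this form vanishes, hence the Blanchfield form is metabolic, hence $K$ is algebraically slice.

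The cleanest route, which I would actually write, is simply to invoke the already-established inclusion and cite the literature: $\mathcal{P}_1 \cup \mathcal{N}_1 \subset \mathcal{F}_1^{\Q}$ by Corollary~\ref{cor:positiveinratnsolv}, and every knot in $\mathcal{F}_1^{\Q}$ (indeed in $\mathcal{F}_{0.5}^{\Q}$) is algebraically slice by \cite[Theorem 4.6 and Remark 4.4]{COT} — there the authors prove that rational $(0.5)$-solvability implies the Blanchfield form is metabolic with the metabolizer coming from the kernel of $H_1(M_K;\Q[t^{\pm1}]) \to H_1(W;\Q[t^{\pm1}])$, and the spin/integral refinements are irrelevant to that argument. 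The main (and only) obstacle is making sure the rational coefficients and the dropping of the spin condition cause no trouble; but since the algebraic concordance obstruction is itself a rational invariant (Levine's algebraic concordance group injects into its rationalization on the relevant part, and the Blanchfield/Seifert form arguments of \cite[\S4]{COT} are stated over $\Q$), there is genuinely nothing extra to check. Thus the corollary follows formally from Corollary~\ref{cor:positiveinratnsolv} and the $n=1$ (indeed $n=0.5$) case of the Cochran–Orr–Teichner obstruction theory.
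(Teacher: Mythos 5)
Your proposal is correct and takes essentially the same approach as the paper: both pass through the inclusion $\mathcal{P}_1\subset\mathcal{F}_1^{odd}$ (Proposition~\ref{prop:positiveinratnsolv}/Corollary~\ref{cor:positiveinratnsolv}) and then invoke the Cochran--Orr--Teichner result (~\cite[Theorem 4.4]{COT}) that the rational Blanchfield form admits a metabolizer, hence $K$ is algebraically slice. Your extra geometric sketch and the observation that even $\mathcal{F}_{0.5}^{\Q}$ suffices are fine but not needed; the paper's proof is the two-line version of exactly this argument.
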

\begin{proof} If $K\in \mathcal{P}_1$ then $K\in \mathcal{F}_1^{odd}$. By ~\cite[Theorem 4.4]{COT} the Blanchfield form on the rational Alexander module of $K$ has a self-annihilating submodule (a Lagrangian). It follows that $K$ is an algebraically slice knot.
\end{proof}

Because of the close relationship between the $n$-positive filtration and (variations of) the $n$-solvable filtration, there exist \emph{signature-defect} obstructions that can assist in determining whether or not a given knot lies in a particular term of $\mathcal{P}_{*}$ or $\cB_{*}$. Given a closed, oriented 3-manifold $M$, a discrete group $\G$, and a representation $\phi : \pi_1(M)
\to \G$, the \textbf{von Neumann
$\boldsymbol{\rho}$-invariant}, $\rho(M,\phi)$, was defined by Cheeger and Gromov.

\begin{thm}\label{thm:generalsignaturesobstruct}  Suppose $K\in \mathcal{P}_{n}$, so $K$ bounds a slice disk $\Delta$ in an $n$-positon $V$. Suppose $\G$ is a poly-(torsion-free-abelian) group and 
$$
\psi:\pi_1(V\setminus \Delta)\to \G,
$$
is a homomorphism  whose restriction to $\pi_1(\partial(V\setminus\Delta))\cong \pi_1(M_K)$ is denoted  $\phi$. 
Then
$$
\rho(M_K,\phi)\leq 0;
$$
and if, moreover, $\phi(\pi_1(V\setminus\Delta)^{(n)})=0$, then
$$
\rho(M_K,\phi)=0.
$$
\end{thm}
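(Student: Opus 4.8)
The plan is to mimic the standard $\rho$-invariant argument from Cochran--Orr--Teichner, but tracking the sign coming from the positive-definiteness of the intersection form. Let $W = V \setminus \Delta$, so $\partial W = M_K$ and $\psi : \pi_1(W) \to \G$ restricts to $\phi$ on $\pi_1(M_K)$. Since $\G$ is poly-(torsion-free-abelian), $\Z\G$ embeds in its (skew) quotient field $\K$, and the $L^2$-signature (von Neumann $\rho$-invariant) satisfies the key defect formula: for any $4$-manifold $W$ with $\partial W = M_K$ over which $\phi$ extends to $\psi$,
\begin{equation*}
\rho(M_K,\phi) = \sigma^{(2)}_\G(W,\psi) - \sigma(W),
\end{equation*}
where $\sigma^{(2)}_\G(W,\psi)$ is the $L^2$-signature of the intersection form on $H_2(W;\K)$ (or equivalently on the image of $H_2(W;\Z\G) \to H_2(W;\K)$) and $\sigma(W)$ is the ordinary signature of $W$. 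This is exactly the Cheeger--Gromov/Lück--Schick signature theorem used in \cite{COT}.

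The key point is to bound $\sigma^{(2)}_\G(W,\psi)$ using the surface basis. First I would observe that, because $\pi_1(W) = \pi_1(V \setminus \Delta)$ is normally generated by the meridian of $K$ and $H_1(M_K) \to H_1(W)$ is an isomorphism onto $\Z$, the image of $H_2(W;\Z\G)$ in $H_2(W;\K)$ has rank at most $\beta_2(W) = \rank H_2(V)$; in fact the ordinary intersection form $\oplus\langle +1\rangle$ and the $\K$-intersection form are both represented by the same-sized matrices, and the $L^2$-signature is bounded above by the rank. The cleanest route is the standard one: one shows that the $\K$-intersection form on $H_2(W;\K)$ is presented by a matrix congruent (over $\K$) to a matrix whose $L^2$-signature is at most $\beta_2(W)$ — indeed, since the $S_i$ form a basis of $H_2(W;\Z)$ with intersection matrix the identity, a routine diagonalization/bordism argument shows $\sigma^{(2)}_\G(W,\psi) \le \beta_2(W) = \sigma(W)$, giving $\rho(M_K,\phi) \le 0$. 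For the sharpened conclusion: if $\phi(\pi_1(W)^{(n)}) = 0$, then each $S_i$, having $\pi_1(S_i) \subset \pi_1(W)^{(n)}$, lifts to the $\G$-cover $\widetilde{W}$; one then runs the argument of \cite[Thm 4.2, Lemma 4.5]{COT} (or its integral-coefficient analogue) showing that the lifted $S_i$ together with their duals realize the $\K$-intersection form as $\oplus\langle +1\rangle$ over $\K$ as well, so $\sigma^{(2)}_\G(W,\psi) = \beta_2(W) = \sigma(W)$ and hence $\rho(M_K,\phi) = 0$.

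In more detail for the inequality: let $r = \rank H_2(W;\Z\G \to \K)$ be the $\K$-rank of the image. Poincar\'e--Lefschetz duality over $\K$ for $(W, M_K)$, combined with the fact that $H_*(M_K;\K)$ vanishes in the relevant degrees when $\phi$ is nontrivial (or is carried by the identity summand when $\phi$ factors through $\Z$), shows that the $\K$-intersection form on this image is nonsingular of rank $r \le \beta_2(W)$. Its $L^2$-signature lies in $[-r, r]$, but more is true: since the ordinary form is positive definite of rank $\beta_2(W)$, a Novikov-additivity / bordism-invariance argument for the $L^2$-signature (gluing $W$ to $-W$ along $M_K$ and using that the closed $L^2$-signature equals the ordinary one for $\pi_1$ poly-torsion-free-abelian) pins down $\sigma^{(2)}_\G(W,\psi) \le \sigma(W) = \beta_2(W)$, with equality exactly when the form lifts to a positive-definite form over $\K$, which is what the vanishing hypothesis $\phi(\pi_1(W)^{(n)}) = 0$ supplies via the embedded surfaces.

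The main obstacle I anticipate is the precise algebraic bookkeeping showing $\sigma^{(2)}_\G(W,\psi) \le \beta_2(W)$ over the skew field $\K$ — one must be careful that the $\K$-intersection form on $H_2(W;\K)$, which need not have the same rank as $H_2(W;\Z)$, still has $L^2$-signature bounded by $\beta_2(W)$ and not merely by its own (possibly smaller) rank. This is handled by the reduced-$L^2$-homology comparison between $H_2(W;\Z)\otimes_{\Z\G}\K$-type objects and $H_2^{(2)}(\widetilde{W})$ and Lück's approximation theorem, exactly as in \cite[Section 4]{COT}; once that comparison is in place, the sign assertion is immediate from $\sigma(W) = \beta_2(W)$, and the equality case follows by the same lifting argument used there for the $n$-solvable filtration, now applied to the $\langle +1\rangle$ form instead of the hyperbolic form.
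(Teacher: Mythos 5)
Your approach is correct and, for the inequality, coincides with the paper's: the defect formula $\rho(M_K,\phi)=\sigma^{(2)}(W,\psi)-\sigma(W)$, the identity $\sigma(W)=\beta_2(W)$ from positive-definiteness, and the bound $|\sigma^{(2)}(W,\psi)|\le\beta_2(W)$. The concern you raise about the $\K$-rank of $H_2(W;\K)$ possibly differing from $\beta_2(W;\Q)$ is a real phenomenon, but the bound $|\sigma^{(2)}(W,\psi)|\le\beta_2(W)$ holds anyway and is a standard cited fact (the paper points to Cha's Lemma~2.7); you do not need the Novikov-additivity/gluing detour you sketch. For the equality case, the paper takes a shorter route than your surface-lifting argument: by Corollary~\ref{cor:positiveinratnsolv}, $K\in\cP_n$ gives $K\in\mathcal{F}_{n}^\Q\subset\mathcal{F}_{n-.5}^\Q$, and then it simply cites~\cite[Theorem~4.2]{COT} to conclude $\rho(M_K,\phi)=0$. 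Your direct sketch --- lift each $S_i$ to the $\G$-cover (possible since $\pi_1(S_i)\subset\pi_1(W)^{(n)}$ and $\phi(\pi_1(W)^{(n)})=0$) and show the lifts present the $\K$-intersection form --- is exactly the content underlying that theorem, so the two routes agree in substance. The thing to watch in your version is the step where you say the lifts ``realize the $\K$-intersection form as $\oplus\langle+1\rangle$'': having identity intersection matrix shows the lifts span a $\oplus\langle+1\rangle$ \emph{summand}; that they span all of $H_2(W;\K)$ requires an additional Euler-characteristic/rank count, which is precisely the part that the paper's citation of COT Theorem~4.2 absorbs.
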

\begin{proof} Let $W=V\setminus\Delta$. Recall that
$$
\rho(M_K,\phi)= \sigma^{(2)}(W,\psi)-\sigma(W)
$$
where $\sigma^{(2)}(W,\psi)$ is the von Neumann signature  of $W$ corresponding to $\psi$ (also called the $L^{(2)}$-signature) (see ~\cite[Section 5, Lemma 5.9]{COT}. Since the intersection form on $W$ is positive-definite, $\sigma(W)=\beta_2(W)$. But it is known that 
$$
|\sigma^{(2)}(W,\psi)|\leq \beta_2(W)
$$
(see for example ~\cite[Lemma 2.7]{Cha3}). The first part of the theorem follows immediately. 

Now suppose $\psi(\pi_1(V\setminus\Delta)^{(n)})=0$. Since $K\in \cP_0$, by Corollary~\ref{cor:positiveinratnsolv}, $\K\in \mathcal{F}_{n}^\Q$. Hence $\K\in \mathcal{F}_{n-.5}^\Q$. Then,  by ~\cite[Theorem 4.2]{COT},  $\rho(M_L,\phi)=0$.
\end{proof}

Another major result of ~\cite{COT}  is that if $K $is a  slice knot then certain so-called higher-order Alexander modules of $K$ have submodules that are self-annihilating with respect to a higher-order Blanchfield linking form. Specifically if $\G$ is a poly-(torsion-free-abelian) group  then $\Z\G$ is an Ore domain with classical (skew) field of fractions $\mathcal{K}$. Suppose $\mathcal{R}$ is a classical localization of  $\Z\G$ where $\Z\G\subset \mathcal{R}\subset \mathcal{K}$. If $\phi:\pi_1(M_K)\to\G$ is a coefficient system then $H_1(M_K; \mathcal{R})$ is defined and is called a higher-order Alexander module (see ~\cite[Section 2]{COT}). Moreover there is a linking form defined on this higher-order Alexander module that takes values in $\mathcal{K}/\mathcal{R}$.

\begin{thm}\label{thm:hyperbolicity}  Suppose $n\geq 1$ and $K\in \mathcal{P}_{n}$, so $K$ bounds a slice disk $\Delta$ in an $n$-positon $V$. Let $W=V\setminus \Delta$. Suppose $\phi$ above extends to  $\pi_1(W)$ such that $\psi(\pi_1(W)^{(n-1)})=0$.  Let $P$ be the kernel of
$$
j_*:H_1(M_K; \mathcal{R})\to H_1(W; \mathcal{R}).
$$
Then $P\subset P^\perp$ with respect to the higher-order linking form. If $\mathcal{R}$ is a PID then $P=P^\perp$.
\end{thm}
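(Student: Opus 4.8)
The plan is to prove that the kernel $P$ of $j_*:H_1(M_K;\R)\to H_1(W;\R)$ is self-annihilating under the higher-order Blanchfield form, adapting the argument for $n$-solvable knots in \cite[Theorem 4.4]{COT} to the present setting where $W$ is an $n$-positon (rather than an $n$-solution). The crucial point, as in \cite{COT}, is that the relevant duality/half-lives-half-dies argument depends only on (a) $M_K$ being the $\R$-coefficient boundary of $W$, (b) the map $H_1(M_K;\R)\to H_1(W;\R)$ and its dual interacting correctly via the long exact sequences of the pair $(W,M_K)$ and Poincar\'e--Lefschetz duality with $\K/\R$ coefficients, and (c) the vanishing $\psi(\pi_1(W)^{(n-1)})=0$, which guarantees that $H_*(W;\R)$ is computed by coefficients that factor through the relevant metabelian-type quotient and hence that the torsion linking form on $M_K$ with $\R$-coefficients is defined and non-singular. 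None of these inputs uses the hyperbolicity of the intersection form on $H_2(W)$, so the $n$-positon hypothesis suffices.

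Concretely, first I would set up the commutative diagram relating $H_2(W;\K/\R)$, $H_2(W,M_K;\K/\R)$, $H_1(M_K;\R)$, $H_1(W;\R)$, and their $\K/\R$-analogues, using that the ordinary and $\K/\R$-coefficient homology of $W$ are related by the universal coefficient / Bockstein machinery; here the condition $\psi(\pi_1(W)^{(n-1)})=0$ forces $H_*(W;\R)$ to be $\R$-torsion in the relevant degrees (this is exactly the point where, in \cite{COT}, one invokes that $W$ is an ``$(n-1)$-solvable-type'' cobordism, and in our case follows since $K\in\cP_0\Rightarrow K\in\mathcal F^{\Q}_{n}$ by Corollary~\ref{cor:positiveinratnsolv}, plus the coefficient hypothesis). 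Second, I would run the standard linking-form duality argument: the composite $P\hookrightarrow H_1(M_K;\R)\xrightarrow{\text{B{\ell}}} \overline{\Hom(H_1(M_K;\R),\K/\R)}$ annihilates $P$ because any class in $P$ dies in $W$ and hence pairs trivially with any other class dying in $W$, by naturality of the linking form under the inclusion and the fact that the $\K/\R$-valued form extends over $W$ up to the image of $H_2(W;\K/\R)$, which is Lagrangian by Poincar\'e duality and the long exact sequence. This gives $P\subseteq P^{\perp}$.

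Third, for the equality $P=P^{\perp}$ when $\R$ is a PID, I would use the rank/order count (``half lives, half dies''). Over a PID the Alexander module $H_1(M_K;\R)$ is a torsion module with a well-defined order, the linking form is non-singular, so $|H_1(M_K;\R)| = |P|\cdot|P^{\perp}|$ forces $|P|=|P^{\perp}|$ once we know $P\subseteq P^{\perp}$; combined with the inclusion this gives $P=P^{\perp}$. The input I need here is that $j_*:H_1(M_K;\R)\to H_1(W;\R)$ has image exactly ``half'' the module, which follows from the exact sequence of $(W,M_K)$ with $\R$-coefficients together with Poincar\'e--Lefschetz duality $H_1(W,M_K;\R)\cong \overline{H^3(W;\R)}$ and the resulting symmetry between $\ker j_*$ and $\mathrm{coker}$, exactly as in \cite[Theorem 4.4]{COT}.

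The main obstacle I anticipate is verifying the coefficient-system bookkeeping: one must check that $\psi$ restricted to $\pi_1(M_K)$ actually produces a non-degenerate higher-order Blanchfield form on $H_1(M_K;\R)$ and that the PTFA hypothesis on $\G$ together with $\psi(\pi_1(W)^{(n-1)})=0$ is the right analogue of the ``$(n-1)$-solvable'' coefficient condition used in \cite{COT}. This is essentially routine given the machinery of \cite{COT} (Ore localization of $\Z\G$, the fact that $\R$-homology of $W$ is torsion, the non-singularity of higher-order linking forms), but it requires care to state precisely; I would phrase the proof so that it reduces, line by line, to the proof of \cite[Theorem 4.4]{COT} with ``$n$-solution'' replaced by ``$n$-positon'' and with the observation that the hyperbolic-form hypothesis there is never actually used in the linking-form portion of that argument — only the existence of the cobordism $W$ with the stated $\pi_1$-control on surfaces and the coefficient vanishing matter.
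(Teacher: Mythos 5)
Your proposal is correct and takes essentially the same route as the paper: the paper's proof simply observes that $K\in\cP_n$ implies $K\in\mathcal{F}_n^{\Q}$ via Corollary~\ref{cor:positiveinratnsolv}, then invokes \cite[Theorem 4.4]{COT} directly, noting (as you do) that the PID hypothesis there is only needed for the equality $P=P^{\perp}$ and not for the inclusion $P\subset P^{\perp}$. Your version is more explicit about why the hyperbolicity hypothesis in \cite{COT} is never used in the linking-form argument, whereas the paper sidesteps that by first converting the $n$-positon to a rational $n$-solution (connect-summing with $\mp\mathbb{C}P(2)$'s leaves $\pi_1$ and the relevant $H_1$ unchanged, so the kernel $P$ is preserved), but the two amount to the same reduction.
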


\begin{proof} Since $K\in \cP_0$, by Corollary~\ref{cor:positiveinratnsolv}, $\K\in \mathcal{F}_{n}^\Q$. Then the claimed result follows from ~\cite[Theorem 4.4]{COT}. Note that in the proof of that theorem, the fact that $\mathcal{R}$ was a PID was not used to prove that $P\subset P^\perp$.

\end{proof}

Note that  Theorem~\ref{thm:hyperbolicity} may be applied to the abelianization map $\phi:\pi_1(M_K)\to\Z=\Gamma$,  with $\mathcal{R}=\Q[t,t^{-1}]$ and with $n=1$ to yield Corollary~\ref{cor:ponealgslice}.

\section{d-invariant obstructions to membership in $\mathcal{P}_1, \mathcal{N}_1, \cB_1$ and $\mathcal{T}_1$}\label{sec:firstlevel}

In this section we prove we prove that (the signs of) the Ozsv\'ath-Szabo correction terms for the prime-power cyclic branched covers of a knot obstruct that knot's lying in $\cN_1$ (respectively, in $\P_1$). First we review the known result that we will generalize, namely that these corrections terms obstruct a knot,s being a slice knot.

Recall that if  $K$ is a knot then any prime-power cyclic cover of $S^3$ branched over $K$ is a $\Q$-homology sphere $Y$  ~\cite{CG1}. There is a symmetric, non-singular \textit{linking form} defined on the torsion subgroup of the first homology of any oriented $3$-manifold: 
$$
\ell : H_1(Y) \times H_1(Y)\to \Q  ~\text{mod} ~ \Z.
$$
Also recall that, given a rational homology sphere $Y$ and a $\spinc$ structure $\s$ on $Y$,  Ozsv\'ath and Szab\'o have defined  the so-called \emph{correction term} $d(Y,\s)\in \Q$ ~\cite[Definition 4.1]{OzSz1}. They showed that the correction terms are invariants of the $\spinc$-rational homology cobordism class. More generally, if $Y$ bounds a negative-definite $4$-manifold $X$, and if $\s\in\spinc(Y)$ extends to $\t\in\spinc(X)$, the correction term satisfies~\cite[Theorem 9.6]{OzSz1}
\begin{equation}\label{eq:OSinequality}
	c_1(\t)^2 + \beta_2(X) \leq 4\,d(Y,\s)
\end{equation}
There is a similar inequality if $X$ is positive definite. 
\begin{equation}\label{eq:OSinequality2}
	c_1(\t)^2 - \beta_2(X) \geq 4\,d(Y,\s)
\end{equation}
The meaning of $c_1(\t)^2$ is explained more thoroughly later in this section.

These invariants can be combined to yield obstructions to a knots being a slice knot, which we now review. It is these obstructions that we will generalize. Suppose $K$ is a slice knot. In that case it is known that the corresponding cover of $B^4$ branched over the slice disk is a $\Q$-homology ball $\widetilde{V}$ and that the linking form is \textbf{metabolic} ~\cite{CG1} . The latter means that there is a \textbf{metabolizer} for the linking form, that is, a subgroup $G$, such that $G=G^\perp$ with respect to $\ell$.   Thus, for any $\t$, $\beta_2(\widetilde{V})=0=c_1(\t)^2$ (so its intersection form is both positive and negative definite), and it follows from ~\ref{eq:OSinequality} and ~\ref{eq:OSinequality2} that $d(Y,\s)=0$. Various authors have observed that this line of thinking leads to the meta-statement: \textit{d-invariants of prime-power branched covers obstruct being a slice knot} ~\cite{OwSt,GRSt,JaNa,GJ}. Specifically, by this we  mean:

\begin{thm} ~\cite[Prop. 4.1, Cor. 4.2]{OwSt}  \label{thm:dinvsslice} If $K$ is a slice knot and $Y$ is the $p^r$-fold cyclic  cover of $S^3$ branched over $K$, then there is a metabolizer $G<H_1(Y)$  for the $\Q/\Z$-linking form on $H_1(Y)$; and there exists a $\spinc$ structure $\mathfrak{s}_0$ on $Y$  such that $d(Y,\mathfrak{s}_0+\hat{z})=0$ for all $z\in G$, where $\hat{z}$ is the Poincare dual of $z$. Moreover  if $X$ is the $p^r$-fold cyclic  cover of $B^4$ branched over the slice disk for $K$, then $G$ may be taken to be the kernel of
$$
j_*:H_1(Y)\to H_1(X).
$$
Furthermore  we may take $\mathfrak{s}_0$ to be a $\spinc$ structure corresponding to a spin structure on $Y$.
\end{thm}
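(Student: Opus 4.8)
The plan is to manufacture, from the slice disk, a $4$-manifold filling $Y$ whose intersection form is trivial, read off the metabolizer from it by the classical Casson--Gordon duality count, and then feed the vanishing of $\beta_2$ and of $c_1^2$ into the Ozsv\'ath--Szab\'o inequalities \eqref{eq:OSinequality} and \eqref{eq:OSinequality2} to force $d(Y,\mathfrak{s})=0$ on the nose. Setup: let $\Delta\subset B^4$ be a slice disk for $K$, let $X$ be the $p^r$-fold cyclic cover of $B^4$ branched over $\Delta$, so $\partial X=Y$. Since $p^r$ is a prime power, the arguments of \cite{CG1} show $Y$ is a $\Q$-homology sphere and $X$ a $\Q$-homology ball; hence $\beta_2(X)=0$, the intersection form of $X$ is the zero form (so vacuously both positive- and negative-definite), and $H^2(X;\Z)\cong H_1(X;\Z)$ is finite, so $c_1$ of every $\spinc$ structure on $X$ is torsion.

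Metabolizer. Put $G=\ker\bigl(j_*\colon H_1(Y)\to H_1(X)\bigr)$. I would verify $G=G^{\perp}$ for the linking form $\ell$ exactly as in \cite{CG1}: $G$ is isotropic because, for $a,b\in G$, $\ell(a,b)$ is computed as the $\Q$-valued intersection number in $X$ of $2$-chains bounding multiples of $a$ and $b$, and the $\Q$-intersection form of $X$ vanishes; and $|H_1(Y)|=|G|^2$ by the ``half lives, half dies'' count for the pair $(X,Y)$ with $X$ a $\Q$-homology ball. This realizes the abstract metabolizer as the stated kernel.

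Which $\spinc$ structures extend, and the correction terms. Restriction $\spinc(X)\to\spinc(Y)$ is equivariant over $H^2(X;\Z)\to H^2(Y;\Z)$, so fixing $\mathfrak{t}_0\in\spinc(X)$ and setting $\mathfrak{s}_0=\mathfrak{t}_0|_Y$, the extendable $\spinc$ structures on $Y$ are exactly $\mathfrak{s}_0+\operatorname{im}\bigl(H^2(X;\Z)\to H^2(Y;\Z)\bigr)$. By Poincar\'e--Lefschetz duality this restriction map is identified with the connecting homomorphism $\partial\colon H_2(X,Y;\Z)\to H_1(Y;\Z)$, whose image is $\ker(H_1(Y)\to H_1(X))=G$; under $H^2(Y;\Z)\cong H_1(Y;\Z)$ this says the extendable $\spinc$ structures are precisely $\{\mathfrak{s}_0+\hat{z}\ :\ z\in G\}$. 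Now given such $\mathfrak{s}=\mathfrak{s}_0+\hat{z}$, choose $\mathfrak{t}\in\spinc(X)$ restricting to it; since $H^2(X;\Z)$ is finite, $c_1(\mathfrak{t})$ is torsion, so $c_1(\mathfrak{t})^2=0$, and $\beta_2(X)=0$. Viewing $X$ as negative-definite, \eqref{eq:OSinequality} gives $0\le 4\,d(Y,\mathfrak{s})$; viewing it as positive-definite, \eqref{eq:OSinequality2} gives $0\ge 4\,d(Y,\mathfrak{s})$. Hence $d(Y,\mathfrak{s}_0+\hat{z})=0$ for all $z\in G$.

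Spin refinement, and the main obstacle. It remains to arrange that the base point $\mathfrak{s}_0$ of the extendable coset be induced by a spin structure on $Y$; equivalently, to produce a self-conjugate $\spinc$ structure inside the conjugation-invariant coset $\mathfrak{s}_0+G$. When $p=2$ this is clean, since then $H_1(Y)$ has odd order: $Y$ has a unique spin structure, multiplication by $2$ is invertible on $H_1(Y)$, and writing $\overline{\mathfrak{s}_0}=\mathfrak{s}_0+\hat{w}$ with $w\in G$ one solves $2z=w$ with $z\in G$, making $\mathfrak{s}_0+\hat{z}$ self-conjugate. For general $p$ one must work harder with the $2$-torsion of $H_1(Y)$ --- e.g.\ by using that the branched cover $X$ itself is spin, so that a spin structure on $X$ restricts to a spin structure on $Y$ whose associated $\spinc$ structure is automatically extendable, and relabeling that $\mathfrak{s}_0$. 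This $\spinc$-theoretic bookkeeping --- pinning down the extendable locus as exactly the $G$-coset, and exhibiting a spin-induced representative of it --- is the step I expect to require the most care; once the covers are in place the correction-term inequalities and the metabolizer count are essentially immediate.
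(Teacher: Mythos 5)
Your proposal is essentially correct and, up to the spin refinement, matches the strategy the paper deploys for its generalization (Theorem~\ref{thm:onepositivedinvariants}, the $\cP_1$ analogue); the paper itself does not re-prove this theorem but cites Owens--Strle. Your specialization is simpler precisely because the branched cover $X$ is a $\Q$-homology ball: $\beta_2(X)=0$ and $c_1(\t)^2=0$ automatically, so both Ozsv\'ath--Szab\'o inequalities~\eqref{eq:OSinequality},~\eqref{eq:OSinequality2} apply at once and pinch $d$ to zero, whereas in the paper's $\cP_1$ version one must invoke Lemma~\ref{lem:smallestchar} (Elkies) to control $c_1(\t)^2$ against a nonzero $\beta_2(X)$. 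Your identification of the extendable coset with $\s_0+G$ via Poincar\'e--Lefschetz duality and the long exact sequence of $(X,Y)$ is correct and is the trivial-$H_2$ instance of the paper's Lemma~\ref{lem:onepositivecohomology}.

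The one place where your write-up leaves daylight is the spin refinement, and you flag it yourself. Your conjugation-invariance argument is complete whenever $|H_1(Y)|$ is odd --- in particular for $p=2$, since $Y$ is then a $\Z_2$-homology sphere --- because the extendable coset $\s_0+G$ is conjugation-invariant and $G$ has odd order, so solving $2z=w$ in $G$ lands on the unique self-conjugate $\spinc$ structure. For odd $p$ the branched cover $Y$ can have $2$-torsion in $H_1$, so this does not immediately produce a self-conjugate element of the coset, and your proposed substitute --- that $X$ is spin and a spin structure on $X$ restricts to one on $Y$ --- is asserted rather than shown (and would require tracking spin structures through the free cyclic cover of $B^4\setminus\nu(\Delta)$ and across the reglued $2$-handle). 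Worth noting: the paper sidesteps this by simply asserting, in the $\spinc$ background discussion around Diagram~\eqref{diag:spincstuff}, that ``any spin structure on $Y$ induces a $\spinc$-structure on $Y$ that extends to\ldots some $\spinc$-structure on $X$,'' also without proof, and relies on \cite{OwSt} for the present theorem. So you have not missed anything the paper supplies; you have merely reached the same unelaborated point.
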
 

In this section we extend this result to the meta-statement: \textit{(the signs of) d-invariants of prime-power branched covers of $K$ obstruct membership in $\cP_1$ (or in $\cN_1$}) . By this we  mean:

\begin{thm}\label{thm:onepositivedinvariants} If $K\in\cP_1$  and $Y$ is the $p^r$-fold cyclic  cover of $S^3$ branched over $K$, then there is a metabolizer $G<H_1(Y)$ for the $\Q/\Z$-linking form on $H_1(Y)$; and there is a $\spinc$ structure $\mathfrak{s}_0$ on $Y$  such that $d(Y,\mathfrak{s}_0+\hat{z})\leq 0$ for all $z\in G$, where $\hat{z}$ is the Poincare dual of $z$. Moreover  if $X$ is the $p^r$-fold cyclic  cover of the $1$-positon $V$ branched over the slice disk for $K$, then $G$ may be taken to be the kernel of
$$
j_*:H_1(Y)\to H_1(X).
$$
Furthermore we may take $\mathfrak{s}_0$ to be a $\spinc$ structure corresponding to a spin structure on $Y$.  Furthermore, these correction terms will be even integers.
\end{thm}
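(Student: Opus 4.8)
The plan is to mimic the proof of Theorem~\ref{thm:dinvsslice} due to Owens and Strle~\cite{OwSt}, with the $4$-ball replaced by the $1$-positon $V$ and with careful bookkeeping for the extra second homology this introduces. Set $d=p^r$. Since $\Delta$ is disjoint from a basis of $H_2(V)$, it is nullhomologous in $H_2(V,\partial V)$, so $H_1(V\setminus\Delta)\cong\Z$, generated by a meridian of $K$, and the $d$-fold cyclic cover $X$ of $V$ branched over $\Delta$ is defined. Its boundary is the $d$-fold cyclic cover $Y$ of $S^3$ branched over $K$, the complement $X\setminus\widetilde\Delta$ is the $\Z/d$-cover of $W:=V\setminus\Delta$, and $j_\ast\colon H_1(Y)\to H_1(X)$ is the map induced by $Y=\partial X\hookrightarrow X$.

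First I would produce the metabolizer. Because $K\in\cP_1\subset\mathcal{F}_1^{odd}$ (Proposition~\ref{prop:positiveinratnsolv}), $K$ is algebraically slice by Corollary~\ref{cor:ponealgslice}; hence all of its Tristram-Levine signatures, in particular all of its $p^r$-signatures, vanish. I would apply Theorem~\ref{thm:hyperbolicity} with $\G=\Z$, $\R=\Q[t,t^{-1}]$, $n=1$, and $\phi\colon\pi_1(M_K)\to\Z$ the abelianization: the kernel $P$ of $H_1(M_K;\R)\to H_1(W;\R)$ satisfies $P=P^{\perp}$ for the (rational) Blanchfield form. The natural surjection $H_1(M_K;\R)\twoheadrightarrow H_1\big(M_K;\Z[t]/(t^d-1)\big)\cong H_1(Y)$ carries $P$ onto a subgroup $G<H_1(Y)$, which, by the standard passage from Blanchfield metabolizers to linking-form metabolizers on prime-power branched covers (cf.~\cite{CG1,OwSt}), satisfies $G=G^{\perp}$ for the $\Q/\Z$-linking form on $H_1(Y)$. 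Since the homology of $X$ with the relevant coefficients is computed from that of $W$ with $\Z[t]/(t^d-1)$-coefficients, this same $G$ is $\ker(j_\ast)$.

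Next I would pin down the intersection form of $X$; this is where the condition $\pi_1(S_i)\subset\pi_1(W)^{(1)}$ in Definition~\ref{def:positive} enters. Each basis surface $S_i$ maps to $0$ under $H_1(W)\twoheadrightarrow\Z/d$, so $S_i$ has $d$ disjoint lifts $\widetilde S_i^{(0)},\dots,\widetilde S_i^{(d-1)}$ in $X$, each mapping homeomorphically onto $S_i$ with the pulled-back normal bundle; hence $\widetilde S_i^{(a)}\cdot\widetilde S_i^{(a)}=S_i\cdot S_i=1$ and $\widetilde S_i^{(a)}\cdot\widetilde S_j^{(b)}=0$ for $(i,a)\ne(j,b)$. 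Running the Euler characteristic and $G$-signature computation from the proof of Proposition~\ref{prop:easy2fold} --- using the vanishing of the $p^r$-signatures and $H_1(V;\Z_p)=0$ --- gives $\beta_2(X)=d\,\beta_2(V)$ and $\sigma(X)=\beta_2(X)$, so these $d\,\beta_2(V)$ linearly independent classes form a basis of $H_2(X;\Q)$ and span a unimodular sublattice; therefore the intersection form of $X$ is isomorphic to $\bigoplus\langle 1\rangle$, positive definite, and $H_1(X)$ is finite. Now I would transplant the Owens-Strle argument: let $\s_0$ be the $\spinc$ structure underlying a spin structure on $Y$, so $c_1(\s_0)=0$; for $z\in G$ one has $j_\ast(z)=0$, so $\hat z$ lies in the image of $H^2(X)\to H^2(Y)$ and, as in~\cite{OwSt}, $\s_0+\hat z$ extends to a $\spinc$ structure $\t$ on $X$; because the intersection form of $X$ is diagonal, adding to $\t$ classes in $\ker\!\big(H^2(X)\to H^2(Y)\big)$ reduces each coordinate of the characteristic element $c_1(\t)$ to $\pm1$ without changing $\t|_Y$, so we may assume $c_1(\t)^2=\beta_2(X)$. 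Then inequality~\eqref{eq:OSinequality2} gives $4\,d(Y,\s_0+\hat z)\le c_1(\t)^2-\beta_2(X)=0$.

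For the last assertion, observe that for \emph{any} $\spinc$ extension $\t$ one has $c_1(\t)^2-\beta_2(X)=\sum_k(\gamma_k^2-1)$ with all $\gamma_k$ odd, hence $c_1(\t)^2-\beta_2(X)\in 8\Z$; coupling this with the matching lower bound from the other $4$-manifold bounded by $Y$ --- the $d$-fold cyclic branched cover of $(B^4,F_K)$ over a pushed-in Seifert surface $F_K$, which has zero signature since $K$ is algebraically slice and, when $p=2$, even intersection form --- forces each $d(Y,\s_0+\hat z)$ to be an even integer. I expect the main obstacle to be precisely the faithful porting of Owens-Strle from a rational homology ball to $V$: verifying that the lifted surfaces really do give an \emph{integral} basis (equivalently that $X$ is homologically a punctured $\#\,\mathbb{C}P(2)$ away from torsion) and controlling $\mathrm{tors}\,H_1(X)$ in the extension and coordinate-reduction steps, together with extracting the reverse estimate needed for the even-integer refinement.
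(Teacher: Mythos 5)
Your overall plan matches the paper's: pass to the branched cover $X$ of the $1$\nobreakdash-positon, show its intersection form is diagonalizable, identify $\ker(j_*)$ as the metabolizer, and then extend $\spinc$ structures à la Owens--Strle to apply the correction-term inequality. The intersection-form analysis (lifting the $S_i$ because $\pi_1(S_i)\subset\pi_1(W)^{(1)}$, counting ranks via the branched-cover Euler characteristic from Proposition~\ref{prop:easy2fold}, concluding that the lifts span a full-rank unimodular direct summand) is essentially the argument in the paper.

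The genuine problem is your metabolizer construction. You want to produce $G<H_1(Y)$ from Theorem~\ref{thm:hyperbolicity} with $\R=\Q[t,t^{-1}]$ by ``the natural surjection $H_1(M_K;\R)\twoheadrightarrow H_1(M_K;\Z[t]/(t^d-1))\cong H_1(Y)$,'' but no such map exists: $H_1(M_K;\Q[t,t^{-1}])$ is a $\Q$\nobreakdash-vector space, hence torsion-free as an abelian group, while $H_1(Y)$ is finite (for a prime-power branched cover of a knot). There is no canonical surjection from the former onto the latter, and the passage from a rational Blanchfield metabolizer to a $\Q/\Z$\nobreakdash-linking-form metabolizer on the branched cover is substantially more delicate than a coefficient reduction; moreover, even granting such a metabolizer, you would still need to show it coincides with $\ker(j_*)$, which is what the $\spinc$\nobreakdash-extension step requires. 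The paper sidesteps all of this: it first establishes that the intersection form on $X$ is unimodular (exactly the step you also carry out), and then proves directly, by the diagram chase in Lemma~\ref{lem:onepositivecohomology} (Poincar\'e--Lefschetz duality plus the Bockstein), that for any compact oriented $4$\nobreakdash-manifold with unimodular intersection form bounding a union of rational homology spheres, $\ker(j_*)$ is automatically a metabolizer for the linking form. No detour through the Alexander module or Blanchfield form is needed, and the identification $G=\ker(j_*)$ is immediate. Your logical ordering --- metabolizer first, intersection form second --- cannot be made to work as written; it should be reversed and the metabolizer obtained from the unimodularity.

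Two smaller points. In the $\spinc$\nobreakdash-extension step, to get $c_1(\t)^2=\beta_2(X)$ you assert you can ``reduce each coordinate of $c_1(\t)$ to $\pm 1$'' by adding classes in $\ker(H^2(X)\to H^2(Y))$; this works rationally, but the paper's Lemma~\ref{lem:smallestchar} is careful to pass through Elkies' theorem to pick a genuine minimal characteristic \emph{class} and then to adjust only by a torsion class in $H^2(X)$ so as not to disturb the restriction to $Y$ --- your shortcut should be fleshed out to address $2$\nobreakdash-torsion in $H^2(X)$. As for the final ``even integers'' refinement, your sketch via $\gamma_k^2-1\in 8\Z$ and a matching bound from a second filling of $Y$ is plausible in spirit but is not a proof; the paper itself does not present an argument for this assertion in the proof of Theorem~\ref{thm:onepositivedinvariants}, so this remains an incompletely justified claim on both sides.
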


Before proving Theorem~\ref{thm:onenegativedinvariants}, we review necessary material about $\spinc$-structures. Experts can skip to the proof. Most of what is reviewed her can be found in ~\cite[pp. 55-57]{GompfStip}\cite[pp. 387-390]{Scor}.

Suppose that $Y$ is an oriented $\Q$-homology $3$-sphere. The set of $\spinc$ structures on $Y$  (respectively on a $4$-dimensional oriented manifold $X$ ) is a non-empty affine set in one-to-one correspondence with the second cohomology group of $Y$ (respectively $X$).  If $\t\in \spinc (X)$ and $\a\in H^2(X)$ then we use the notation $\t+\a$ to denote the element of $\spinc (X)$ obtained from  the action of $\a$ on $\t$.  If $\partial X=Y$,  given $\t\in\spinc(X)$, one may \textit{restrict} to get a $\spinc$ structure $\t|_Y\in\spinc(Y)$. Let $j:Y\to X$ denote the inclusion map.  The action of $H^2(\cdot)$ on $\spinc(\cdot)$ is equivariant with respect to the restriction map in the sense that
$$(\t+\a)|_Y = \t|_Y+j^\ast(\a).$$
This information is encoded by the commutativity of the left-hand side of  Diagram (\ref{diag:spincstuff}) below. 
\begin{equation}\label{diag:spincstuff}
\begin{diagram}
\node{H^2(Y)} \arrow{e,t}{\beta\to \s+\beta}\node{\spinc(Y)}\arrow{e,t}{c_1}
 \node{H^2(Y)}\arrow{e,t}{P.D.}\node{H_1(Y)}
\\
\node{H^2(X)} \arrow{e,t}{\a\to \t+\a}\arrow{n,r}{j^*} \node{\spinc(X)}\arrow{n,r}{|_Y}  \arrow{e,t} {c_1} \node{\mathcal{C}_X\subset H^2(X)}\arrow{n,r}{j^*}\arrow{e,t}{P.D.} \node{H_2(X,Y)}\arrow{n,r}{\partial_*}
\end{diagram}
\end{equation}
In addition, any $\spinc$ structure has a \textbf{first Chern class}, which, for a $4$-manifold takes values in the set, $\mathcal{C}_X$, of \textbf{characteristic} elements of $H^2(X)$ (defined below).  The naturality property of the first Chern class is encoded by the commutativity of the middle square of  Diagram (\ref{diag:spincstuff}). The map $c_1$ is surjective (and injective if $H^2(X)$ has no $2$-torsion). The first Chern class is affected by the above action according to the formula
\begin{equation}\label{eq:chernformula}
c_1(\t+\a)= c_1(\t)+2\a 
\end{equation}

Moreover any $\spin$ structure induces a unique $\spinc$-structure. Finally any $\spin$ structure on $Y$ induces a $\spinc$-structure on $Y$ that extends to (is the restriction of) some $\spinc$-structure on $X$.

If $X$ is a compact oriented $4$-manifold with boundary $Y$ and $TH_2$ denotes the torsion subgroup of $H_2$, then the composition
\begin{equation}\label{eq:defintersform}
H_2(X)\overset{\pi_*}{\rightarrow}H_2(X,Y)\overset{P.D.}{\rightarrow}H^2(X)\overset{\kappa}{\rightarrow}\Hom (H_2(X),\Z)
\end{equation}
induces a symmetric  bilinear form:
$$
H_2(X)/TH_2(X)\otimes H_2(X)/TH_2(X)\to \Z
$$
that we call the \textbf{intersection form of $X$}, denoted $x\cdot y$. By duality there is an equivalent form
\begin{equation}\label{eq:QX}
Q_X:H^2(X,Y)\otimes H^2(X,Y)\to\Z
\end{equation}
where    $Q_X(\hat{x},\hat{y})=x\cdot y=<\hat{x}\cup\hat{y},[X]>$ for $x,y\in H_2(X)$. The first Chern class, $c$, of a $\spinc$ structure is a \emph{characteristic class}, meaning that
\begin{equation}\label{eq:char}
\kappa(c)(x)\equiv x\cdot x \pmod 2
\end{equation}
for all $x\in H_2(X)$. For $\xi\in H^2(X,Y))$ we abbreviate $\xi^2\cong Q_X(\xi,\xi)$. In the case that $Y$ is a rational homology sphere this abbreviation can be extended as follows: for any $c\in H^2((X)$ we may choose $\xi\in H^2(X,Y)$ such that $\pi^*(\xi)=mc$ for some positive integer $m$. Then we define
\begin{equation}\label{eq:squareddef}
c^2\equiv \frac{Q_X(\xi,\xi)}{m^2}
\end{equation}
which can be shown to be independent of $\xi$. This is the meaning of the term $c_1(\t)^2$ in equations~\ref{eq:OSinequality} and ~\ref{eq:OSinequality2}.

\begin{proof}[Proof of Theorem~\ref{thm:onepositivedinvariants}]  Since $K\in \cP_1$, $K$ bounds a slice disk $\Delta$ in a $1$-positon, that is a $4$-manifold $V$ that satisfies the conditions of Definition~\ref{def:positive}.
 Let $X$ denote the $p^r$-fold cyclic cover of $V$ branched over $\Delta$, so  $\partial X=Y$. This makes sense according to the first paragraph in the proof of Proposition~\ref{prop:easy2fold}.  
 
First we will establish that the intersection form on $X$ is positive definite and diagonalizable.  Since $V$ is a $1$-positon for $K$, there exists a collection $S_i$ of surfaces disjointly embedded in $W=V\setminus\Delta$, representing a basis for $H_2(V)$, such that $\pi_1(S_i)\subset \pi_1(W)^{(1)}$. Thus this collection lifts to a collection $\widetilde{S}_i$ of $p^r\beta_2(V)$ embedded surfaces  in $X$. Indeed, their regular neighborhoods lift, so each has self-intersection $+1$. Moreover, for different $i$ and $j$ the collections of lifts $\widetilde{S}_i$ and $\widetilde{S}_j$ are disjoint, since $S_i$ and $S_j$ are disjoint in $V$. Each of these  surfaces represents a homology class $x_k$ such that $x_k\cdot x_k=+1$. The set of all such classes is linearly independent in $H_2(X)/TH_2(X)$, since if
$$
\sum a_kx_k=\mathrm{torsion}
$$
then taking the intersection of both sides with $x_k$ implies $a_k=0$.
Thus the surfaces $\widetilde{S}_i$ (and their translates) form a basis for a  free \emph{subgroup} (of rank $p^r\beta_2(V)$) of $H_2(X)/TH_2(X)$. We claim that it is a  \emph{direct summand}. For suppose some primitive element, $x$, of the lattice spanned by the $x_k$ were \emph{non}-primitive in $H_2(X)$/torsion, that is,
$$
x=\sum a_kx_k=my
$$
for some $m>1$. Then intersecting with $x_k$ implies that $m$ divides each $a_k$, contradicting the fact that $x$ was primitive. Finally we claim that $\widetilde{S}_i$ is a basis for $H_2(X)/TH_2(X)$, for which it now suffices to show that  $\beta_2(X)=p^r\beta_2(V)$.  But by Corollary~\ref{cor:ponealgslice}, $K$ is an algebraically slice knot so has zero signature function. Thus Proposition~\ref{prop:easy2fold} may be applied. In that proof it was shown that $\beta_2(X)=p^r\beta_2(V)$ (see equation~\ref{eq:rankH2}). Hence we have shown that there is a basis for which the intersection matrix for $X$ is an identity matrix. In particular the intersection form on $X$ is unimodular.

\begin{lem}\label{lem:onepositivecohomology} Suppose $X$ is a compact, oriented $4$-manifold whose intersection form is unimodular and whose boundary, $Y$, is a union of rational homology spheres.  Then the following two (isomorphic)  sequences are exact:
\begin{equation}\label{diag:torsionexact}
\begin{diagram}
\node{TH^2(X)} \arrow{e,t}{j^\ast}\arrow{s,r}{P.D.}\node{H^2(Y)}\arrow{s,r}{P.D.} \arrow{e,t}{\partial^\ast}
 \node{H^3(X,Y)}\arrow{s,r}{P.D.}
\\
\node{TH_2(X,Y)} \arrow{e,t}{\partial_*} \node{H_1(Y)} \arrow{e,t} {j_*} \node{H_1(X)}
\end{diagram}
\end{equation}
Moreover the kernel of $j_*$ is a metabolizer for the linking form on $H_1(Y)$.
\end{lem}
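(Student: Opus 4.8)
The plan is to separate the lemma into (i) exactness of the displayed sequences and (ii) the metabolizer assertion, and to reduce both to statements about the long exact sequence of the pair $(X,Y)$, with the hypothesis that the intersection form is unimodular entering at exactly one crucial point. First I would dispose of the identification of the two sequences in \eqref{diag:torsionexact}: the vertical maps are Poincar\'e duality on the closed oriented $3$--manifold $Y$ (giving $H^2(Y)\cong H_1(Y)$) and Poincar\'e--Lefschetz duality for $(X,\partial X)$ (giving $H^2(X)\cong H_2(X,Y)$, which restricts to torsion subgroups, and $H^3(X,Y)\cong H_1(X)$), and that $j^\ast$ corresponds to $\partial_\ast$ and the coboundary $\partial^\ast$ to $j_\ast$ is the naturality of these dualities with respect to the maps of the pair (the standard commuting ladder between the homology and cohomology long exact sequences of a pair). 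So it suffices to prove that $TH_2(X,Y)\xrightarrow{\partial_\ast} H_1(Y)\xrightarrow{j_\ast} H_1(X)$ is exact at $H_1(Y)$ and that $\ker j_\ast$ is a metabolizer for $\ell_Y$.

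For exactness, since $Y$ is a disjoint union of rational homology $3$--spheres we have $H_2(Y;\Z)=0$ and $H_1(Y)$ finite, so the long exact sequence gives an injection $\iota\colon H_2(X)\hookrightarrow H_2(X,Y)$ (the map written $\pi_\ast$ in \eqref{eq:defintersform}) with $\operatorname{im}\iota=\ker\partial_\ast$ and $\operatorname{im}\partial_\ast=\ker j_\ast$. It therefore remains to show that every class of $H_2(X,Y)$ agrees, modulo $\operatorname{im}\iota$, with a torsion class, so that $\partial_\ast(TH_2(X,Y))=\operatorname{im}\partial_\ast$. Here is where unimodularity is used: by \eqref{eq:defintersform} the composite $H_2(X)\xrightarrow{\iota}H_2(X,Y)\xrightarrow{\mathrm{P.D.}}H^2(X)\xrightarrow{\kappa}\Hom(H_2(X),\Z)$ is the adjoint of the intersection form of $X$, which is surjective precisely because that form is unimodular; since $\kappa$ restricts to an isomorphism from the free part of $H^2(X)$ onto $\Hom(H_2(X),\Z)$ (universal coefficients) and $\mathrm{P.D.}$ is an isomorphism, it follows that $\iota$ followed by the projection $H_2(X,Y)\to H_2(X,Y)/TH_2(X,Y)$ is onto. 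Hence for $a\in H_2(X,Y)$ there is $x\in H_2(X)$ with $a-\iota(x)\in TH_2(X,Y)$ and $\partial_\ast(a)=\partial_\ast(a-\iota(x))$. This is the exactness claimed, and the dual statement $j^\ast(TH^2(X))=j^\ast(H^2(X))=\ker\partial^\ast$ is just its image under the dualities above.

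For the metabolizer statement, the exactness just proved gives $P:=\ker j_\ast=\partial_\ast(TH_2(X,Y))$, and I would then follow the template of the slice-knot case (Theorem~\ref{thm:dinvsslice}, cf. \cite{OwSt},\cite{CG1}) with the hypothesis ``$X$ is a rational homology ball'' there replaced by ``$Q_X$ is unimodular'' here. There are two points. (a) $\ell_Y$ vanishes on $P$: given $x=\partial_\ast a$, $y=\partial_\ast b$ with $Na=0$ in $H_2(X,Y)$, one represents $a,b$ by relative $2$--chains in $X$ and $Na$ by the boundary of a $3$--chain, and computes $\ell_Y(x,y)$ as an intersection number carried out inside $X$; this is the classical self-annihilation of $\ker(H_1(\partial X)\to H_1(X))$ and needs nothing about $Q_X$. (b) $P^\perp\subseteq P$: one computes $\ell_Y$ on all of $H_1(Y)$ from $X$ and uses unimodularity of $Q_X$ to see that the pairing it induces on $\operatorname{im}(H_1(Y)\to H_1(X))$ is non-degenerate, so $\ell_Y(y,P)=0$ forces $j_\ast(y)=0$. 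Together with the nonsingularity of $\ell_Y$ (which gives $|P^\perp|=|H_1(Y)|/|P|$) this yields $P=P^\perp$.

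The step I expect to be the main obstacle is (b): running the linking-form computation on $X$ when the intersection form is non-trivial (merely unimodular rather than vanishing on a rank-zero group, as in the slice-disk exterior), and verifying that unimodularity is exactly the input needed to promote the easy inclusion $P\subseteq P^\perp$ to an equality. A minor additional bookkeeping point arises if $Y$ is genuinely disconnected, since then $H_1(X,Y)$ and $H_3(X)$ pick up a free summand of rank one less than the number of boundary components; this is harmless and can be avoided altogether by grouping boundary components and treating the connected case.
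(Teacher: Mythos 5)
Your overall strategy is the same as the paper's: establish exactness of the torsion sequence by showing, via unimodularity of the intersection form, that any class in $H_2(X,Y)$ differs from something in the image of $H_2(X)$ by a torsion class, and then run the standard metabolizer argument now that $P=\ker j_*$ is identified with $\partial_*(TH_2(X,Y))$. The exactness step is essentially identical to the paper's; for the metabolizer step the paper chases a Bockstein/Kronecker diagram using injectivity of $\Q/\Z$, which is equivalent (by finite Pontryagin duality) to the surjectivity/non-degeneracy you invoke, so the two formulations amount to the same thing.

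One point in your write-up is stated wrongly and is worth correcting, because it misplaces where unimodularity is actually used. In step (a) you appeal to ``the classical self-annihilation of $\ker(H_1(\partial X)\to H_1(X))$'' and assert this needs nothing about $Q_X$. That kernel is \emph{not} isotropic in general: take $X$ the $D^2$-bundle over $S^2$ of Euler number $n>1$, so $\partial X=L(n,1)$, $H_1(X)=0$, and $\ker j_*=\Z_n$ with a nonvanishing linking form. What is true without hypotheses is that $\partial_*(TH_2(X,Y))$ is isotropic (this is the computation you sketch, and is fine); equating that subgroup with $\ker j_*$ is precisely what the exactness step, hence unimodularity, buys you. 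Conversely, the inclusion $P^\perp\subset P$ in your step (b) does not use unimodularity at all: the surjectivity of $TH_2(X,Y)\to (TH_1(X))^{\#}$ that drives the paper's diagram chase (equivalently, the non-degeneracy you want) is automatic for any compact oriented $4$-manifold with rational homology sphere boundary, since the relevant Bockstein and Kronecker maps are always surjective. So the hypothesis enters only once, in the exactness, and in particular in (a) rather than (b). With that attribution corrected, your proposal matches the paper's proof.
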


\begin{proof}  For the first claim it suffices to show the bottom sequence is exact. For this it suffices to show that any element in the kernel of $j_*$ is in the image of some torsion element. We have the following commutative diagram where $\lambda$ is the intersection form:
\[
\begin{diagram}
	\node{H_2(X)} \arrow{e,t}{\pi_*}\arrow{se,b}{\lambda}
   \node{H_2(X,Y)} \arrow{s,r}{\kappa\circ P.D.} \arrow{e,t}{\partial_*}
	\node{H_1(Y)}
\\
\node[2]{(H_2(X))^\ast}
\end{diagram}
\]
Since  the intersection form of $X$ is unimodular, $\lambda$ is surjective. Suppose $p\in\ker j_*$ and choose $x\in H_2(X,Y)$ so that $\partial_*(x)=p$. Take $y\in H_2(X)$ so that $\lambda(y)=(\kappa\circ P.D.)(x)$.  Then $\partial_*(x-\pi_*(y))=p$. Moreover $x-\pi_*(y)$ is in the kernel of $\kappa\circ P.D.$  and  hence is torsion.

Now set $G=\ker j_*:H_1(Y)\to H_1(X)$. We claim that $G$ is a metabolizer for the $\Q/\Z$ linking form on $H_1(Y)$. This is a  standard result in the case that $X$ is a rational homology ball, which is the case that $H_2(X)$ is torsion. But in fact the proof works as long as the sequence(s)  in Lemma~\ref{diag:torsionexact} is exact. We will provide some details. Consider the commutative diagram below, where $^\#$ denotes $Hom(-,\Q/\Z)$.

\[
\begin{diagram}
	\node{TH_2(X,Y)} \arrow{e,t}{\partial_*}\arrow{s,r}{PD}
   \node{H_1(Y)} \arrow{s,r}{PD} \arrow{e,t}{j_*}\node{TH_1(X)}
\\
\node{TH^2(X)}\arrow{e}\node{H^2(Y)}
\\
\node{H^1(X;\Q/\Z)}\arrow{n,r}{\beta_X}\arrow{s,r}{\kappa_X}\arrow{e}\node{H^1(Y;\Q/\Z)}\arrow{n,r}{\beta}\arrow{s,r}{\kappa}
\\
\node{(TH_1(X))^\#}\arrow{e,t}{j^\#}\node{(H_1(Y))^\#}
\end{diagram}
\]
The maps labelled $\beta$ and $\beta_X$ are Bockstein maps (connecting maps) in the long exact sequences associated to the coefficient sequence $0\to\Z\to\Q\to\Q/\Z\to 0$. Under our hypotheses one checks that $\beta$ is an isomorphism and $\beta_X$ is surjective. The right-most vertical composition is the definition of the linking form, sending $x$ to $\ell(x,-)$. The left-most vertical composition defines a ``linking pairing'' for $X$ (since the ambiguity due to the kernel of $\beta_X$ is lost when evaluated on $TH_1(X)$). The Kronecker map $\kappa$ is an isomorphism while $\kappa_X$ is surjective. Now, if $x, y\in G$, then $x=\partial_*(z)$ for some $z\in TH_2(X,Y)$. It follows that the homomorphism
$\ell(x,-)$ lies in the image of $j^\#$ , hence factors through $j_*$ and so annihilates $y$. This shows that $G\subset G^\perp$. Now suppose $x\in G^\perp$. Then the map $\ell(x,-)$ annihilates $G$ and hence descends to define an element of $(H_1(Y)/G)^\#$. But the latter embeds in $TH_1(X)$ and hence, since $\Q/\Z$ is an injective $\Z$-module, this map extends to an element of $(TH_1(X))^\#$. Thus $\ell(x,-)$ is in the image of $j^\#$. It follows that $x$ is in the image of $\partial_*$, so $x\in G$. Thus $G^\perp\subset G$.
	
\end{proof}

We now return to the proof of Theorem~\ref{thm:onepositivedinvariants}. Suppose  that $\s_0=\t_0|Y$ for some $\t_0\in\spinc(X)$. If $z\in G$, then the Poincar\'{e} dual satisfies $\hat{z}=j^*(\a)$ for some $\a\in H^2(X)$. To see that $\s_0+\hat{z}\in \spinc(Y)$ also extends to $X$, note that
$$
(\t_0+\a)|_Y=\t_0|_Y+j^\ast(\a)=\s_0+\hat{z}.
$$
Thus if $\s=\s_0+\hat{z}$ then $\s=\t|Y$ for some $\t\in\spinc(X)$.  By Lemma~\ref{lem:smallestchar} below, we may choose $\t$ such that $c_1(\t)^2=\beta_2(X)$.  Finally by applying the inequality ~\ref{eq:OSinequality2}, $d(Y,\s)\leq 0$.  

This concludes  the proof of Theorem~\ref{thm:onepositivedinvariants}, modulo the proof of Lemma~\ref{lem:smallestchar}.

 \begin{lem}\label{lem:smallestchar}   Suppose $X$ is a compact, oriented $4$-manifold whose intersection form is unimodular and whose boundary, $Y$, is a union of rational homology spheres.   Then for any $\s\in\spinc(Y)$ which is the restriction of some $\spinc$ structure on $X$, there is  $\t\in\spinc(X)$ with $\t|_Y=\s$ and $|c_1(\t)^2|\leq\beta_2(X)$  (with equality achievable precisely when the intersection form is isomorphic to $\pm\oplus <1>$).
\end{lem}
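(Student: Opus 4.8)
The plan is to reduce the statement to a purely lattice-theoretic fact and then invoke classical results on unimodular lattices; the one genuinely geometric input, where unimodularity enters, is an identification of the set of first Chern classes of $\spinc$ structures restricting to $\s$. Fix $\t_0 \in \spinc(X)$ with $\t_0|_Y = \s$ (which exists by hypothesis). Since the action of $H^2(X)$ on $\spinc(X)$ is free and transitive and is compatible with restriction, $\{\t \in \spinc(X) : \t|_Y = \s\} = \{\t_0 + \a : \a \in \ker(j^* : H^2(X) \to H^2(Y))\}$, and by the Chern class formula~(\ref{eq:chernformula}) the corresponding set of Chern classes is $c_1(\t_0) + 2\,\ker j^* \subset H^2(X)$. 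Let $L = H^2(X)/TH^2(X)$, a free abelian group of rank $\beta_2(X) =: n$; by hypothesis the intersection form induces a unimodular symmetric form $Q$ on $L$, and~(\ref{eq:squareddef}) shows $c_1(\t)^2$ depends only on the image $\bar c_1(\t) \in L$, where it equals $Q(\bar c_1(\t), \bar c_1(\t))$. By~(\ref{eq:char}), $\bar c_1(\t_0)$ is a characteristic vector of $(L,Q)$, and since any two characteristic vectors of a unimodular lattice differ by an element of $2L$, the coset $\bar c_1(\t_0) + 2L$ is precisely the set of all characteristic vectors of $(L,Q)$.

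The key step is to show that the image $A$ of $\ker j^*$ under the projection $H^2(X) \to L$ is all of $L$. By exactness of the cohomology sequence of $(X,Y)$, $\ker(j^* : H^2(X) \to H^2(Y)) = \operatorname{image}(\pi^* : H^2(X,Y) \to H^2(X))$. Poincar\'e--Lefschetz duality intertwines $\pi^*$ with the natural map $\pi_* : H_2(X) \to H_2(X,Y)$ and carries torsion to torsion, so it suffices to show $\pi_*$ is surjective modulo torsion. But the composite of $\pi_*$ with the duality/universal-coefficient isomorphism $H_2(X,Y)/\mathrm{tors} \xrightarrow{\sim} \Hom(H_2(X)/\mathrm{tors},\Z)$ is exactly the adjoint of the intersection form (compare~(\ref{eq:defintersform})), which is an isomorphism because $Q$ is unimodular; hence $\pi_*$ is onto modulo torsion, so $A = L$.

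Combining the two paragraphs, $\{\, c_1(\t)^2 : \t \in \spinc(X),\ \t|_Y = \s \,\}$ equals $\{\, Q(c,c) : c \text{ a characteristic vector of } (L,Q)\,\}$, with no constraint beyond characteristicity. The lemma is thus equivalent to: a unimodular lattice $(L,Q)$ of rank $n$ has a characteristic vector $c$ with $|Q(c,c)| \le n$, with equality attained for some characteristic vector if and only if $Q \cong \pm(\langle 1\rangle \oplus \cdots \oplus \langle 1\rangle)$. The ``if'' direction, which is all that is used in the proof of Theorem~\ref{thm:onepositivedinvariants} (there the intersection form of $X$ has already been shown to be $\bigoplus\langle +1\rangle$), is immediate: for the diagonal form every vector with all coordinates odd is characteristic, $(1,\dots,1)$ has square $\pm n$, and every characteristic vector has $|Q(c,c)| = \sum c_i^2 \ge n$, so equality is forced. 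For the general bound one reduces by splitting off orthogonal $\langle \pm1\rangle$ summands whenever $L$ has a vector of square $\pm 1$ (this drops both the rank and $n$ by one and extends a characteristic vector by $\pm1$), then invokes the classification of indefinite unimodular forms together with the fact that an even unimodular lattice has $0$ as a characteristic vector; the characterization of $\Z^n$ among positive-definite unimodular lattices as the unique one with no characteristic vector of square $< n$ (Elkies) gives the ``only if''.

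The main obstacle is the middle paragraph: one must show that \emph{every} characteristic vector of the intersection lattice is realized as $c_1$ of a $\spinc$ structure restricting to $\s$. Without unimodularity the image $A$ can be a proper finite-index sublattice of $L$, and then the bound $|c_1(\t)^2| \le n$ can genuinely fail; so the argument must use unimodularity crucially, via Poincar\'e duality, together with careful bookkeeping of the torsion subgroups involved.
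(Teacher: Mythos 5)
Your proposal is correct and uses the same two essential ingredients as the paper's proof: Elkies' theorem bounding characteristic vectors of unimodular lattices, and the exactness fact (the paper's Lemma~\ref{lem:onepositivecohomology}, which you reprove via Poincar\'e duality and the surjectivity of the adjoint of the unimodular intersection form) needed to pass between $\spinc$ structures restricting to $\s$ and characteristic vectors modulo torsion. The organization is dual but logically equivalent: the paper starts from a minimal characteristic class $x$, realizes it as $c_1(\t')$, and then uses Lemma~\ref{lem:onepositivecohomology} to correct $\t'$ by a torsion class so that it restricts to $\s$; you instead show directly that the set of Chern classes of $\spinc$ structures restricting to $\s$ projects onto the full set of characteristic vectors in $H^2(X)/\mathrm{tors}$, and then pick the minimal one.
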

\begin{proof} By ~\cite[Theorem 1]{Elkies}, there exists a characteristic class $x\in H^2(X)$ for which $|x^2|\leq \beta_2(X)$. In the case(s) that the form is $\pm\oplus <1>$  with respect to the basis $\{ e_i \}$, then $x=e_1+\cdots +e_n$ is characteristic with $|x^2|=n=\beta_2(X)$. By ~\cite{Elkies} equality is achievable only in these cases.

Choose $\mathfrak{t}'\in \spinc (X)$ so that $c_1(\mathfrak{t}')=x$. Let $\s'=\t'|Y$. By hypothesis $\s=\t_0|_Y$ for some $\mathfrak{t}_0\in \spinc (X)$. Thus $\t_0=\t' + \alpha$ for some $\alpha\in H^2(X)$. Hence $\s=\s'+j^*(\alpha)$. But since $j*(\alpha)\in \ker \partial^*$, by Lemma~\ref{lem:onepositivecohomology},  $j^*(\alpha)=j^*(\beta)$ for some torsion class $\beta\in H^2(X)$. Let $\t\in \spinc$ be chosen such that $\t=\t'+ \beta$. Then
$$
\t|_Y=\t'|_Y+j^*(\beta)=\s'+j^*(\alpha)=\s
$$
as desired. Moreover, since $\beta$ is torsion, $c_1(\t)^2=c_1(\t')^2=x^2$.
 
 \end{proof}

\end{proof}

Of course, by reversing the orientation in Theorem~\ref{thm:onepositivedinvariants}, we have the analogous result for knots in $\cN_1$.

\begin{thm}\label{thm:onenegativedinvariants} If $K\in\cN_1$  and $Y$ is the $p^r$-fold cyclic  cover of $S^3$ branched over $K$, then there is a metabolizer $G'<H_1(Y)$ for the $\Q/\Z$-linking form on $H_1(Y)$; and there is a $\spinc$ structure $\mathfrak{s}_0$ on $Y$  such that $d(Y,\mathfrak{s}_0+\hat{z})\geq 0$ for all $z\in G'$, where $\hat{z}$ is the Poincare dual of $z$. If $X'$ is the $p^r$-fold branched cyclic  cover of the $1$-negaton $V'$  for $K$, then $G'$ may be taken to be the kernel of
$$
j_*:H_1(Y)\to H_1(X').
$$
Furthermore we may take $\mathfrak{s}_0$ to be a $\spinc$ structure corresponding to a spin structure on $Y$.  Furthermore, these correction terms will be even integers.
\end{thm}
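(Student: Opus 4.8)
The plan is to obtain Theorem~\ref{thm:onenegativedinvariants} as a formal consequence of Theorem~\ref{thm:onepositivedinvariants} by reversing orientations, so that no new geometric input is needed. The starting point is the observation already made in Section~\ref{sec:filtrations}: a knot $K$ lies in $\cN_1$ if and only if $-K$ (the mirror image with reversed string orientation) lies in $\cP_1$, because reversing the orientation of the ambient $S^3$ and of every $4$-manifold in sight turns a $1$-negaton $V'$ for $K$ into a $1$-positon for $-K$ — it negates the intersection form while leaving the $\pi_1$-conditions of Definition~\ref{def:positive} untouched, since $\pi_1(-V'\setminus\overline{\Delta'})=\pi_1(V'\setminus\Delta')$ with the same derived series and the same (re-oriented) surfaces $S_i$. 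So the first step is, given $K\in\cN_1$ and a $1$-negaton $V'$ with slice disk $\Delta'$, to pass to the associated $1$-positon $-V'$ for $-K$ with slice disk $\overline{\Delta'}$.

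Next I would record a short ``orientation-reversal dictionary'' for the objects in Theorem~\ref{thm:onepositivedinvariants}. The $p^r$-fold cyclic cover of $S^3$ branched over $-K$ is $-Y$, where $Y$ is the corresponding cover branched over $K$; the $\Q/\Z$-linking form on $H_1(-Y)$ is $-\ell$, so a subgroup of $H_1(Y)=H_1(-Y)$ is a metabolizer for $-Y$ exactly when it is one for $Y$; and the Ozsv\'ath--Szab\'o correction terms satisfy $d(-Y,\mathfrak{s})=-d(Y,\mathfrak{s})$ under the canonical identification of $\spinc$ structures. Similarly, the $p^r$-fold branched cyclic cover of $-V'$ over $\overline{\Delta'}$ is $-X'$, where $X'$ is the branched cover of $V'$ over $\Delta'$; in particular $H_1(-X')=H_1(X')$ and the inclusion-induced map $j_*\colon H_1(Y)\to H_1(X')$ is unchanged, homology (and maps between homology groups) being insensitive to orientation.

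With this dictionary, the remaining step is pure translation. Applying Theorem~\ref{thm:onepositivedinvariants} to $-K\in\cP_1$ with the $1$-positon $-V'$ yields a metabolizer $G'<H_1(-Y)$ for the linking form on $H_1(-Y)$, realized as $\ker\big(j_*\colon H_1(-Y)\to H_1(-X')\big)=\ker\big(j_*\colon H_1(Y)\to H_1(X')\big)$, together with a $\spinc$ structure $\mathfrak{s}_0$ on $-Y$ coming from a spin structure, with $d(-Y,\mathfrak{s}_0+\hat z)\le 0$ an even integer for every $z\in G'$. Reading each clause back through the dictionary gives precisely the assertion: $G'$ is a metabolizer for $\ell$ on $H_1(Y)$; a spin structure on $-Y$ is the same data as a spin structure on $Y$; $d(Y,\mathfrak{s}_0+\hat z)=-d(-Y,\mathfrak{s}_0+\hat z)\ge 0$; and ``even integer'' is orientation-independent.

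I do not expect a genuine obstacle here; the only thing requiring care is the bookkeeping of orientation conventions on branched covers, in particular confirming $d(-Y,\cdot)=-d(Y,\cdot)$ (a standard Ozsv\'ath--Szab\'o fact) and that the branched cover of $-V'$ is $-X'$ carrying the same $j_*$. As an alternative to invoking orientation reversal, one could instead repeat the proof of Theorem~\ref{thm:onepositivedinvariants} verbatim with the negative-definite hypothesis in place of the positive-definite one: the lifted surfaces $\widetilde S_i$ now have self-intersection $-1$, Lemma~\ref{lem:onepositivecohomology} and Lemma~\ref{lem:smallestchar} apply unchanged since they use only unimodularity, and one appeals to inequality~\ref{eq:OSinequality} instead of~\ref{eq:OSinequality2} to conclude $d(Y,\mathfrak{s})\ge 0$.
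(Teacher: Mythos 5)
Your proposal is correct and follows essentially the same approach as the paper: the authors likewise deduce Theorem~\ref{thm:onenegativedinvariants} from Theorem~\ref{thm:onepositivedinvariants} by noting that $K\in\cN_1$ with $1$-negaton $V'$ gives $-K\in\cP_1$ with $1$-positon $-V'$, then translate via $d(-Y,\mathfrak{s})=-d(Y,\mathfrak{s})$. Your write-up just makes the orientation-reversal bookkeeping more explicit than the paper's one-line proof.
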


\begin{proof} Suppose that $K\in \cN_1$ where $V'$ is a $1$-negaton for $K$. Then $-K\in\cP_1$ and $V=-V'$ where $V'$ is a $1$-positon for $-K$. Then Theorem~\ref{thm:onepositivedinvariants} can be applied to $-K$ to get the claimed result using the fact that $-d(Y,\mathfrak{s})=d(-Y,\mathfrak{s})$.
\end{proof}

\begin{cor}\label{cor:zerodeltainvariants} 	Suppose $K\in\cB_1$, $Y$ is the $p^r$-fold branched cyclic cover of $K$ and $\mathfrak{s}_0$ is a $\spinc$ structure corresponding to a spin structure on $Y$.  Then $d(Y,\mathfrak{s}_0)=0$. 
\end{cor}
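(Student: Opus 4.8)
The plan is to read this off from Theorems~\ref{thm:onepositivedinvariants} and~\ref{thm:onenegativedinvariants} by using that $\cB_1=\cP_1\cap\cN_1$, so a knot $K\in\cB_1$ is simultaneously $1$-positive and $1$-negative. Fix a spin structure on $Y$ and let $\mathfrak{s}_0$ be the $\spinc$ structure it induces; this is the $\mathfrak{s}_0$ in the statement. Since $K\in\cP_1$, Theorem~\ref{thm:onepositivedinvariants} gives a $1$-positon $V$ for $K$, its $p^r$-fold branched cover $X$, a metabolizer $G=\ker\big(H_1(Y)\to H_1(X)\big)$, and the inequality $d(Y,\mathfrak{s}_0+\hat{z})\le 0$ for every $z\in G$; taking $z=0$ yields $d(Y,\mathfrak{s}_0)\le 0$. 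Symmetrically, since $K\in\cN_1$, Theorem~\ref{thm:onenegativedinvariants} gives a $1$-negaton $V'$, its branched cover $X'$, a (possibly different) metabolizer $G'=\ker\big(H_1(Y)\to H_1(X')\big)$, and $d(Y,\mathfrak{s}_0+\hat{z})\ge 0$ for every $z\in G'$; taking $z=0$ yields $d(Y,\mathfrak{s}_0)\ge 0$. Hence $d(Y,\mathfrak{s}_0)=0$.

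The one thing that needs to be checked is that both theorems may legitimately be applied to the \emph{same} $\spinc$ structure $\mathfrak{s}_0$. Each theorem only asserts that its $\mathfrak{s}_0$ \emph{may be taken} to correspond to a spin structure, not which one. But the argument in the proof of Theorem~\ref{thm:onepositivedinvariants} works for any $\spinc$ structure on $Y$ that is the restriction of a $\spinc$ structure on $X$ (and for its translates by $\hat{G}$), and, as recalled in the review of $\spinc$-structures, the $\spinc$ structure induced by any spin structure on $Y=\partial X$ does extend over $X$; the same holds for $X'$. Thus the $\spinc$ structure $\mathfrak{s}_0$ coming from our chosen spin structure on $Y$ is admissible simultaneously for Theorem~\ref{thm:onepositivedinvariants} and for Theorem~\ref{thm:onenegativedinvariants}, and the computation of the first paragraph goes through.

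The only real obstacle is this bookkeeping about extension of $\spinc$ structures over the two branched covers; once that is in place the corollary is a formal consequence of the two cited theorems. I would also remark that the conclusion is weaker than the sliceness obstruction of Theorem~\ref{thm:dinvsslice}: there, a single metabolizer carries a whole family of vanishing correction terms, whereas here the metabolizers $G$ and $G'$ produced by the $1$-positon and the $1$-negaton may differ, so only the value at $z=0$ can be controlled from both sides.
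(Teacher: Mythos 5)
Your argument is correct and is essentially the paper's own proof, which reads simply ``Take $z=0$ in both Theorem~\ref{thm:onepositivedinvariants} and Theorem~\ref{thm:onenegativedinvariants}.'' Your second paragraph usefully fills in the point the paper leaves implicit---that any spin-induced $\spinc$ structure on $Y$ extends over both branched covers $X$ and $X'$ (as recorded in the $\spinc$ review), so the two theorems can indeed be applied to a common $\mathfrak{s}_0$---and your closing remark correctly anticipates Corollary~\ref{cor:ambiguousdinvariants}.
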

\begin{proof} Take $z=0$ in both Theorem~\ref{thm:onepositivedinvariants} and Theorem~\ref{thm:onenegativedinvariants}.
\end{proof}

If $K$ is an oriented knot and $p/q\in \Q$ is non-zero then $S^3_{p/q}(K)$, the $p/q$-framed Dehn surgery, is a rational homology $3$-sphere whose first homology may be identified with $\Z_p$ via a canonical map sending the meridian to $1$. The $\spinc$-structures on $S^3_{p/q}(K)$ can be canonically labelled by elements $i\in \Z_p$ ~\cite[Section 2]{RubStrle}. 

\begin{cor}\label{cor:dinvariantssurgeries} 	Suppose $J\geq K$. Then for any non-zero $p/q\in \Q$,
$$
d(S^3_{p/q}(K),i)\geq d(S^3_{p/q}(J),i),
$$
In particular if $K\in\cB_0$, $Y$ is either the $+1$ or $-1$ framed surgery on $K$ and $\mathfrak{s}_0$ is the unique $\spinc$ structure on $Y$.  Then $d(Y,\mathfrak{s}_0)=0$. More generally, if $K\in \cB_0$ then the d-invariants of $S^3_{p/q}(K)$ agree with those of the lens space obtained by $p/q$-surgery on the unknot.
\end{cor}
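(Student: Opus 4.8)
The plan is to reduce everything to Proposition~\ref{prop:1surgery}, which already produces the relevant cobordism, and then feed that cobordism into the Ozsv\'ath--Szab\'o correction-term inequalities (\ref{eq:OSinequality}) and (\ref{eq:OSinequality2}). First I would recall that $J\geq K$ means, by Definition~\ref{def:geq} with $n=0$, that $J$ and $K$ are concordant in a simply-connected smooth $4$-manifold $V$ whose intersection form is positive definite and diagonalizable (unimodular positive definite, hence $\oplus\langle 1\rangle$). Performing ``Dehn surgery cross $[0,1]$'' on the concordance annulus, exactly as in the proof of Proposition~\ref{prop:1surgery}, yields a smooth compact $4$-manifold $W$ with $\partial W = S^3_{p/q}(K)\ \amalg\ -S^3_{p/q}(J)$, with intersection form $\oplus\langle 1\rangle$ and with the $H_1$-isomorphism property $H_1(\partial_\pm W)\cong H_1(W)\cong \Z_p$. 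In particular $W$ is a positive-definite cobordism between the two surgered manifolds that induces isomorphisms on $H_1\cong\Z_p$, so the canonical $\Z_p$-labelings of the $\spinc$ structures on the two boundary components are matched up by $W$.

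Next I would run the standard ``both orientations of a positive-definite cobordism'' argument. Fix $i\in\Z_p$ and let $\mathfrak{s}_K$, $\mathfrak{s}_J$ be the corresponding $\spinc$ structures on $S^3_{p/q}(K)$, $S^3_{p/q}(J)$. Because $W$ has diagonal intersection form, for any $\spinc$ structure $\mathfrak{t}$ on $W$ extending a fixed one on the boundary we may, by (the closed-up, or directly the $b_1=0$ version of) Lemma~\ref{lem:smallestchar} applied to $W$ with one boundary component capped or simply by choosing the extension with $c_1(\mathfrak{t})^2=\beta_2(W)$, arrange $c_1(\mathfrak{t})^2-\beta_2(W)=0$. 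Viewing $W$ as a negative-definite cobordism from $-S^3_{p/q}(J)$ to $-S^3_{p/q}(K)$ (equivalently, reversing orientation) and applying inequality (\ref{eq:OSinequality}), and then viewing $W$ as a positive-definite cobordism from $S^3_{p/q}(K)$ to $S^3_{p/q}(J)$ and applying (\ref{eq:OSinequality2}), together with $d(-Y,\mathfrak{s})=-d(Y,\mathfrak{s})$, gives
$$
d(S^3_{p/q}(K),i)\ \geq\ d(S^3_{p/q}(J),i).
$$
(The bookkeeping here is that each $\spinc$ structure on a boundary component extends across $W$, since the obstruction to extending lives in a group that vanishes under the $H_1$-isomorphism hypothesis; this is where the strengthened conclusion of Proposition~\ref{prop:1surgery} --- that $\pi_1(V)=0$, not just that the form is diagonal --- is used.)

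Finally I would deduce the stated special cases. If $K\in\cB_0$ then $K\geq U$ and $U\geq K$, so applying the inequality in both directions with $J=U$ gives $d(S^3_{p/q}(K),i)=d(S^3_{p/q}(U),i)$ for every $i$, and $S^3_{p/q}(U)$ is the lens space $L(p,q)$; in particular for $p/q=\pm 1$ the surgered manifold is $S^3$ (resp.\ the Poincar\'e-type bound is trivial) with its unique $\spinc$ structure, giving $d=0$. The main obstacle I anticipate is purely the $\spinc$-theoretic bookkeeping: one must check that every $\spinc$ structure $i\in\Z_p$ on one boundary component of $W$ does extend over $W$ and restricts to the correctly-labeled $\spinc$ structure on the other component, and that a common extension can be chosen with $c_1^2=\pm\beta_2(W)$ so that both correction-term inequalities become sharp in the right direction. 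This is where care with the canonical labelings of \cite[Section 2]{RubStrle} and the naturality in Diagram~(\ref{diag:spincstuff}) is needed; once that is in place the inequality is immediate and the corollaries are formal.
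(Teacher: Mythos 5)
Your proposal follows the same route as the paper: invoke Proposition~\ref{prop:1surgery} to get a definite cobordism between the two surgered manifolds, use Lemma~\ref{lem:smallestchar} (or the direct observation for a diagonal form) to pick a $\spinc$-structure with $|c_1^2|=\beta_2$, and feed this into the Ozsv\'ath--Szab\'o inequalities. There is, however, a sign slip you should fix: if $J\geq_0 K$ then Proposition~\ref{prop:1surgery} (which is stated for $K\geq J$) gives a \emph{positive}-definite manifold bounding $S^3_{p/q}(J)\sqcup -S^3_{p/q}(K)$, equivalently a \emph{negative}-definite manifold $X$ with $\partial X = S^3_{p/q}(K)\sqcup -S^3_{p/q}(J)$. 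You have written $\partial W = S^3_{p/q}(K)\amalg -S^3_{p/q}(J)$ with $W$ positive definite; keeping that sign and applying inequality~(\ref{eq:OSinequality2}) would yield $d(S^3_{p/q}(K),i)\leq d(S^3_{p/q}(J),i)$, the \emph{opposite} of what you (correctly) assert. The paper uses the negative-definite orientation and a single application of~(\ref{eq:OSinequality}), which gives the inequality cleanly; your scheme of applying both~(\ref{eq:OSinequality}) and~(\ref{eq:OSinequality2}) to $W$ and $-W$ is logically redundant (they produce the same inequality), and in the presence of the sign confusion it obscures which direction you actually get. Once the orientation is straightened out, the rest --- matching the canonical $\Z_p$-labels on the two boundary components via the $H_1$-isomorphism property of $W$, and the $\cB_0$ specializations via $K\geq U$ and $U\geq K$ --- is exactly as in the paper.
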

\begin{proof}  Suppose $J\geq K$. By Proposition~\ref{prop:1surgery}, $Y=S^3_{p/q}(K)\coprod -S^3_{p/q}(J)$ bounds a $4$-manifold $X$ whose intersection form is isomorphic to $\oplus\langle -1\rangle$. In particular this intersection form  is  unimodular. Moreover the inclusion maps $\partial_{\pm}Y\to Y$ induce the \textit{identity} map $\Z_p\equiv H_1(\partial_+Y)\to H_1(\partial_-Y)\equiv \Z_p$. By Lemma~\ref{lem:smallestchar}, there is  $\t\in\spinc(X)$ with $\t|_Y=(i,i)$ and $c_1(\t)^2=-\beta_2(X)$. Thus by the Ozsv\'ath-Szabo inequality ~(\ref{eq:OSinequality}), $d(Y,(i,i))\geq 0$. Hence $d(S^3_{p/q}(K),i)\geq d(S^3_{p/q}(J),i)$.

Similarly if $K\geq J$ then $d(S^3_{p/q}(K),i)\leq d(S^3_{p/q}(J),i)$. Thus if $K\in \cB_0$ then, $K\geq U$ and  $U\geq K$, so letting $J=U$ we get the last claim of the Corollary.
\end{proof}

Since $d$-invariants associated to prime-power cyclic branched covers obstruct  membership in $\mathcal{T}_1$, in principle it should be possible to produce an infinite linearly independent set in $\mathcal{T}_0/\mathcal{T}_1$. However, due to the paucity of calculations of $d$-invariants for prime-power branched covers of topologically slice knots, at the moment we are only able to show:

\begin{cor}\label{cor:HLRexamples} The rank of
$\mathcal{T}/\mathcal{T}_1$
is at least one.
\end{cor}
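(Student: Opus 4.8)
The plan is to exhibit a single topologically slice knot $K$ and show that $K \notin \mathcal{T}_1$, which (since $K \in \mathcal{T}$) immediately gives $\mathrm{rank}(\mathcal{T}/\mathcal{T}_1) \geq 1$. The natural candidate is one of the topologically slice knots already known to have a nontrivial $d$-invariant obstruction to sliceness — for instance, a knot from the family studied by Hedden--Livingston--Ruberman \cite{HLR} whose double branched cover $Y = \Sigma_2(K)$ carries a $\spinc$ structure $\mathfrak{s}_0$ coming from a spin structure with $d(Y,\mathfrak{s}_0) \neq 0$. Such knots have Alexander polynomial $1$ (hence are topologically slice by \cite{FQ}), and one can arrange $d(Y,\mathfrak{s}_0) > 0$ (replacing $K$ by its mirror if necessary).

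First I would invoke Theorem~\ref{thm:onenegativedinvariants} applied to $-K$, or equivalently combine Theorems~\ref{thm:onepositivedinvariants} and \ref{thm:onenegativedinvariants}: if $K$ were in $\mathcal{T}_1 = \cB_1 \cap \mathcal{T}$, then $K \in \cB_1 = \mathcal{P}_1 \cap \mathcal{N}_1$, and Corollary~\ref{cor:zerodeltainvariants} would force $d(Y,\mathfrak{s}_0) = 0$ for the $\spinc$ structure $\mathfrak{s}_0$ arising from a spin structure on the $p^r$-fold branched cover. Taking $p^r = 2$, this contradicts $d(\Sigma_2(K),\mathfrak{s}_0) > 0$. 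Hence $K \notin \mathcal{T}_1$, completing the argument.

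The main obstacle is not the logical structure — that is a one-line application of Corollary~\ref{cor:zerodeltainvariants} — but rather identifying in the literature a concrete topologically slice knot whose branched-cover $d$-invariant at the spin $\spinc$ structure has been computed and is nonzero. The knots of Hedden--Livingston--Ruberman are designed exactly for this: they are constructed from twisted doubles of knots, are topologically slice, and are distinguished from slice knots precisely by a $d$-invariant of $\Sigma_2$. One must check that the relevant $\spinc$ structure in their computation is (or can be taken to be) the one induced by a spin structure, which is what Corollary~\ref{cor:zerodeltainvariants} constrains; since $H_1(\Sigma_2(K))$ in the HLR examples is typically of odd order (so there is a unique spin structure and it is the relevant one), this should go through directly. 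I would cite \cite[Cor. 4.2]{OwSt} or the explicit HLR computation for the numerical input and then assemble the contradiction as above.
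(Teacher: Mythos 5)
Your approach is essentially the paper's: pick an HLR knot, apply Corollary~\ref{cor:zerodeltainvariants} to the $2$-fold branched cover, and get a contradiction from the nonvanishing $d$-invariant at the spin $\spinc$-structure. Your supporting observations (that the sign of $d$ can be adjusted by replacing $K$ with its mirror, and that $|H_1(\Sigma_2(K))|$ being odd pins down the spin $\spinc$-structure as the one HLR compute) are correct and on point.

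However, there is a real logical gap at the very first step. You write that it suffices to show $K \notin \mathcal{T}_1$, since this ``immediately gives $\mathrm{rank}(\mathcal{T}/\mathcal{T}_1) \geq 1$.'' That is not true: $K \notin \mathcal{T}_1$ only shows the quotient is \emph{nontrivial}, not that it has positive free rank --- $K$ could a priori represent torsion in $\mathcal{T}/\mathcal{T}_1$. To get rank at least one you must show some element has \emph{infinite order}. The paper does exactly this: it supposes $nK \in \mathcal{T}_1$ for some $n \neq 0$ and then applies Corollary~\ref{cor:zerodeltainvariants} to $nK$, using additivity of $d$ under connected sum of branched covers to conclude $n\,d(\Sigma_2(K),\mathfrak{s}_0)=0$, contradicting $d(\Sigma_2(K),\mathfrak{s}_0)<0$. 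Your argument is one small step short; the fix is to run your contradiction for $nK$ rather than $K$, noting that $\Sigma_2(nK) \cong \#_n \Sigma_2(K)$ and $d$ adds over connected sums, so the obstruction scales linearly in $n$ and never vanishes.
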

\begin{proof}  We will show that any one of topologically slice knots considered by Hedden-Livingston-Ruberman in ~\cite{HLR} has infinite order in this quotient group. Let $K$ be a knot  appearing in that paper associated to a prime $p$. Suppose  $nK\in \mathcal{T}_1$ for some $n\neq0$.  Let $Y$ be the $2$-fold branched cover over $K$ and let $\mathfrak{s}_0$ denote a $\spinc$-structure on $Y$ associated to the unique spin structure on $Y$. By Corollary~\ref{cor:zerodeltainvariants}, $nd(Y,\mathfrak{s}_0)=0$.  But in ~\cite{HLR} it is calculated that $d(Y,\mathfrak{s}_0)$ is strictly less than $0$.
\end{proof}

\begin{cor}\label{cor:ambiguousdinvariants} If $K\in\cB_1$ and the $\Q/\Z$-valued linking form on $Y$,  the $p^r$-fold branched cover of $K$ has precisely one metabolizer, then the correction terms, $d(Y,\mathfrak{s}_0+\hat{z})$, from Theorem~\ref{thm:onenegativedinvariants} vanish (if $\mathfrak{s}_0$ arises from a $\spin$ structure on $Y$).
\end{cor}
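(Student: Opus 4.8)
The plan is to combine Theorem~\ref{thm:onepositivedinvariants} and Theorem~\ref{thm:onenegativedinvariants}, and to use the uniqueness of the metabolizer to force a single coset family of $\spinc$ structures to be controlled from both sides, so that each correction term is squeezed between $\le 0$ and $\ge 0$.

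First I would unpack the hypothesis $K\in\cB_1=\cP_1\cap\cN_1$. Since $K\in\cP_1$, Theorem~\ref{thm:onepositivedinvariants} provides a metabolizer $G<H_1(Y)$ for the $\Q/\Z$-linking form, namely $G=\ker(j_*\colon H_1(Y)\to H_1(X))$ for $X$ the $p^r$-fold branched cover of a $1$-positon, together with the inequality $d(Y,\mathfrak{s}_0+\hat z)\le 0$ for all $z\in G$ whenever $\mathfrak{s}_0$ is a $\spinc$ structure induced by a spin structure on $Y$. Since $K\in\cN_1$, Theorem~\ref{thm:onenegativedinvariants} provides a metabolizer $G'<H_1(Y)$ (arising analogously from a $1$-negaton) together with $d(Y,\mathfrak{s}_0+\hat z)\ge 0$ for all $z\in G'$, again for $\mathfrak{s}_0$ induced by a spin structure. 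By hypothesis there is only one metabolizer for the linking form on $Y$, so $G=G'$.

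Next I would reconcile the $\spinc$ structures used on the two sides. The key observation is that the argument proving Theorem~\ref{thm:onepositivedinvariants} uses no property of $\mathfrak{s}_0$ beyond its being the restriction to $Y$ of a $\spinc$ structure on $X$, and, by the discussion of $\spinc$ structures preceding that proof, every $\spinc$ structure on $Y$ arising from a spin structure extends over any $4$-manifold that $Y$ bounds. Hence the inequality $d(Y,\mathfrak{s}_0+\hat z)\le 0$, $z\in G$, holds for precisely the $\mathfrak{s}_0$ delivered by Theorem~\ref{thm:onenegativedinvariants} (the one arising from a spin structure on $Y$). Combining this with $d(Y,\mathfrak{s}_0+\hat z)\ge 0$ for all $z\in G'=G$ gives $d(Y,\mathfrak{s}_0+\hat z)=0$ for every $z\in G$, which is the assertion.

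I expect the only delicate point to be the bookkeeping of the previous paragraph: verifying that one $\spinc$ structure $\mathfrak{s}_0$ (coming from a spin structure on $Y$) and one subgroup $G<H_1(Y)$ can be used \emph{simultaneously} as the outputs of both Theorem~\ref{thm:onepositivedinvariants} and Theorem~\ref{thm:onenegativedinvariants}. Once the hypothesis on metabolizers collapses $G$ and $G'$, the remainder is the familiar ``$\le 0$ and $\ge 0$ imply $=0$'' argument, exactly as in Corollary~\ref{cor:zerodeltainvariants}, but now applied across the entire coset $\{\mathfrak{s}_0+\hat z : z\in G\}$ rather than only at $z=0$.
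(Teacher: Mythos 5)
Your proposal is correct and uses the same argument as the paper: the unique-metabolizer hypothesis forces the subgroup $G$ from the $1$-positon side to coincide with the subgroup $G'$ from the $1$-negaton side, after which the inequalities $\leq 0$ and $\geq 0$ from Theorems~\ref{thm:onepositivedinvariants} and~\ref{thm:onenegativedinvariants} squeeze the $d$-invariants to zero across the whole coset. Your extra care in checking that a single spin-induced $\mathfrak{s}_0$ serves in both theorems is a reasonable elaboration of a point the paper leaves implicit.
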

\begin{proof} Under this hypothesis, the subgroup $G$ from Theorem~\ref{thm:onenegativedinvariants} (where $G$ is taken to be the kernel of $j_*$) will necessarily coincide with the subgroup $G'$ as in Corollary~\ref{thm:onepositivedinvariants}.  Thus, the $\spinc$ structures corresponding to $G=G'$ will yield vanishing $d$-invariants.
\end{proof}


\begin{ex} Let $K=K(25,2)$ denote the two-bridge knot in Figure~\ref{fig:twist}.  In the notation of Conway, $K$ corresponds to the rational tangle $25/2=12+\frac{1}{2}$. We will argue that $K\in\cB_0-\cB_1$.   It is obvious that $K$ can be unknotted by changing one negative crossing or by changing six positive crossings.  Thus $K\in\cB_0$. The double-branched cover of $K$ is the lens space $L(25,2)$.  Since $\Z_{25}$ has only one proper subgroup, there is only one subgroup that is a metabolizer for the linking form. Therefore, if $K\in \cB_1$, then Corollary~\ref{cor:ambiguousdinvariants} implies that $L(25,2)$ has at least $5$ vanishing $d$-invariants. But using Ozsv\'ath and Szab\'o's formula for the $d$-invariants of lens spaces~\cite[Proposition 4.8]{OzSz1} (and a Mathematica notebook generously provided by Stanislav Jabuka), we compute the $d$-invariants of $L(25,2)$:
	$$\left\{-\frac{72}{25},-\frac{72}{25},-2,-2,-\frac{48}{25},-\frac{48}{25},-\frac{32}{25},-\frac{32}{25},-\frac{28}{25},-\frac{28}{25},-\frac{18}{25},-\frac{18}{25},-\frac{12}{25},-\frac{12}{25},-\frac{8}{25},-\frac{8}{25},\right.$$
	$$\left.-\frac{2}{25},-\frac{2}{25},0,0,0,\frac{8}{25},\frac{8}{25},\frac{12}{25},\frac{12}{25}\right\}$$
	\begin{center}
		\begin{figure}[ht]
			\begin{picture}(422,44)(0,0)
				\includegraphics[scale=2]{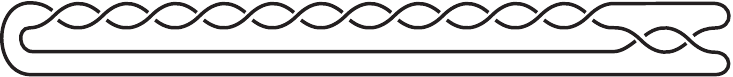}
			\end{picture}
			\caption{A twist knot}
			\label{fig:twist}
		\end{figure}
	\end{center}
There are only $3$ vanishing correction terms, so $K\notin\cB_1$.
\end{ex}

Sometimes $d$-invariants associated to prime-power branched obstruct membership in $\cP_0$, $\cN_0$,  and $\cB_0$, namely in cases where the branched cover is a homology sphere. This is the case (for any $p$) if $K$ is a knot with Alexander polynomial $1$. We state only the $\cB_0$ case.

\begin{cor}\label{cor:homologyspheredinvariants} Suppose $K\in\cB_0$ and $Y$, the $p^r$-fold branched cover over $K$, is an integral homology sphere, and  $\mathfrak{s}_0$ is the unique $\spinc$ structure on $Y$.  Then $d(Y,\mathfrak{s}_0)=0$.
\end{cor}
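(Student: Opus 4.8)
The statement to prove is Corollary~\ref{cor:homologyspheredinvariants}: if $K\in\cB_0$ and the $p^r$-fold branched cover $Y$ over $K$ is an integral homology sphere, then $d(Y,\mathfrak{s}_0)=0$ where $\mathfrak{s}_0$ is the unique $\spinc$ structure. The plan is to deduce this from the $\cP_0$ and $\cN_0$ half of the machinery already assembled, running the same argument as in Theorem~\ref{thm:onepositivedinvariants} but in the much simpler homology-sphere setting, where there is no linking form and no metabolizer to worry about.

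First I would invoke Proposition~\ref{prop:easy2fold} applied to $K\in\cB_0$: since $K$ lies in both $\cP_0$ and $\cN_0$ and (being in $\cB_0$) has vanishing Levine–Tristram signature function, in particular all its $p^r$-signatures vanish, so $Y$ bounds a compact $4$-manifold $\widetilde V_+$ with positive definite intersection form and $H_1(\widetilde V_+;\Z_p)=0$, and similarly bounds $\widetilde V_-$ with negative definite form and $H_1(\widetilde V_-;\Z_p)=0$. Since $Y$ is an integral homology sphere, the linking form is trivial, the only metabolizer is $0$, and the unique $\spinc$ structure $\mathfrak{s}_0$ automatically extends over any $4$-manifold $Y$ bounds (it is the one arising from the spin structure). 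Next I would upgrade the definiteness to diagonalizability: using $H_1(Y)=0$ and the argument in the proof of Theorem~\ref{thm:onepositivedinvariants} — equivalently, applying Donaldson's theorem after capping off or simply citing that a positive-definite filling of a homology sphere with the appropriate algebraic-slice hypotheses can be taken unimodular and then diagonalizable via Elkies' theorem (Lemma~\ref{lem:smallestchar}) — I obtain for $\widetilde V_+$ a $\spinc$ structure $\mathfrak t$ restricting to $\mathfrak{s}_0$ with $c_1(\mathfrak t)^2=\beta_2(\widetilde V_+)$, hence by the positive-definite Ozsváth–Szabó inequality~\eqref{eq:OSinequality2}, $d(Y,\mathfrak{s}_0)\le 0$. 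Running the same reasoning with $\widetilde V_-$ and inequality~\eqref{eq:OSinequality} gives $d(Y,\mathfrak{s}_0)\ge 0$. Combining, $d(Y,\mathfrak{s}_0)=0$.

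Alternatively, and more economically, I would simply observe that this is the special case $z=0$ of Theorem~\ref{thm:onenegativedinvariants} combined with (the orientation-reversed) Theorem~\ref{thm:onepositivedinvariants}, exactly as in the proof of Corollary~\ref{cor:zerodeltainvariants}: a $0$-positon is in particular a $1$-positon once one notes that for a homology-sphere branched cover the derived-series condition on the basis surfaces is vacuous at the relevant level, or more directly one reproves the two inequalities $d(Y,\mathfrak{s}_0)\le 0$ and $d(Y,\mathfrak{s}_0)\ge 0$ from the $\cP_0$ and $\cN_0$ fillings directly. I would present it as: apply Theorems~\ref{thm:onepositivedinvariants} and~\ref{thm:onenegativedinvariants} with $z=0$ (the metabolizer being trivial and $\mathfrak{s}_0$ the spin $\spinc$ structure), exactly as in Corollary~\ref{cor:zerodeltainvariants}, since a knot in $\cB_0$ with algebraically trivial behavior lies in the relevant positivity classes; then $d(Y,\mathfrak{s}_0)\le 0$ and $d(Y,\mathfrak{s}_0)\ge 0$ force equality.

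The only genuine subtlety — and the step I expect to be the main obstacle to writing cleanly — is making sure the $\cP_0/\cN_0$ fillings $\widetilde V_\pm$ furnished by Proposition~\ref{prop:easy2fold} have \emph{unimodular} (equivalently diagonalizable) intersection forms, not merely definite ones, so that Lemma~\ref{lem:smallestchar} applies to produce the $\spinc$ structure with $c_1^2=\pm\beta_2$ needed to saturate the Ozsváth–Szabó inequalities. This is handled by the same computation as in the proof of Theorem~\ref{thm:onepositivedinvariants} (the lifts of the basis surfaces $S_i$ have self-intersection $\pm1$, are disjoint, are linearly independent, span a direct summand, and exhaust $H_2$ by the Euler-characteristic count from Proposition~\ref{prop:easy2fold}), but since $Y$ is a homology sphere and $H_2(\widetilde V_\pm)$ is torsion-free it is strictly easier; I would simply remark that the argument of Theorem~\ref{thm:onepositivedinvariants} goes through verbatim, or cite Donaldson's theorem directly.
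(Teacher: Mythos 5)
Your first line of attack is the paper's own argument, and it works: apply Proposition~\ref{prop:easy2fold} to get a positive-definite $0$-positon branched cover and a negative-definite $0$-negaton branched cover, then use Lemma~\ref{lem:smallestchar} plus the two Ozsv\'ath--Szab\'o inequalities~\eqref{eq:OSinequality} and~\eqref{eq:OSinequality2} to sandwich $d(Y,\mathfrak s_0)$ between $0$ and $0$. However, two things in your write-up need correction.

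First, the unimodularity of the intersection form on the branched covers $\widetilde V_\pm$ is \emph{not} the subtle point you make it out to be, and your proposed workaround would actually fail. You suggest re-running the argument from the proof of Theorem~\ref{thm:onepositivedinvariants}, which lifts the basis surfaces $S_i$ to the branched cover; but that step crucially uses $\pi_1(S_i)\subset\pi_1(W)^{(1)}$, available only for knots in $\cP_1$, not $\cP_0$. For a $0$-positon the condition $\pi_1(S_i)\subset\pi_1(W)^{(0)}$ is vacuous and the $S_i$ need not lift, so ``the argument goes through verbatim'' is false. Likewise citing Donaldson's theorem would give diagonalizability \emph{given} unimodularity, but you would be trying to use it to establish unimodularity, which is circular. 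What actually makes this trivial --- and what the paper uses --- is that $Y$ is assumed to be an \emph{integral} homology sphere, so by Poincar\'e--Lefschetz duality the intersection form on $H_2(\widetilde V_\pm)/\mathrm{torsion}$ is automatically unimodular for any compact oriented $4$-manifold $\widetilde V_\pm$ that $Y$ bounds. No lifting argument and no Donaldson needed; Lemma~\ref{lem:smallestchar} applies directly.

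Second, your ``more economical'' alternative --- invoking Theorems~\ref{thm:onepositivedinvariants} and~\ref{thm:onenegativedinvariants} with $z=0$ as in Corollary~\ref{cor:zerodeltainvariants} --- does not apply as stated. Those theorems have $\cP_1$ and $\cN_1$ as hypotheses, and $K\in\cB_0$ does not put $K$ in $\cB_1$; the whole point of the filtration is that the inclusion $\cB_1\subset\cB_0$ is strict. Your parenthetical justification (``the derived-series condition on the basis surfaces is vacuous at the relevant level'') is not correct: for $n=1$ the condition $\pi_1(S_i)\subset\pi_1(W)^{(1)}$ is a genuine restriction. So this route would need additional argument and is best dropped; the direct argument with $\cP_0$/$\cN_0$ fillings, fixed as above, is the clean path and is the one the paper takes.
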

\begin{proof} Since $K\in \cB_0$, its $p^r$-signatures vanish by Proposition~\ref{prop:easy2fold}. Let  $X$ be the $p^r$-fold branched cover over the slice disk in a $0$-negaton. By Proposition~\ref{prop:easy2fold},  $X$ has negative definite intersection form and, since its boundary is a homology sphere, this form is unimodular. Hence by Lemma~\ref{lem:smallestchar}, there exists $\t\in\spinc(X)$ with $\t|_Y=\s_0$ and $-c_1(\t)^2\leq\beta_2(X)$.  Thus by the Ozsv\'ath-Szabo inequality ~(\ref{eq:OSinequality}), $d(Y,\mathfrak{s}_0)\geq 0$. Similarly using a $0$-positon we get $d(Y,\mathfrak{s}_0)\leq 0$. 
\end{proof}

\section{Nontriviality of the $\cB$ filtration}\label{sec:filtrationnotrivial}

In this section we prove that the successive quotients of the terms of the $\cB$ filtration have infinite rank and contain infinite rank $2$-torsion subgroups. Readers interested only in topologically slice knots can skip this section.

It is easy to create knots lying in $\cB_n$ using the satellite constructions discussed in Section~\ref{sec:examples}. In order to show that such a knot does not lie in $\cB_{n+1}$, it suffices, by Corollary~\ref{cor:positiveinratnsolv}, to show that it does not lie in $\mathcal{F}_{n.5}^\Q$ (which implies it doesn't lie in $\mathcal{F}_{n+1}^\Q$). For this, the techniques of ~\cite{CHL3,CHL5} using the von Neumann signature defects introduced in ~\cite{COT} can be successfully brought to bear.  Slight modifications are necessary because unfortunately the specific examples used in ~\cite{CHL3,CHL5} (in showing that $\mathcal{F}_n/\mathcal{F}_{n.5}$ has infinite rank) were not chosen to lie in $\cB_n$. 

The main idea can already be seen in the knot $K=R(LHT,RHT)$ of Example~\ref{ex:nine46}. There it was observed that, since the right-hand trefoil is a positive knot and the left-hand trefoil is a negative knot, $K\in \cB_1$. Yet $K$ cannot lie in $\cB_2$ since it would then lie in $\mathcal{F}_{1.5}^\Q$ and hence would have vanishing Casson-Gordon invariants (alternatively von Neumann metabelian signature invariants). Specifically the fact that the classical signatures of the right-handed or left-handed trefoil knot are \textit{non-zero} obstructs $K$ from being slice or even lying in $\cB_2$ (see Appendix~\ref{sec:CGinvts}).  It does not matter that the signs of these signatures are different.

\begin{thm}\label{thm:nontrivialityoffiltration} For each $n$, there is an infinite rank subgroup
$$
\Z^\infty\subset\frac{\cB_n}{\cB_{n+1}}.
$$
while for each $n\geq 0, n\neq 1$ there exists
$$
\Z_2^\infty\subset \frac{\cB_n}{\cB_{n+1}}.
$$
\end{thm}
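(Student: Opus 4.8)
The plan is to build, for each $n \geq 1$, an infinite family of knots in $\cB_n$ that remain linearly independent modulo $\cB_{n+1}$, following the architecture of the non-triviality arguments in \cite{CHL3,CHL5} but adapting the seed knots so that the constructed knots actually lie in $\cB_n$. First I would choose ``bipolar seeds'': instead of taking arbitrary knots with nonzero signature, I would use seed knots of the form $J = RHT \# LHT$-type satellites (or, more efficiently, a single seed obtained by infecting a ribbon knot in two complementary ways, exactly as in Example~\ref{ex:nine46}), so that each seed lies in $\cB_0$ while still carrying a nontrivial integral of the Levine--Tristram signature over the circle. Iterating the winding-number-zero satellite construction of Proposition~\ref{prop:operatorsact} (or more precisely Corollary~\ref{cor:satellites}), starting from such a $\cB_0$ seed and infecting $n$ times along curves $\eta_i \in \pi_1(S^3 \setminus R_i)^{(1)}$ using appropriate ribbon patterns $R_i$ that themselves lie in $\cB_0$ (indeed are slice, so lie in every $\cB_m$), produces knots $K$ in $\cB_n$. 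The point is that the same construction, done with genuinely nonslice but bipolar building blocks, lands in $\cB_n$ by repeated application of \eqref{eq:operators3}.

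Next I would show the constructed family does not lie in $\cB_{n+1}$. By Corollary~\ref{cor:positiveinratnsolv}, $\cB_{n+1} \subset \mathcal{F}_{n+1}^{\Q} \subset \mathcal{F}_{n.5}^{\Q}$, so it suffices to show that no nontrivial rational linear combination of the family lies in $\mathcal{F}_{n.5}^{\Q}$. For this I would invoke the von Neumann $\rho$-invariant machinery: following \cite[Theorem 4.2]{COT} and the amenable-signature refinements of \cite{CHL3,CHL5}, a knot in $\mathcal{F}_{n.5}^{\Q}$ must have vanishing higher-order $\rho$-invariants for coefficient systems factoring through the relevant $(n{+}1)$-st solvable quotient of $\pi_1(M_K)$. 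The standard computation shows these $\rho$-invariants of the iterated satellite decompose, via the satellite formula, into a sum of integrals of classical signature functions of the seed knots over subsets of the circle; choosing the seeds so that these integrals are large and suitably independent (e.g. seeds whose signature functions are supported near distinct roots of unity, and arranging the infection curves to have infinite order in the relevant metabelian quotients) forces any hypothetical linear combination lying in $\mathcal{F}_{n.5}^{\Q}$ to have all coefficients zero. This is essentially the argument of \cite[\S 5--6]{CHL3}, and the only genuinely new ingredient is verifying that the seeds and patterns can be taken bipolar without destroying the signature-independence; since slice patterns contribute nothing to the $\rho$-invariant and $\cB_0$ seeds can still have arbitrary signature functions (only the \emph{sign} is unconstrained — indeed identically-zero signature is required only for $\cB_0$ membership of the seed \emph{itself}, not of the infection), one checks the seeds of Example~\ref{ex:nine46}-type work: each is $\geq_0 U$ and $U \geq_0$ it, yet its metabelian signature invariants are nonzero.

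For the $\Z_2^{\infty}$ statement with $n \geq 2$, I would run the identical construction but with $2$-torsion seeds: start from knots $J$ with $2J$ slice (e.g. suitable amphichiral or negative-amphichiral knots, or knots built from the order-two elements in the algebraic concordance group) that lie in $\cB_0$ — note by Corollary~\ref{prop:filtrationsoftorsionC} any torsion knot in $\cP_0$ automatically lies in $\cB_0$, which makes producing bipolar torsion seeds easy. Iterating winding-number-zero satellites on these again lands in $\cB_n$, and the $\Z_2$-coefficient $\rho$-invariant obstructions of \cite{CHL3} (using $L^2$-signatures with finite-image pieces, or the Cha--Orr $\Q/\Z$-coefficient invariants) detect that no combination lies in $\mathcal{F}_{n.5}^{\Q}$. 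The reason $n = 1$ is excluded from the torsion statement is, as flagged in the introduction, that $\cB_1 \subset \mathcal{F}_1^{\mathrm{odd}}$ forces algebraic sliceness (Corollary~\ref{cor:ponealgslice}), yet the first-order obstructions available for $\cB_1/\cB_2$ detection — Casson--Gordon and metabelian $\rho$-invariants — are $\R$-valued, not obviously producing $2$-torsion families; and the case $n=0$ is where classical signatures (Corollary~\ref{cor:CmodZzero}) already give $\Z^{\infty}$ but the analogous $\Z_2$ computation requires a separate argument about $2$-torsion in the algebraic concordance group restricted to $\cP_0$, handled directly.

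The main obstacle I anticipate is not the topology of the construction — that is a routine iteration of Corollary~\ref{cor:satellites} — but rather the bookkeeping in the $\rho$-invariant non-vanishing argument: one must verify that the infection curves $\eta_i$ remain nontrivial (indeed of infinite order, or of appropriate order in the $2$-torsion case) in the successive higher-order Alexander modules / solvable quotients of the iterated satellites, so that the coefficient systems do not degenerate and the signature integrals genuinely appear. This is exactly the ``$\eta$ has infinite order in $\pi_1 / \pi_1^{(2)}$'' type hypothesis managed carefully in \cite{CHL4,CHL5}, and the work is to check it survives the replacement of the original seeds by bipolar ones; since that replacement only changes the seeds (whose complements' fundamental groups are unaffected by which slice-or-bipolar knot one infects with in the \emph{pattern} slots, and whose signature data in the \emph{companion} slots is what we are controlling), I expect it to go through with the standard first-homology/Alexander-module computations, but it is the step that must be executed with care.
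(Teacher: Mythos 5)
Your $\Z^\infty$ argument for $n\geq 1$ is essentially the paper's: build a $\cB_1$ seed by infecting $9_{46}$ along both bands (one $\cP_0$ companion, one $\cN_0$ companion), iterate winding-number-zero doubling operators with slice patterns to land in $\cB_n$ via Proposition~\ref{prop:operatorsact}, and rule out membership in $\cB_{n+1}$ through $\mathcal{F}_{n+1}^\Q$-independence and the von Neumann $\rho$-invariant arguments of \cite{CHL3}; this matches the paper modulo the explicit choice of twist-knot Seifert forms and the Cheeger--Gromov constant bookkeeping that the paper carries out.

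However, your $\Z_2^\infty$ argument has a genuine gap. You propose to ``run the identical construction but with $2$-torsion seeds,'' on the theory that iterating winding-number-zero satellites on a seed $J$ with $2J$ slice produces knots that are themselves $2$-torsion. This is false in general: if $P(-)$ is a satellite operator and $2J$ is slice, $2P(J)$ need not be slice, since $P(J)\#P(J)$ is not $P(J\#J)$ and satellite operators are not homomorphisms. The paper does something structurally different: it takes seeds that are \emph{not} torsion (connected sums of right-handed and of left-handed trefoils, inserted into the two slots of $9_{46}$), and relies on \cite[Proposition 2.5]{CHL6}, which shows that the particular iterated doubling-operator pattern used there produces \emph{negative amphichiral} knots. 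It is the amphichirality of the pattern/construction --- not the torsion order of the seeds --- that forces each $\mathcal{K}^n$ to have order at most $2$ in $\C$. Your proposal has no analogue of this step, so even if your iterated satellites of torsion seeds lay in $\cB_n\setminus\cB_{n+1}$, you would have no reason to believe they represent $2$-torsion classes. To repair it, you need to exhibit an explicit amphichirality of the doubling pattern (as the paper does by importing the CHL6 construction), rather than leaning on torsion of the input.

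A minor secondary point: the theorem statement asserts $\Z^\infty\subset\cB_n/\cB_{n+1}$ for all $n$ (including $n=0$), and your iterated-satellite machinery only covers $n\geq 1$; the $n=0$ case needs a separate argument, which the paper addresses by appeal to Theorem~\ref{thm:Endo}.
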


Before proving the theorem, we note that, by happenstance the specific examples used in ~\cite[Theorem 3.3]{CDavis2} \textit{do} lie in $\cB_n$, and so \textit{that} theorem may be directly applied \textit{without modification} to show the slightly weaker result that for $n\geq 1$ there is an infinite rank subgroup
$$
\Z^\infty\subset\frac{\cB_n}{\cB_{n+2}}.
$$

\begin{proof}  First we treat the infinite rank claim. For $n=0$ we appeal to Theorem~\ref{thm:Endo}.   Now fix $n\geq 1$.  We follow the strategy of the proof of ~\cite[Theorem 8.1]{CHL3}. Let $\{J_0^j~|~1\leq j\leq \infty \}$ be the family of twist knots wherein $J_0^j$ has Seifert form $\bigl(\begin{smallmatrix}
-1&1\\ 0&-2j
\end{smallmatrix} \bigr)$. Each of these Arf invariant zero knots admits a positive projection and hence, by Proposition~\ref{prop:easyexs}, lies in $\mathcal{P}_0$. It was shown in ~\cite[Proposition 2.6]{COT2} that $\{\rho_0(J_0^j)\}$ is a linearly independent set of real numbers (here we view $\R$ is as a rational vector space). Let $R$ be the ribbon knot $9_{46}$. Let $\overline{T}^*$ denote either the left-hand trefoil or the connected sum of two copies of the left-handed trefoil (chosen so that $\rho^1(R)+\rho_0(\overline{T}^*)\neq 0$-see ~\cite[Theorem 8.1]{CHL3}). In either case, $\overline{T}^*$  lies in $\mathcal{N}_0$.  If necessary delete two members from the set $\{\rho_0(J_0^j)\}$ in order that \textit{afterwards} the set $\{\rho_0(J_0^j), \rho_0(\overline{T}^*), \rho^1(R)\}$ is a $\Q$-linearly independent set. Now define $J_1^j=R(\overline{T}^*,J_0^j)$, as shown on the right-hand side of Figure~\ref{fig:familyJ1}. Here the right-hand band of $R$ is tied into the knot $J^j_0$ and the left-hand band is tied into the knot $\overline{T}^*$. Let $U$ be the unknot. Since both $R(U,J_0^j)$ (shown on the left-hand side of Figure~\ref{fig:familyJ1}) and $R(\overline{T}^*,U)$ are also ribbon knots (in the first case cut the right-hand band, in the second case cut the other band),
\begin{figure}[htbp]
\setlength{\unitlength}{1pt}
\begin{picture}(327,151)
\put(0,0){\includegraphics{family_scaled}}
\put(202,0){\includegraphics{family_scaled}}
\put(140,20){$\equiv R(U,J_0^j)$}
\put(13,93){$U$}
\put(120,93){$J_0^j$}
\put(175,60){$J_1^j\equiv$}
\put(322,93){$J_0^j$}
\put(212,93){$\overline{T}^*$}
\end{picture}
\caption{$J_1^j=R(\overline{T}^*,J_0^j)$}\label{fig:familyJ1}
\end{figure}
$J_1^j$ may be viewed as the result of a doubling operator applied to $J_0^j$, namely
$$
J_1^j=R(\overline{T}^*,U)(J_0^j),
$$
or as the result of a doubling operator applied to $\overline{T}^*$. Hence, by Proposition~\ref{prop:operatorsact},
$$
 J_1^j\in \cB_1.
$$

\begin{figure}[ht!] \begin{center}
\begin{picture}(160,165)
\put(13,0){\includegraphics{family_scaled}}
\put(25,90){$\overline{T}^*$}
\put(131,90){$J_{n-1}^j$}
\put(-15,72){$J^j_{n}=$}
\end{picture}
\caption{The family $J^j_{n}\in \cB_n$}\label{fig:familyJ_n^j}
\end{center} \end{figure}
Now for $n\geq 2$, define an infinite set of knots, $\{J_n^j~|~1\leq j\leq \infty \}$, inductively as shown in Figure~\ref{fig:familyJ_n^j}, by starting with
$J_1^j$. Since, for any fixed $n$ and $j$, $J_n^j$ is obtained from $J^j_1$ by applying a composition of $n-1$ doubling operators, we can apply Proposition~\ref{prop:operatorsact} to conclude that $J_n^j\in \cB_n$.

Suppose that some non-trivial linear combination of $\{J_n^j~|~1\leq j\leq \infty \}$ lay in $\cB_{n+1}$. Then, by Proposition~\ref{prop:positiveinratnsolv}, it would also lie in $\mathcal{F}_{n+1}^\Q$. But, in ~\cite[Proof of Theorem 8.1, Step 4,Step 2]{CHL3}, it was shown that this set is linearly independent modulo $\mathcal{F}_{n+1}^\Q$. Hence, this set is linearly independent in $\cB_n/\cB_{n+1}$.

For the $2$-torsion result for $n=0$, one simply considers the family of genus one negative amphichiral knots shown in ~\cite[Figure 2.5]{CHL6}. Since these can obviously be unknotted by changing either positive or negative crossings, each lies in $\cB_0$. If a non-trivial sum of these knots were to lie in $\cB_1$ then it would be algebraically slice by Corollary~\ref{cor:ponealgslice}. This is a contradiction since this family is well-known to be a set of  independent $2$-torsion elements in the algebraic knot concordance group.

For the $2$-torsion results  for $n>1$ the examples and results of ~\cite{CHL6} can be  applied with slight modification. First fix $n\geq 2$.  In ~\cite[Definition 2.6]{CHL6} families of knots $\mathcal{K}^n$ are defined by applying $n$ successive doubling operators operations to a seed knot $K_0$ where $K_0$ has very large classical signature. That is:
$$
\mathcal{K}^n\equiv \mathcal{R}^n\circ \dots \circ\mathcal{R}^2\circ\mathcal{R}^1(K_0)
$$
which ensures that $\mathcal{K}^n\in \mathcal{F}^{odd}_n$. But this does not ensure that $\mathcal{K}^n$ lies in $\cB_n$. To achieve this we replace $\mathcal{R}^1(K_0)$ in the above construction by $\mathcal{R}(P,N)$ so that now
$$
\mathcal{K}^n\equiv \mathcal{R}^n\circ \dots \circ\mathcal{R}^2\circ\mathcal{R}(P,N)
$$
where $R(P,N)$ as in Figure~\ref{fig:2-tor}.
\begin{figure}[ht!] \begin{center}
\begin{picture}(160,165)
\put(13,0){\includegraphics{family_scaled}}
\put(23,92){$P$}
\put(133,92){$N$}
\end{picture}
\caption{$R(P,N)$}\label{fig:2-tor}
\end{center} \end{figure}
Choose $P$ to be a connected sum of a large number of right-handed trefoils and $N$ to be a connected sum of a large number of left-handed trefoils (the numbers specified below). Thus $P\in \cP_0$ and $N\in \cN_0$ so $\mathcal{R}(P,N)\in \cB_1$, just as in Example~\ref{ex:nine46}. It follows that $\mathcal{K}^n\in \cB_n$ by Proposition~\ref{prop:operatorsact}. Entire families are created by varying the Alexander polynomials of the ribbon knots $\mathcal{R}^1,\dots,\mathcal{R}^{n-1}$. Any knots thus created were shown to be negative amphichiral in ~\cite[Proposition 2.5]{CHL6} which implies that  these knots are of order at most $2$ in $\cB_n$. It remains to show that, for appropriate choice of $N$ and $P$, no sum of elements in this family lies in $\cB_{n+1}$.  Establishing this will show that these  knots are ``linearly independent'' elements of order two in $\mathcal{B}_n/\mathcal{B}_{n+1}$. By  Proposition~\ref{prop:positiveinratnsolv}, it suffices to show that no sum of elements in this family lies in $\mathcal{F}^\Q_{n+1}$. This was shown in ~\cite[Theorem 5.7]{CHL6}. However, in that paper, one begins with any  knot $K_0$ for which $\rho_0(K_0)$, the integral of the Levine-Tristram signature function, is very large. For our modified knot we need to specify a different condition. Let $C$ be twice the sum of the Cheeger-Gromov constants of the ribbon knots  $\mathcal{R}^1,\dots,\mathcal{R}^{n}$. This will depend on which ribbon knots are used and so will be different for each member of the family. Choose the number of left-handed trefoils so that $|\rho_0(N)|>C$. Then choose the number of right-handed trefoils so that 
$$
\rho_0(P)> C + |\rho_0(N)|.
$$
This will ensure that for each knot $\mathcal{K}^n$, each of the following is greater than $C$: $\rho_0(P)$, $\rho_0(N)$, $\rho_0(N)+\rho_0(P)+\rho^1(\mathcal{R})$. This is what is needed for the proof of ~\cite[Theorem 5.7]{CHL6}.
\end{proof}

\section{Topologically slice knots}\label{sec:mainthmknots}

The proof of the following theorem will essentially occupy the remainder of this paper, although at the end of this section we sketch the proof that, under an additional ``weak homotopy-ribbon'' assumption, the rank of $\frac{\T_n}{\T_{n+1}}$ is positive for each $n$.

\begin{thm}\label{thm:maintheoremtop}  The rank of $\frac{\T_1}{\T_2}$ is positive.
\end{thm}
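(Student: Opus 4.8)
The plan is to construct topologically slice knots that lie in $\mathcal{T}_1 = \cB_1\cap \mathcal{T}$ but not in $\mathcal{T}_2$, using a doubling-operator construction analogous to $R(LHT,RHT)$ from Example~\ref{ex:nine46} but with Alexander-polynomial-one companions to guarantee topological sliceness. First I would fix a ribbon knot $R$ with two ``bands'' (e.g.\ $9_{46}$) and infect it along curves $\eta_+,\eta_-\in \pi_1(S^3\setminus R)^{(1)}$, one band tied into a knot $P$ and the other into a knot $N$. To keep the result topologically slice, $P$ and $N$ must have trivial Alexander polynomial; the natural choices are $P = Wh^+(J_+)$ and $N = Wh^-(J_-)$, untwisted Whitehead doubles with the appropriate clasp, so that $P\in \mathcal{P}_0$ and $N\in \mathcal{N}_0$ by Example~\ref{ex:easy}. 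Then, exactly as in Example~\ref{ex:nine46}, viewing $K = R(\eta_+,\eta_-;P,N)$ as a doubling operator applied to $P$ in one way and to $N$ in another, Proposition~\ref{prop:operatorsact} gives $K\in \mathcal{P}_1$ and $K\in \mathcal{N}_1$, hence $K\in \cB_1$; and since $P,N$ and the ribbon $R$ all have Alexander polynomial one, $K$ has Alexander polynomial one and so is topologically slice by \cite{FQ}, giving $K\in \mathcal{T}_1$.

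The substance of the argument is showing such a $K$ (or a suitable family, so one gets positive rank) does \emph{not} lie in $\mathcal{T}_2$, i.e.\ not in $\cB_2$. By Corollary~\ref{cor:positiveinratnsolv} it suffices to show $K\notin \mathcal{F}_2^\Q$, and the introduction tells us this will be done ``using a combination of $d$-invariants and Casson--Gordon invariants.'' The point is delicate: because $P$ and $N$ have Alexander polynomial one, all the abelian and first-order obstructions vanish, and the usual von Neumann $\rho^{(1)}$ signature from the classical signature of the companion (the mechanism used in Theorem~\ref{thm:nontrivialityoffiltration}) is \emph{unavailable}. So the companions $J_+$ (inside $P = Wh^+(J_+)$) and $J_-$ must themselves be chosen so that a \emph{metabelian} obstruction for $K$ survives. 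I would choose $J_+$ to be a knot whose $2$-fold branched cover has a $d$-invariant (of a spin $\spinc$-structure) that is strictly negative, and arrange $J_-$ so that the corresponding $d$-invariant contribution does not cancel it --- this is where $d$-invariants of prime-power branched covers of the relevant satellites enter, via the machinery of Theorem~\ref{thm:onepositivedinvariants}/Theorem~\ref{thm:onenegativedinvariants} applied at the level above (used here as a \emph{slicing}/metabelian obstruction inside a Casson--Gordon-type computation). The Casson--Gordon side (Appendix~\ref{sec:CGinvts}) provides a metabelian representation of $\pi_1$ of the zero-surgery, and the $d$-invariants of the associated branched covers of the infected pieces serve as the non-vanishing ``higher-order signature'' that obstructs $K\in \mathcal{F}_2^\Q$, just as classical signatures of the trefoils obstruct $R(LHT,RHT)\in\cB_2$.

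The key steps, in order: (1) fix the ribbon pattern $R$ and the infection curves, and verify the two-fold satellite description that yields $K\in \mathcal{P}_1\cap\mathcal{N}_1=\cB_1$ and, via Alexander polynomial one, $K\in\mathcal{T}$; (2) compute $\pi_1(M_K)$ and its metabelian quotients, identifying the finitely many metabelian characters that could conceivably extend over a hypothetical $2$-positon, as in the Casson--Gordon framework; (3) for each such character, compute the relevant Casson--Gordon / $d$-invariant obstruction of the branched covers, expressing it in terms of the $d$-invariants of branched covers of $P$ and $N$ (equivalently of $J_+,J_-$) --- here the congruence restrictions from Theorem~\ref{thm:onepositivedinvariants} (the correction terms being even integers, the metabolizer being the kernel of $j_*$) control which terms can appear; (4) choose $J_+,J_-$ and, for a rank statement, an infinite family of companions so that these obstructions are non-zero and $\Q$-linearly independent, concluding $K\notin\cB_2$ for all nontrivial linear combinations. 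The main obstacle is step (3): showing that the $d$-invariant contributions from the two infection sites, which carry \emph{opposite} definiteness signs (positive from $P$, negative from $N$), do not conspire to cancel in every admissible metabelian character --- this asymmetry is exactly what made the analogous statement work for trefoils, and making it work for Alexander-polynomial-one companions where only $d$-invariants (not signatures) are available is the technical heart of the paper.
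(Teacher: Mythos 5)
Your general direction is right: you correctly identify the use of a $9_{46}$ doubling-operator, the case analysis over the two metabolizers of $H_1(\Sigma_3)\cong\Z_7\oplus\Z_7$, and the fact that the paper's obstruction is a mixture of $d$-invariants and Casson--Gordon invariants. But the specific example you propose does not work, and the reason exposes the key idea that your sketch is missing.

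The central misconception is the claim that ``to keep the result topologically slice, $P$ and $N$ must have trivial Alexander polynomial.'' This is false, and it is fatal to your plan. The paper's knot is $K=R(J,T)$ with $R=9_{46}$, $T=Wh(RHT)$ (Alexander polynomial one), and $J$ a \emph{twist knot} with Alexander polynomial $-11t+23-11t^{-1}$, which is decidedly not $1$. The knot $K$ is nonetheless topologically slice, because $K$ may be viewed as a satellite with companion $T$ and pattern $R(J,U)$, and $R(J,U)$ is a \emph{ribbon} knot for \emph{any} $J$ (cut the untied band of $9_{46}$). So one only needs one of the two companions to be topologically slice. This asymmetry is what makes the whole argument go: $J$ carries nontrivial \emph{algebraic concordance} information (seen by Casson--Gordon) and $T$ carries nontrivial \emph{Floer-theoretic} information (seen by $d$-invariants of the $3$-fold branched cover), and the two cases of the metabolizer dichotomy are blocked by one or the other.

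By insisting that both $P$ and $N$ have Alexander polynomial one you destroy the Casson--Gordon side. By Litherland's formula (as used in Theorem~\ref{thm:discrimvanish} and Corollary~\ref{cor:ourknotCG}), the Casson--Gordon invariant of $R(J,\eta)$ differs from that of $R$ by algebraic concordance invariants of $J$ evaluated at roots of unity; if $J$ has Alexander polynomial $1$ these vanish identically. Your proposed replacement --- ``$d$-invariants of branched covers of the infected pieces serve as the non-vanishing higher-order signature inside a Casson--Gordon computation'' --- is not the paper's mechanism and is not established anywhere: Theorems~\ref{thm:onepositivedinvariants}/\ref{thm:onenegativedinvariants} are obstructions to $\cP_1/\cN_1$, not to $\cB_2$, and the paper explicitly does not possess a $d$-invariant obstruction at the $\cB_2$ level (that is precisely why Case~II uses Casson--Gordon). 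What the paper actually does in Case~I is use a transitivity trick (Lemma~\ref{lem:ignoreJ}, relying on $J\geq_0 U$) to drop $J$ and reduce to a concrete $d$-invariant calculation for $\Sigma_3(R(U,T))$, which detects $T=Wh(RHT)$ via $\cfkinf$; and in Case~II it uses $K\in\cN_2\Rightarrow K\in\mathcal{F}^{odd}_{1.5}$ together with Theorem~\ref{thm:CGvanish} and then Corollary~\ref{cor:ourknotCG}, which detects $J$ via its discriminant. Finally, for the rank statement, the paper does not need an infinite family: $K\in\cP_2$ (since $R(J,U)$ is slice) together with $K\notin\cN_2$ and Corollary~\ref{cor:torsionfreeC} already forces $K$ to have infinite order in $\T_1/\T_2$.

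So the gap is concrete: your example choice removes the lever the paper relies on in Case~II, and the replacement obstruction you gesture at is neither defined nor proved. The fix is exactly the paper's observation that one infection is allowed to be an algebraically-nontrivial knot, because ribbon-ness of $R(J,U)$ is automatic.
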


\begin{proof} Let $R$ be the ribbon knot $9_{46}$, and $\eta_1. \eta_2$ be meridional circles to the left and right-hand bands of $R$, respectively, as shown on the left-hand side of Figure~\ref{fig:mainexnotT2}. Let $J$ be the twist knot with a negative clasp and $11$ twists and let $T$ be $Wh(RHT)$ where $RHT$ is the right-handed trefoil knot. Let $\eta_1, \eta_2$ also denote meridional circles to the left and right-hand bands of $K$, respectively. 
\begin{figure}[htbp]
\setlength{\unitlength}{1pt}
\begin{picture}(327,151)

\put(201,75){}
\put(-41,85){$R\equiv$}
\put(-32,58){$\eta_1$}
\put(130,58){$\eta_2$}
\put(190,0){\includegraphics{family_scaled}}

\put(203,93){$J$}
\put(165,70){$K\equiv$}
\put(313,93){$T$}
\put(-20,0){\includegraphics{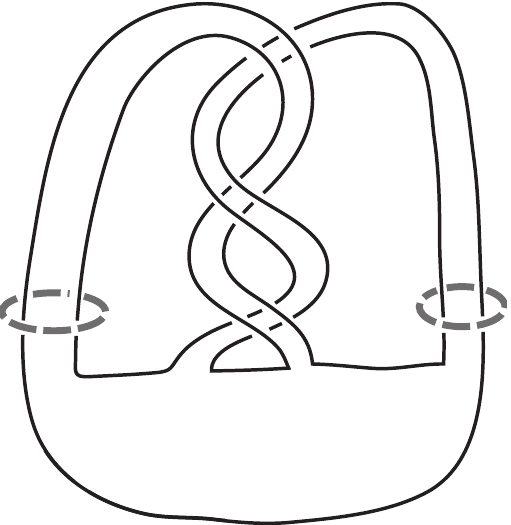}}
\end{picture}
\caption{$K=R(J,T)$}\label{fig:mainexnotT2}
\end{figure}

We will prove that $K$ is infinite order in $\T_1/\T_2$. First we show that $K\in \T_1$. Note that both $R(J,U)$ and $R(U,T)$, where $U$ is the unknot, are  ribbon knots, as we saw in Example~\ref{ex:nine46}.  Since $T=Wh(RHT)$ has Alexander polynomial $1$, it is a topologically slice knot by ~\cite{FQ}, and  lies in $\cP_1$ by  Example~\ref{ex:exswhitehead}. Hence $K$ is a satellite knot whose  pattern $R(J,U)$ is a ribbon knot and whose companion, $Wh(T)$, is a topologically slice knot that lies in $\cP_1$. It follows that $K$ is a topologically slice knot, that is $K\in \T$.  Moreover since $R(J,U)$ is slice, it lies in $\cP_2$. Thus, since $\eta_2$ lies in the commutator subgroup, $K\in \cP_2$ by equation~\ref{eq:operators} of Proposition~\ref{prop:operatorsact}. Since $J$ can be unknotted either via positive crossings or negative crossings, $J\in \cB_0$. Since $K$ may also be viewed as a winding number zero satellite knot with  ribbon knot pattern $R(U,T)$ and with companion $J$, $K\in \cB_1$ by equation~\ref{eq:operators3} of Proposition~\ref{prop:operatorsact}. Hence $K\in \T\cap \cB_1\equiv\T_1$.

In the rest of the proof  we show $K\notin \cN_2$ which shows $K$ is non-zero in $\T_1/\T_2$. It then will follow immediately from Corollary~\ref{cor:torsionfreeC}  that (since $K\in \cP_2$) no positive multiple of $K$ lies in $\cB_2$.  Consequently no non-zero  multiple of $K$ lies in $\T_2$, so $K$ has infinite order in $\T_1/\T_2$.

We proceed by contradiction. Suppose that $K\in \cN_2$ so $K$ bounds a disk $\Delta$  in some $V$ that satisfies (the negative definite analogue of)  Definition~\ref{def:positive}. Let $\Sigma$ denote the $3$-fold cyclic cover of $S^3$ branched over $K$. It is easy to see that $H_1(\Sigma)$ is independent of $J$ and $T$ and in fact $H_1(\Sigma)\cong \Z_7\langle x_1\rangle\oplus\Z_7\langle y_1\rangle$, and that there are only two metabolizers, $\langle x_1\rangle$ and $\langle y_1\rangle$,  for the $\Q/\Z$-valued linking form. A surgery picture of the $3$-fold cover of $S^3$ branched over $K$ (in the case that $J=U$) is shown in Figure~\ref{3fold}, using the method of ~\cite{AkKir}. The homology classes that we call $x_1$ and $y_1$ are represented by the \textit{meridians} of the oriented circles labelled by $x_1$ and $y_1$ in this figure.

As we saw in the proof of Proposition~\ref{prop:easy2fold} (even under the weaker hypothesis $K\in \cN_0$), the $3$-fold cyclic cover of $V$ branched over $\Delta$, here denoted  $X=\widetilde{V}$, is defined and has boundary $\Sigma$. Since $K\in \cN_1$,  and $V$ is a $1$-negaton, by Theorem~\ref{thm:onenegativedinvariants}  the kernel of
$$
j_*:H_1(\Sigma)\to H_1(X)
$$
is a metabolizer for the linking form. Hence the proof splits into two cases, $\ker(j_*)=<x_1>$ and $\ker(j_*)=<y_1>$.

\textbf{Case I}: $\ker(j_*)=<x_1>$

In this case we will show that even the weaker hypothesis $K\in \cN_1$ leads to a contradiction.

\begin{lem}\label{lem:ignoreJ} Without loss of generality we may assume that $J=U$. Specifically, if $K\in\cN_1$ via a $1$-negaton for which Case I holds then the same is true for the knot obtained by letting $J$ be the unknot.
\end{lem}
\begin{proof}  Since $J$ may be unknotted by changing only positive crossings, $J\geq_0 U$. It follows that $K=R(J,T)\geq _1 R(U,T)$.  Since $K\in \cN_1$, or equivalently, $U\geq_1 K$,  and since $K\geq_1 R(U,T)$, it follows that $U\geq_1 R(U,T)$ or, equivalently, $R(U,T)\in \cN_1$.  Temporarily let $K'=R(U,T)$, and let $\Sigma'$ and $X'$ denote the analogous branched covers. More precisely, $K\geq_1 K'$ means, after reversing orientation, that  $K'$ is concordant to $K$ inside a \textit{negative} definite $4$-manifold $C$ with intersection form $\oplus <-1>$. Thus $K'$ is slice in $V'\equiv V\cup C$.  Moreover, since the $J$ does not affect the Alexander module of $K$, $H_1(\Sigma)$ is naturally isomorphic to $H_1(\Sigma')$ and under this identification the kernel of
$$
j_*': H_1(\Sigma ')\to H_1(X')=H_1(\widetilde{V\cup C})
$$
is once again $<x_{1}'>$. This finishes the proof of the Lemma.
\end{proof}

Therefore if setting $J=U$ leads to a contradiction then we will be done with Case I. 
Henceforth, \textbf{in our proof of Case I}, we will just \textbf{assume that} $J=U$, so $K$ is redefined as $R(U,T)$ for the rest of Case I (i.e. we drop the ``primes'' ).

A surgery picture of the $3$-fold cover of $S^3$ branched over $K$ is shown in Figure~\ref{3fold}.

\begin{figure}[h!]
	\begin{center}
		\begin{picture}(346,165)(0,0)
			\put(80,125){$T$}
			\put(210,125){$T$}
			\put(335,125){$T$}
			
			\put(72,-1.8){$>$}
			\put(72,-8){$x_1$}
			
			\put(190,-1.8){$<$}
			\put(190,-8){$x_2$}
			
			\put(72,28.5){$<$}
			\put(72,20){$y_1$}
			
			\put(190,28.5){$>$}
			\put(190,20){$y_2$}
			
			\put(28,155){$0$}
			\put(58,155){$0$}
			\put(280,155){$0$}
			\put(315,155){$0$}
			
			\includegraphics[scale=1.5]{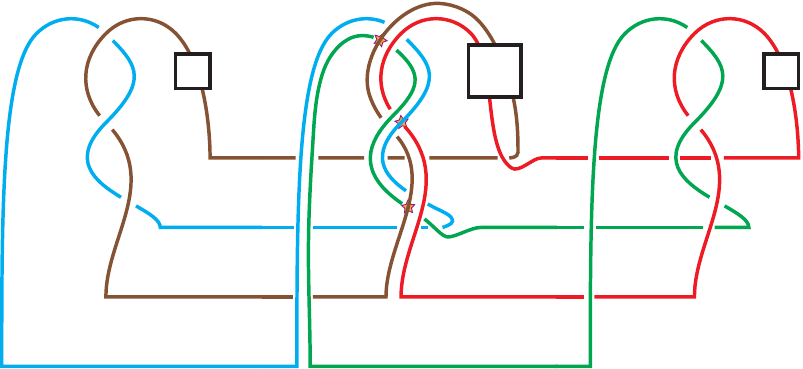}
		\end{picture}
		\caption{The $3$-fold branched cover $\Sigma_3$}
		\label{3fold}
	\end{center}
\end{figure}
Therefore, by Theorem~\ref{thm:onenegativedinvariants}, for any  $z$ in $<x_1>$, 
$$
d(\Sigma,\mathfrak{s}_0+\hat{z})\geq 0,
$$
where $\mathfrak{s}_0$ is the spin$^c$ structure associated to the unique spin structure on $\Sigma$.

But in Theorem~\ref{thm:dinvtcalc} of the first appendix, Section~\ref{sec:calcdinvt}, we  calculate that $d(\Sigma,\mathfrak{s}_0+4\widehat{x_2})\leq -3/2$, which is a contradiction since $4x_2=x_1$ in $H_1(\Sigma)$.   This completes the proof of Case I.

\vspace{.15in}

\textbf{Case II}: $\ker(j_*)=<y_1>$

\vspace{.15in}

We return to $K\equiv R(J,T)$. We will show, using Casson-Gordon invariants, that $K$ admits no such $2$-negaton \textit{even in the topological category}. This is counter-intuitive because, since $K$ is topologically slice, all the \textit{Casson-Gordon sliceness obstructions} must be zero!  The resolution of this apparent paradox is simple. Casson-Gordon \textit{invariants} involve a choice of a metabolizer. The so-called \textit{Casson-Gordon sliceness obstruction} is that \textit{one} (not all) of the Casson-Gordon \textit{invariants} must be zero. So, even for topologically slice knots, Casson-Gordon \textit{invariants} obstruct the knots being slice in such a way that some fixed metabolizer arises.  

By  Proposition~\ref{prop:positiveinratnsolv} since $K\in \cN_2$,  $K\in \mathcal{F}_2^{odd}$ and hence $K\in \mathcal{F}_{1.5}^{odd}$. More precisely, we are assuming that $K$ admits a $2$-negaton $V$ for which  Case II holds (a condition on the $3$-fold branched cover). Recall that in the proof  of Proposition~\ref{prop:positiveinratnsolv}, given a $2$-negaton, $V$, we showed $K\in \mathcal{F}_2^{odd}$ by connected summing with copies of $\mathbb{C}P(2)$. Since this does  not alter $\pi_1$, we know that $K\in \mathcal{F}_2^{odd}$ in such a way that it is slice via a particular $4$-manifold $V$ as in Definition~\ref{def:altdefoddsolv} ($n=2$) wherein the kernel of the corresponding map $j_*$ on $3$-fold branched covers is $<y_1>$.

In the second appendix, Section~\ref{sec:CGinvts}, we show that  Casson-Gordon invariants of $K$ obstruct membership in $\mathcal{F}_{1.5}^{odd}$ (something that was essentially already proved in ~\cite[Theorem 9.11]{COT}).  In the case at hand the relevant Casson-Gordon invariants of $K$ are equal to certain well-known algebraic concordance invariants of $J$ (essentially follows from previous work of Litherland, Gilmer and Livingston), which we compute to be non-zero (in Corollary~\ref{cor:ourknotCG}). This is all detailed in Appendix~\ref{sec:CGinvts}. Relying on the previous work alluded to above is delicate, since (due to an error in an early paper of Gilmer) many of the results stated in the relevant papers are stronger than can currently be proved (some corrected statements appear in ~\cite{GL4}).  Moreover we do not need the full strength of Gilmer's work anyway, so the gap in his proof is not relevant to us. But this makes things challenging  even for the  experts and so we have relegated this to an appendix.

This completes the proof of Theorem~\ref{thm:maintheoremtop} .

\end{proof}

At this time we are unable to show  that  $\mathcal{T}_n/\mathcal{T}_{n+1}$ is non-zero for every $n$.  However we can exhibit with confidence precise families that will, hopefully, in the  future be shown to give infinite linearly independent sets in $\mathcal{T}_n/\mathcal{T}_{n+1}$. These are given below. Moreover we do prove below, under a mild ``homotopy-ribbon assumption'', that $\mathcal{T}_n/\mathcal{T}_{n+1}$ has positive rank for every $n>1$. We view this assumption as a technical hypothesis. We could also avoid this hypothesis and prove the anticipated result if we were better at calculating $d$-invariants of \textit{all} branched covers of a fixed knot.

\begin{thm}\label{thm:generalnontriviality} For each $n>1$ there is a (topologically slice) knot, $K_n$, in $\mathcal{T}_n$ for which there exists no $(n+1)$-negaton $V$ wherein the inclusion  $S^3-K\to V\setminus \Delta$ induces a  surjection on Alexander modules.  
\end{thm}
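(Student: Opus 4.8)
The plan is to build $K_n$ by iterating winding-number-zero satellite (``doubling'') operators, exactly in the style of the families $J_n^j$ from the proof of Theorem~\ref{thm:nontrivialityoffiltration}, but now arranged so that the companion at the deepest stage is a topologically slice knot which itself lies in $\cB_n$ while being detectably non-trivial. Concretely, I would start with a topologically slice seed knot $K_0$ with Alexander polynomial one (so $K_0 \in \T$) that lies in $\cB_0$ but whose $d$-invariants of some prime-power branched cover are non-zero --- e.g. a Whitehead double $Wh^{-}(RHT)$, which by Example~\ref{ex:exswhitehead} lies in $\cP_1$ and in $\cB_0$, or more simply any of the Hedden--Livingston--Ruberman knots used in Corollary~\ref{cor:HLRexamples}, whose $2$-fold branched cover has strictly negative $d$-invariant at the spin $\spinc$-structure. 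Then set
\[
K_n \equiv \mathcal{R}^{n} \circ \cdots \circ \mathcal{R}^{2}(R(P,N))
\]
where $R = 9_{46}$, $P$ is a large connected sum of right-handed trefoils, $N$ is a large connected sum of left-handed trefoils, and the companion knot fed into the innermost infection is the seed $K_0$; the $\mathcal{R}^i$ are infections along curves $\eta_i \in \pi_1(S^3\setminus R)^{(1)}$ by ribbon knots, so each raises the filtration level by one. By Proposition~\ref{prop:operatorsact} (winding number zero, so $k=1$ at each stage) and Corollary~\ref{prop:Alexone}-style reasoning on Alexander polynomial one patterns, $K_n \in \cB_n \cap \T = \T_n$.

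Next I would establish the obstruction. Suppose for contradiction that $K_n$ bounds a slice disk $\Delta$ in an $(n+1)$-negaton $V$ such that $S^3 - K_n \to V \setminus \Delta$ is surjective on Alexander modules. The surjectivity hypothesis plays the role that the metabolizer-is-the-whole-thing phenomenon played in Case I of Theorem~\ref{thm:maintheoremtop}: it forces the kernel of $j_*$ on the relevant branched homology to be trivial (or, dually, forces a specific one of the possible metabolizers), so that the ``bad'' $\spinc$-structure cannot be avoided. I would then combine this with the $d$-invariant inequality of Theorem~\ref{thm:onenegativedinvariants} --- or rather, since we are $n+1$ levels deep, with the von Neumann signature-defect obstruction of Theorem~\ref{thm:generalsignaturesobstruct} together with the fact (Corollary~\ref{cor:positiveinratnsolv}) that $\cN_{n+1} \subset \mathcal{F}_{n+1}^\Q \subset \mathcal{F}_{n+.5}^\Q$, so that all order-$\leq n$ $\rho$-invariants vanish. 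The surjectivity-on-Alexander-modules hypothesis is exactly what lets one push a non-vanishing higher-order $\rho$-invariant (computed as in \cite{CHL3,CHL6} from the large signatures of $P$, $N$, and from $K_0$) down through the tower: it guarantees that the coefficient systems extend over $V \setminus \Delta$ as needed for the induction in \cite[Section 7-8]{CHL3}, and at the bottom level one invokes the $d$-invariant calculation for $K_0$ (Corollary~\ref{cor:HLRexamples} / Corollary~\ref{cor:zerodeltainvariants}) to get the final contradiction. Choosing the number of trefoils in $P$ and $N$ larger than the sum of Cheeger--Gromov constants of all the $\mathcal{R}^i$ (exactly as in the proof of Theorem~\ref{thm:nontrivialityoffiltration}) makes the signature terms dominate.

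The main obstacle will be the bottom of the induction: in the ordinary non-triviality arguments for $\{\cB_n\}$ one is allowed to compare against $\mathcal{F}^\Q_{n+1}$ and finish with metabelian/higher-order signatures, which are insensitive to the topological-versus-smooth distinction. Here, because $K_n$ is topologically slice, \emph{all} such signature obstructions vanish on $K_n$ itself, so the argument cannot be purely algebraic --- one genuinely needs the smooth input ($d$-invariants of $K_0$) to survive being passed up through $n$ layers of satellite operations. Making that survival rigorous requires controlling how the homology and the relevant $\spinc$-structures of the branched cover of $V$ decompose along the satellite decomposition (a Mayer--Vietoris argument as in the proof of Proposition~\ref{prop:operatorsact}, combined with the branched-cover bookkeeping of \cite{AkKir}), and it is precisely here that the ``weak homotopy-ribbon'' (surjectivity on Alexander modules) hypothesis is used to prevent the bad $\spinc$-structure from being hidden in a kernel. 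I expect verifying that the relevant $d$-invariant of the top-level branched cover equals (up to an explicitly computable even-integer shift coming from the ribbon patterns and trefoils) the already-computed $d$-invariant of the branched cover of $K_0$ to be the delicate computational heart of the proof, analogous to Theorem~\ref{thm:dinvtcalc} in the appendix but now iterated.
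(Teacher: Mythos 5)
There is a genuine gap here, and the construction you propose does not match the paper's.

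First, your construction is not well-posed and misplaces the topologically slice ingredient. The paper's proof takes $K_n = R(J_{n-1},T)$ (Figure~\ref{fig:familyJn}), where the topologically slice $T=Wh(RHT)$ is tied into \emph{one} band of $R=9_{46}$ at the \emph{top} level and the non-topologically-slice $J_{n-1}\in \cB_{n-1}$ is tied into the \emph{other} band, with $J_{n-1}$ itself built by iterated doubling from $J_1=R(RHT,\,LHT\#LHT)$. You instead want to feed your seed $K_0$ in as the innermost companion below a tower of operators, but it is unclear where $K_0$ goes relative to $R(P,N)$ (the ribbon knot $9_{46}$ has only two bands, already occupied by $P$ and $N$), and in any case the seed you posit cannot exist: if $K_0$ has Alexander polynomial one --- so every prime-power branched cover is an integral homology sphere --- and $K_0\in\cB_0$, then Corollary~\ref{cor:homologyspheredinvariants} forces $d(Y,\mathfrak{s}_0)=0$ on every such cover; in particular $Wh^-(RHT)$ does not have the nonvanishing branched-cover $d$-invariants you want. (The knots of Corollary~\ref{cor:HLRexamples} do not have Alexander polynomial one, and the paper does not assert they lie in $\cB_0$.)

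Second, you misidentify the role of the ``weak homotopy-ribbon'' hypothesis and miss the case split that drives the argument. After passing to the $3$-fold branched cover, $H_1(\Sigma)\cong\Z_7\oplus\Z_7$ has exactly two metabolizers, and the proof bifurcates on which one equals $\ker j_*$. In Case~I ($\ker j_*=\langle x_1\rangle$, lifts of $\eta_1$), Lemma~\ref{lem:ignoreJ} lets one discard $J_{n-1}$ entirely, reducing to the single $d$-invariant computation for $R(U,T)$ already done in Theorem~\ref{thm:dinvtcalc}; there is no need, and no mechanism, to ``pass a $d$-invariant up through $n$ layers,'' and the surjectivity hypothesis is not used at all in this case. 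In Case~II ($\ker j_*=\langle y_1\rangle$), $T$ is discarded since it is topologically slice, and the obstruction is a higher-order von~Neumann $\rho$-invariant argument in the style of~\cite{CHL3}; this is the \emph{only} place the surjectivity-on-Alexander-modules hypothesis is invoked, and its purpose is not to pin down a $\spinc$-structure for a $d$-invariant inequality but to promote a kernel condition on branched homology (Case~II) to the corresponding kernel condition on the integral Alexander module (Case~II$'$), so that the relevant metabelian coefficient systems extend. Your plan of a single unified argument mixing $\rho$-invariants and $d$-invariants along one satellite tower does not track the actual logical structure: each tool handles exactly one metabolizer case, and only the $\rho$-invariant case is iterated.
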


\begin{proof} Because this theorem is not the desired end-goal, we only sketch the proof. In fact the examples and the proof are entirely similar to those of the previous theorem, but with  the role of Casson-Gordon invariants being replaced by higher-order signatures. Again let $R$ be the ribbon knot $9_{46}$ as shown on the left-hand side of Figure~\ref{fig:mainexnotT2}  and let $T$ be $Wh(RHT)$.  We will specify below a knot $J_{n-1}\in\cB_{n-1}$ and then we set $K=K_n:= R(J_{n-1},T)$ as shown in Figure~\ref{fig:familyJn}.
\begin{figure}[htbp]
\setlength{\unitlength}{1pt}
\begin{picture}(327,151)

\put(201,75){}

\put(100,0){\includegraphics{family_scaled}}

\put(105,93){$J_{n-1}$}
\put(73,70){$K\equiv$}
\put(223,93){$T$}

\end{picture}
\caption{$K_n=R(J_{n-1},T)$}\label{fig:familyJn}
\end{figure}
Thus each knot $K_n$ is very similar to the example of the previous theorem as shown on the right-hand side  of Figure~\ref{fig:mainexnotT2}. As in the previous proof, since $R$ is a ribbon knot and $T$ is topologically slice, $K_n$ is topologically slice. Also, as in the previous proof, since $R(U,T)$ is a ribbon knot and $J_{n-1}\in \cB_{n-1}$, $K_n\in \cB_n$. Hence $K_n\in \mathcal{T}_n$. 

We now  recursively specify $J_{n-1}$.  Let $J_1$ be  as in Figure~\ref{fig:J1} where  $J_0^+$ is the right-handed trefoil knot and  $J_0^-$ is the connected-sum of two left-handed trefoil knots. 
\begin{figure}[ht!] \begin{center}
\begin{picture}(160,165)
\put(13,0){\includegraphics{family_scaled}}
\put(22,92){$J_0^+$}
\put(131,92){$J_{0}^-$}
\put(-15,72){$J_{1}=$}
\end{picture}
\caption{The knot $J_1$}\label{fig:J1}
\end{center} \end{figure}
For $n>2$ let $J_{n-1}$ be given inductively as on the right-hand side of Figure~\ref{fig:familyJn-1}.  
\begin{figure}[ht]
\setlength{\unitlength}{1pt}
\begin{picture}(327,151)
\put(74,0){\includegraphics{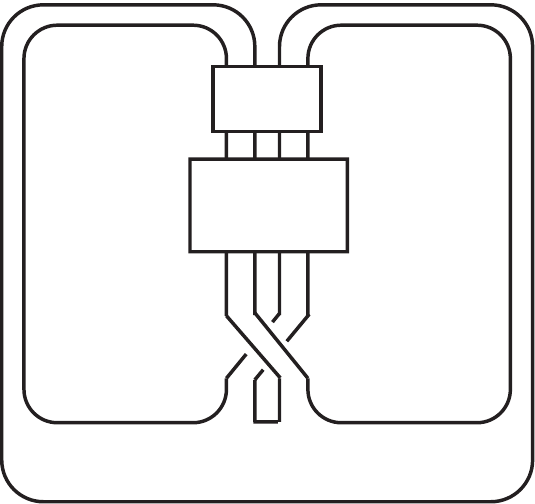}}
\put(142,83){$J_{n-2}$}
\put(144,113){$-2$}
\put(28,56){$J_{n-1}\equiv$}
\end{picture}
\caption{$J_{n-1},~ n\geq 3$}\label{fig:familyJn-1}
\end{figure}
We owe the reader a proof that $J_{n-1}\in \cB_{n-1}$.  Note in Figure~\ref{fig:familyJn-1} that the knot obtained by replacing $J_{n-2}$ by an unknot is itself a ribbon knot $R'$. Thus $J_{n-2}=R'(J_{n-2})$. Hence, by Corollary~\ref{cor:satellites}, it suffices to show  inductively that $J_{n-2}\in \cB_{n-2}$. The base step of the induction is the assertion that $J_1$ of Figure~\ref{fig:J1} lies in $\cB_1$. But this was already shown in Example~\ref{ex:nine46}, since $J_0^+\in \cP_0$ and $J_0^-\in \cN_0$. (The additional feature of $J_{n-1}$ that will implicitly be used later is that it does not lie in $\cB_n\subset\mathcal{F}^{odd}_n$ as detected by higher-order von Neumann signature defects  ~\cite[Theorem 9.1]{CHL3}.)

We proceed by contradiction.  Suppose that $K=K_n\in \cN_{n+1}$ so $K$ bounds a disk $\Delta$  in some $(n+1)$-negaton $V$ that satisfies (the negative definite analogue of)  Definition~\ref{def:positive} and wherein the inclusion  $S^3-K\to V\setminus\Delta$ induces a  surjection on Alexander modules. Let $\Sigma$ denote the $3$-fold cyclic cover of $S^3$ branched over $K$ and let $X$ be the $3$-fold cyclic cover of $V$ branched over $\Delta$.  Let $\eta_1, \eta_2$  denote meridional circles to the left and right-hand bands of $K$, respectively. As we saw in the previous proof,  the kernel of
$$
j_*:H_1(\Sigma)\to H_1(X)
$$
is a metabolizer for the linking form. Moreover,  $H_1(\Sigma)\cong \Z_7\langle x_1\rangle\oplus\Z_7\langle y_1\rangle$, and  there are only two metabolizers, $\langle x_1\rangle$ and $\langle y_1\rangle$,  for the $\Q/\Z$-valued linking form, where $x_1$ is a lift of $\eta_1$ and $y_1$ is a lift of $\eta_2$. Hence the proof splits into two cases, $\ker(j_*)=<x_1>=<\tilde{\eta}_1>$ and $\ker(j_*)=<y_1>=<\tilde{\eta}_2>$.

\textbf{Case I}: $\ker(j_*)=<x_1>$

In this case we will show that even the weaker hypothesis $K\in \cN_1$ leads to a contradiction. In fact the proof is identical to the proof of Case I of Theorem~\ref{thm:maintheoremtop}, since in Lemma~\ref{lem:ignoreJ}, we showed that we may ignore $J_{n-1}$, in which case our knot $K$ is identical to that of Theorem~\ref{thm:maintheoremtop}! Moreover here we do not need to use our weak homotopy ribbon assumption.

\textbf{Case II}: $\ker(j_*)=<y_1>=<\tilde{\eta}_2>$ 

Since in this case our proof will work in the topological category and since $T$ is topologically slice, it may be ignored. The first step in the proof of this case is to use the weak homotopy ribbon assumption to show Case II implies what we call Case II$^\prime$ which is defined as follows. The inclusion map $j:S^3\setminus K\to V\setminus\Delta$ induces a map $\tilde{j}_*$ of integral Alexander modules and the kernel of the composition
$$
\pi\circ\tilde{j}_*:\mathcal{A}(K)\to\mathcal{A}(V\setminus\Delta)\to \mathcal{A}(V\setminus\Delta)\otimes \Q
$$
 is known to be self-annihilating with respect to the classical Blanchfield linking form as long as $n\geq 0$  ~\cite[Theorem 4.4]{COT}. In our case, $\mathcal{A}(K)$ has precisely two such submodules, generated by $\eta_1$ and $\eta_2$ respectively. Then we define:

\textbf{Case II$^\prime$}: The kernel of $\pi\circ\tilde{j}_*$ is the submodule generated by $\eta_2$.

\noindent We show Case II implies Case II$^\prime$. This is the only place where we need our hypothesis that $\tilde{j}_*$ is surjective.
For this implies that $\mathcal{A}(V\setminus\Delta)$ is $\Z$-torsion-free (like $\mathcal{A}(K)$) and so $\pi$ is injective. Suppose that Case II$^\prime$ were to fail to hold. Then $\pi\circ\tilde{j}_*(\eta_1)=0$ so $\tilde{j}_*(\eta_1)=0$. Since
$$
\mathcal{A}(V\setminus\Delta)\cong \frac{G^{(1)}}{G^{(2)}}
$$
where $G=\pi_1(V\setminus\Delta)$, it would follow that, as a \textit{homotopy} class, $j_*(\eta_1)\in G^{(2)}$. It follows that, as a homotopy class, $j_*(\tilde{\eta_1})\in \pi_1(X)^{(1)}$. Thus as a \textit{homology} class $j_*(\tilde{\eta}_1)=0$, which contradicts our assumption that we are in Case II.

Assuming now that we are in Case II$^\prime$, we can prove that $K\notin \cN_{n+1} $ by showing that it is not in $\mathcal{F}_{n.5}^{\Q}$.   This proof is essentially identical to the proofs of ~\cite[Theorem 9.1, Thm 8.1, Ex.8.4]{CHL3}.  The only difference is that in that proof \textit{both} $J_0^+$ and $J_0^-$ were chosen to have classical  signatures of the same sign. Here, in order to get $J_1\in \cB_1$ we needed to choose $J_0^-\in \cN_0$ and $J_0^+\in \cP_0$ so they will have signatures of opposite signs. However, as long as the \textit{sum} of the integrals of their Levine-Tristram signature functions is large than a certain constant $\rho^1(9_{46})$ (which is now known due to work of Christopher Davis to be bounded in absolute value by $1$), the proof works the same.

This concludes our sketch of the proof of Theorem~\ref{thm:generalnontriviality}. The only use made of the extra hypothesis that $j_*$ is surjective is to ensure that Case II and Case II$^\prime$  coincide. This could be avoided by doing Case I for branched covers for arbitrarily large primes (not just $3$-fold).
\end{proof}

If the extra assumption could be eliminated, then in order the get subgroups of infinite rank, one would merely replace the $9_{46}$ knot $R$ by the family in Figure~\ref{fig:differentR}. The concordance classes of the resulting families knots $K^m_n=R^m(J_{n-1},T)$ could be distinguished (fixed $n$, varying $m$) by virtue of their coprime Alexander polynomials  using, in Case I, the techniques of ~\cite{HLR} (of $d$-invariants associated to different $p$-fold branched covers); and, in Case II, the techniques of ~\cite{CHL5}.
\begin{figure}[htbp]
\setlength{\unitlength}{1pt}
\begin{picture}(327,151)
\put(90,0){\includegraphics{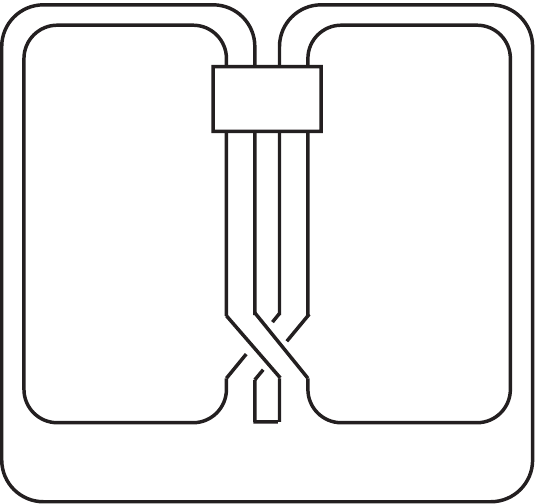}}
`\put(160,113){$m$}
\end{picture}
\caption{Family of ribbon knots, $R^m$}\label{fig:differentR}
\end{figure}

\section{Appendix I: Calculation of a certain $d$-invariant}\label{sec:calcdinvt}

We employ the notation of the proof of Theorem~\ref{thm:maintheoremtop} Case 1 (where we have already assumed that $J=U$). Recall that the $3$-fold branched cover, $\Sigma$, of $K\equiv R(U,T)$ is pictured in Figure~\ref{3foldappendix}. In this appendix, we abuse notation in that we will use the symbols $x_1,x_2,y_1,y_2$ to denote both the oriented circles shown in Figure~\ref{3foldappendix}, as well as the first homology classes represented by the oriented meridians of these circles.
\begin{figure}[!ht]
	\begin{center}
		\begin{picture}(346,165)(0,0)
			\put(80,125){$T$}
			\put(210,125){$T$}
			\put(335,125){$T$}
			
			\put(72,-1.8){$>$}
			\put(72,-8){$x_1$}
			
			\put(190,-1.8){$<$}
			\put(190,-8){$x_2$}
			
			\put(72,28.5){$<$}
			\put(72,20){$y_1$}
			
			\put(190,28.5){$>$}
			\put(190,20){$y_2$}
			
			\put(28,155){$0$}
			\put(58,155){$0$}
			\put(280,155){$0$}
			\put(315,155){$0$}
			
			\includegraphics[scale=1.5]{3fold}
		\end{picture}
		\caption{The $3$-fold branched cover $\Sigma$}
		\label{3foldappendix}
	\end{center}
\end{figure}
Our goal is to prove:
\begin{thm}\label{thm:dinvtcalc}
	$d(\Sigma,\sss_0+4\widehat{x_2})\leq -3/2$.
\end{thm}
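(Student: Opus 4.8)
The plan is to bound this correction term by producing an explicit $4$-manifold with boundary $\pm\Sigma$ and feeding it into the Ozsv\'ath--Szab\'o inequalities (\ref{eq:OSinequality}) and (\ref{eq:OSinequality2}). First I would convert the surgery picture of Figure~\ref{3foldappendix} (produced from $K\equiv R(U,T)$ by the Akbulut--Kirby branched-cover algorithm ~\cite{AkKir}) into a handle diagram of a $4$-manifold $X$ with $\del X=-\Sigma$. Each of the three $T$-boxes is a copy of $T=Wh^+(RHT)$, and by Example~\ref{ex:easy} the knot $Wh^+(RHT)$ is unknotted by changing a single \emph{positive} crossing of its clasp; such a crossing change is realized by blowing down a framed meridional circle, so undoing all three clasps converts $\Sigma$ into the $3$-fold branched cover $\Sigma_3(9_{46})$ through a cobordism built from three $2$-handles. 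Since $9_{46}$ is a ribbon knot, $\Sigma_3(9_{46})$ caps off with the rational homology ball obtained as the $3$-fold cyclic cover of a ribbon disk in $B^4$; assembling these pieces (reversing orientation if convenient) yields $X$. The bulk of the work here is to perform the attendant handle slides carefully enough that one can (i) write down the intersection form $Q_X$ and (ii) identify the spin$^c$ structure $\sss_0+4\widehat{x_2}$ (as a structure on $\del X$) with a specific characteristic covector on $X$, using that $x_1,x_2$ are the meridians of the labelled circles and that $4x_2=x_1$ in $H_1(\Sigma)\cong\Z_7\langle x_1\rangle\oplus\Z_7\langle y_1\rangle$.

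The feature that pushes the bound strictly below $0$ — a diagonal form would, via Lemma~\ref{lem:smallestchar}, only give $d\le 0$ — is the right-handed trefoil companions inside the three copies of $Wh^+(RHT)$: since suitable surgeries on the trefoil bound the $E_8$-plumbing (equivalently $V_0(Wh^+(RHT))=1$), the form $Q_X$ acquires an $E_8$-block. Consequently, in the coset of characteristic covectors restricting to $\sss_0+4\widehat{x_2}$, the minimal value of $c_1(\t)^2$ is small relative to $\b_2(X)$; tracking the arithmetic one obtains, for a suitable $\t\in\spinc(X)$ extending $\sss_0+4\widehat{x_2}$ and with $X$ negative definite, the estimate $c_1(\t)^2+\b_2(X)\ge 6$. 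Applying (\ref{eq:OSinequality}) to $X$ then gives $4\,d(-\Sigma,\sss_0+4\widehat{x_2})\ge 6$, and since $d(-\Sigma,\sss)=-d(\Sigma,\sss)$ this is exactly $d(\Sigma,\sss_0+4\widehat{x_2})\le -3/2$. (If it is cleaner to build a \emph{positive}-definite filling of $\Sigma$ itself, one uses (\ref{eq:OSinequality2}) and the corresponding estimate $c_1(\t)^2-\b_2\le -6$; the bookkeeping is identical.)

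I expect the main obstacle to be precisely this diagram bookkeeping: reducing the Akbulut--Kirby picture to a normal form in which $Q_X$, the $E_8$-summand(s) coming from the trefoil companions, and the characteristic covector representing $\sss_0+4\widehat{x_2}$ are all simultaneously visible, and then checking that this covector genuinely attains $c_1(\t)^2+\b_2(X)\ge 6$. A conceptually cleaner but computationally heavier alternative that sidesteps $4$-manifolds is to present $\Sigma$ (or its relevant connected summand) as surgery on a knot or link and compute $d(\Sigma,\sss_0+4\widehat{x_2})$ directly from the Ozsv\'ath--Szab\'o mapping-cone / large-surgery formula, using the explicitly known $\cfkinf$ of $Wh^+(RHT)$ (that of $T_{2,3}$ with one ``box'' adjoined) together with the fact that $\Sigma_3(9_{46})$, being the branched cover of a ribbon knot, has vanishing $d$-invariant on the relevant spin structure: each of the three infections by $Wh^+(RHT)$ then drops the correction term by a computable amount, yielding the stated inequality. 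Either route concentrates all of the difficulty into a single explicit, if intricate, calculation rather than into any conceptual step.
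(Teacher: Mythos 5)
Your overall strategy — build a negative-definite filling of $\pm\Sigma$ by (a) attaching $2$-handles that realize crossing changes and (b) capping off with a rational homology ball coming from a ribbon disk for $9_{46}$, then apply inequality (\ref{eq:OSinequality}) — is the correct framework, and you have also correctly intuited that the right-handed trefoil buried inside $T=Wh^+(RHT)$ is what forces the bound strictly below zero. But the mechanism you propose for extracting that strictness is wrong, and the gap is not a bookkeeping matter.

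Blowing down a $-1$-framed meridional circle to change a crossing adds exactly one $\langle -1\rangle$ summand to the intersection form of the cobordism; three such blow-downs (one per lift of $T$) produce a \emph{diagonal} form $\oplus_3\langle -1\rangle$, and the $3$-fold cover of the ribbon ball is a rational homology ball contributing nothing to $H_2$. There is no $E_8$-block anywhere in this $X$. The fact that ``$S^3_{+1}(T_{2,3})$ bounds the $E_8$-plumbing'' and that $V_0(Wh^+(RHT))=1$ are statements about Heegaard Floer correction terms of surgeries on the trefoil; they do not alter the intersection form of a blow-down cobordism. For a diagonalizable negative-definite $X$, Lemma~\ref{lem:smallestchar} gives a characteristic $\t$ with $|c_1(\t)^2|=\b_2(X)$ in the relevant extension coset, so (\ref{eq:OSinequality}) can only yield $d\ge 0$ for $-\Sigma$, i.e.\ $d\le 0$ for $\Sigma$ — never the strict $\le -3/2$. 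Your claimed estimate $c_1(\t)^2+\b_2(X)\ge 6$ therefore does not hold for any $X$ built this way.

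The paper instead produces the strictness exactly where your second sketch gestures: via Heegaard Floer surgery formulas. The cobordism in the paper changes three \emph{different} starred crossings (between the $x_i$ and $y_j$ surgery circles, not inside the $T$-boxes) plus one extra meridional handle, giving a non-diagonal negative-definite $\mathcal I$ on $\Z^4$; a second diagonal cobordism unknots only \emph{two} of the three $T$'s; the result is $M=S^3_{-7}(T\#LHT)\# S^3_{-6}(LHT)$, whose $d$-invariants are computed from $\cfkinf$ using the splitting $\cfkinf(T)=\cfkinf(RHT)\oplus A$ with $A$ acyclic. The nonzero value $d(S^3_6(RHT),\sss_0)=5/4-2$ (not an $E_8$ summand in any $4$-manifold) is what ultimately forces the strict negativity. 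Your alternative ``mapping-cone'' route is closer in spirit, but as written — ``each infection drops the correction term by a computable amount'' — it assumes an additivity of $d$-invariants under infection that is not true in general and would itself need a cobordism-plus-surgery-formula argument of essentially the kind the paper carries out.
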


\begin{proof} Momentarily ignoring the knots $T$, observe that that changing the $3$ crossings with stars on them would enable one to pull apart $\{\textrm{the curves labeled $x_1$ and $y_1$}\}$ from $\{\textrm{the curves labeled $x_2$ and $y_2$}\}$.  Recall also that one can achieve a crossing change by adding a $-1$-framed curve and blowing it down, as shown in Figure~\ref{blowdown}. We refer the reader to \cite{GompfStip} for the specifics of Kirby calculus.

\begin{figure}[h!]
	\begin{center}
		\begin{picture}(222,84)(0,0)
			\put(80,50){$-1$}
			\put(103,40){\Huge$\leadsto$}
			\put(93,30){blow down}
			\includegraphics[scale=2]{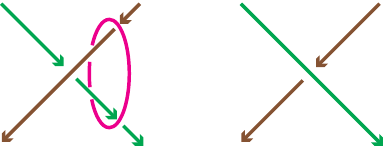}
		\end{picture}
		\caption{Blowing down a $-1$}
		\label{blowdown}
	\end{center}
\end{figure}
Corresponding to the three indicated crossing-changes, we now describe a $4$-dimensional cobordism $W$, one of whose boundary components is $-\Sigma$ and the other we call $Y$. Begin with $\Sigma\times[0,1]$ and form $W$ by attaching to $\Sigma\times \{1\}$ four $-1$ framed $2$-handles along curves $\{e_1,e_2,e_3,e_4\}$  where $e_4$ is a meridian of $x_2$ and $e_1,e_2,e_3$ are the above-mentioned small circles about the starred crossings, like the $-1$-framed curve in Figure~\ref{blowdown}, forming the homology classes $e_1=y_1-x_2$, $e_2=y_1-x_2$, $e_3=x_1-y_2$.  $Y$ is the $3$-manifold obtained from framed surgery on the union of the framed link in Figure~\ref{3foldappendix} together with the four $-1$-framed circles.  To simplify $Y$,  we  blow down all four of the $-1$s.  Since each $-1$-framed curve has linking number one with the curves that pass through it, each blow-down increases the framings of these curves involved by $1$.  The resulting framed link description of $Y$ is shown in  Figure~\ref{topboundary}.

\begin{figure}[!ht]
	\begin{center}
		\begin{picture}(153,82)(0,0)
			\put(23,35){$3$}
			\put(123,35){$3$}
			\put(72,35){$T$}
			\put(53,64){$T$}
			\put(93,64){$T$}
			\put(10,75){$1$}
			\put(29,75){$2$}
			\put(120,75){$1$}
			\put(140,75){$3$}
			\includegraphics{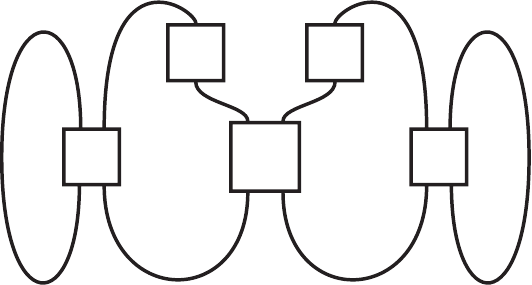}
		\end{picture}
		\caption{The top boundary $Y$}
		\label{topboundary}
	\end{center}
\end{figure}

A presentation matrix for $H_1(\Sigma)$ with respect to the basis given by the meridians $\{x_1,x_2,y_1,y_2\}$ is given by the framing matrix $P$ of the framed link in Figure~\ref{3foldappendix}.
$$
P=\left(\begin{array}{cccc}
0 & 0 & 3 & 1\\
0 & 0 & 2& 3\\
3 & 2& 0 & 0\\
1& 3 &0 &0
\end{array}\right).
$$
From this we see that $H_1(\Sigma)\cong \Z_7\oplus\Z_7$ generated by $\{x_1,y_1\}$ and where $x_2=4x_1$ and $y_2=4y_1$. Hence the order of each of $x_i,y_i,e_i$ is $7$. It also follows that $H_1(W)=0$.

We now compute the intersection form on $H_2(W)$. Since $\Sigma$ is a rational homology sphere $H_2(W)\cong \Z^4$ with basis $\{E_1,E_2,E_3,E_4\}$  given as follows.  Since $e_i$ has order $7$ in $H_1(\Sigma)$, there is a surface, $F_i$, in $\Sigma\times \{1\}$ with boundary $7\,e_i$.  Then $E_i$ is represented by the union of this surface and seven copies of the core of the $2$-handle  attached along $e_i$.  Note that \[ E_i\cdot E_j = E_i\cdot F_j = 7e_i\cdot F_j= \lk_{\Sigma}(7\,e_i,7\,e_j) = 49\,\lk_{\Sigma}(e_i,e_j)\]
Since the oriented curves $e_i$ miss the four-component framed link (in Figure~\ref{3foldappendix}), the linking number in $\Sigma$ is related to the linking number in $S^3$ by the formula \[\lk_{\Sigma}(e_i,e_j) = \lk_{S^3}(e_i,e_j) - [e_i]^\top\cdot P\inv \cdot [e_j]\] where $[\_]$ denotes the coordinates with respect to $\{x_1,x_2,y_1,y_2\}$ and where, if $i=j$, linking number is interpreted as framing. We refer the reader to \cite[Lemma 1.1]{Hoste} for a proof.  Since the matrix that expresses $\{e_1,\dots,e_4\}$ in terms of $\{x_1,x_2,y_1,y_2\}$ is
$$
Q=\left(\begin{array}{rrrr}
0 & 0 & 1 & 0\\
-1 & -1 & 0& 1\\
1 & 1& 0 & 0\\
0& 0 &-1 & 0
\end{array}\right),
$$ 
we compute that the matrix, $\mathcal{I}$, for the intersection form on $W$, with respect to the basis $\{E_1,E_2,E_3,E_4\}$  is 
 \[ \mathcal{I} =49 \lk_{\Sigma} =49\left( -I-Q^T P^{-1}Q\right)= 7\left(\begin{array}{rrrr}
	-9&-2&-6&1\\
	-2&-9&-6&1\\
	-6&-6&-11&3\\
	1&1&3&-7
\end{array}\right) \]
One checks that $\sigma(\mathcal{I})=-4$, so $W$ is a negative definite cobordism with boundary $Y\sqcup -\Sigma$. This will enable us to relate the $d$-invariants of $\Sigma$ with those of $Y$, once we specify a $\spinc$-structure on $W$.

Let $w\in H_2(W,\partial W)$ denote the class represented by the core of the $2$-handle added along $e_4=x_2$ (extended so that its boundary lies in $\partial W$). Hence $\partial_*(w)=(x_2,0)$ in $H_1(\partial W)\cong H_1(\Sigma)\oplus H_1(Y)$. We claim that the Poincare dual, $\hat{w}$, of $w$ is a characteristic class in $H^2(W)$ (as defined in Equation~\ref{eq:char} ).
It suffices to see that $7\hat{w}$ is characteristic. Note that $7w=\pi_*(E_4)$ where $\pi_*:H_2(W)\to H_2(W,\partial W)$. We must verify that, for all $x\in H_2(W)$,
$$
x\cdot x\equiv (\kappa\circ 7\hat{w})(x)= (\kappa\circ PD(7w))(x) = (\kappa\circ PD\circ \pi_*(E_4))(x)= E_4\cdot x,
$$
where the last equality is the definition of the intersection form (see ~\ref{eq:defintersform}). Suppose $x$ has coordinates $(a,b,c,d)$ with respect to our basis $\{E_1,E_2,E_3,E_4\}$. Then we calculate using $\mathcal{I}$ that, modulo $2$, both $x\cdot x$ and $E_4\cdot x$ are given by $a+b+c+d$. Thus $w$ is characteristic and so we know by Diagram~\ref{diag:spincstuff} and the surrounding discussion that there exists $\t\in \spinc(W)$ with $c_1(\t)=\hat{w}$. Then $\t$ restricts to some $(\s_0+z',\s_0^Y+z)\in\spinc(\Sigma\sqcup Y)$  where $s_0$ and $s_0^Y$ are $\spinc$-structures on $\Sigma$ and $Y$ that correspond to spin structures, and where $c_1(\s_0+z')=\hat{x_2}$ and $c_1(\s_0^Y+z)=0$. Since $c_1(\s_0+4\hat{x}_2)=2(4\hat{x}_2)=\hat{x}_2$ and since $H^2(\Sigma)$ has no $2$-torsion, we conclude that $z'=4\hat{x}_2$. Since $c_1(z_0^Y+z)=2z$ we conclude that either $z=0$ or $z=\hat{c}$ where $c$ is the unique element of order two in $H_1(Y)\cong \Z_7\oplus \Z_6$.

Moreover, since $\pi_*(E_4)=7w$, $\pi^*(\hat{E}_4)=7\hat{w}$. Thus, by definition (see ~\ref{eq:squareddef} and ~\ref{eq:QX}),
$$
c_1(\t)^2=\hat{w}^2:=\frac{Q_W(\hat{E}_4,\hat{E}_4)}{49}:=\frac{E_4\cdot E_4}{49}=-1.
$$
Thus, by the Ozsv\'ath-Szab\'o inequality ~\ref{eq:OSinequality}, 

\begin{equation}\label{eq:dsigmady}
	d(\Sigma,\s_0 + 4\hat{x}_2)+\frac{-1+4}{4}\leq d(Y,\s^Y_0+z)
\end{equation}

\begin{lem}\label{lem:positiveCrossingDInvariantLemma}
	Let $M$ be the $3$-manifold obtained by substituting two unknots for the two right-most copies of $T$ in Figure~\ref{topboundary}. Suppose $\s_0^M\in\spinc(M)$ comes from a spin structure on $M$. Then $d(Y,\s_0^Y+z) \leq d(M,\s^M_0+z')$, where, if $z=0$ then $z'=0$ and if $z=\hat{c}$ then $z'=\hat{c}'$ where $c'$ is the element of order two in $H_1(M)$.
\end{lem}
\begin{proof} Recall that the knot $T$ can be unknotted by changing one positive crossing.  We will describe a $4$-dimensional cobordism $Z$ from $Y$ to $M$.
	
	First, note that blowing down a $-1$-framed circle around a positive crossing in a framed knot (similar to Figure~\ref{blowdown}) changes it to a negative crossing and does not change the framing (since we choose the circle to have zero linking number with the knot).  Second, consider this process applied to the $(2,0)$-cable of such a knot (where the two components can have different framings).  One can check that blowing down a $-1$ around a positive crossing  in the ``doubled version'' of Figure~\ref{blowdown} does not change either the linking number between the two components or the framings of the components.  It does however change the crossing.  Thus, since $T$ can be unknotted by changing one positive crossing, the framed knot $(T,1)$ can be changed to $(U,1)$ by blowing down a single $-1$.  Similarly, the framed link $(T_{(2,0)},(2,1))$ can be changed to $(U_{(2,0)},(2,1))$ by blowing down one $-1$.  
	
Consider the cobordism $Z$ obtained by adding two $-1$-framed 2-handles to $Y\times[0,1]$ according to the previous paragraph. Since blowing down the $-1$'s amounts to unknotting the two right-most copies of $T$ in Figure~\ref{topboundary}, the top boundary of $Z$ is what we called $M$. Each of the $-1$ framed circles is nullhomologous in $Y$ and they have zero linking number in $Y$, so the intersection form on $H_2(Z)\cong \Z^2$ is given by the matrix $-I_{2x2}$.  It also follows that $H_1(Y)\cong H_1(Z)\cong H_1(M)$.
	
The $\spinc$-structure $(\mathfrak{s}_0^M,\s_0^Y)$ on $\del Z=M\sqcup -Y$ extends to some $\spinc$-structure $\mathfrak{t}_0$ on $Z$.  First we consider the case that $z=0$. By Lemma~\ref{lem:smallestchar}, we may alter $\t_0$ to assume $c_1(\mathfrak{t}_0)^2 = -\beta_2(Z)$.  Then by inequality~\ref{eq:OSinequality}, $d(Y,\s_0^Y) \leq d(M,\s_0^M)$. Next we consider the case that $z=\hat{c}$. Let $c'$ denote the unique element of order $2$ in $H_1(M)$  that is homologous to $c$ in $H_1(Z)$.  Since $(c', -c)\in H_1(\partial Z)$ maps to zero in $H_1(Z)$, the class $(\hat{c}', -\hat{c})\in H^2(Z,\partial Z)$ is $j^*(\beta)$ for some $\beta\in H^2(Z)$. Therefore $\t_0+\beta\in \spinc(Z)$ restricts to the class $(\mathfrak{s}_0^M+\hat{c}',\s_0^Y+\hat{c})$ in $\spinc(\partial Z)$. By Lemma~\ref{lem:smallestchar}, we may assume $c_1(\mathfrak{t}_0+\beta)^2 = -\beta_2(Z)$.  By inequality~\ref{eq:OSinequality}, $d(Y,\s_0^Y+\hat{c}) \leq d(M,\s_0^M+\hat{c}')$.
\end{proof}

Now one can simplify $M$ by blowing down the two $+1$'s. Then we see that 
$$
M\cong S^3_{-7}(T\#LHT)\#S^3_{-6}(LHT).
$$

Let $N=S^3_{-7}(T\#LHT)$.  To a knot $J$ in $S^3$, Ozsv\'ath and Szab\'o associate a $\Z\oplus\Z$-filtered chain complex $\cfkinf(J)$ over $\Z_2$.  There is an action on $\cfkinf(J)$ by a formal variable $U$ that lowers the bifiltration by $(1,1)$.  We can think of the generators of $\cfkinf(J)$ as triples $[\x,i,j]$ where $(i,j)$ is the bifiltration.  Let $\cfkinf(J)\{i\geq 0\ \mathrm{and}\ j\geq 0\}$ denote the quotient complex of $\cfkinf(J)$ generated by those generators of $\cfkinf(J)$ with $i\geq 0$ and $j\geq 0$.  Ozsv\'ath and Szab\'o give~\cite[Corollary 4.2]{OzSz:KnotInv} a filtered chain homotopy equivalence (denoted $\simeq$) 
	\begin{equation}\label{eq:ChainHomotopyEquivalence}
		HF_\ell^+\left(S^3_{-7}(J),\sss_0\right)\simeq H_k\left(\cfkinf(J)\{i\geq 0\ \mathrm{and}\ j\geq 0\}\right)
	\end{equation}
	where $\ell=k-3/2$.
Recall that the variable $U$ acts on $HF^+$ and that any element $\alpha\in HF^+(N,\sss_0)$ has a grading $gr(\alpha)\in\Q$.  The homotopy equivalence above respects the $U$-action.  The definition of the $d$-invariant is $$d(N,\sss_0)=\min_{\alpha\neq0\in HF^+(N,\sss_0)}\left\{gr(\alpha):\ \alpha\in\im U^k\ \mathrm{for\ all\ } k\geq 0\right\}$$  Thus, to compute $d(N,\sss_0)$, it suffices to consider the complex $\cfkinf(T\#LHT)$ and its quotients.
	
By Subsection~\ref{splitting} below, $\cfkinf(T)=\cfkinf(RHT)\oplus A$ where $A$ is acyclic.  Also, there is a K\"unneth theorem for $\cfkinf$ of connected sums.	These remarks imply:
	\begin{eqnarray*}
		\cfkinf(T\#LHT) &\simeq& \cfkinf(T) \otimes \cfkinf(LHT)\\
		& = & \left(\cfkinf(RHT) \oplus A\right) \otimes \cfkinf(LHT)\\
		& = & \left(\cfkinf(RHT)\otimes\cfkinf(LHT)\right) \oplus \left(A \otimes \cfkinf(LHT)\right)
	\end{eqnarray*}
By the K\"unneth theorem for complexes over the PID $\Z/2\Z\left[U,U\inv\right]$, the right summand is acyclic.  Since the $d$-invariant is defined as a minimum over nonzero homology classes, only $\cfkinf(RHT)\otimes\cfkinf(LHT)$ will contribute to the $d$-invariant.  Thus, as far as $d$-invariants are concerned, $\cfkinf(T\#LHT)$ and $\cfkinf(RHT\#LHT)$ are interchangeable.  This and the Ozsv\'ath-Szab\'o equivalence~(\ref{eq:ChainHomotopyEquivalence}) imply that $d(S^3_{-7}(T\#LHT), \sss_0) = d(S^3_{-7}(RHT\#LHT), \sss_0)$.  Since the $d$-invariant of surgery on a knot is an invariant of the smooth concordance class of the knot, we see that $d\left(S^3_{-7}(RHT\#LHT),\sss_0\right)=d\left(S^3_{-7}(U),\sss_0\right)$.  We see $d\left(S^3_{-7}(U),\sss_0\right)=-3/2$ by~\cite[Theorem 6.1]{OwSt:Lattice} (which is based on results in~\cite{OzSz:LensSpSur} and~\cite{OzSz:IntegerSurgeries}).

Finally, by combining the above calculations we can finish the proof of Theorem~\ref{thm:dinvtcalc}. First we treat the case that $z=z'=0$.
	The $d$-invariants of surgery on $L$-space knots are known.  In particular, $d(S^3_6(RHT),\sss_0) = 5/4 - 2 $~\cite[Theorem 6.1]{OwSt:Lattice}.  Thus by equation~\ref{eq:dsigmady}, Lemma~\ref{lem:positiveCrossingDInvariantLemma}, we have
	\begin{eqnarray*}
		d(\Sigma, \sss_0+4\widehat{x_2}) + 3/4 &\leq& d(Y,\sss_0) \\
		&\leq& d(M,\sss_0) \\
		&=& d(S^3_{-7}(T\#LHT),\sss_0)+d(S^3_{-6}(LHT),\sss_0) \\
		&=& d(S^3_{-7}(U),\sss_0)-d(S^3_{6}(RHT),\sss_0) \\
		&=& -3/2 - (5/4-2) \\
		&=& -3/4
	\end{eqnarray*}
Now we consider the case that $z=c,z'=c'$. In this case the only change is that we must now use $d(S^3_6(RHT),\sss_0+3)$ which was computed to be $-1/4$ in ~\cite[Theorem 6.1]{OwSt:Lattice}. Then as above
\begin{eqnarray*}
		d(\Sigma, \sss_0+4\widehat{x_2}) + 3/4 &\leq& d(Y,\sss_0+z) \\
		&\leq& d(M,\sss_0+z') \\
		&=& d(S^3_{-7}(T\#LHT),\sss_0)+d(S^3_{-6}(LHT),\sss_0+3) \\
		&=& d(S^3_{-7}(U),\sss_0)-d(S^3_{6}(RHT),\sss_0+3) \\
		&=& -3/2 - (-1/4)\\
		&=& -5/4
	\end{eqnarray*}

\end{proof}

\subsection{A splitting of $\cfkinf(T)$}\label{splitting}

We will briefly review the basics of $\cfkinf$ and prove a splitting result for $\cfkinf(T)$.  Throughout, abbreviate $C=\cfkinf$.  Our treatment here is a ``user's guide'' of sorts.  We refer the reader to the source \cite{OzSz:KnotInv} for more.  We would like to thank Matt Hedden and Jen Hom for telling us about this result (see ~\cite[Prop. 6.1]{HeKiLiv}).

Given a knot $K$, $C(K)$ is a $\Z\oplus\Z$-filtered chain complex over $\Z/2\Z$.  Its total homology is $HFK^\infty(S^3)$.  A formal variable $U$ acts on $C(K)$, and so does $U\inv$.  The $U$-action commutes with $\del$, making $C(K)$ a $\Z/2\Z\left[U,U\inv\right]$-module whose homology has rank $1$.  There is an ordering $\leq$ on the bifiltration: filtration level $(i,j)$ is $\leq$ filtration level $(i' j')$ if $i\leq i'$ and $j\leq j'$.  The boundary operator preserves $\leq$ on the level of filtrations, and $U$ lowers the bifiltration by $(1,1)$.  In fact, one can obtain a basis (as a module) for $C(K)$ from the $\Z$-filtered homology group $\widehat{HFK}(K)$ via the identification \cite[Equation 6.3]{HLR} 
	\begin{equation}\label{eq:CFKINFbasis}
		\widehat{HFK}_\ast(K,j) \cong C_\ast\{0,j\}
	\end{equation}
Using this identification, the $i$-coordinate of the bifiltration records the negative $U$-power, and the $j$-coordinate records the difference between the Alexander filtration (from $\widehat{HFK}$) and the $U$-power.  Multiplication by $U$ lowers homological grading by $2$, and so we have
	\begin{equation}\label{eq:FullCFKINF}
		C_\ast\{0,j\} \cong C_{\ast -2n}\{-n, j-n\}
	\end{equation}

Recall $T$ is the positively-clasped, untwisted Whitehead double of the right-handed trefoil $RHT$.  As a $\Z/2\Z$-vector space, $\widehat{HFK}(T)$ has rank $15$ \cite[Theorem 1.2]{Hed:WHD}, and so by equation~\ref{eq:CFKINFbasis}, $C(T)$ has rank $15$ as a $\Z/2\Z\left[U,U\inv\right]$-module.  It is convenient to visualize $C(T)$ by drawing the $\Z/2\Z$-ranks of the chain groups at filtration level $(i,j)$ in the $(i,j)$-plane.  We have done this in Figure~\ref{fig:cfkinft}, only for homological gradings $0$ (in Hindu-Arabic numerals) and $1$ (in Roman numerals).

\begin{figure}[!ht]
	\begin{center}
		\begin{picture}(80,80)(0,0)
			\put(10,0){\line(0,1){80}}
			\put(0,10){\line(1,0){80}}
			\put(83,5){$i$}
			\put(0,77){$j$}
			\put(35,40){$4/\textrm{III}$}
			\put(15,40){$2$}
			\put(40,15){$2$}
			\put(40,65){$\textrm{II}$}
			\put(65,40){$\textrm{II}$}
		\end{picture}
		\caption{Ranks of the chain groups homological gradings $0$ and $\textrm{I}$}
		\label{fig:cfkinft}
	\end{center}
\end{figure}

We may draw arrows in the $(i,j)$ plane to indicate pieces the boundary map $\del$.  Since $\del$ preserves the filtration, arrows point down and/or left.  By the distribution of the chain groups in the $(i,j)$-plane, the only possible arrows are length one arrows $\downarrow$, $\leftarrow$, and $\swarrow$.  Let us denote the direction of these arrows by $\xrightarrow{d}$, $\xrightarrow{l}$, and $\xrightarrow{ld}$, respectively.  Note that by equation~\ref{eq:CFKINFbasis}, the homology of the column $C\{i=0\}$ is $\widehat{HF}(S^3)\cong \Z/2\Z$.  By the symmetry $\widehat{HFK}(K,-j)\cong\widehat{HFK}(K,j)$ and by the $U$-action, we see that the homology of the row $C\{j=0\}$ is $\widehat{HF}(S^3)$.  By the action, any column or row has homology $\Z/2\Z$.

Let us examine the column $C\{i=0\}$.  It is convenient to consider the ranks of the groups in this column, according to their $j$-filtration and homological grading $\ast$.  Below, $n_{j,\ast}$ denotes a rank $n$ $\Z/2\Z$-vector space in filtration level $j$ with homological grading $\ast$.  For typographical reasons, we will express the column in horizontal form so that the downward arrows in the column are expressed as rightward arrows.  Since the homology of the column has rank $1$, we see that the ranks of the vertical differential are as follows (two arrows means the differential has rank $2$):

\begin{figure}[!ht]
	\begin{center}
		\begin{picture}(100,30)(0,0)
			\put(0,15){$2_{1,0}$}
			\put(24,18){\vector(1,0){15}}
			\put(0,0){$2_{1,-1}$}
			\put(24,3){\vector(1,0){15}}
			\put(24,0){\vector(1,0){15}}
			\put(45,15){$3_{0,-1}$}
			\put(70,18){\vector(1,0){15}}
			\put(70,15){\vector(1,0){15}}
			\put(70,3){\vector(1,0){15}}
			\put(70,0){\vector(1,0){15}}
			\put(45,0){$4_{0,-2}$}
			\put(90,15){$2_{-1,-2}$}
			\put(90,0){$2_{-1,-3}$}
		\end{picture}
	\end{center}
\end{figure}

So a nonzero element, say $a$ in $2_{1,0}$ is in the kernel, but not the image, of the vertical differential.  Let us now return to considering all of $C(T)$.

Claim 1: there is only one arrow into $a$.  More specifically, there exists a unique arrow into $a$, and it is of the form $b\xrightarrow{l}a$.  We know there must be at least one $\xrightarrow{l}$ into $a$, by the symmetry of the filtration, for $a$ lies in the left-most position in its row and the vertical chain complex maps onto the bottom-most groups.  Thus, we may pick a $b\xrightarrow{l}a$.  If there is another arrow $d\xrightarrow{l}a$, we may eliminate it by a basis change.  If there is a diagonal $d\xrightarrow{ld}a$, we may eliminate it by a filtered change of basis ($b$ is below $d$, so $b \leq d$ on the level of filtrations).

Claim 2: there is no arrow out of $a$.  To see this, one must translate the $15$ generators in Figure~\ref{fig:cfkinft} by the action of $U\inv$.  Nothing lies to the left of $a$.  By our choice of $a$, there are no vertical arrows emanating from $a$.  No diagonal arrows may emanate from $a$, either, as the only elements southwest of $a$ have the wrong homological grading.

Claim 3: there is no arrow into $b$, by claims 1 and 2, and $\del^2=0$.

Claim 4: there is a (unique) $b\xrightarrow{d}c$.  Since $b$ is not in the image of the vertical differential (claim 3), and since it is not in the top of its column, $[b]=0\in H_\ast(C\{i=1\})$.  Thus, there is a nonzero $c$ so that $b\xrightarrow{d}c$.  We may assume $c$ is the vertical image of $b$.

Claim 5: there are no arrows emanating from $c$, because $0=\del^2 b = \del(a+c) = \del a + \del c = \del c$.

Claim 6: there are no other vertical arrows into $c$.  Let us look at the piece of $C\{i=1\}$ that splits as $2\to 3\to 2$.  This is the piece where $U\inv a$, $b$, and $c$ live.  Now $\del U\inv a =0$, but the $2\to 3$ has rank $1$.  Complete $U\inv a$ to a basis $\{U\inv a, g\}$.  Define $d$ to be the image of $g$.  Now $d$ is in the kernel of the vertical differential.  The map $3\to 2$ has rank $2$, so we may complete $\{b,d\}$ to a basis $\{b,d,e\}$ where $e\xrightarrow{d}f$ and $\{c,f\}$ is a basis for the bottom-most group in the column.  By a change of basis, we may assume that $e \not\xrightarrow{d}c$.

Claim 7: there are no diagonal arrows into $c$.  If there is a $h\xrightarrow{ld}c$, then $h$ must live in the bottom-most spot in its column.  There could be at most two diagonal arrows into $c$, so by a change of basis, we may assume there is exactly one.  By rank reasons, there would be a $i\xrightarrow{d}h$.  Thus, there would exist a two-arrow path from $i$ to $c$.  Now $\del^2 i = 0$, so there must exist another two-arrow path from $i$ to $c$.  Such a path must be $\xrightarrow{ld}\xrightarrow{d}$ or $\xrightarrow{d}\xrightarrow{ld}$.  The first case is impossible since $d\not\xrightarrow{d}c$ and $e\not\xrightarrow{d}$, and $b$ has no arrows coming in (claim 3).  The second case is impossible since we assumed there was a unique diagonal arrow into $c$.

Now, claims 1-7 imply that $C(T)$ splits off a summand consisting of $U$-translates of $a\xleftarrow{l}b\xrightarrow{d}c$.  This is $\cfkinf(RHT)$, as one can compute from the genus one Heegaard diagram for the trefoil.  The total homology of $C(T)$ has rank $1$, and so does $\cfkinf(RHT)$.  Thus, $\cfkinf(T)=\cfkinf(RHT)\oplus A$ where $A$ is acyclic.

\section{Appendix II: Casson-Gordon invariants as obstructions to membership in $\cP_2$ or $\cN_2$}\label{sec:CGinvts}

We review Casson-Gordon invariants. Suppose $K$ is a knot and $q$ is a prime power. Let $\Sigma_q$ denote the $q$-fold cyclic  cover of $S^3$  branched over $K$. Suppose $d=p^r$ is a prime power and $\chi:H_1(\Sigma_q)\to \Z_d$ is a homomorphism. The \textbf{associated character} is the composition $\overline{\chi}:H_1(\Sigma_q)\to \Z_d\hookrightarrow \mathbb{C}^*$ where $1\in \Z_d$ is mapped to a fixed primitive $d^{th}$ root of unity $\zeta_d$. To this data Casson-Gordon ~\cite{CG2} associate a Witt class
$$
\tau(K, \overline{\chi})\in L_0(\mathbf{k})\otimes \Z\left[\frac{1}{d}\right ],
$$
where $\mathbf{k}=\Q(\zeta_d)(t)$. We remark that, since it can be shown that $L_0(\mathbf{k})$ has no odd torsion ~\cite[Appendix 2]{Lith1}\cite[p.478]{GL1},  one  thinks, without loss,  of $\tau$ as lying in $L_0(\mathbf{k})\otimes R$ where $R=\Z_{(2)}$ if $p$ is odd and $R=\Q$ if $p=2$. We call this a \textbf{Casson-Gordon invariant}. It is not a concordance invariant. The collection of all such invariants (fixed $q$ and $d$, varying $\chi$)  obstructs a knot's being a slice knot in the following sense. Suppose  that $K$ bounds the slice disk $\Delta\hookrightarrow B^4$ and suppose that $V_q$ is the  $q$-fold cyclic  cover of $B^4$  branched over $\Delta$. Casson-Gordon proved ~\cite[Theorem 2]{CG2}:

\begin{itemize}
\item [A.] The kernel $Q$ of the inclusion-induced $j_*: H_1(\Sigma_q)\to H_1(V_q)$ is a metabolizer for the torsion-linking form, and $\overline{\chi}$  extends  to $H_1(V_q)$ if and only if $\chi(Q)=0$.
\item [B.] for any $\overline{\chi}$ that extends to $H_1(V_q)$,  $\tau(K,\overline{ \chi})=0$.
\end{itemize}

We call this ``the Casson-Gordon \textbf{slicing obstructions} for $K$''. We need to generalize these results by replacing the condition that $K$ is a slice knot by the much weaker conditions that, for part $A$, $K\in \mathcal{F}^{odd}_{1}$ and, for part $B$,  $K\in \mathcal{F}^{odd}_{1.5}$. These were essentially already proved in Proposition 9.7 and Theorem 9.11 of ~\cite{COT}, respectively, where these were shown under the hypotheses that $K\in \mathcal{F}_{1}$, and $K\in \mathcal{F}_{1.5}$ respectively. 

\begin{thm}\label{thm:CGvanish}  Suppose that $K\in \mathcal{F}^{odd}_{1.5}$, that is, $K$ is slice in a $4$-manifold $V$ as in Definition~\ref{def:altdefoddsolv}). Then ``the Casson-Gordon obstructions for $K$ vanish''. More precisely, with the above terminology:
\begin{itemize}
\item [A.] The kernel $Q$ of the inclusion-induced $j_*: H_1(\Sigma_q)\to H_1(V_q)$ satisfies $Q=Q^\perp$ with  respect to the torsion-linking form, and $\overline{\chi}$  extends  to $H_1(V_q)$ if and only if $\chi(Q)=0$.
\item [B.] for any $\overline{\chi}$ that extends to $H_1(V_q)$,  $\tau(K,\overline{ \chi})=0$.
\end{itemize}
Moreover part $A$ requires only that $K\in \mathcal{F}^{odd}_{1}$, and all statements hold even in the topological category (that is $V$ may be merely a topological $4$-manifold).
\end{thm}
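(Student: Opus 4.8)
The plan is to run the arguments of Propositions~9.7 and 9.11 of \cite{COT}, checking that the weaker (``odd'') hypotheses and the topological category cost us nothing. Throughout write $W=V\setminus\Delta$, so $\partial W=M_K$; since $\Delta$ is disjoint from a basis of $H_2(V)$ it is null-homologous in $H_2(V,\partial V)$, hence $H_1(W)\cong\Z$ is generated by a meridian and the $q$-fold cyclic cover $V_q$ of $V$ branched over $\Delta$ is defined, with $\partial V_q=\Sigma_q$.

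\medskip\noindent\textbf{Part A} (requires only $K\in\mathcal{F}^{odd}_1$). First I would record the homological properties of $V_q$, copying the homological part of the proof of Proposition~\ref{prop:easy2fold} (also used in the proof of Theorem~\ref{thm:onepositivedinvariants}). Because $H_1(V)=0$ we have $H_1(V;\Z_p)=0$, so the eigenspace/Gysin computation there applies and yields $H_1(V_q;\Z_p)=0$, $\beta_1(V_q)=\beta_3(V_q)=0$, and $\beta_2(V_q)=q\,\beta_2(V)$. Next, since $\pi_1(L_i),\pi_1(D_i)\subset\pi_1(W)^{(1)}\subset\ker(\pi_1(W)\to\Z_q)$, the surfaces $L_i,D_i$ lift to $q$ disjoint copies each; a deck-transformation count lets one index the lifts so that the intersection form restricted to them is $\bigoplus_{i,a}\bigl(\begin{smallmatrix}0&1\\1&*_i\end{smallmatrix}\bigr)$, where $*_i=D_i\cdot D_i$. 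This sublattice is unimodular, and (by the dual-basis argument used in the proof of Theorem~\ref{thm:onepositivedinvariants}) it is a direct summand of $H_2(V_q)/TH_2(V_q)$; since its rank is $q\,\beta_2(V)=\beta_2(V_q)$, it is everything, so the intersection form of $V_q$ is unimodular. Now Lemma~\ref{lem:onepositivecohomology}, applied with $X=V_q$, $Y=\Sigma_q$, shows that $Q=\ker j_*$ satisfies $Q=Q^{\perp}$ for the linking form on $H_1(\Sigma_q)$, and the statement that $\overline{\chi}$ extends to $H_1(V_q)$ iff $\chi(Q)=0$ follows as in \cite{CG2}, using $H_1(\Sigma_q)/Q\hookrightarrow TH_1(V_q)$ (from the same exact sequence) together with injectivity of $\Q/\Z$. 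None of this uses smoothness.

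\medskip\noindent\textbf{Part B} (requires $K\in\mathcal{F}^{odd}_{1.5}$, i.e.\ also $\pi_1(L_i)\subset\pi_1(W)^{(2)}$). Assume $\overline{\chi}$ extends to $\pi_1(V_q)$. Following the proof of \cite[Thm.~9.11]{COT}: the covering datum (the $\Z_q$ from the branched cover, the extended $\chi\colon H_1(V_q)\to\Z_d$, and the meridional map to $\Z$) determines a coefficient system on $V_q\setminus\widetilde\Delta$ valued in a solvable group of derived length at most two; the Casson--Gordon computation then represents $\tau(K,\overline{\chi})$ by the $\mathbf{k}$-valued intersection form of $V_q$ twisted by this system, corrected by the untwisted form (whose Witt class has signature $\sigma(V_q)$, the signature of the block form above), with $\mathbf{k}=\Q(\zeta_d)(t)$. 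Since $\Sigma_q$ is a $\Z_p$-homology sphere the relevant $\mathbf{k}$-homology of $\partial V_q$ vanishes, so this form is nonsingular. The point is that $\{[\widetilde L_i^{(a)}]\}$ spans a metabolizer for it: these surfaces are disjointly embedded with trivial normal bundle, and because $\pi_1(L_i)\subset\pi_1(W)^{(2)}$ the (metabelian) coefficient system is trivial on each $\pi_1(\widetilde L_i^{(a)})$, so the classes survive to the twisted homology and generate a self-annihilating submodule of half the rank. Hence the twisted and untwisted forms are Witt-equivalent over $\mathbf{k}$ and $\tau(K,\overline{\chi})=0$ in $L_0(\mathbf{k})\otimes R$. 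As in \cite{COT}, everything is homological/$L$-theoretic and goes through for topological $V$.

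\medskip The main obstacle is Part B: carefully identifying $\tau(K,\overline{\chi})$ with the $\mathbf{k}$-twisted intersection form of $V_q$ relative to its boundary (tracking the passage to multiples of $\Sigma_q$ and the $\Z[1/d]$ coefficients), and verifying that the derived-length-two condition $\pi_1(L_i)\subset\pi_1(W)^{(2)}$ (with $\pi_1(D_i)\subset\pi_1(W)^{(1)}$ automatic) is exactly what forces $\{[\widetilde L_i^{(a)}]\}$ to be a Lagrangian for the twisted form. This is ``essentially'' \cite[Thm.~9.11]{COT}, but the interplay between the solvable length of the coefficient system and the derived-series condition on the surfaces, together with nonsingularity over the $\Z_p$-homology-sphere boundary, is the delicate point; the rest (Part A, and the topological-category remark) is routine transfer and duality.
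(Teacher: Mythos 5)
Your proposal tracks the paper's own proof very closely: in Part A you lift the $L_i,D_i$, compute $\beta_2(V_q)=q\beta_2(V)=2mq$ by an Euler-characteristic argument, observe that the $2mq$ lifts span a unimodular direct summand so the intersection form of $V_q$ is unimodular, and then invoke Lemma~\ref{lem:onepositivecohomology}; in Part B you reduce to the argument of \cite[Thm.~9.11]{COT} by showing the lifts of the $L_i$ give a half-rank isotropic subspace of both the ordinary and the twisted (metabelian) intersection form, using that $\pi_1(L_i)\subset\pi_1(W)^{(2)}$ is precisely what lets them lift to any metabelian cover. This is the same strategy the authors carry out.

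One small imprecision worth flagging: you dispose of the untwisted contribution by saying its Witt class ``has signature $\sigma(V_q)$, the signature of the block form above.'' Signature does not determine a Witt class over $\Q$ (nor over $\mathbf{k}$), so this phrasing leaves a gap. The correct (and easy) observation, which is also what the paper uses, is that the $mq$ lifts of the $L_i$ already form a Lagrangian of the ordinary form on $H_2(W_q;\Q)$ (equivalently, each $\bigl(\begin{smallmatrix}0&1\\1&*\end{smallmatrix}\bigr)$ block is metabolic), so the untwisted Witt class is genuinely zero, not merely of signature zero. With that clarification your argument matches the paper's.
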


\begin{proof}  Since $K\in \mathcal{F}^{odd}_{1.5}$, $K$ bounds a slice disk $\Delta$ in a manifold $V$ as in Definition~\ref{def:altdefoddsolv}  (for n=1.5). Alternatively the $4$-manifold $W=V\setminus \Delta$ has boundary $M_K$ and satisfies the conditions of Definition~\ref{def:nsolvableodd}. Let $W_q$ denote the $q$-fold cyclic cover of $W$ and let $V_q$ denote the branched cyclic cover over $\Delta$. Since there is a canonical isomorphism $H_1(V_q)\oplus \Z\cong H_1(W_q)$, $\overline{\chi}$ extends to $W_q$.

To prove $A$ we need only assume $K\in \mathcal{F}^{odd}_{1}$, that is that $V$ satisfies Definition~\ref{def:altdefoddsolv}  for n=1. This proof was sketched in ~\cite[Proposition 9.7]{COT}.  By Lemma~\ref{lem:onepositivecohomology}, it suffices to show that the intersection form on $V_q$ is unimodular.  The hypotheses on the $L_i$ and $D_i$, guaranteed by Definition~\ref{def:altdefoddsolv}, ensure that these surfaces  lift to any abelian covering space of $W$ and in particular to $W_q$. The $mq$ lifts of the $L_i$ and the $mq$ lifts of the $D_i$ span a free abelian subgroup of rank $2mq$ in $H_2(V_q)$ (they are linearly independent because each member has a homological dual).  Their span can be shown to be a direct summand of $H_2(V_q)$ by an argument similar to that at the start of the proof of Theorem~\ref{thm:onepositivedinvariants}.  An Euler characteristic argument shows that the dimension of $H_2(W_q;\Q)$ (and hence $H_2(V_q)$) is $2mq$. Thus these lifts form a basis for $H_2(V_q)/TH_2(V_q)$. Therefore the intersection form on $V_q$ is a direct sum of matrices of the form
$$
\left(\begin{array}{cc}
0 & 1\\
1 & *
\end{array}\right),
$$
hence is unimodular. This concludes the proof of $A$.

Our proof of $B$  will consist in pointing out why the proof of  Theorem 9.11 of ~\cite{COT} works under the weaker hypothesis. The reader can  follow along with ~\cite[p.516]{COT}, as we review the argument.   Recall that, since $\overline{\chi}$ extends to $H_1(W_q)$,  $\tau(K, \overline{\chi})$ may be calculated as the difference between  two Witt classes, one represented by the (image of the) ordinary intersection form on $H_2(W_q;\Q)$ and the other represented by the intersection form on $H_2(W_q;\mathbf{k})$. 

In order to show that the ordinary intersection form on $H_2(W_q;\Q)$ is Witt trivial it suffices to exhibit a half-rank isotropic subspace of $H_2(W_q;\Q)$. The hypotheses on the $L_i$ and $D_i$ ensure that these surfaces lift to any abelian covering space of $W$ and in particular to $W_q$. The $mq$ lifts of the $L_i$ form an isotropic subspace of dimension $mq$ (they are linearly independent because they have homological duals).  An Euler characteristic argument shows that the dimension of $H_2(W_q;\Q)$ is $2mq$. Thus the ordinary intersection form on $H_2(W_q;\Q)$ is Witt trivial.

Similarly the hypotheses imply that the $L_i$ lift to any \textit{metabelian} cover and indeed represent an isotropic submodule of the covering space of $W$ with $\pi_1=\pi_1(W)^{(2)}$. It follows that the $L_i$ span an isotropic subspace of $H_2(W_q;\mathbf{k})$. We claim that it is of half-rank.  By ~\cite[Lemma 9.6]{COT}, dim$_{\mathbf{k}}H_2(W_q,\mathbf{k})=$dim$_\Q H_2(W_q,\Q)=2mq$. Therefore it suffices to show that the $mq$ (disjoint) lifts of the $L_i$ are linearly independent in $H_2(W_q,\mathbf{k})$. This is proved on the bottom of page 516 in ~\cite{COT}  (in that argument replace the $2$-spheres by our surfaces $L_i$).
\end{proof}

Suppose that $K=R(J,\eta)$ is the result of infection on a ribbon knot $R$ by a knot $J$ along the circle $\eta$ where lk($\eta$,$R)=0$. In other words, $K$ is a satellite of $J$ with companion $R$ of winding number zero.
Suppose $q$ is a prime-power. Then the first homology of the $q$-fold branched covers of $S^3$ over $R$ and $R(K)$ are naturally isomorphic and so we can identify, in a canonical way, the characters on these $3$-manifolds ~\cite[Lemma 4]{Lith1}. Then, for any such character, $\chi$, of prime-power order $d$, it was shown in ~\cite[Corollary 2]{Lith1} that
$$
\tau(R(K),\chi)= \tau(R,\chi) + \sum_{i=1}^{q}\alpha_i(J)
$$
We must explain the notation $\alpha_i(J)$.  Each lift $\tilde{\eta}_i$ of $\eta$ represents a homology class $x_i\in H_1(\Sigma_q)$ and hence under $\chi$ corresponds to a root of unity $\chi(x_i)=\zeta_d^{s(i)}$. For each $i$ we have the following commutative diagram
$$
L_0(\Z[x,x^{-1}])\overset{f_i}{\longrightarrow} L_0(\Q(\zeta_d))\overset{i}{\longrightarrow} L_0(\Q(\zeta_d))\otimes \Z_{(2)}\overset{j}{\longrightarrow} L_0(\mathbf{k})\otimes \Z_{(2)},
$$
where the first map is induced by sending $x$ to $\chi(x_i)$ (her also for simplicity we restrict to the case that $d$ is odd). The class $\alpha_i(J)$ is the image under this composition of the ordinary algebraic concordance class of $J$. 

Suppose further that $\chi$ extends over the $q$-fold  cover of $B^4$ branched over  some slice disk for $R$. Then $\tau(R,\chi)=0$ by Casson-Gordon's main result.  Thus, in this case, the $\tau$-invariants of $R(K)$ are equal to sums of certain algebraic concordance invariants of $J$, where here one must use that the maps $i$ and $j$ above are injective ~\cite{Lith1}\cite[p480]{GL1}.  Specifically we are interested in certain $2$-torsion invariants, $\delta_{i/d}(J)$,  of the algebraic concordance class of $J$ which were defined by Gilmer-Livingston ~\cite[Section 1]{GL2}, and identified with $\Delta_J(\zeta_d^i)$ ~\cite[p.129]{GL2}\cite[Appendix 2]{GL1}. Here $\Delta_J(t)$ is the symmetrized Alexander polynomial of $J$. These invariants take values in the quotient, $N_d$, of $\Q(\zeta_d+\overline{\zeta}_d)$ by the subgroup of norms of elements of $\Q(\zeta_d)$.

Then, by combining our Theorem~\ref{thm:CGvanish} with the above-mentioned work of Litherland and Gilmer-Livingston, we have the following, which extends a version of ~\cite[Theorem 7]{GL2} (see their corrected statement in ~\cite{GL4}). 

\begin{thm}\label{thm:discrimvanish} Suppose that  $K=R(J,\eta)$ is the result of infection on a topologically slice knot $R$ by a knot $J$ along the circle $\eta$ where lk($\eta$,$R)=0$ and that $K\in \mathcal{F}^{odd}_{1.5}$ via the $4$-manifold $W$. Suppose $q$ is a prime power,  $\Sigma_q$ denotes the $q$-fold cyclic  cover  branched over $K$, $d$ is an odd prime power,  $\overline{\chi}:H_1(\Sigma_q)\to \Z_d\to \mathbb{C}^*$ is a  character, non-trivial on a lift of $\eta$, that extends over $W_q$ and extends over the $q$-fold  cover of $B^4$ branched over a slice disk for $R$.  Then
$$
\prod_{i=1}^{q}\Delta_J(\zeta_d^{s(i)})
$$
is a norm of $\Q(\zeta_d)$.
\end{thm}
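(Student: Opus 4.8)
The plan is to replace Casson--Gordon's original slicing theorem by the stronger Theorem~\ref{thm:CGvanish}, feed its output into Litherland's satellite formula for $\tau$, and then translate the resulting vanishing of Witt classes into the norm condition via the Gilmer--Livingston computation of the relevant summands. The only genuinely delicate part will be quoting \cite{Lith1}, \cite{GL2}, \cite{GL1}, \cite{GL4} correctly.

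First I would apply Theorem~\ref{thm:CGvanish}(B) directly: since $K\in\mathcal{F}^{odd}_{1.5}$ via $W$ and, by hypothesis, $\overline{\chi}$ extends over $W_q$, we get $\tau(K,\overline{\chi})=0$ in $L_0(\mathbf{k})\otimes\Z_{(2)}$ (the coefficient ring is $\Z_{(2)}$ because $d$ is odd, cf. the remark preceding Theorem~\ref{thm:CGvanish}). Next, because $K=R(J,\eta)$ has winding number zero, the $q$-fold cyclic branched covers of $S^3$ over $R$ and over $K$ have canonically identified first homology (\cite[Lemma 4]{Lith1}), and under this identification Litherland's computation \cite[Corollary 2]{Lith1} gives
$$
\tau(K,\overline{\chi})=\tau(R,\overline{\chi})+\sum_{i=1}^{q}\alpha_i(J),
$$
where $\alpha_i(J)$ is the image of the algebraic concordance class $[J]\in L_0(\Z[x,x^{-1}])$ under the composition $j\circ i\circ f_i$ displayed above, $f_i$ being evaluation of $x$ at $\chi(x_i)=\zeta_d^{s(i)}$. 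Finally, $R$ is (topologically) slice, so it lies in $\mathcal{F}^{odd}_{1.5}$ via $W=B^4\setminus\Delta_R$ (which has $H_2=0$), and by hypothesis $\overline{\chi}$ extends over the $q$-fold cover of $B^4$ branched over a slice disk for $R$; hence Theorem~\ref{thm:CGvanish}(B), now applied to $R$, yields $\tau(R,\overline{\chi})=0$. Here it is essential that Theorem~\ref{thm:CGvanish} is valid in the topological category, since $R$ is only assumed topologically slice.

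Combining the three facts gives $\sum_{i=1}^{q}\alpha_i(J)=0$ in $L_0(\mathbf{k})\otimes\Z_{(2)}$. I would then unwind $\alpha_i(J)$. The maps $i$ and $j$ are injective, because $L_0(\Q(\zeta_d))$ and $L_0(\mathbf{k})$ have no odd torsion (\cite{Lith1}, \cite[p.480]{GL1}); since $i$, $j$ and each $f_i$ are group homomorphisms, the vanishing of $\sum_i \alpha_i(J)=(j\circ i)\!\left(\sum_i f_i([J])\right)$ forces $\sum_{i=1}^{q}f_i([J])=0$ in $L_0(\Q(\zeta_d))$. The hypothesis that $\overline{\chi}$ is non-trivial on a lift of $\eta$ guarantees the evaluation points $\zeta_d^{s(i)}$ are genuine (not all trivial) roots of unity, so that $f_i([J])$ and $\Delta_J(\zeta_d^{s(i)})$ are the correct objects. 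Applying the Gilmer--Livingston discriminant homomorphism $L_0(\Q(\zeta_d))\to N_d$, under which $f_i([J])$ is carried to $\delta_{s(i)/d}(J)$, which is identified with the class of $\Delta_J(\zeta_d^{s(i)})$ in $N_d$ (\cite[p.129]{GL2}, \cite[Appendix 2]{GL1}), we obtain $\sum_{i=1}^{q}\delta_{s(i)/d}(J)=0$ in $N_d$; equivalently $\prod_{i=1}^{q}\Delta_J(\zeta_d^{s(i)})$, which is totally real since $\Delta_J$ is symmetric, is a norm of $\Q(\zeta_d)$.

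I expect the main obstacle to be the bookkeeping in the middle step: matching, with care, the various statements of \cite{Lith1}, \cite{GL2}, \cite{GL1} and the corrections in \cite{GL4}, given the known gap in an earlier paper of Gilmer. Concretely one must verify that the canonical identification of characters on the two families of branched covers transports both the extension hypotheses and the summand $\sum_i\alpha_i(J)$ faithfully, and that the argument invokes only the robust facts (injectivity of $i$ and $j$, and the identification of $\delta_{i/d}(J)$ with $\Delta_J(\zeta_d^i)$) rather than any of the affected intermediate claims. The remaining ingredients --- Theorem~\ref{thm:CGvanish}, the absence of odd torsion, and the symmetry argument placing the product in the maximal real subfield --- are routine.
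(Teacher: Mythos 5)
Your proposal is correct and is exactly the argument the paper has in mind: the paper does not give an explicit proof, but states that Theorem~\ref{thm:discrimvanish} follows ``by combining our Theorem~\ref{thm:CGvanish} with the above-mentioned work of Litherland and Gilmer--Livingston,'' which is precisely what you carry out (applying Theorem~\ref{thm:CGvanish}(B) to $K$, invoking Litherland's additivity formula, killing $\tau(R,\overline\chi)$ via the topological-category case, and pushing the result through the injective maps $i,j$ and the Gilmer--Livingston discriminant). Your explicit observation that one must invoke Theorem~\ref{thm:CGvanish} in the topological category to kill $\tau(R,\overline\chi)$ (rather than quoting Casson--Gordon's original smooth-category theorem) is a useful clarification of a point the paper glosses over.
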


\begin{cor}\label{cor:ourknotCG} Let $K$ be the knot shown on the left-hand side of Figure~\ref{fig:ourknotCG} where $J$ is the twist knot with a negative clasp and $11$ twists and $T$ is a topologically slice knot. Let $\eta_1, \eta_2$ be meridional circles to the left and right-hand bands of $K$, respectively. Then $K\notin \mathcal{N}_2$  via $K=\partial \Delta\hookrightarrow V$ wherein the kernel of the inclusion-induced map $j_*:H_1(\Sigma_3(K))\to H_1(V_3)$ is generated by the lifts of $\eta_2$; and $\Sigma_3, V_3$ are the $3$-fold covers branched over $K$ and $\Delta$ respectively.
\end{cor}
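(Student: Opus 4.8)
The plan is to suppose, for contradiction, that $K$ bounds a disk $\Delta$ in a $4$-manifold $V$ satisfying the negative-definite analogue of Definition~\ref{def:positive} with $Q:=\ker\big(j_*\colon H_1(\Sigma_3(K))\to H_1(V_3)\big)$ generated by the lifts of $\eta_2$, and to derive a contradiction from Theorems~\ref{thm:CGvanish} and~\ref{thm:discrimvanish} together with an elementary sign computation. First I would pass to the framework of Theorem~\ref{thm:discrimvanish}. Since $-K\in\cP_2\subset\mathcal{F}_2^{odd}$, the proof of Proposition~\ref{prop:positiveinratnsolv} realizes $-K\in\mathcal{F}_2^{odd}$ (hence $-K\in\mathcal{F}_{1.5}^{odd}$) by connect-summing $V\setminus\Delta$ with copies of $-\mathbb{C}P(2)$ performed away from $\Delta$; as this alters neither $\pi_1$ nor $H_1$, the corresponding kernel on $3$-fold branched covers is still generated by the lifts of $\eta_2$, and reversing orientation (re-orienting the dual surfaces $D_i$) yields the same statement for $K$. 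So, without loss of generality, $K$ is slice in a $4$-manifold $V$, with exterior $W=V\setminus\Delta$, satisfying Definition~\ref{def:altdefoddsolv} for $n=1.5$, for which $Q=\langle\widetilde{\eta_2}\rangle\subset H_1(\Sigma_3(K))$.

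Next I would record the infection structure. Since $T=Wh(RHT)$ has Alexander polynomial $1$, the knot $R(U,T)$ is topologically slice (the ribbon disk of $9_{46}$ away from the $\eta_2$-band persists), $\lk(\eta_1,R(U,T))=0$, and $K=\big(R(U,T)\big)(J,\eta_1)$ is the winding-number-zero satellite with companion $J$ infected along $\eta_1$. Recall from the proof of Theorem~\ref{thm:maintheoremtop} that $H_1(\Sigma_3(K))\cong\Z_7\langle x_1\rangle\oplus\Z_7\langle y_1\rangle$ with $x_1$, $y_1$ lifts of $\eta_1$, $\eta_2$, that the $\Q/\Z$-linking form is hyperbolic with exactly the two metabolizers $\langle x_1\rangle$, $\langle y_1\rangle$, and (from the surgery description in Appendix~\ref{sec:calcdinvt}) that the three lifts of $\eta_1$ carry the classes $x_1$, $2x_1$, $4x_1$. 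Now take $q=3$, $d=7$, and let $\chi\colon H_1(\Sigma_3(K))\to\Z_7$ be given by $\chi(x_1)=1$, $\chi(y_1)=0$, with associated character $\overline{\chi}$. Then $\overline{\chi}$ is nontrivial on the lift $x_1$ of $\eta_1$ and $\chi(Q)=0$, so by Theorem~\ref{thm:CGvanish}(A) it extends over $W_3$; and it extends over the $3$-fold branched cover of $B^4$ over a slice disk of $R(U,T)$ whose associated metabolizer on $H_1(\Sigma_3)$ is $\langle\widetilde{\eta_2}\rangle$ — the disk coming from the ribbon move of $9_{46}$ dual to $\eta_1$. Applying Theorem~\ref{thm:discrimvanish} with $R\rightsquigarrow R(U,T)$, $\eta\rightsquigarrow\eta_1$, $q=3$, $d=7$ then forces the product $\Delta_J(\zeta_7)\,\Delta_J(\zeta_7^2)\,\Delta_J(\zeta_7^4)$ — the product of the $\Delta_J(\zeta_7^{s(i)})$ over the three lifts of $\eta_1$, where $\{s(1),s(2),s(3)\}=\{1,2,4\}$ — to be a norm of $\Q(\zeta_7)$.

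Finally I would extract the contradiction. The assignment $a\mapsto(\sigma_a\bmod\{\pm1\})$ identifies $\{1,2,4\}$ with $\operatorname{Gal}\big(\Q(\zeta_7)^+/\Q\big)\cong\Z/3$ acting on $\Q(\zeta_7)^+=\Q(\zeta_7+\overline{\zeta_7})$, so the above product equals $\operatorname{Nm}_{\Q(\zeta_7)^+/\Q}\!\big(\Delta_J(\zeta_7)\big)\in\Q$. For the twist knot $J$ with $11$ twists and a negative clasp, $\Delta_J(t)=11t-c+11t^{-1}$ with $c\in\{21,23\}$, so under the three real embeddings of $\Q(\zeta_7)^+$ the element $\Delta_J(\zeta_7)$ maps to $22\cos(2\pi a/7)-c$ for $a=1,2,3$, each negative since $22\cos(2\pi a/7)\le 22\cos(2\pi/7)<14<c$; hence this norm is a product of three negative reals and is strictly negative. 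But every norm from the totally imaginary field $\Q(\zeta_7)$ (whether to $\Q$ or to $\Q(\zeta_7)^+$) is a product of archimedean factors of the form $|\beta_\sigma|^2$, hence positive. This contradicts Theorem~\ref{thm:discrimvanish}, so no such $V$ exists, which is the assertion of Corollary~\ref{cor:ourknotCG}.

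The hard part is not the sign computation but the two extension assertions for $\overline{\chi}$ in the second step: pinning down that a ribbon disk of $9_{46}$ kills the metabolizer $\langle\widetilde{\eta_2}\rangle$ of $H_1(\Sigma_3(R(U,T)))$, and that this is unaffected by the infection at $\eta_2$ by the topologically slice knot $T$; and then invoking Litherland's satellite formula $\tau(R(J),\overline{\chi})=\tau(R,\overline{\chi})+\sum_i\alpha_i(J)$ in conjunction with $\tau(R(U,T),\overline{\chi})=0$ and the Gilmer--Livingston identification of $\alpha_i(J)$ with $\Delta_J(\zeta_d^{s(i)})$, which requires the injectivity of the localization maps $L_0(\Q(\zeta_d))\to L_0(\Q(\zeta_d))\otimes\Z_{(2)}\to L_0(\mathbf{k})\otimes\Z_{(2)}$. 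These are precisely the delicate points addressed in Appendix~\ref{sec:CGinvts}; care is needed because of the known gap in Gilmer's original argument, though (as noted) its full strength is not required here.
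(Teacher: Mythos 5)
Your overall strategy matches the paper's: reduce to $\mathcal{F}_{1.5}^{odd}$ via the connect-sum argument of Proposition~\ref{prop:positiveinratnsolv}, set up the infection structure $K = (R(U,T))(J,\eta_1)$, choose the character $\chi$ killing $\langle\widetilde{\eta_2}\rangle$, verify it extends over both branched covers, and invoke Theorem~\ref{thm:discrimvanish} to conclude that $\prod_{i=1}^3\Delta_J(\zeta_7^{s(i)})$ with $\{s(i)\}=\{1,2,4\}$ must be a norm of $\Q(\zeta_7)$.

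However, your final contradiction — that this product is strictly negative and hence cannot be a norm — rests on an incorrect Alexander polynomial. You write $\Delta_J(t)=11t-c+11t^{-1}$ with $c\in\{21,23\}$, which yields negative values at seventh roots of unity. But the symmetrized Alexander polynomial normalized so that $\Delta_J(1)=1$ is $\Delta_J(t)=-11t+23-11t^{-1}$ (Seifert matrix $\bigl(\begin{smallmatrix}-1&1\\0&11\end{smallmatrix}\bigr)$ gives $\det(tV-V^T)=-11t^2+23t-11$). Under each real embedding of $\Q(\zeta_7)^+$, the value $\Delta_J(\zeta_7^a)=23-22\cos(2\pi a/7)>0$, so the product is $+11089>0$, and your positivity-of-norms argument gives no contradiction. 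This normalization matters: since $-1$ is not a norm from the totally imaginary $\Q(\zeta_7)$, the two signs are in different cosets, so one cannot wave this away. The paper instead obtains the contradiction by factoring $11089=13\cdot 853$ and invoking the Gilmer--Livingston criterion: since $13\equiv -1\pmod 7$ has order $2$ in $(\Z/7\Z)^*$, $11089$ is not a norm of $\Q(\zeta_7)$. So the endgame of your argument needs to be replaced by this $p$-adic norm-residue computation; the archimedean sign argument is not available here.
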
 

\begin{figure}[htbp]
\setlength{\unitlength}{1pt}
\begin{picture}(327,151)
\put(0,0){\includegraphics{family_scaled}}
\put(185,0){\includegraphics{family_scaled}}
\put(13,93){$J$}
\put(-25,73){$K\equiv$}
\put(123,93){$T$}
\put(310,93){$T$}
\put(194,93){$U$}
\put(159,73){$R\equiv$}
\end{picture}
\caption{$K=R(J,\eta_1)$}\label{fig:ourknotCG}
\end{figure}

\begin{proof}[Proof of Corollary~\ref{cor:ourknotCG}]  Suppose that $K\in \mathcal{N}_2$  via such a $V$. Let $x_1,\tau(x_1)$ and $\tau^2(x_1)$ denote the homology classes of the lifts of $\eta_1$ and let $y_1,\tau(y_1)$ and $\tau^2(y_1)$ denote the homology classes of the lifts of $\eta_2$. Define $\chi:H_1(\Sigma_3(K))\to \Z_7$ by $\chi(x_1)=1, \chi(\tau(x_1))=4, \chi(\tau^2(x_1))=2, \chi(y_1)=0, \chi(\tau(y_1))=0,$ and  $\chi(\tau^2(y_1))=0$. Since ker$\chi\subset$ ker$j_*$, the character $\overline{\chi}$ extends to $V_3$.

Note that $K=R(J,\eta_1)$ where $R$ is the ribbon knot shown on the right-hand side of Figure~\ref{fig:ourknotCG}. Note that $R=S(T,\eta_2)$ where $S$ is the ribbon knot $9_{46}$. The latter knot has a ribbon disk obtained by ``cutting the right-hand band''. It may be seen then that $R$ is topologically slice via a disk $\Delta\hookrightarrow B^4$  for which $\chi$ extends over the $3$-fold cover branched over $\Delta$.
Now we may apply Theorem~\ref{thm:CGvanish} with $q=3, d=7$ to conclude that
$$
\Delta_J(\zeta_7)\Delta_J(\zeta_7^4)\Delta_J(\zeta_7^2)=11,089=13\cdot 853
$$
is a norm of $\Q(\zeta_7)$. Here $\Delta_J(t)=-11t+23-11t^{-1}$ and the calculation was done via Maple. But $13$ has order $2$ in $\Z_7^*$ so $11089$ is not a norm of $\Q(\zeta_7)$ ~\cite[p.128]{GL2}. This is a contradiction.
\end{proof}

\bibliographystyle{plain}
\bibliography{posfiltrbibJan.bib}
\end{document}